\numberwithin{equation}{section}
\newcommand\void[1]{}
\newcommand{\C}{\mathbb{C}}
\newcommand{\R}{\mathbb{R}}
\newcommand{\Z}{\mathbb{Z}}
\newcommand{\BH}{\mathbb{H}}
\newcommand{\CA}{\mathcal{A}}
\newcommand{\CB}{\mathcal{B}}
\newcommand{\CC}{\mathcal{C}}
\newcommand{\CD}{\mathcal{D}}
\newcommand{\CE}{\mathcal{E}}
\newcommand{\CG}{\mathcal{G}}
\newcommand{\CH}{\mathcal{H}}
\newcommand{\CM}{\mathcal{M}}
\newcommand{\CN}{\mathcal{N}}
\newcommand{\CP}{\mathcal{P}}
\newcommand{\CQ}{\mathcal{Q}}
\newcommand{\CR}{\mathcal{R}}
\newcommand{\CX}{\mathcal{X}}
\newcommand{\CY}{\mathcal{Y}}
\newcommand{\FZ}{\mathfrak{Z}}
\newcommand{\Cat}{\mathrm{Cat}}
\newcommand{\Hilb}{\mathrm{Hilb}}
\newcommand{\Vect}{\mathrm{Vec}}
\newcommand{\op}{\mathrm{op}}
\newcommand{\rev}{\mathrm{rev}}
\newcommand{\TO}{\mathcal{QL}}
\newcommand{\KarCat}{\mathrm{KarCat}}
\newcommand{\Kar}{\mathrm{Kar}}
\newcommand{\sk}{\mathrm{sk}}
\newcommand{\Sep}{\mathcal{S}ep}
\newcommand{\MFus}{\mathcal{MF}us}
\newcommand{\Fus}{\mathcal{F}us}
\newcommand{\BFus}{\mathcal{BF}us}
\newcommand{\ind}{\mathrm{ind}}
\newcommand{\cl}{\mathrm{cl}}
 \DeclareMathOperator{\Hom}{Hom}
 \DeclareMathOperator{\Ker}{Ker}
 \DeclareMathOperator{\Img}{Im}
 \DeclareMathOperator{\Id}{Id}
 \DeclareMathOperator{\chara}{char}
 \DeclareMathOperator{\one}{\mathbf1}
 \DeclareMathOperator{\Fun}{Fun}
 \DeclareMathOperator{\LMod}{LMod}
 \DeclareMathOperator{\RMod}{RMod}
 \DeclareMathOperator{\BMod}{BMod}
 \DeclareMathOperator{\Rep}{Rep}
 \DeclareMathOperator{\sRep}{sRep}
 \DeclareMathOperator{\MExt}{MExt}
 \DeclareMathOperator{\Gal}{Gal}
\newtheorem{thm}{Theorem$^*$}[section]
\newtheorem{lem}[thm]{Lemma$^*$}
\newtheorem{lem0}[thm]{Lemma}
\newtheorem{prop}[thm]{Proposition$^*$}
\newtheorem{cor}[thm]{Corollary$^*$}
\theoremstyle{definition}
\newtheorem{defn}[thm]{Definition}
\newtheorem{exam}[thm]{Example}
\newtheorem{rem}[thm]{Remark}
\theoremstyle{remark}
\newcommand\arXiv[1]{\href{http://arxiv.org/abs/#1}{\nolinkurl{arXiv:#1}}}
\newcommand\condense{\mathrel{\,\hspace{.75ex}\joinrel\rhook\joinrel\hspace{-.75ex}\joinrel\rightarrow}}
\begin{document}

\title{Categories of quantum liquids II}
\maketitle

\begin{center}
{\large
Liang Kong$^{a,b,c}$,\,
Hao Zheng$^{d,e,f}$\,
~\footnote{Emails:
{\tt  kongl@sustech.edu.cn, haozheng@mail.tsinghua.edu.cn}}}
\\[1em]
{\small $^a$ Shenzhen Institute for Quantum Science and Engineering, \\
Southern University of Science and Technology, Shenzhen 518055, China
\\[0.5em]
$^b$ International Quantum Academy, Shenzhen 518048, China
\\[0.5em]
$^c$
Guangdong Provincial Key Laboratory of Quantum Science and Engineering, \\
Southern University of Science and Technology, Shenzhen, 518055, China
\\[0.5em]
$^d$ Institute for Applied Mathematics, Tsinghua University, Beijing 100084, China 
\\[0.5em]
$^e$ Beijing Institute of Mathematical Sciences and Applications, Beijing 101408, China 
\\[0.5em]
$^f$ Department of Mathematics, Peking University, Beijing, 100871, China }
\end{center}

\vspace{0.1cm}
\begin{abstract}
We continue to develop the theory of separable higher categories, including center functors, higher centralizers, modular extensions and group theoretical higher fusion categories. Moreover, we outline a theory of orthogonal higher categories to treat anti-unitary symmetries. Using these results we derive a systematic classification of gapped quantum liquids and predict many new SPT orders in spacetime dimension $\ge3$.
\end{abstract}

\vspace{0.3cm}
\tableofcontents

\section{Introduction}

In the first part of this series \cite{KZ22}, we developed a mathematical theory of separable higher categories (including higher $E_m$-multi-fusion categories) based on Gaiotto and Johnson-Freyd's work on condensation completion \cite{GJF19}, and we also introduced and studied higher categories of quantum liquids in all spacetime dimensions.

The goal of this work is to further develop the mathematical theory of separable higher categories and derive a systematic classification of {\em gapped} quantum liquids. 
With the aid of the powerful theory of condensation completion \cite{GJF19}, we are able to generalize a lot of results on fusion 1-categories to higher fusion categories and provide mathematical proofs of many earlier physical results on gapped quantum liquids obtained or conjectured in \cite{KWZ15, LKW18, KLWZZ20a} and their generalizations.


\smallskip

In the rest of this section, we provide some explanations of the physical origins and a brief summary of our results.

\smallskip

\textbf{Center functors.} This is the main topic of Section \ref{sec:cen-fun}.
One of the fundamental guiding principles in our study of quantum liquids is the {\em boundary-bulk relation} proposed in \cite{KWZ15,KWZ17}, which says that the bulk of a quantum liquid is the center of its boundaries. This result generalizes and unifies the boundary-bulk relations in 1+1D rational conformal field theories (RCFT)\cite{FFRS08,KR08,Dav10} and those in 2+1D topological orders \cite{KK12} (in a lattice model approach) and \cite{FSV14,Kon14} (in a model-independent approach). 
However, as observed in \cite{DKR15,KWZ15}, this boundary-bulk relation is only the first layer of a hierarchical structure, which can be obtained by including higher codimensional defects. As a consequence, a complete statement of the boundary-bulk relation can be formulated as the functoriality of the $E_1$-center or the so-called $E_1$-center functor. This $E_1$-center functor has been rigorously constructed for a few special families of quantum liquids: (1) for all 1+1D RCFT’s with a fixed chiral symmetry \cite{DKR15} and for all 1+1D RCFT’s with arbitrary chiral symmetries \cite{KYZ21}; (2) for 2+1D non-chiral topological orders with only gapped boundaries \cite{KZ18} and for all 2+1D topological orders with all (potentially gapless) boundaries \cite{KZ21}. These rigorous results only involve monoidal 1-categories or algebras in them. 

In this work, we prove the functoriality of the $E_1$-center for higher categories (see Theorem \ref{thm:z1}). Although the proofs for the 1-category cases \cite{KZ18,KYZ21,KZ21} are very technical, it turns out that the proof for higher categories are surprisingly simple. The reason behind this miracle is that the theory of separable $n$-categories allows us to reduce the functoriality of the $E_1$-center to that of the $E_0$-center (see Theorem \ref{thm:z0}), which is much easier to prove. These center functors provide powerful tools that are useful in other studies, for example, in the study of modular extensions. 

\smallskip

\textbf{Modular extensions.}\footnote{Modular extensions are also called nondegenerate extensions in the literature. We follow M\"uger's terminology. In fact, a nondegenerate braided fusion category admits a canonical modular structure in the unitary settings.}
This is the main subject of Section \ref{sec:cen-mext}. 
The mathematical notion of a (minimal) modular extension\footnote{In this work, a modular extension means a minimal modular extension.} of a braided fusion 1-category was introduced by M\"uger in \cite{Mu03b}, but was largely unnoticed until the discovery of its physical meaning as a gauging process of the finite internal symmetry in a 2+1D SPT/SET order \cite{LKW15,LKW17,BGHN+17}. Extensive research on this subject followed (see, for example, \cite{GVR17,VR19,DN20,Nik22,OY23,JFR24} and references therein). Based on a different realization of the gauging process via the boundary-bulk relation, a one-to-one correspondence between modular extensions and braided equivalences of $E_1$-centers was obtained on a physical level of rigor and was generalized to higher categories \cite{KLWZZ20a}. 

Roughly speaking, the theory of modular extensions tells the story about a pair of $E_m$-fusion $n$-categories centralizing each other in a nondegenerate one. We call such a structure a perfect pairing (see Definition \ref{defn:pairing}). In order to define the notion of a modular extension properly for higher categories, we first develop the mathematical theory of $E_m$-centralizers in Subsection \ref{sec:centralizers} (largely parallel to that of $E_m$-centers in \cite{KZ22}) and that of perfect pairings in Subsection \ref{sec:pairing}. Similar to the proof of the functoriality of centers, we reduce the original problem of modular extensions of an $E_2$-fusion $n$-category (see Definition \ref{defn:e2-mext}) to the $E_1$-settings (see Definition \ref{defn:e1-mext}) and the $E_0$-settings (see Definition \ref{defn:e0-mext}). Then we establish one-to-one correspondence between modular extensions and equivalences of centers (see Theorem \ref{thm:mod-ext-e0} and Corollary \ref{cor:mod-ext-e1}, Corollary \ref{cor:mod-ext-e2}).

\smallskip
\textbf{Group-theoretical fusion $n$-categories.} This is the subject of Section \ref{sec:group-cat}. Group-theoretical fusion 1-categories are a family of well studied fusion 1-categories and play a crucial role in the classification of low-dimensional gapped quantum liquids. Moreover, higher groups are main examples of higher symmetries, so their representation theory are among the most important problems in the theory of quantum liquids. In Subsection \ref{rep-high-group}, we define the representation $n$-category of a finite higher group and prove that it is indeed a symmetric fusion $n$-category (see Theorem \ref{thm:nrepg-fus}). In Subsection \ref{sec:point-fus}, we study the so-called pointed fusion $n$-categories which is a natural generation of pointed fusion 1-categories. These results are used in the classification of gapped quantum liquids with group symmetries.

\smallskip
\textbf{Orthogonal higher categories.} This is the subject of Section \ref{sec:orth-cat}. The motivation is to set up a categorical foundation for the time-reversal symmetry. Note that the time-reversal symmetry is a kind of anti-unitary symmetry and can be naturally described by the Galois group $\Gal(\C/\R)$. We observe that the twisted group algebra of $\Gal(\C/\R)$ is Morita equivalent to $\R$ (see Example \ref{exam:rep-Gpi}(2)). Therefore, the time-reversal symmetry demands a theory of separable $n$-categories based on real Hilbert spaces instead of complex Hilbert spaces. This leads to the notion of an orthogonal $n$-category (see Definition \ref{defn:orth-cat}) which is a real analogue of a unitary $n$-category. 

The major issue of orthogonal $n$-categories is that the field $\R$ is not separably closed. Fortunately, most results on unitary $n$-categories have counterparts for orthogonal $n$-categories. There are also interesting new features arising in the orthogonal settings that are briefly discussed in this part of work. This approach to the time-reversal symmetry is justified by the examples of time-reversal SPT's presented in Subsection \ref{sec:anti-unitary-symm} (see Example \ref{exam:spt-Z2T}).

\smallskip
\textbf{Classification of gapped quantum liquids.} This is the task of Section \ref{sec:class-gql} and relies on the mathematical results established in the previous sections. Recall that gapped quantum liquids include topological orders, SPT/SET orders and symmetry-breaking orders. In Subsection \ref{sec:unitary-symm}, we give a precise formulation of and slightly generalize the classification result of gapped quantum liquids presented in \cite{KLWZZ20a} and discuss some examples. 

Then we consider gapped quantum liquids with spacetime symmetries (including the time-reversal symmetry) in the next two subsections. Indeed, such phases do not belong to the higher category $\TO^n$ introduced in \cite{KZ22} because spacetime symmetries are not local and because a domain wall between such phases hardly makes sense. We refer to them as {\em crystalline quantum liquids} and reduce their classification problem to quantum liquids without spacetime symmetries. In Subsection \ref{sec:anti-unitary-symm}, we deal mainly with the time-reversal symmetry. Surprisingly, we find new time-reversal SPT's in spacetime dimension $\ge3$ (see Example \ref{exam:spt-Z2T}, Example \ref{exam:newspt-Z4T} and Example \ref{exam:newspt-Z2fxZ2T}). In Subsection \ref{sec:ext-symm}, we deal with general finite spacetime symmetries and provide a categorical proof to the crystalline equivalence principle \cite{TE18} which claims that space symmetries can be dualized to internal symmetries.

\medskip

We assume that readers are familiar with the notions of a multi-fusion $n$-category \cite{JF22} and a separable $n$-category \cite{KZ22} and the notations and the results in \cite{KZ22}. 

We work on the $\C$-linear settings unless stated otherwise as in \cite{KZ22}. All the results have a $*$-version. The results also apply to the $k$-linear settings, where $k$ is an arbitrary field, with some minor exceptions that are clarified at the end of each subsection.

\medskip


\noindent\textbf{Disclaimer}: Our work is based on the theory of condensation completion for higher categories \cite{GJF19}, a theory which is mathematically incomplete because no concrete model of a weak $n$-category is chosen and the yet-to-be-developed theory of (co)limits is assumed. Implementing the technical details in a specific model will be a substantial task. However, this approach, as explained in \cite{JF22}, allows us to proceed to study problems that are not explicitly dependent on the higher coherence data. 

We use Theorem$^*$, Proposition$^*$, etc. to remind readers that the results are proven at a physical level of rigor, depending more or less on unproven assertions about higher category theory.

\medskip
\noindent{\bf Acknowledgments}:
We thank Zheng-Cheng Gu, Tian Lan, Dmitri Nikshych, Yin Tian, Yilong Wang, Xiao-Gang Wen, Yong-Shi Wu, Hao Xu, Zhi-Hao Zhang for helpful discussions. We appreciate the constructive comments and suggestions of the referee that have helped us improve the quality of our paper.
LK is supported by NSFC under Grant No. 11971219 and Guangdong Basic and Applied Basic Research Foundation under Grant No. 2020B1515120100 and Guangdong Provincial Key Laboratory (Grant No.2019B121203002). HZ is  supported by NSFC under Grant No. 11871078 and by Startup Grant of Tsinghua University and BIMSA.



\section{Elements of separable $n$-categories} \label{sec:elements}

We prove some preliminary results of separable $n$-categories in this section for later use. 

Recall that a $\C$-linear $n$-category is {\em separable} if it lies in the essential image of the embedding $(n+1)\Vect \hookrightarrow \Cat^\C_n$ \cite[Definition 3.3]{KZ22}. 

A {\em (separable) $E_m$-multi-fusion $n$-category} is a condensation-complete $\C$-linear $E_m$-monoidal $n$-category $\CA$ such that $\Sigma^m\CA$ is separable \cite[Definition 3.29]{KZ22} (see also \cite{JF22}). An $E_m$-multi-fusion $n$-category with a simple tensor unit is also referred to as an {\em $E_m$-fusion $n$-category}. We say that an $E_m$-multi-fusion $n$-category $\CA$ is {\em indecomposable} if $\Sigma^m\CA$ is indecomposable; {\em connected} if the underlying separable $n$-category of $\CA$ is indecomposable.
A {\em separable module} over a multi-fusion $n$-category is a module in $(n+1)\Vect$ \cite[Definition 3.19]{KZ22}.

\begin{prop} \label{prop:fus-nonzero}
Let $\CA$ be a fusion $n$-category. Then $X\otimes Y$ is nonzero for any two nonzero objects $X,Y\in\CA$.
\end{prop}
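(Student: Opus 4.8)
The plan is to prove the contrapositive—if $X\neq 0$ and $X\otimes Y=0$ then $Y=0$—using rigidity together with the simplicity of the unit. First I would invoke that a fusion $n$-category is rigid, so that $X$ admits a left dual $X^\vee$ with coevaluation $\coev\colon\one\to X^\vee\otimes X$ and evaluation $\ev\colon X\otimes X^\vee\to\one$ obeying the snake identity, i.e. the composite
\[
X \xrightarrow{\ \id_X\otimes\coev\ } X\otimes X^\vee\otimes X \xrightarrow{\ \ev\otimes\id_X\ } X
\]
is (isomorphic to) $\id_X$.

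Then I would proceed in three steps. (Step 1) Since $X\neq0$ we have $\id_X\neq0$, and the snake identity forces $\coev\neq0$: were $\coev$ the zero $1$-morphism, then $\id_X\otimes\coev$, and hence the whole composite, would vanish. (Step 2) Because $\one$ is simple and $\CA$ is separable, a nonzero $1$-morphism $\coev\colon\one\to X^\vee\otimes X$ out of $\one$ should exhibit $\one$ as a condensate (retract) of $X^\vee\otimes X$; that is, there is a $1$-morphism $r\colon X^\vee\otimes X\to\one$ with $r\circ\coev\cong\id_\one$. (Step 3) Now apply the functor $-\otimes Y$. A condensation is preserved by every functor, and $-\otimes Y$ preserves the zero object, so $Y\cong\one\otimes Y$ becomes a retract of $(X^\vee\otimes X)\otimes Y\cong X^\vee\otimes(X\otimes Y)$. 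If $X\otimes Y=0$, then the latter object is $X^\vee\otimes0=0$, so $\id_Y$ is isomorphic to a $1$-morphism factoring through the zero object; hence $\id_Y=0$ and $Y=0$, which is the desired contradiction.

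The hard part will be Step 2, the one genuinely higher-categorical point. In a semisimple abelian $1$-category a nonzero morphism out of a simple object splits automatically, but for $n$-categories ``retract'' must be read in the sense of condensations, and the required splitting has to be extracted from the separability (condensation-completeness) of $\CA$ and the simplicity of $\one$ (indecomposability with $\Omega^n\CA=\C$) as developed in \cite{KZ22}. I would therefore isolate the statement ``in a separable $n$-category a nonzero $1$-morphism from a simple object $S$ realizes $S$ as a condensate of its target'' as the key input; alternatively, one could attempt the same reduction by induction on $n$, passing to the looping $\Omega\CA$ and to the hom $(n-1)$-categories. Granting that input, Steps 1 and 3 are formal consequences of rigidity and of the linearity of the tensor product.
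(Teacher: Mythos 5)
Your argument is correct and is essentially the paper's: the entire published proof is the single line that $\one_\CA$ is a condensate of $(X^L\otimes X)\otimes(Y\otimes Y^R)$, which packages exactly your Steps 1--3 --- your key input in Step 2 (a nonzero $1$-morphism out of the simple unit splits it off as a condensate of the target) is precisely what makes that line work, and the induction on $n$ via adjoints of $1$-morphisms that you sketch is the right way to justify it. The only cosmetic difference is that the paper applies the duality trick to both $X$ and $Y$ and contradicts $\one_\CA\neq0$, whereas you apply it to $X$ alone and conclude $Y=0$.
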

\begin{proof}
Since $\one_\CA$ is simple and since the counit map $v:X^L\otimes X\to\one_\CA$ is nonzero, $v$ extends to a condensation by \cite[Proposition 3.11]{KZ22}. Similarly, the counit map $Y\otimes Y^R\to\one_\CA$ extends to a condensation. Therefore, $\one_\CA$ is a condensate of $(X^L\otimes X)\otimes(Y\otimes Y^R)$. This implies that $X\otimes Y$ is nonzero.
\end{proof}

\begin{prop} \label{prop:morita-braid}
The following conditions are equivalent for $\CA,\CB\in E_1\KarCat^\C_n$:
\begin{enumerate}
\item There is a $\C$-linear equivalence $\RMod_\CA(\KarCat^\C_n) \simeq \RMod_\CB(\KarCat^\C_n)$.
\item There is a $\C$-linear equivalence $\Sigma\CA\simeq\Sigma\CB$.
\item There is an invertible $\CA$-$\CB$-bimodule in $\KarCat^\C_n$ (that is, $\CA$ and $\CB$ are Morita equivalent).
\end{enumerate}
Moreover, if these conditions are satisfied, then there is a $\C$-linear monoidal equivalence $\FZ_1(\CA)\simeq\FZ_1(\CB)$.
\end{prop}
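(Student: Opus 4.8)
The plan is to prove the chain $(2)\Leftrightarrow(1)\Leftrightarrow(3)$ by higher Morita theory and then to read off the final assertion from a Morita-invariant description of the $E_1$-center.

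First I would dispose of $(1)\Leftrightarrow(2)$: under the identification $\Sigma\CA\simeq\RMod_\CA(\KarCat^\C_n)$ from \cite{KZ22}, a $\C$-linear equivalence $\Sigma\CA\simeq\Sigma\CB$ \emph{is} a $\C$-linear equivalence $\RMod_\CA(\KarCat^\C_n)\simeq\RMod_\CB(\KarCat^\C_n)$, so the two conditions carry the same data. For $(3)\Rightarrow(1)$, an invertible $\CA$-$\CB$-bimodule $M$ yields the $\C$-linear functor $-\otimes_\CA M\colon\RMod_\CA\to\RMod_\CB$, whose quasi-inverse is $-\otimes_\CB M^{-1}$; here $M^{-1}$ is the inverse bimodule and the unit/counit come from the coherence equivalences $M\otimes_\CB M^{-1}\simeq\CA$ and $M^{-1}\otimes_\CA M\simeq\CB$ of bimodules. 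For the converse $(1)\Rightarrow(3)$, given a $\C$-linear equivalence $F\colon\RMod_\CA\to\RMod_\CB$ I would set $M:=F(\CA)$, the image of the regular right module, and transport the canonical monoidal equivalence $\CA\simeq\End_{\RMod_\CA}(\CA)$ along $F$ to a monoidal functor $\CA\to\End_{\RMod_\CB}(M)$; this equips $M$ with a left $\CA$-action commuting with the right $\CB$-action, i.e.\ a bimodule structure, and its inverse is $F^{-1}(\CB)$, with invertibility inherited from $F$ being an equivalence.

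For the concluding statement I would use the Morita-invariant formula for the center. Recall $\FZ_1(\CA)\simeq\End_{\CA\boxtimes\CA^\rev}(\CA)$, which is the looping of the monoidal $(n+1)$-category $\BMod_\CA(\KarCat^\C_n)$ of $\CA$-bimodules at its unit, the regular bimodule $\CA$. By the higher Eilenberg--Watts identification $\BMod_\CA(\KarCat^\C_n)\simeq\Fun^\C(\Sigma\CA,\Sigma\CA)$, which sends the regular bimodule to $\id_{\Sigma\CA}$, this is nothing but the monoidal (indeed braided) $n$-category $\End(\id_{\Sigma\CA})$ of endotransformations of the identity functor, i.e.\ the $E_2$-center of the $(n+1)$-category $\Sigma\CA$. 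This description manifestly depends only on the $\C$-linear equivalence class of $\Sigma\CA$: a $\C$-linear equivalence $\Phi\colon\Sigma\CA\simeq\Sigma\CB$ induces the monoidal equivalence $\theta\mapsto\Phi\,\theta\,\Phi^{-1}$ from $\End(\id_{\Sigma\CA})$ to $\End(\id_{\Sigma\CB})$. Combined with $(1)\Leftrightarrow(2)$, this produces the asserted monoidal equivalence $\FZ_1(\CA)\simeq\FZ_1(\CB)$.

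The formal implications above are light; the real content sits in the two inputs they consume. The first is the higher Eilenberg--Watts equivalence $\BMod_\CA(\KarCat^\C_n)\simeq\Fun^\C(\Sigma\CA,\Sigma\CA)$: proving that every $\C$-linear endofunctor of $\RMod_\CA$ has the form $-\otimes_\CA N$ uses that $\RMod_\CA$ is generated under condensates by the regular module and that such functors preserve the relevant condensates, which is exactly where separability and condensation-completeness of $\KarCat^\C_n$ enter. The second is the coherence of the conjugation equivalence on $\End(\id)$. I expect the genuine obstacle to be the coherent bookkeeping of bimodule structures at the $n$-categorical level --- in particular, checking in $(1)\Rightarrow(3)$ that the transported left action is associative and unital up to a coherent system of higher cells, and that $M\otimes_\CB F^{-1}(\CB)\simeq\CA$ holds as bimodules and not merely as module $n$-categories.
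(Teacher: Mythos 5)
Your argument breaks at the very first step: the identification $\Sigma\CA\simeq\RMod_\CA(\KarCat^\C_n)$ is not correct, and conditions (1) and (2) do \emph{not} ``carry the same data.'' In the framework of this paper (following Gaiotto--Johnson-Freyd and \cite{KZ22}), $\Sigma\CA$ is only the \emph{full subcategory} of $\RMod_\CA(\KarCat^\C_n)$ spanned by the tiny objects, i.e.\ the condensates of the regular module $\CA$; the ambient module category is strictly larger in general (already for $\CA=\Vect$, $\Sigma\Vect=2\Vect$ consists of the finite semisimple categories, whereas $\RMod_\Vect(\KarCat^\C_1)$ contains every condensation-complete $\C$-linear $1$-category). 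The equivalence $\Sigma\CA\simeq\RMod_\CA((n+1)\Vect)$ quoted from \cite[Theorem~3.22]{KZ22} is for modules internal to $(n+1)\Vect$, not to $\KarCat^\C_n$, and only for multi-fusion $\CA$, whereas the proposition is stated for arbitrary $\CA\in E_1\KarCat^\C_n$. As written, your proof therefore establishes $(1)\Leftrightarrow(3)$ but leaves condition (2) disconnected from the other two.

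The good news is that your own ingredients close the gap with small rearrangements, and this is exactly what the paper does. For $(1)\Rightarrow(2)$: tininess is a purely categorical property, so any $\C$-linear equivalence $\RMod_\CA(\KarCat^\C_n)\simeq\RMod_\CB(\KarCat^\C_n)$ restricts to the full subcategories of tiny objects, which are $\Sigma\CA$ and $\Sigma\CB$. For $(2)\Rightarrow(3)$: run your construction $M:=F(\CA)$ on an equivalence $F:\Sigma\CA\to\Sigma\CB$ rather than on an equivalence of the full module categories; together with your $(3)\Rightarrow(1)$ via $-\boxtimes_\CA\CM$ this yields $(2)\Rightarrow(1)$ and completes the cycle. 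Your treatment of the final assertion --- realizing $\FZ_1(\CA)$ as $\Omega\FZ_0(\Sigma\CA)=\End(\id_{\Sigma\CA})$ and conjugating by the equivalence $\Sigma\CA\simeq\Sigma\CB$ --- is precisely the paper's argument and is fine, granted the corrected equivalence of (1)--(3).
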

\begin{proof}
$(1)\Rightarrow(2)$ The full subcategory $\Sigma\CA$ of $\RMod_\CA(\KarCat^\C_n)$ consists of tiny objects.
$(2)\Rightarrow(3)$ A $\C$-linear equivalence $F:\Sigma\CA\to\Sigma\CB$ gives an invertible $\CA$-$\CB$-bimodule $F(\CA)$.
$(3)\Rightarrow(1)$ An invertible $\CA$-$\CB$-bimodule $\CM$ induces a $\C$-linear equivalence $-\boxtimes_\CA \CM: \RMod_\CA(\KarCat^\C_n) \to \RMod_\CB(\KarCat^\C_n)$.
Moreover, since $\FZ_1(\CA)=\Omega\FZ_0(\Sigma\CA)$, a $\C$-linear equivalence $\Sigma\CA\simeq\Sigma\CB$ induces a $\C$-linear monoidal equivalence $\FZ_1(\CA)\simeq\FZ_1(\CB)$.
\end{proof}


\begin{prop} \label{prop:dual-bimod}
Let $\CA,\CB$ be multi-fusion $n$-categories and $\CM$ be a separable $\CA$-$\CB$-bimodule. $(1)$ The $\CB$-$\CA$-bimodule $\Fun_{\CB^\rev}(\CM,\CB)$ is left dual to $\CM$. $(2)$ The $\CB$-$\CA$-bimodule $\Fun_\CA(\CM,\CA)$ is right dual to $\CM$.
\end{prop}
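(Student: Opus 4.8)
The plan is to prove (1) directly by exhibiting evaluation and coevaluation $1$-morphisms satisfying the two triangle identities, and then to obtain (2) by the mirror argument. Write $\CN:=\Fun_{\CB^\rev}(\CM,\CB)$, made into a $\CB$-$\CA$-bimodule by $(b\cdot f\cdot a)(m)=b\otimes f(a\otimes m)$. To say that $\CN$ is left dual to $\CM$ is to give an evaluation $\ev\colon\CN\boxtimes_\CA\CM\to\CB$ of $\CB$-$\CB$-bimodules and a coevaluation $\coev\colon\CA\to\CM\boxtimes_\CB\CN$ of $\CA$-$\CA$-bimodules obeying the triangle identities. The evaluation is the easy half: the assignment $(f,m)\mapsto f(m)$ is balanced over $\CA$, since $(f\cdot a)(m)=f(a\otimes m)$, so it descends to a functor $\ev\colon\CN\boxtimes_\CA\CM\to\CB$, and right $\CB$-linearity of each $f$ makes it a map of $\CB$-$\CB$-bimodules.

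The coevaluation is the crux. The left $\CA$-action on $\CM$ is a functor $\rho\colon\CA\to\Fun_{\CB^\rev}(\CM,\CM)$, $a\mapsto(m\mapsto a\otimes m)$, which is a map of $\CA$-$\CA$-bimodules once the target carries the pre- and post-composition actions. I would then introduce the canonical \emph{rank-one} functor
\[
\Phi\colon\CM\boxtimes_\CB\CN\longrightarrow\Fun_{\CB^\rev}(\CM,\CM),\qquad m\boxtimes f\mapsto\bigl(m'\mapsto m\otimes f(m')\bigr),
\]
which is well defined (it is $\CB$-balanced and lands in right $\CB$-module endofunctors) and is a map of $\CA$-$\CA$-bimodules. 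Granting that $\Phi$ is an equivalence, one sets $\coev$ to be $\rho$ composed with a quasi-inverse of $\Phi$; by construction it is an $\CA$-$\CA$-bimodule functor sending $\one_\CA$ to the class of $\id_\CM$.

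The main obstacle is exactly the claim that $\Phi$ is an equivalence, and this is where separability is indispensable: $\Phi$ is the categorical analogue of the isomorphism $V\otimes V^\ast\xrightarrow{\sim}\End(V)$ valid for a \emph{dualizable} object. A separable $\CA$-$\CB$-bimodule $\CM$ is dualizable as a right $\CB$-module in $\KarCat^\C_n$, and for such a module the functor category $\Fun_{\CB^\rev}(\CM,\CB)$ realizes the dual while $\Phi$ is its structural equivalence; I would extract this from the dualizability results for separable modules in \cite{KZ22}. Concretely, separability exhibits $\id_\CM$ as a condensate of rank-one endofunctors of the form $m\otimes f(-)$, which is what produces a quasi-inverse to $\Phi$. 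Once $\Phi$ is an equivalence, the two triangle identities become a direct diagram chase: under $\Phi$ the relevant composites unwind through $(f,m)\mapsto f(m)$ and $m\boxtimes f\mapsto m\otimes f(-)$ to the identity, so this step is routine rather than delicate.

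Finally, (2) follows by the mirror argument, interchanging the left $\CA$-action with the right $\CB$-action and using instead that $\CM$ is dualizable as a left $\CA$-module: one takes $\ev'\colon\CM\boxtimes_\CB\Fun_\CA(\CM,\CA)\to\CA$, $(m,g)\mapsto g(m)$, and builds $\coev'\colon\CB\to\Fun_\CA(\CM,\CA)\boxtimes_\CA\CM$ from the right $\CB$-action $\CB\to\Fun_\CA(\CM,\CM)$ together with the corresponding rank-one equivalence. Equivalently, (2) is (1) read in the bicategory of bimodules with reversed composition, in which left duals become right duals.
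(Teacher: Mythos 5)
Your proposal is correct in outline but takes a genuinely different route from the paper. The paper never constructs explicit duality data: it uses the hom-adjunction characterization of a left dual in the Morita bicategory, i.e.\ it shows $\Fun_\CA(\CP,\CM\boxtimes_\CB-) \simeq \Fun_\CB(\Fun_{\CB^\rev}(\CM,\CB)\boxtimes_\CA\CP,-)$ for all $\CP\in\Sigma\CA^\rev$, verifies this for the single object $\CP=\CA$ by quoting \cite[Theorem 3.1(3)]{KZ22} (which says exactly that $\Fun_{\CB^\rev}(\CM,\CB)$ corepresents $\CM\boxtimes_\CB-$), and then extends to all $\CP$ because every object of $\Sigma\CA^\rev$ is a condensate of $\CA$; part (2) is the same mirror trick you use. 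Your route via explicit $\ev$/$\coev$ and the rank-one functor $\Phi$ is the classical one and buys you concrete duality data, but note two things. First, your crux --- that $\Phi\colon\CM\boxtimes_\CB\CN\to\Fun_{\CB^\rev}(\CM,\CM)$ is an equivalence --- is not an independent input: it is the same \cite[Theorem 3.1(3)]{KZ22} statement repackaged (compare Proposition \ref{prop:tensor-nonzero}(1) in this paper, which derives $\CM\boxtimes_\CB\CN\simeq\Fun_\CB(\CN,\CN)$ from that theorem), so you should cite that specific corepresentability statement rather than gesture at ``dualizability results,'' which risks sounding circular since the one-sided dual being $\Fun_{\CB^\rev}(\CM,\CB)$ is close to what is being proved. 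Second, ``the triangle identities are a routine diagram chase'' is optimistic in an $n$-categorical setting; the cleaner move (and effectively what the paper does) is to observe that your $\ev$ and $\coev$ induce the natural transformation $\Fun_\CB(\CN\boxtimes_\CA\CP,-)\to\Fun_\CA(\CP,\CM\boxtimes_\CB-)$ and check that it is an equivalence, which again reduces to $\CP=\CA$ and condensation. The genuine content beyond the one-sided statement is the compatibility with the left $\CA$-action, and you do address that correctly by making $\rho$, $\Phi$ and $\ev$ bimodule functors.
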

\begin{proof}
(2) is obtained from (1) by viewing $\CM$ as a $\CB^\rev$-$\CA^\rev$-bimodule. To establish (1) we need to show that $\Fun_\CA(\CP,\CM\boxtimes_\CB-) \simeq \Fun_\CB(\Fun_{\CB^\rev}(\CM,\CB)\boxtimes_\CA\CP,-)$ for $\CP\in\Sigma\CA^\rev$. The equivalence holds for $\CP=\CA$ as $\Fun(n\Vect,\CM\boxtimes_\CB-) \simeq \Fun_\CB(\Fun_{\CB^\rev}(\CM,\CB),-)$ by \cite[Theorem 3.1(3)]{KZ22}, hence holds for any condensate $\CP$ of $\CA$.
\end{proof}

\begin{prop} \label{prop:tensor-nonzero}
Let $\CA$ be a multi-fusion $n$-category, $\CM$ be a separable right $\CA$-module and $\CN$ be a separable left $\CA$-module. $(1)$ We have a $\C$-linear equivalence 
$$\CM\boxtimes_\CA\CN \simeq \Fun_\CA(\CM^L,\CN), \quad X\boxtimes_\CA Y \mapsto (f\mapsto f(X)\otimes Y),$$ 
where $\CM^L = \Fun_{\CA^\rev}(\CM,\CA)$. $(2)$ If $\CA$ is a fusion $n$-category, then $X\boxtimes_\CA Y$ is nonzero for nonzero $X\in\CM$ and $Y\in\CN$.
\end{prop}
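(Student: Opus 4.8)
The plan is to obtain part (1) directly from the duality recorded in Proposition \ref{prop:dual-bimod} together with \cite[Theorem 3.1(3)]{KZ22}, and then to bootstrap part (2) from part (1) by a nondegeneracy argument in the spirit of Proposition \ref{prop:fus-nonzero}. For (1), I would first view the right $\CA$-module $\CM$ as an $n\Vect$-$\CA$-bimodule, so that Proposition \ref{prop:dual-bimod}(1) (with $\CB=\CA$) identifies $\CM^L=\Fun_{\CA^\rev}(\CM,\CA)$ as the left dual of $\CM$; this is in particular a left $\CA$-module, so $\Fun_\CA(\CM^L,\CN)$ makes sense. I would then invoke \cite[Theorem 3.1(3)]{KZ22} in the form $\Fun(n\Vect,\CM\boxtimes_\CA-)\simeq\Fun_\CA(\CM^L,-)$ and evaluate the variable at $\CN$; since $\Fun(n\Vect,\CX)\simeq\CX$, this already yields the $\C$-linear equivalence $\CM\boxtimes_\CA\CN\simeq\Fun_\CA(\CM^L,\CN)$. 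The remaining point is to check that the abstract equivalence is implemented by the stated formula $X\boxtimes_\CA Y\mapsto(f\mapsto f(X)\otimes Y)$, which amounts to tracing the evaluation pairing $\CM^L\boxtimes_{n\Vect}\CM\to\CA$, $f\boxtimes X\mapsto f(X)$, through the construction of the equivalence; this is routine once the duality data are unwound.

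For (2), I would transport the question along the equivalence of (1): $X\boxtimes_\CA Y$ is nonzero iff the module functor $\phi_{X,Y}\colon f\mapsto f(X)\otimes Y$ is nonzero, so it suffices to produce a single $f\in\CM^L$ with $f(X)\otimes Y\neq0$. The argument then splits into two steps. First, since $\CM^L$ is left dual to $\CM$, the coevaluation $n\Vect\to\CM\boxtimes_\CA\CM^L$ together with the triangle identity exhibits $X$ as a condensate of objects of the form $P_i\otimes f_i(X)$ with $P_i\in\CM$ and $f_i\in\CM^L$ (here $\otimes$ is the right $\CA$-action on $\CM$); hence $X\neq0$ forces $f_i(X)\neq0$ for some $i$, and I fix such an $f:=f_i$. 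Second, because $\CA$ is fusion, $\one_\CA$ is a condensate of $f(X)^L\otimes f(X)$, exactly as in the proof of Proposition \ref{prop:fus-nonzero}; acting on $Y$ then shows that $Y\simeq\one_\CA\otimes Y$ is a condensate of $f(X)^L\otimes(f(X)\otimes Y)$, so $f(X)\otimes Y=0$ would give $Y=0$, contradicting $Y\neq0$. Therefore $\phi_{X,Y}(f)=f(X)\otimes Y\neq0$, and $X\boxtimes_\CA Y\neq0$.

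The main obstacle I anticipate is essentially bookkeeping: pinning down the left/right module and dual conventions so that the evaluation pairing and the coevaluation are the correct ones, and verifying that the equivalence of (1) genuinely sends $X\boxtimes_\CA Y$ to $\phi_{X,Y}$ rather than to some twist of it. The substantive inputs—the duality of Proposition \ref{prop:dual-bimod}, the base equivalence \cite[Theorem 3.1(3)]{KZ22}, and the fusion nondegeneracy underlying Proposition \ref{prop:fus-nonzero}—are all available, so the only place demanding real care is the triangle-identity step that detects an $f$ with $f(X)\neq0$, where one must argue correctly within the condensation-complete setting rather than assume an honest decomposition.
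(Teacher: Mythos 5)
Your argument is correct and for part (1) coincides with the paper's proof: both rest on \cite[Theorem 3.1(3)]{KZ22} applied via $\CM\boxtimes_\CA\CN\simeq\Fun(n\Vect,\CM\boxtimes_\CA\CN)\simeq\Fun_\CA(\CM^L,\CN)$, with the explicit formula extracted by unwinding the duality data. For part (2) the second half of your argument --- using that $\one_\CA$ is a condensate of $f(X)^L\otimes f(X)$ in the fusion case, so that $f(X)\otimes Y=0$ would force $Y=0$ --- is exactly what the paper does. The one place you diverge is in producing an $f\in\CM^L$ with $f(X)\neq0$: you go through the coevaluation $n\Vect\to\CM\boxtimes_\CA\CM^L$ and the triangle identity, writing $X$ as a condensate of objects $P_i\otimes f_i(X)$, which is precisely the step you flag as needing care (one must know that every object of the relative tensor product is a condensate of box-products $P\boxtimes_\CA f$, true in this framework but an extra input). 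The paper sidesteps this entirely by specializing part (1) to $\CN=\CA$: the equivalence $\CM\simeq\CM\boxtimes_\CA\CA\simeq\Fun_\CA(\CM^L,\CA)$ sends $X$ to the functor $f\mapsto f(X)$, and since equivalences preserve nonzero objects, some $f$ must have $f(X)\neq0$. That specialization trick is shorter and avoids any decomposition of the coevaluation object; your route works but buys nothing extra here, so if you want the cleanest write-up, replace the triangle-identity step with the $\CN=\CA$ instance of part (1).
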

\begin{proof}
(1) By \cite[Theorem 3.1(3)]{KZ22}, $\CM\boxtimes_\CA\CN \simeq \Fun(n\Vect,\CM\boxtimes_\CA\CN) \simeq \Fun_\CA(\CM^L,\CN)$ that maps $X\boxtimes_\CA Y$ to the composition $\CM^L \xrightarrow{\Id_{\CM^L}\boxtimes X\boxtimes_\CA Y} \CM^L\boxtimes\CM\boxtimes_\CA\CN \to \CA\boxtimes_\CA\CN \simeq \CN$.
(2) In particular, we have $\CM\simeq\Fun_\CA(\CM^L,\CA)$, $X\mapsto(f\mapsto f(X))$. Then there exists $f\in\CM^L$ such that $f(X)$ is nonzero. Then $\one_\CA\otimes Y$ is a condensate of $f(X)^L\otimes f(X)\otimes Y$ hence $f(X)\otimes Y$ is nonzero. Therefore, $X\boxtimes_\CA Y$ is nonzero.
\end{proof}

Recall that an $E_0$-monoidal $n$-category consists of a pair $(\CC,X)$ where $\CC$ is an $n$-category and $X$ is an object of $\CC$. In particular, if $\CA$ is a monoidal $n$-category then $(\Sigma\CA,\CA)$ defines an $E_0$-monoidal $(n+1)$-category.

\begin{prop} \label{prop:fun-inv}
Let $\CA$ be an indecomposable multi-fusion $n$-category and let $\CB=\Fun_\CA(\CM,\CM)$ where $\CM$ is a nonzero separable left $\CA$-module. $(1)$ The $\CA$-$\CB^\rev$-bimodule $\CM$ is invertible. $(2)$ The canonical $\C$-linear monoidal functor $\CA\to\Fun_\CB(\CM,\CM)$ is invertible.
\end{prop}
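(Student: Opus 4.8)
The two parts are facets of a single Morita-theoretic statement: that $\CM$, regarded as an object of the module $2$-category $\Sigma\CA$ with endomorphism multi-fusion $n$-category $\CB=\Fun_\CA(\CM,\CM)=\End_{\Sigma\CA}(\CM)$, exhibits a Morita equivalence between $\CA$ and $\CB$. The plan is to prove (1) by checking that the two (co)evaluation $1$-morphisms witnessing $\CM$ as a dualizable bimodule are in fact equivalences, and then to extract (2) as a formal consequence of invertibility. Throughout, $\CM^\vee$ denotes the dual bimodule furnished by Proposition \ref{prop:dual-bimod}; since $\CM$ is separable, that proposition supplies both duals in the expected $\Fun$-form together with their structural evaluations, so $\CM$ is at least dualizable and the only issue is nondegeneracy of the two evaluations.

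The easy half of invertibility is the counit $\CM^\vee\boxtimes_\CA\CM\xrightarrow{\sim}\CB$, which is just the reconstruction of the internal endomorphism object. Applying Proposition \ref{prop:tensor-nonzero}(1) with $\CM^\vee$ as the right $\CA$-module and $\CM$ as the left $\CA$-module gives $\CM^\vee\boxtimes_\CA\CM\simeq\Fun_\CA((\CM^\vee)^L,\CM)$, and the double-dual identification $(\CM^\vee)^L=\Fun_{\CA^\rev}(\CM^\vee,\CA)\simeq\CM$ (rigidity in the separable setting) turns this into $\Fun_\CA(\CM,\CM)=\CB$. I would then check that this equivalence is the canonical one and is compatible with the $\CB$-bimodule structure. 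This step uses only that $\CA$ is multi-fusion.

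The substance of the argument is the remaining equivalence $\CM\boxtimes_\CB\CM^\vee\xrightarrow{\sim}\CA$, which is also exactly part (2): the canonical monoidal functor of the statement is $\CA\to\Fun_\CB(\CM,\CM)\simeq\CM\boxtimes_\CB\CM^\vee$. Writing $F=-\boxtimes_\CA\CM\colon\Sigma\CA\to\Sigma\CB$ and $G=-\boxtimes_\CB\CM^\vee\colon\Sigma\CB\to\Sigma\CA$, the counit above yields $F\circ G\simeq\id_{\Sigma\CB}$, so $G$ is fully faithful with essential image the condensates of $\CM^\vee$, and the desired equivalence is equivalent to $G$ being essentially surjective, i.e.\ to $\CM$ generating $\Sigma\CA$ under condensation. \textbf{Main obstacle.} This generation (equivalently, faithfulness) claim is where the indecomposability of $\CA$ is indispensable — the statement is false for decomposable $\CA$, e.g.\ for a module supported on a single block — and I expect it to be the hard part. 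My proposed route is a connectivity argument: a nonzero module meets some simple summand $\one_{i_0}$ of $\one_\CA$, and indecomposability propagates non-vanishing across every block via the nondegeneracy of relative tensor products (Proposition \ref{prop:fus-nonzero} and Proposition \ref{prop:tensor-nonzero}(2)), so that $\one_\CA$, and hence $\CA$ itself, is realized as a condensate of $\CM$. Since Proposition \ref{prop:tensor-nonzero}(2) requires $\CA$ fusion, I would first reduce the indecomposable multi-fusion case to the fusion case by passing to a diagonal corner $\one_i\CA\one_i$, which is fusion and Morita equivalent to $\CA$, the off-diagonal blocks being invertible bimodules.

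With both evaluations shown to be equivalences, $\CM$ is an invertible $\CA$-$\CB^\rev$-bimodule, giving (1). For (2), invertibility yields $\Fun_\CB(\CM,\CM)\simeq\CM\boxtimes_\CB\CM^\vee\simeq\CA$, and a short diagram chase identifies this equivalence with the canonical monoidal functor in the statement, proving it invertible. Alternatively, the whole argument can be packaged through Proposition \ref{prop:morita-braid}: the generation claim gives $\Sigma\CB\simeq\Sigma\CA$ (the latter being the condensates of $\CM$), and the implication $(2)\Rightarrow(3)$ there produces an invertible $\CA$-$\CB$-bimodule, which one verifies to be $\CM$.
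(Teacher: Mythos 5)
Your overall architecture --- prove (1) by showing that both evaluation $1$-morphisms of the dualizable bimodule $\CM$ are equivalences, then read off (2) --- is sound and is genuinely different from the paper's argument. The paper instead forms the matrix multi-fusion $n$-category $\Fun_\CA(\CA\oplus\CM,\CA\oplus\CM)\simeq\begin{pmatrix}\CA^\rev&\Fun_\CA(\CM,\CA)\\ \CM&\CB\end{pmatrix}$, observes that it is indecomposable, and reads off the composite equivalence $\Sigma\CA\simeq\Sigma\Fun_\CA(\CA\oplus\CM,\CA\oplus\CM)^\rev\simeq\Sigma\CB^\rev$ carrying $\CA\mapsto\CA\oplus\CM\mapsto\CM$; Proposition \ref{prop:morita-braid} then gives (1), and (2) follows from $\Fun_\CB(\CM,\CM)\simeq\Omega(\Sigma\CB^\rev,\CM)\simeq\Omega(\Sigma\CA,\CA)\simeq\CA$. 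The point of this trick is that $\CA$ is a literal direct summand of $\CA\oplus\CM$, so the generator property of $\CA\oplus\CM$ is trivial, while the fact that the column $\CM$ generates $\Sigma$ of the matrix category is absorbed into rigidity of an indecomposable multi-fusion category applied to its idempotent summands.

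That is exactly where your proposal has a genuine gap. You correctly isolate the crux --- that $\CM$ must generate $\Sigma\CA$, i.e.\ that $\CA$ is a condensate of (a finite sum of copies of) $\CM$ --- and correctly note that indecomposability is indispensable there. But the route you propose does not deliver it: Propositions \ref{prop:fus-nonzero} and \ref{prop:tensor-nonzero}(2) only assert that certain (relative) tensor products are \emph{nonzero}, and non-vanishing of $\one_i\otimes X$ or $X\boxtimes_\CA Y$ does not produce the condensation datum $\CM\condense\CA$, namely the retraction together with its higher splittings. To manufacture that datum you need duals, and objects of the module $\CM$ have no duals inside $\CM$: you must either pass to internal homs --- exhibiting $\one_\CA$ as a condensate of $[X,X]$ for a nonzero $X\in\CM$ and using the adjunction $-\otimes X\dashv[X,-]$ --- or embed $\CM$ into a multi-fusion category where rigidity applies, which is precisely what the paper's $\Fun_\CA(\CA\oplus\CM,\CA\oplus\CM)$ accomplishes. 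Your reduction to a diagonal corner $\one_i\CA\one_i$ simplifies $\CA$ but does not address this point, so the step ``so that $\one_\CA$, and hence $\CA$ itself, is realized as a condensate of $\CM$'' remains an unjustified leap. The remainder of the proposal --- the counit equivalence via Proposition \ref{prop:tensor-nonzero}(1) and double duals, and the derivation of (2) from invertibility --- is fine modulo routine bookkeeping.
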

\begin{proof}
Consider the indecomposable multi-fusion $n$-category 
$$\Fun_\CA(\CA\oplus\CM,\CA\oplus\CM) \simeq \begin{pmatrix} \CA^\rev & \Fun_\CA(\CM,\CA) \\ \CM & \CB \end{pmatrix}.$$
The composition $\Sigma\CA \simeq \Sigma\Fun_\CA(\CA\oplus\CM,\CA\oplus\CM)^\rev \simeq \Sigma\CB^\rev$ maps $\CA \mapsto \CA\oplus\CM \mapsto \CM$. We obtain (1). Moreover, $\Fun_\CB(\CM,\CM) \simeq \Omega(\Sigma\CB^\rev,\CM) \simeq \Omega(\Sigma\CA,\CA) \simeq \CA$. We obtain (2).
\end{proof}

\begin{prop} \label{prop:fun-tensor}
Let $\CA,\CB$ be multi-fusion $n$-categories. Then 
$$\Fun_\CA(\CM,\CN)\boxtimes_\CB\CP \simeq \Fun_\CA(\CM,\CN\boxtimes_\CB\CP)$$ 
for separable left $\CA$-module $\CM$, $\CA$-$\CB$-bimodule $\CN$ and left $\CB$-module $\CP$.
\end{prop}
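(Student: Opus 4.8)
The plan is to turn the asserted equivalence into a mere associativity statement for the relative tensor product, by rewriting the internal hom $\Fun_\CA(\CM,-)$ as tensoring with a dual of $\CM$. Since $\CM$ is a separable left $\CA$-module it is dualizable; viewing it as an $\CA$-$\one$-bimodule, Proposition \ref{prop:dual-bimod} gives its right dual $\CM^\vee:=\Fun_\CA(\CM,\CA)$, a separable right $\CA$-module, and biduality yields $(\CM^\vee)^L\simeq\CM$. Applying Proposition \ref{prop:tensor-nonzero}(1) to the right $\CA$-module $\CM^\vee$ and an arbitrary separable left $\CA$-module $\CL$ produces a $\C$-linear equivalence
$$\CM^\vee\boxtimes_\CA\CL \simeq \Fun_\CA\big((\CM^\vee)^L,\CL\big) \simeq \Fun_\CA(\CM,\CL),$$
natural in $\CL$. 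In particular $\Fun_\CA(\CM,-)\simeq\CM^\vee\boxtimes_\CA-$, so this functor preserves condensations in its argument.

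Granting this identification, the proof is the chain
$$\Fun_\CA(\CM,\CN)\boxtimes_\CB\CP \simeq (\CM^\vee\boxtimes_\CA\CN)\boxtimes_\CB\CP \simeq \CM^\vee\boxtimes_\CA(\CN\boxtimes_\CB\CP) \simeq \Fun_\CA(\CM,\CN\boxtimes_\CB\CP),$$
where the outer equivalences are the identification above with $\CL=\CN$ and $\CL=\CN\boxtimes_\CB\CP$, and the middle one is associativity of $\boxtimes$ over the two algebras $\CA$ and $\CB$. For the first step I must check that the equivalence $\Fun_\CA(\CM,\CN)\simeq\CM^\vee\boxtimes_\CA\CN$ is right $\CB$-linear: on both sides the right $\CB$-action is induced from that on $\CN$ (by post-composition, respectively on the right tensor factor), so the two actions match and the equivalence may be tensored with $\CP$ over $\CB$. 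I also use that $\CN\boxtimes_\CB\CP$ is again a separable left $\CA$-module, which holds by the closure of separability under $\boxtimes_\CB$ over the multi-fusion $n$-category $\CB$.

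An alternative route, matching the style of the preceding proofs, establishes the statement first on the free module and then extends by condensation. One writes the evident natural functor from the left to the right side, $\phi\boxtimes_\CB Y\mapsto(X\mapsto\phi(X)\boxtimes_\CB Y)$; for $\CP=\CB$ both sides collapse to $\Fun_\CA(\CM,\CN)$ and this functor is an equivalence. Both sides send condensations of left $\CB$-modules to condensations---the right side precisely because $\Fun_\CA(\CM,-)\simeq\CM^\vee\boxtimes_\CA-$ preserves colimits---and every separable left $\CB$-module is a condensate of $\CB$, so the functor is an equivalence for all $\CP$.

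The genuinely substantive point, shared by both routes, is that $\Fun_\CA(\CM,-)$ commutes with $\boxtimes_\CB$ in its second variable; this is exactly where the separability, and hence dualizability, of $\CM$ enters, through the identification of Proposition \ref{prop:tensor-nonzero}. I expect the main work to be bookkeeping rather than any conceptual obstacle: confirming the right $\CB$-linearity of that identification so it can be tensored with $\CP$, and matching the associativity coherences for $\boxtimes$ over $\CA$ and $\CB$ with the module structures.
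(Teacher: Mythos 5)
Your proof is correct, but it takes a different (and more explicit) route than the paper, whose entire proof is a single line: by \cite[Theorem 3.1(2)]{KZ22} the functor $\Fun_\CA(\CM,-)$ preserves colimits, hence commutes with the relative tensor product $-\boxtimes_\CB\CP$, which is itself a colimit. Your main route instead trades the internal hom for an external tensor: separability of $\CM$ gives dualizability via Proposition \ref{prop:dual-bimod}, Proposition \ref{prop:tensor-nonzero}(1) identifies $\Fun_\CA(\CM,-)\simeq\CM^\vee\boxtimes_\CA-$, and the statement collapses to associativity of $\boxtimes$ over $\CA$ and $\CB$. Your second, ``free module plus condensation'' route is essentially the paper's argument unpacked, since the only nontrivial input is again that $\Fun_\CA(\CM,-)$ preserves condensations. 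What your dualization route buys is that the $\CB$-linearity and coherence issues become visible and checkable (matching the two right $\CB$-actions on $\CM^\vee\boxtimes_\CA\CN$ versus $\Fun_\CA(\CM,\CN)$), at the cost of invoking biduality $(\CM^\vee)^L\simeq\CM$; what the paper's citation buys is brevity, deferring all of this to the quoted theorem. Both arguments rest on the same underlying fact --- separable modules are dualizable, so $\Fun_\CA(\CM,-)$ is colimit-preserving --- so there is no gap.
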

\begin{proof}
By \cite[Theorem 3.1(2)]{KZ22}, $\Fun_\CA(\CM,-)$ preserves colimits.
\end{proof}

\begin{defn}
We say that $\CA\in E_m\KarCat^\C_n$ is {\em trivial} if the canonical $\C$-linear $E_m$-monoidal functor $n\Vect \to \CA$ is invertible.
\end{defn}

\begin{lem0} \label{lem:trivial-alg}
Let $\CC$ be a monoidal 1-category. An algebra $A$ in $\CC$ is trivial (i.e. the unit $u_A:\one\to A$ of $A$ is an isomorphism) if and only if the underlying object of $A$ is invertible.\footnote{We thank the referee for pointing out this lemma.}
\end{lem0}
\begin{proof}
One direction is trivial. Assume the underlying object of $A$ is invertible. Then $A$ is dualizable with invertible unit map $u:\one\to A\otimes A^L$ and counit map $v:A^L\otimes A\to\one$. Let $w$ denote the composition $$A \simeq A\otimes\one \xrightarrow{A\otimes u} A\otimes A\otimes A^L \xrightarrow{m_A\otimes A^L} A\otimes A^L \xrightarrow{u^{-1}} \one$$ where $m_A$ is the product of $A$. Then $w\circ u_A = u^{-1}\circ u = \Id_{\one}$ by the unity property of $A$. Moreover, $u_A$ equals to the composition $$\one \simeq \one\otimes\one \xrightarrow{u\otimes u_A} A\otimes A^L\otimes A \xrightarrow{A\otimes v} A.$$ Thus $u_A\circ w = \Id_A$ by the unity property of $A$ again. Therefore, $u_A=w^{-1}$.
\end{proof}

\begin{prop} \label{prop:fus-trivial}
For $m\ge1$, $\CA\in E_m\KarCat^\C_n$ is trivial if and only if the underlying $\C$-linear $n$-category of $\CA$ is invertible (as an object of $\KarCat^\C_n$).
\end{prop}
\begin{proof}
Since the forgetful functor $E_m\KarCat^\C_n\to\KarCat^\C_n$ is conservative (c.f. \cite[Lemma 3.2.2.6]{Lur14}), $\CA$ is trivial if and only if the canonical $\C$-linear functor $n\Vect \to \CA$ is invertible.
Applying Lemma \ref{lem:trivial-alg} to the homotopy 1-category of $\KarCat^\C_n$, we conclude the proposition.
\end{proof}


\begin{cor} \label{cor:inv-trivial}
A separable $n$-category $\CC$ is invertible (as an object of $\KarCat^\C_n$) if and only if $\FZ_0(\CC)$ is trivial.
\end{cor}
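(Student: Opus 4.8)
The plan is to reduce the statement to the triviality criterion of Proposition \ref{prop:fus-trivial}, applied to the $E_1$-monoidal $n$-category $\FZ_0(\CC)=\Fun(\CC,\CC)$. Since this is an object of $E_1\KarCat^\C_n$, Proposition \ref{prop:fus-trivial} says that $\FZ_0(\CC)$ is trivial if and only if there is a $\C$-linear equivalence $\FZ_0(\CC)\simeq n\Vect$. Thus the whole corollary becomes the assertion
$$\CC \text{ is invertible} \iff \CC\boxtimes\CC^\op \simeq n\Vect,$$
once $\FZ_0(\CC)$ has been identified with $\CC\boxtimes\CC^\op$ as $\C$-linear $n$-categories.

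First I would establish this identification. Viewing $\CC$ as a separable $n\Vect$-module over the base, I apply Proposition \ref{prop:tensor-nonzero}(1) with $\CA=n\Vect$, $\CM=\CC^\op$ and $\CN=\CC$. Since the left dual $(\CC^\op)^L$ is $\CC$ (in the separable setting $\CC^\op$ is the dual of $\CC$, and the double dual returns $\CC$), this yields
$$\CC^\op\boxtimes\CC \;=\; \CC^\op\boxtimes_{n\Vect}\CC \;\simeq\; \Fun_{n\Vect}((\CC^\op)^L,\CC) \;=\; \Fun(\CC,\CC) \;=\; \FZ_0(\CC),$$
and by symmetry of $\boxtimes$ this is $\C$-linearly equivalent to $\CC\boxtimes\CC^\op$. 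Note that only a $\C$-linear equivalence is needed here, not a monoidal one, which is exactly what Proposition \ref{prop:fus-trivial} consumes.

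With the identification in hand the two implications are short. If $\CC$ is invertible, then as an invertible object of the symmetric monoidal category $(\KarCat^\C_n,\boxtimes,n\Vect)$ its inverse is its dual $\CC^\op$, so $\FZ_0(\CC)\simeq\CC\boxtimes\CC^\op\simeq n\Vect$ and $\FZ_0(\CC)$ is trivial by Proposition \ref{prop:fus-trivial}. Conversely, if $\FZ_0(\CC)$ is trivial, then $\CC\boxtimes\CC^\op\simeq\FZ_0(\CC)\simeq n\Vect$, and this equivalence exhibits $\CC^\op$ as an inverse of $\CC$, so $\CC$ is invertible.

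I expect the only point requiring care to be the first step: pinning down that $\FZ_0(\CC)=\Fun(\CC,\CC)$ and that $\CC^\op$ is its two-sided dual with $(\CC^\op)^L\simeq\CC$, so that Proposition \ref{prop:tensor-nonzero}(1) applies cleanly. The abstract equivalence ``invertible $\iff$ $\CC\boxtimes\CC^\op\simeq n\Vect$'' is then automatic; the subtlety is that I deliberately route through the equivalence to $n\Vect$ rather than through ``$\FZ_0(\CC)$ is an invertible $n$-category'' via Corollary \ref{cor:fus-trivial}, since an equivalence $\CC\boxtimes\CC^\op\simeq n\Vect$ names an inverse of $\CC$ outright, whereas mere invertibility of $\CC\boxtimes\CC^\op$ would not obviously return invertibility of $\CC$.
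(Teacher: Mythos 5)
Your proposal is correct and follows essentially the same route as the paper: the paper's proof is exactly the identification $\FZ_0(\CC)\simeq\CC^\op\boxtimes\CC$ as $\C$-linear $n$-categories followed by the observation that $\CC$ is invertible if and only if $\FZ_0(\CC)\simeq n\Vect$, with Proposition \ref{prop:fus-trivial} supplying the bridge to triviality. You have merely filled in the details the paper leaves implicit (the duality $(\CC^\op)^L\simeq\CC$ and the fact that an invertible object's inverse is its dual $\CC^\op$), all of which are sound.
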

\begin{proof}
We have a $\C$-linear equivalence $\FZ_0(\CC)\simeq\CC^\op\boxtimes\CC$. Therefore, $\CC$ is invertible if and only if $\FZ_0(\CC)\simeq n\Vect$.
\end{proof}

\begin{prop} \label{prop:sep-idem}
Let $\CC$ be a nonzero separable $n$-category. If $\CC\boxtimes\CC\simeq\CC$, then $\CC\simeq n\Vect$.
\end{prop}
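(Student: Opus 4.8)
The plan is to reduce the statement to Corollary \ref{cor:inv-trivial} and then induct on $n$. First observe that it suffices to prove that $\CC$ is invertible: if $\CC^{-1}$ denotes an inverse, then tensoring $\CC\boxtimes\CC\simeq\CC$ with $\CC^{-1}$ gives $\CC\simeq\CC^{-1}\boxtimes\CC\boxtimes\CC\simeq\CC^{-1}\boxtimes\CC\simeq n\Vect$. By Corollary \ref{cor:inv-trivial} invertibility of $\CC$ is equivalent to $\FZ_0(\CC)$ being trivial, i.e. $\FZ_0(\CC)\simeq n\Vect$, so the whole problem becomes: show $\FZ_0(\CC)\simeq n\Vect$. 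The first useful consequence of the hypothesis is that $\FZ_0(\CC)$ is again idempotent. Indeed, using $\FZ_0(\CC)\simeq\CC^\op\boxtimes\CC$ together with $\CC\boxtimes\CC\simeq\CC$ and $\CC^\op\boxtimes\CC^\op\simeq(\CC\boxtimes\CC)^\op\simeq\CC^\op$, the symmetry of $\boxtimes$ yields
\[
\FZ_0(\CC)\boxtimes\FZ_0(\CC)\simeq(\CC^\op\boxtimes\CC^\op)\boxtimes(\CC\boxtimes\CC)\simeq\CC^\op\boxtimes\CC\simeq\FZ_0(\CC).
\]

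I would run the induction on $n$, the base case $n=0$ being the numerical identity $\dim V=\dim V\cdot\dim V$ for a finite-dimensional vector space $V\simeq\C^{\oplus r}$, which forces $r=1$, i.e. $V\simeq0\Vect$. For the inductive step I would first reduce to the case that $\CC$ is connected. Since the ground field $\C$ is separably closed, the Deligne tensor product of two connected (indecomposable) separable $n$-categories is again connected, so $\pi_0(-)$ is multiplicative under $\boxtimes$; the hypothesis then gives $\lvert\pi_0(\CC)\rvert=\lvert\pi_0(\CC)\rvert^2$, and as $\CC$ is nonzero we get $\lvert\pi_0(\CC)\rvert=1$, i.e. $\CC$ is connected. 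In this case $\FZ_0(\CC)\simeq\CC^\op\boxtimes\CC\simeq\Fun(\CC,\CC)$ is a connected fusion $n$-category (its tensor unit $\Id_\CC$ is simple), so it is the delooping of its looping: $\FZ_0(\CC)\simeq\Sigma\Omega\FZ_0(\CC)$.

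Finally I would descend one dimension. Applying the symmetric monoidal functor $\Omega$ to the idempotency of $\FZ_0(\CC)$ gives $\Omega\FZ_0(\CC)\boxtimes\Omega\FZ_0(\CC)\simeq\Omega\FZ_0(\CC)$, and $\Omega\FZ_0(\CC)$ is a nonzero separable $(n-1)$-category (it contains the identity endomorphism of $\Id_\CC$). By the induction hypothesis $\Omega\FZ_0(\CC)\simeq(n-1)\Vect$, whence $\FZ_0(\CC)\simeq\Sigma\Omega\FZ_0(\CC)\simeq\Sigma\big((n-1)\Vect\big)\simeq n\Vect$. Thus $\FZ_0(\CC)$ is trivial, $\CC$ is invertible, and the opening remark finishes the proof. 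The part I expect to be most delicate is not the formal descent but the structural inputs it rests on: that $\boxtimes$ preserves connectedness over $\C$, that $\FZ_0$ of a connected separable $n$-category is a \emph{connected} fusion $n$-category (equivalently that $\Id_\CC$ is simple), and the delooping identification $\FZ_0(\CC)\simeq\Sigma\Omega\FZ_0(\CC)$ for connected fusion $n$-categories; these are exactly the facts that convert ``$\Omega\FZ_0(\CC)$ is trivial'' into ``$\FZ_0(\CC)$ is trivial'' and thereby make the induction close.
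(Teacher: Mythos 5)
Your proof is correct and follows essentially the same route as the paper: induct on $n$, pass to $\FZ_0(\CC)\simeq\CC^\op\boxtimes\CC$, observe it inherits idempotency, loop down to apply the inductive hypothesis to $\Omega\FZ_0(\CC)$, and conclude via triviality of $\FZ_0(\CC)$ (Corollary \ref{cor:inv-trivial}/Proposition \ref{prop:fus-trivial}) that $\CC$ is invertible and hence, by tensoring the idempotency with an inverse, equivalent to $n\Vect$. The only difference is that you spell out the steps the paper leaves implicit — the reduction to $\CC$ connected via multiplicativity of $\pi_0$ under $\boxtimes$, the delooping $\FZ_0(\CC)\simeq\Sigma\Omega\FZ_0(\CC)$, and the final tensoring argument — which is a useful expansion rather than a different proof.
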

\begin{proof}
The claim is clear for $n=0$. For $n\ge1$, we have $\FZ_0(\CC)\boxtimes\FZ_0(\CC) \simeq \FZ_0(\CC\boxtimes\CC) \simeq \FZ_0(\CC)$. By the inductive hypothesis, we have $\Omega\FZ_0(\CC)\simeq(n-1)\Vect$. The equivalence $\FZ_0(\CC)\boxtimes\FZ_0(\CC) \simeq \FZ_0(\CC)$ implies that $\FZ_0(\CC)$ is connected so that $\FZ_0(\CC) \simeq n\Vect$. Hence $\FZ_0(\CC)$ is trivial by Proposition \ref{prop:fus-trivial}. Therefore, $\CC$ is invertible by Corollary \ref{cor:inv-trivial} hence equivalent to $n\Vect$.
\end{proof}

\begin{cor} \label{cor:ret-nvec}
Let $\CC$ be a nonzero $\C$-linear $n$-category. If $\C$-linear functors $i:\CC\to n\Vect$ and $r:n\Vect\to\CC$ exhibit $\CC$ as a retract of $n\Vect$, i.e. $r\circ i \simeq \Id_\CC$, then $i$ and $r$ are inverse to each other.
\end{cor}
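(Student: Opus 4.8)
The plan is to convert the retraction into an idempotent endofunctor of $n\Vect$ and then exploit the fact that $n\Vect$, being the monoidal unit, admits no nontrivial idempotents. Concretely, I would set $e := i\circ r\colon n\Vect\to n\Vect$. The hypothesis $r\circ i\simeq\Id_\CC$ makes $e$ idempotent, since $e\circ e = i\circ(r\circ i)\circ r \simeq i\circ r = e$. It therefore suffices to prove $e\simeq\Id_{n\Vect}$: combined with $r\circ i\simeq\Id_\CC$, this exhibits $i$ and $r$ as mutually inverse equivalences.

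The next step is to identify idempotent $\C$-linear endofunctors of $n\Vect$ with idempotent objects. Since $n\Vect$ is the unit for $\boxtimes$, evaluation at $\one_{n\Vect}$ yields $\Fun(n\Vect,\CD)\simeq\CD$ for every (cocontinuous $\C$-linear) $\CD$; taking $\CD=n\Vect$ gives a monoidal equivalence $\Fun(n\Vect,n\Vect)\simeq n\Vect$ under which composition corresponds to $\otimes$. Under this equivalence $e$ corresponds to the object $M := i(r(\one_{n\Vect}))\in n\Vect$, and the relation $e\circ e\simeq e$ becomes $M\otimes M\simeq M$. Thus the problem reduces to a purely object-level statement in $n\Vect$.

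The main content is then the structural claim that the only objects $M\in n\Vect$ with $M\otimes M\simeq M$ are $0$ and $\one_{n\Vect}$. I would prove this by induction on $n$ using $n\Vect=\Sigma\,(n-1)\Vect$: every object of $n\Vect$ is a finite direct sum of copies of the unique simple object $\one_{n\Vect}$, so it is classified by a nonnegative integer \emph{rank} that is multiplicative under $\otimes$; an idempotent rank lies in $\{0,1\}$, forcing $M\simeq 0$ or $M\simeq\one_{n\Vect}$. (This is the object-level counterpart of Proposition \ref{prop:sep-idem} and is proved by the same kind of induction.) The case $M\simeq 0$ is excluded because $e\simeq 0$ would give $\Id_\CC\simeq(r\circ i)\circ(r\circ i)=r\circ e\circ i\simeq 0$, contradicting $\CC\neq 0$. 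Hence $M\simeq\one_{n\Vect}$, so $e\simeq\Id_{n\Vect}$, and $i,r$ are inverse to each other.

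I expect the main obstacle to be the endofunctor identification together with this structural claim, rather than the formal idempotent manipulations: one must know that the relevant $\C$-linear functors are cocontinuous so that $\Fun(n\Vect,n\Vect)\simeq n\Vect$ applies, and one must control the idempotent objects of $n\Vect$ via its semisimple structure. Everything else is a short diagram chase.
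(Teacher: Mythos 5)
Your overall strategy is exactly the paper's: set $e=i\circ r$, observe $e\circ e\simeq e$ and $e\neq0$, identify $e$ with an object of $n\Vect$ via $\Fun(n\Vect,n\Vect)\simeq n\Vect$, and conclude $e\simeq\Id_{n\Vect}$ from the fact that the only nonzero idempotent object of $n\Vect$ is the unit. The paper obtains that last fact by directly invoking Proposition \ref{prop:sep-idem}: an object of $n\Vect$ is a separable $(n-1)$-category, $\otimes$ in $n\Vect$ is $\boxtimes$, so $M\boxtimes M\simeq M$ with $M$ nonzero forces $M\simeq(n-1)\Vect=\one_{n\Vect}$.

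The gap is in your proposed justification of that structural claim. For $n\ge3$ it is false that every object of $n\Vect$ is a finite direct sum of copies of a unique simple object $\one_{n\Vect}$: the simple (indecomposable) objects of $n\Vect$ are the indecomposable separable $(n-1)$-categories, and there are many inequivalent ones (e.g.\ $\Sigma\CA$ for fusion $(n-2)$-categories $\CA$ in distinct Morita classes). Consequently objects of $n\Vect$ are not classified by a rank, and ``idempotent of rank $1$'' does not force $M\simeq\one_{n\Vect}$: an indecomposable object tensored with itself is again indecomposable (this is \cite[Lemma 3.49(1)]{KZ22}), so the rank computation only tells you $M$ is indecomposable, which is far from $M\simeq\one_{n\Vect}$. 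The actual content of Proposition \ref{prop:sep-idem} is an induction through $\FZ_0(\CC)\boxtimes\FZ_0(\CC)\simeq\FZ_0(\CC)$, triviality of $\FZ_0(\CC)$ via Proposition \ref{prop:fus-trivial}, and invertibility --- not a semisimplicity count. Since you parenthetically defer to Proposition \ref{prop:sep-idem} anyway, your proof is repaired simply by citing it and deleting the rank argument; but the self-contained sketch you give would fail for $n\ge3$.
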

\begin{proof}
Let $e=i\circ r$ so that $e\circ e\simeq e$. Since $\CC$ is nonzero, $e$ is nonzero. By regarding $e$ as an object of $n\Vect$, we conclude from Proposition \ref{prop:sep-idem} that $e\simeq\Id_{n\Vect}$.
\end{proof}

Recall that we use $\CC^{\op k}$ to denote the $n$-category obtained by reversing all the $k$-morphisms for an $n$-category $\CC$ and $k>0$. Moreover, we use $\CA^{\op k}$ to denote the $E_m$-monoidal $n$-category obtained by reversing all the $k$-morphisms for an $E_m$-monoidal $n$-category $\CA$ and $k>-m$, i.e. $B^m(\CA^{\op k}) = (B^m\CA)^{\op(k+m)}$. 

\begin{defn}
A {\em condensation-complete $\C$-linear $E_m$-monoidal $n$-category over $\CA\in E_{m+1}\KarCat^\C_n$} is an object $\CM\in E_m\KarCat^\C_n$ equipped with a $\C$-linear $E_{m+1}$-monoidal functor $\psi_\CM:\CA\to\FZ_m(\CM)$. We say that two condensation-complete $\C$-linear $E_m$-monoidal $n$-categories $\CM,\CN$ over $\CA$ are {\em equivalent} if there is a $\C$-linear $E_m$-monoidal equivalence $\CM\simeq\CN$ such that the composition $\CA\xrightarrow{\psi_\CM}\FZ_m(\CM)\simeq\FZ_m(\CN)$ is equivalent to $\psi_\CN$.
\end{defn}

\begin{rem}
Let $\CA\in E_{m+1}\KarCat^\C_n$ and $\CM\in E_m\KarCat^\C_n$. Note that $\CM$ admits an associative left action of $\FZ_m(\CM)$ as well as an associative right action of $\FZ_m(\CM)^{\op(-m)}$ in $E_m\KarCat^\C_n$. Promoting $\CM$ to a $\C$-linear $E_m$-monoidal $n$-category over $\CA$ is equivalent to endowing $\CM$ with an associative left action of $\CA$ in $E_m\KarCat^\C_n$.
For example, the evident associative left action of $\CA$ on itself induces a $\C$-linear $E_{m+1}$-monoidal functor $\CA\to\FZ_m(\CA)$, promoting $\CA$ to a $\C$-linear $E_m$-monoidal $n$-category over itself.
\end{rem}

\begin{rem}
Let $\CC$ be a separable $n$-category. The natural equivalence $\Hom_\CC(X,F(Y)) \simeq \Hom_\CC(F^L(X),Y)$ for $X,Y\in\CC$ induces a $\C$-linear monoidal equivalence
$$\Fun(\CC,\CC)^\rev \simeq \Fun(\CC^\op,\CC^\op), \quad F\mapsto(F^L)^\op.$$
This yields a canonical $\C$-linear $E_{m+1}$-monoidal equivalence by \cite[Theorem 3.43]{KZ22}
$$\FZ_m(\CA)^{\op(-m)} \simeq \FZ_m(\CA^{\op(1-m)})$$
for every $E_m$-multi-fusion $n$-category $\CA$. In this way, if $\CA$ is an $E_{m+1}$-multi-fusion $n$-category, then $\CA^{\op(1-m)}$ is an $E_m$-multi-fusion $n$-category over $\CA^{\op(-m)}$.
\end{rem}

Let $K/k$ be a field extension. 
The inclusion $\iota:k\hookrightarrow K$ induces a $k$-linear symmetric monoidal functor 
$$\Sigma^{n+1}\iota: \Sigma^{n+1}k\to\Sigma^{n+1}K, \quad \CC\mapsto\CC^K:=\CC\boxtimes_k\Sigma^n K.$$
That is, if $\CC$ is a separable $n$-category over $k$ (i.e. an object of $\Sigma^{n+1}k$) then $\CC^K$ is a separable $n$-category over $K$ (i.e. an object of $\Sigma^{n+1}K$) and, moreover, $\CC^K\boxtimes_K\CD^K \simeq (\CC\boxtimes_k\CD)^K$. We refer to $\CC^K$ as the {\em extension} of $\CC$ by $K$. 

\begin{rem}
Field extensions are compatible with many categorical constructions. For example, $\Fun_K(\CC^K,\CD^K) \simeq \Fun_k(\CC,\CD)^K$ for separable $n$-categories $\CC,\CD$ over $k$, $\Sigma(\CA^K) \simeq (\Sigma\CA)^K$, $\Omega(\CA^K) \simeq (\Omega\CA)^K$ and $\FZ_m(\CA^K) \simeq \FZ_m(\CA)^K$ for an $E_m$-multi-fusion $n$-category $\CA$ over $k$. If $K/k$ is a separable finite extension, then $K$ is a symmetric fusion 0-category over $k$ therefore $\Sigma^{n+1}K$ is a symmetric fusion $(n+1)$-category over $k$; the restriction functor $\Sigma^{n+1}K\to\Sigma^{n+1}k$ is right adjoint to $\Sigma^{n+1}\iota$.
\end{rem}

\begin{prop}
Let $K/k$ be a field extension. $(1)$ A fusion $n$-category $\CA$ over $k$ is trivial if and only if $\CA^K$ is trivial. $(2)$ A separable $n$-category $\CC$ over $k$ is invertible if and only if $\CC^K$ is invertible. 
\end{prop}
\begin{proof}
(1) The claim is clearly true for $n=0$. For $n\ge1$, if $\CA^K$ is trivial then $\Omega\CA^K$ is trivial hence $\Omega\CA$ is trivial by the inductive hypothesis hence $\CA$ is trivial. 

(2) $\CC$ is invertible if and only if $\FZ_0(\CC)$ is trivial by Corollary \ref{cor:inv-trivial}, if and only if $\FZ_0(\CC^K)$ is trivial by (1), if and only if $\CC^K$ is invertible. 
\end{proof}

\begin{rem}
Proposition \ref{prop:dual-bimod} for $n=1$ is due to \cite[Proposition 3.2.1]{DSPS20}.
Proposition \ref{prop:tensor-nonzero}(1) for $n=1$ is due to \cite[Proposition 3.5]{ENO10}, \cite[Theorem 3.20]{Gr10} and \cite[Proposition 2.4.10]{DSPS20}.
Proposition \ref{prop:fun-inv} for $n=1$ is due to \cite[Theorem 3.27 and 3.31]{EO04}.
\end{rem}

\section{Center functors} \label{sec:cen-fun}

In this section, we formulate and prove the functoriality of $E_0$-centers for indecomposable separable $n$-categories as well as $E_1$-centers for indecomposable multi-fusion $n$-categories. The center functors offer a conceptual framework to explore various aspects of higher fusion categories, and the related concepts will be recurrent throughout the remainder of this paper. 
The strategy of this section is to establish the fairly simple case of $E_0$-center functors first, and then deduce the highly nontrivial case of $E_1$-center functors by applying the looping-delooping construction.

\subsection{$E_0$-center functors} 

\begin{defn}
Let $\CA,\CB$ be indecomposable multi-fusion $n$-categories. We say that a separable $\CA$-$\CB$-bimodule $\CM$ is {\em closed} if the canonical $\C$-linear monoidal functor $\CA\boxtimes\CB^\rev\to\Fun(\CM,\CM)$ is invertible. We say that $\CA$ is {\em nondegenerate} if the $\CA$-$\CA$-bimodule $\CA$ is closed. 
\end{defn}

\begin{exam} \label{exam:closed-bimod}
Let $\CC,\CD$ be nonzero separable $n$-categories. Then the $\Fun(\CD,\CD)$-$\Fun(\CC,\CC)$-bimodule $\Fun(\CC,\CD)$ is equivalent to $\CC^\op\boxtimes\CD$ hence is closed. In particular, $\Fun(\CC,\CC)$ is nondegenerate.
\end{exam}

\begin{rem}
The concepts of closeness and nondegeneracy correspond to the anomaly-free condition in physics. Specifically, a nondegenerate multi-fusion $n$-category represents an anomaly-free topological order, while a closed separable bimodule represents an anomaly-free domain wall. Proposition \ref{prop:fus-nondeg} below captures the idea that a topological order is anomaly-free if and only if its bulk is trivial \cite{KWZ15,KWZ17}.
\end{rem}

\begin{prop} \label{prop:fus-nondeg}
The following conditions are equivalent for a multi-fusion $n$-category $\CA$:
\begin{enumerate}
\item $\CA$ is nondegenerate.
\item $\Sigma\CA$ is an invertible separable $(n+1)$-category.
\item $\FZ_0(\Sigma\CA)$ is trivial.
\item $\FZ_1(\CA)$ is trivial.
\end{enumerate}
\end{prop}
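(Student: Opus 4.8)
The plan is to run a cycle that shuttles between the $n$-categorical condition $(1)$ and the $(n+1)$-categorical conditions $(2)$--$(3)$ through the delooping $\Sigma\CA$, using the bridges $\FZ_0(\Sigma\CA)\simeq\Fun(\Sigma\CA,\Sigma\CA)$ and $\FZ_1(\CA)=\Omega\FZ_0(\Sigma\CA)$. The two equivalences living at a single categorical level, namely $(2)\Leftrightarrow(3)$ and $(1)\Leftrightarrow(4)$, will be cheap; the genuine content is the passage up one level, for which I would invoke Morita theory. I would therefore close the cycle as $(1)\Rightarrow(3)\Leftrightarrow(2)$, $(3)\Rightarrow(4)\Rightarrow(1)$.

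First I would dispose of $(2)\Leftrightarrow(3)$: since $\CA$ is multi-fusion, $\Sigma\CA$ is a separable $(n+1)$-category, and Corollary \ref{cor:inv-trivial} applied to $\CC=\Sigma\CA$ says exactly that $\Sigma\CA$ is invertible iff $\FZ_0(\Sigma\CA)$ is trivial. Next, $(3)\Rightarrow(4)$ is pure looping: applying $\Omega$ to the canonical $E_1$-monoidal functor $(n+1)\Vect\to\FZ_0(\Sigma\CA)$ yields the canonical functor $n\Vect\to\Omega\FZ_0(\Sigma\CA)=\FZ_1(\CA)$, and since $\Omega$ sends an equivalence to an equivalence, triviality of $\FZ_0(\Sigma\CA)$ forces triviality of $\FZ_1(\CA)$.

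The heart of the matter is $(1)\Leftrightarrow(4)$, which I would extract from Proposition \ref{prop:fun-inv} together with the identification $\FZ_1(\CA)\simeq\Fun_{\CA\boxtimes\CA^\rev}(\CA,\CA)$ (the regular bimodule $\CA$ is the image of $\Id_{\Sigma\CA}$ under $\Fun(\Sigma\CA,\Sigma\CA)\simeq\BMod_{\CA|\CA}(n\Vect)$, so looping at the unit gives this formula). Applying Proposition \ref{prop:fun-inv}(2) to the indecomposable multi-fusion $n$-category $\CA\boxtimes\CA^\rev$ acting on the nonzero separable module $\CM=\CA$ produces a canonical equivalence $\CA\boxtimes\CA^\rev\xrightarrow{\sim}\Fun_{\FZ_1(\CA)}(\CA,\CA)$. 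If $\FZ_1(\CA)$ is trivial then $\Fun_{\FZ_1(\CA)}(\CA,\CA)\simeq\Fun(\CA,\CA)$, whence $\CA\boxtimes\CA^\rev\simeq\Fun(\CA,\CA)$, i.e. $\CA$ is nondegenerate; this is $(4)\Rightarrow(1)$. Conversely, Proposition \ref{prop:fun-inv}(2) with base $n\Vect$ and module $\CA$ gives $\Fun_{\Fun(\CA,\CA)}(\CA,\CA)\simeq n\Vect$, so under the nondegeneracy equivalence $\CA\boxtimes\CA^\rev\simeq\Fun(\CA,\CA)$ (which is the action functor, hence transports the module $\CA$ to the module $\CA$) the center $\FZ_1(\CA)\simeq\Fun_{\CA\boxtimes\CA^\rev}(\CA,\CA)$ becomes $n\Vect$; this is $(1)\Rightarrow(4)$.

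It remains to climb back up, for which I would prove $(1)\Rightarrow(3)$ directly. Nondegeneracy gives $\CA\boxtimes\CA^\rev\simeq\Fun(\CA,\CA)$, and $\Fun(\CA,\CA)$ is Morita equivalent to $n\Vect$ (by Proposition \ref{prop:fun-inv}(1) with base $n\Vect$ and module $\CA$, the bimodule $\CA$ is invertible), so $\CA\boxtimes\CA^\rev$ is Morita equivalent to $n\Vect$. By Proposition \ref{prop:morita-braid} this yields $\FZ_0(\Sigma\CA)\simeq\Fun(\Sigma\CA,\Sigma\CA)\simeq\BMod_{\CA|\CA}(n\Vect)\simeq\Sigma(\CA\boxtimes\CA^\rev)\simeq(n+1)\Vect$ as $\C$-linear categories, and Proposition \ref{prop:fus-trivial} (with $m=1$) upgrades this to triviality of $\FZ_0(\Sigma\CA)$, giving $(3)$. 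The step I expect to demand the most care is precisely this upward passage: one must know that $\FZ_0(\Sigma\CA)$ is computed by $\BMod_{\CA|\CA}(n\Vect)\simeq\Sigma(\CA\boxtimes\CA^\rev)$ compatibly enough with the monoidal structure for Proposition \ref{prop:fus-trivial} to apply, and, for the $(4)\Rightarrow(1)$ application of Proposition \ref{prop:fun-inv}, that $\CA\boxtimes\CA^\rev$ remains indecomposable when $\CA$ is.
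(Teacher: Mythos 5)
Your proof is correct, but it closes the cycle along a genuinely different route from the paper's. The paper proves $(1)\Rightarrow(4)\Rightarrow(3)\Rightarrow(2)\Rightarrow(1)$, with the two nontrivial steps outsourced to \cite{KZ22}: $(4)\Rightarrow(3)$ uses Lemma 3.50 there ($\Sigma\FZ_1(\CA)=\FZ_0(\Sigma\CA)$ for indecomposable $\CA$), and $(1)\Rightarrow(4)$ uses the K\"unneth-type identity $\FZ_1(\CA)\boxtimes\FZ_1(\CA^\rev)\simeq\FZ_1(\FZ_0(\CA))\simeq n\Vect$ from Proposition 3.51(2) there, followed by Corollary \ref{cor:fus-trivial}. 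You instead get $(1)\Leftrightarrow(4)$ from the double-centralizer statement, Proposition \ref{prop:fun-inv}(2), applied to the $\CA\boxtimes\CA^\rev$-module $\CA$, and you climb back up via $(1)\Rightarrow(3)$ using Proposition \ref{prop:fun-inv}(1), Proposition \ref{prop:morita-braid} and Proposition \ref{prop:fus-trivial}; your $(3)\Rightarrow(4)$ is the easy looping direction rather than the paper's delooping direction. What your route buys is self-containment within Section \ref{sec:elements} of this paper (no appeal to the two KZ22 lemmas) and an extra implication ($(4)\Rightarrow(1)$ directly), at the cost of being longer and slightly redundant. Two small points: your final worry about monoidal compatibility is unnecessary, since Proposition \ref{prop:fus-trivial} only requires a plain $\C$-linear equivalence $\FZ_0(\Sigma\CA)\simeq(n+1)\Vect$ (and $\BMod_{\CA|\CA}(n\Vect)$ should read $\BMod_{\CA|\CA}((n+1)\Vect)$); and in $(4)\Rightarrow(1)$ you should note that hypothesis $(4)$ forces $\CA$ to be indecomposable (otherwise $\Id_\CA=\one_{\FZ_1(\CA)}$ would decompose), which is what licenses applying Proposition \ref{prop:fun-inv} to $\CA\boxtimes\CA^\rev$ --- the paper's own proof of $(4)\Rightarrow(3)$ makes the same implicit use of indecomposability.
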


\begin{proof}
$(2)\Rightarrow(1)$ The equivalence $\Sigma\CA\boxtimes\Sigma(\CA^\rev) \simeq (n+1)\Vect$ maps $\CA\boxtimes\CA$ to $\Hom_{\Sigma\CA}(\CA,\CA)\simeq\CA$, inducing the desired equivalence $\CA\boxtimes\CA^\rev \simeq \Fun(\CA,\CA)$.

$(3)\Rightarrow(2)$ Since $\Sigma\CA\boxtimes(\Sigma\CA)^\op \simeq \FZ_0(\Sigma\CA) \simeq (n+1)\Vect$, $\Sigma\CA$ is invertible.

$(4)\Rightarrow(3)$ Since $\CA$ is indecomposable, $\FZ_0(\Sigma\CA) = \Sigma\FZ_1(\CA)$ by \cite[Lemma 3.50]{KZ22}.

$(1)\Rightarrow(4)$ Since $\CA\boxtimes\CA^\rev \simeq \FZ_0(\CA)$ by assumption, we have $\FZ_1(\CA)\boxtimes\FZ_1(\CA^\rev) \simeq \FZ_1(\FZ_0(\CA))$ which is trivial by \cite[Proposition 3.51(2)]{KZ22}. Therefore, $\FZ_1(\CA)$ is trivial by Proposition \ref{prop:fus-trivial}.
\end{proof}

\begin{defn}
We say that a nondegenerate multi-fusion $n$-category is {\em chiral} if it is not the $E_0$-center of a separable $n$-category.
\end{defn}

\begin{prop} \label{prop:chiral-mfc}
A nondegenerate multi-fusion $n$-category $\CA$ is chiral if and only if $\Sigma\CA$ is a nontrivial invertible separable $(n+1)$-category.
\end{prop}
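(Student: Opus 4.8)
The plan is to prove the equivalent contrapositive biconditional: a nondegenerate multi-fusion $n$-category $\CA$ is \emph{not} chiral if and only if $\Sigma\CA$ is trivial, i.e.\ $\Sigma\CA\simeq(n+1)\Vect$. Since $\CA$ is nondegenerate---hence in particular indecomposable, as nondegeneracy is only defined for indecomposable multi-fusion categories---Proposition \ref{prop:fus-nondeg} already guarantees that $\Sigma\CA$ is an invertible separable $(n+1)$-category. Thus the phrase ``invertible separable'' in the statement is automatic, and the only content is to match ``chiral'' with ``nontrivial''. Unwinding the definition of chirality, it therefore suffices to establish
$$\Sigma\CA\simeq(n+1)\Vect \iff \CA\simeq\FZ_0(\CC)\ \text{for some nonzero separable }n\text{-category }\CC.$$

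For the direction ``$\Leftarrow$'', suppose $\CA\simeq\FZ_0(\CC)$. By Example \ref{exam:closed-bimod} and Corollary \ref{cor:inv-trivial} we identify $\FZ_0(\CC)\simeq\CC^\op\boxtimes\CC\simeq\Fun(\CC,\CC)$, and $\CC$ is nonzero because $\CA$ is. View $\CC$ as a left module over the indecomposable multi-fusion category $\Fun(\CC,\CC)\simeq\CA$. Proposition \ref{prop:fun-inv}(2), applied with base $n\Vect$, gives $\Fun_{\Fun(\CC,\CC)}(\CC,\CC)\simeq n\Vect$, and then Proposition \ref{prop:fun-inv}(1), now with base $\Fun(\CC,\CC)$, exhibits $\CC$ as an invertible $\Fun(\CC,\CC)$-$n\Vect$-bimodule. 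Hence $\CA\simeq\Fun(\CC,\CC)$ is Morita equivalent to $n\Vect$, so Proposition \ref{prop:morita-braid} yields $\Sigma\CA\simeq\Sigma(n\Vect)=(n+1)\Vect$.

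For the direction ``$\Rightarrow$'', read $\Sigma\CA\simeq(n+1)\Vect=\Sigma(n\Vect)$ through Proposition \ref{prop:morita-braid}: it produces an invertible $\CA$-$n\Vect$-bimodule, that is, a nonzero separable left $\CA$-module $\CC$. The crux is that invertibility forces $\Fun_\CA(\CC,\CC)\simeq n\Vect$; concretely, the inverse bimodule satisfies $\CC^{-1}\boxtimes_\CA\CC\simeq\Fun_\CA(\CC,\CC)$ (via Proposition \ref{prop:tensor-nonzero}(1), since the inverse coincides with the left dual $\CC^L$), and this must be the unit $n\Vect$. Granting this, Proposition \ref{prop:fun-inv}(2) with base $\CA$ gives $\CA\simeq\Fun_{\Fun_\CA(\CC,\CC)}(\CC,\CC)=\Fun(\CC,\CC)\simeq\FZ_0(\CC)$, so $\CA$ is an $E_0$-center and is not chiral.

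The main obstacle is precisely this last ``$\Rightarrow$'' step: extracting a concrete module $\CC$ from the abstract Morita equivalence and verifying that the internal endomorphism category $\Fun_\CA(\CC,\CC)$ of the invertible module is trivial, so that the double-centralizer assertion of Proposition \ref{prop:fun-inv}(2) can reconstruct $\CA$ as $\Fun(\CC,\CC)$. Once triviality of $\Fun_\CA(\CC,\CC)$ is secured, the remaining identifications $\Fun(\CC,\CC)\simeq\CC^\op\boxtimes\CC\simeq\FZ_0(\CC)$ are routine, and the invertibility half of the statement is free from Proposition \ref{prop:fus-nondeg}.
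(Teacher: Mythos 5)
Your proof is correct and follows essentially the same route as the paper: both reduce the statement to ``$\CA$ is non-chiral iff $\Sigma\CA\simeq(n+1)\Vect$'' (invertibility of $\Sigma\CA$ being automatic from Proposition \ref{prop:fus-nondeg}) and both rest on the equivalence between $\Sigma\CA\simeq(n+1)\Vect$ and $\CA\simeq\Fun(\CC,\CC)$ for a nonzero separable $n$-category $\CC$. The only difference is that the paper obtains the forward implication in one line by reading $\CA\simeq\Omega(\Sigma\CA,\CA)\simeq\Omega((n+1)\Vect,\CC)=\Fun(\CC,\CC)$ directly off the pointed equivalence, whereas you reconstruct the same identification through invertible bimodules, the triviality of $\Fun_\CA(\CC,\CC)$, and the double-centralizer statement of Proposition \ref{prop:fun-inv}(2).
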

\begin{proof}
If $(\Sigma\CA,\CA) \simeq ((n+1)\Vect,\CC)$, then $\CA\simeq\Fun(\CC,\CC)$, i.e. $\CA$ is non-chiral.
Conversely, if $\CA$ is non-chiral, say, $\CA\simeq\Fun(\CC,\CC)$ for a nonzero separable $n$-category $\CC$, then $\CA\simeq\Omega((n+1)\Vect,\CC)$ so that $\Sigma\CA \simeq (n+1)\Vect$. 
\end{proof}

\begin{cor}
Let $\CA,\CB$ be nondegenerate multi-fusion $n$-categories. Then $\CA$ is Morita equivalent to $\CB$ if and only if $\CA\boxtimes\CB^\rev$ is non-chiral.
\end{cor}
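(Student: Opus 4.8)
The plan is to transport the whole statement through the delooping functor $\Sigma$ and argue inside the group of invertible separable $(n+1)$-categories. Since $\CA$ and $\CB$ are nondegenerate, Proposition \ref{prop:fus-nondeg} says that $\Sigma\CA$ and $\Sigma\CB$ are invertible separable $(n+1)$-categories, and the proof of that proposition identifies their inverses, giving $\Sigma\CA\boxtimes\Sigma(\CA^\rev)\simeq(n+1)\Vect$ and likewise for $\CB$. By Proposition \ref{prop:morita-braid}, $\CA$ is Morita equivalent to $\CB$ exactly when $\Sigma\CA\simeq\Sigma\CB$. On the other side, combining the definition of chirality with Proposition \ref{prop:chiral-mfc}, a nondegenerate multi-fusion $n$-category is non-chiral precisely when its delooping is equivalent to the trivial invertible object $(n+1)\Vect$. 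The corollary will therefore follow once both conditions are reformulated as statements about $\Sigma$ and compared.

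The key ingredient is the monoidality of $\Sigma$ with respect to $\boxtimes$, namely the equivalence
\[
\Sigma(\CA\boxtimes\CB^\rev)\simeq\Sigma\CA\boxtimes\Sigma(\CB^\rev),
\]
expressing that separable modules over a Deligne tensor product decompose as the tensor product of the respective module $(n+1)$-categories. Granting this, the right-hand side is a $\boxtimes$-product of two invertible objects, hence invertible, so $\CA\boxtimes\CB^\rev$ is again nondegenerate by Proposition \ref{prop:fus-nondeg} and its chirality is governed by Proposition \ref{prop:chiral-mfc}. Now $\Sigma\CA\boxtimes\Sigma(\CB^\rev)$ is equivalent to $(n+1)\Vect$ if and only if $\Sigma\CA$ is inverse to $\Sigma(\CB^\rev)$; since $\Sigma\CB$ is already the unique inverse of $\Sigma(\CB^\rev)$, this happens if and only if $\Sigma\CA\simeq\Sigma\CB$. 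Reading this through the two reformulations above yields both implications at once.

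I would also present the ``only if'' direction concretely as a sanity check, since it needs no structural input: if $\CA$ and $\CB$ are Morita equivalent, choose an invertible $\CA$-$\CB$-bimodule $\CM$; its bimodule structure makes the canonical functor $\CA\boxtimes\CB^\rev\to\Fun(\CM,\CM)$ an equivalence, exhibiting $\CA\boxtimes\CB^\rev\simeq\Fun(\CM,\CM)\simeq\FZ_0(\CM)$ as an $E_0$-center (compare Example \ref{exam:closed-bimod} and Corollary \ref{cor:inv-trivial}), which is non-chiral by definition. The real content, and the step I expect to be the main obstacle, is justifying the monoidality equivalence $\Sigma(\CA\boxtimes\CB^\rev)\simeq\Sigma\CA\boxtimes\Sigma(\CB^\rev)$ used in the ``if'' direction; this is the same $\boxtimes$-compatibility of $\Sigma$ already invoked for field extensions, and once it is in hand the uniqueness of inverses in the group of invertible separable $(n+1)$-categories closes the argument.
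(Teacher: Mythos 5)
Your proof is correct and follows essentially the same route as the paper: the paper's one-line argument is exactly "Morita equivalence iff $\Sigma\CA\simeq\Sigma\CB$ (Proposition \ref{prop:morita-braid}) iff $\Sigma\CA\boxtimes\Sigma(\CB^\rev)\simeq(n+1)\Vect$", with the monoidality $\Sigma(\CA\boxtimes\CB^\rev)\simeq\Sigma\CA\boxtimes\Sigma(\CB^\rev)$ and Proposition \ref{prop:chiral-mfc} left implicit. Your version merely makes these implicit steps (and the nondegeneracy of $\CA\boxtimes\CB^\rev$) explicit, which is a faithful expansion rather than a different argument.
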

\begin{proof}
$\CA$ is Morita equivalent to $\CB$ if and only if $\Sigma\CA\simeq\Sigma\CB$ by Proposition \ref{prop:morita-braid}, if and only if $\Sigma\CA\boxtimes\Sigma(\CB^\rev)\simeq(n+1)\Vect$.
\end{proof}

\begin{cor}
The construction $\CA\mapsto\Sigma\CA$ gives a one-to-one correspondence between the following two types of data:
\begin{enumerate}
\renewcommand{\labelenumi}{$(\alph{enumi})$}
\item Morita equivalence classes of nondegenerate multi-fusion $n$-categories.
\item Equivalence classes of invertible separable $(n+1)$-categories.
\end{enumerate}
\end{cor}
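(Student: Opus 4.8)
The plan is to exhibit an explicit inverse to $\CA\mapsto\Sigma\CA$, namely the loop construction $\CS\mapsto[\Omega\CS]$, and to verify that the two assignments are mutually inverse on isomorphism classes. I would separate the argument into three parts: well-definedness, injectivity, and surjectivity. The first two are formal consequences of the propositions already established, while surjectivity carries all the content.

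For well-definedness, if $\CA$ is a nondegenerate multi-fusion $n$-category then $\Sigma\CA$ is an invertible separable $(n+1)$-category by Proposition \ref{prop:fus-nondeg} $((1)\Rightarrow(2))$, so it represents a class of type (b); and if $\CA$ is Morita equivalent to $\CB$ then $\Sigma\CA\simeq\Sigma\CB$ by Proposition \ref{prop:morita-braid}, so the assignment descends to Morita classes. Injectivity is the reverse reading of the same proposition: an equivalence $\Sigma\CA\simeq\Sigma\CB$ yields an invertible $\CA$-$\CB$-bimodule, hence a Morita equivalence, again by Proposition \ref{prop:morita-braid}.

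The substance is surjectivity: every invertible separable $(n+1)$-category $\CS$ must be realized as $\Sigma\CA$. My candidate is $\CA:=\Omega\CS$, the loops at any object of $\CS$. The suspension–loop adjunction gives $\Sigma\Omega\CS\simeq\CS$ \emph{provided $\CS$ is connected}; granting this, $\CA$ is an $E_1$-multi-fusion $n$-category by \cite[Theorem 3.35]{KZ22} (because $\Sigma\CA\simeq\CS$ is a separable $(n+1)$-category) and is nondegenerate by Proposition \ref{prop:fus-nondeg} $((2)\Rightarrow(1))$. Thus everything reduces to showing that an invertible separable $(n+1)$-category is connected. Here I would argue with connected components: $\pi_0$ is additive under $\oplus$ and multiplicative under $\boxtimes$, with $\pi_0\big((n+1)\Vect\big)=\ast$. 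Since $\CS$ is invertible we have $\CS^\op\boxtimes\CS\simeq\FZ_0(\CS)\simeq(n+1)\Vect$ as in the proof of Corollary \ref{cor:inv-trivial}, whence $\pi_0(\CS)\times\pi_0(\CS)\simeq\pi_0(\CS^\op\boxtimes\CS)=\ast$; as $\CS$ is invertible it is nonzero, so $\pi_0(\CS)$ is a nonempty set, forcing $\lvert\pi_0(\CS)\rvert=1$ and hence connectedness.

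The main obstacle is exactly this connectedness step, and it is the only non-formal point. The delicate part is justifying that $\pi_0$ behaves as a clean multiplicative invariant on separable $(n+1)$-categories: that such a category splits as a finite direct sum of connected ones, that $\boxtimes$ of connected categories is again connected, and that these decompositions are compatible with $\oplus$ and $\boxtimes$ so that the component count above is legitimate. These are facts of the general condensation-completion formalism, and once they are in hand the counting argument closes surjectivity. Finally, the two assignments are mutually inverse: $\Omega\Sigma\CA\simeq\CA$ computes one composite and $\Sigma\Omega\CS\simeq\CS$ the other, while independence of the chosen base object in $\Omega\CS$ follows from the injectivity already established, since all such loop categories have suspension equivalent to $\CS$.
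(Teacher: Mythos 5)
Your proof is correct and follows essentially the route the paper intends: the corollary is left unproved there as an immediate consequence of Proposition \ref{prop:morita-braid} (injectivity), Proposition \ref{prop:fus-nondeg} (well-definedness and nondegeneracy of $\Omega$ of an invertible category), and the equivalence $\Sigma:\MFus^\ind_n\xrightarrow{\sim}\Sep^\ind_{n+1}$ of \cite[Theorem 3.22]{KZ22}, which supplies surjectivity once one knows an invertible separable $(n+1)$-category is indecomposable. Your $\pi_0$-counting argument for that last point is the standard one and is justified by \cite[Lemma 3.49(1)]{KZ22} (indecomposability is preserved by $\boxtimes$), so you have correctly identified and closed the only non-formal step.
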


\begin{prop} \label{prop:close-bimod1}
Let $\CA,\CB$ be nondegenerate multi-fusion $n$-categories. An $\CA$-$\CB$-bimodule $\CM$ be a closed if and only if the induced $\C$-linear functor $-\boxtimes_\CA\CM:\Sigma\CA\to\Sigma\CB$ is invertible.
\end{prop}
\begin{proof}
An $\CA$-$\CB$-bimodule $\CM$ is closed if and only if the $n\Vect$-$\CA^\rev\boxtimes\CB$-module $\CM$ is closed, if and only if $-\boxtimes\CM:(n+1)\Vect\to\Sigma\CA^\rev\boxtimes\Sigma\CB$ is invertible, if and only if $-\boxtimes_\CA\CM:\Sigma\CA\to\Sigma\CB$ is invertible.
\end{proof}

\begin{cor} \label{cor:closed-bimod}
Let $\CA,\CB,\CC$ be nondegenerate multi-fusion $n$-categories and $\CM$ be a closed $\CB$-$\CA$-bimodule, $\CN$ be a closed $\CC$-$\CB$-bimodule. Then the $\CC$-$\CA$-bimodule $\CN\boxtimes_\CB\CM$ is also closed.
\end{cor}

Let $\Sep^\ind_n$ denote the symmetric monoidal 1-category where an object is an indecomposable separable $n$-category and a morphism is an equivalence class of nonzero $\C$-linear functors. The tensor product is given by $\boxtimes$. By \cite[Corollary 3.12(1)]{KZ22}, the composition law is well-defined because an object of $\Sep^\ind_n$ is a simple object of $(n+1)\Vect$.

Let $\Fus^\cl_n$ denote the symmetric monoidal 1-category where an object is a nondegenerate fusion $n$-category $\CA$ and a morphism $\CA\to\CB$ is an equivalence class of pairs $(\CM,X)$ where $\CM$ is a closed $\CB$-$\CA$-bimodule and $X$ is a nonzero object of $\CM$. Two pairs $(\CM,X)$ and $(\CN,Y)$ are equivalent if there is a $\C$-linear bimodule equivalence $F:\CM\to\CN$ such that $F(X)\simeq Y$. The tensor product is given by $\boxtimes$. The composition law is given by relative tensor product of bimodules.

\begin{prop}
The composition law of $\Fus^\cl_n$ is well-defined.
\end{prop}

\begin{proof}
Let $(\CM,X):\CA\to\CB$ and $(\CN,Y):\CB\to\CC$ be two morphisms of $\Fus^\cl_n$. By Corollary \ref{cor:closed-bimod}, the $\CC$-$\CA$-bimodule $\CN\boxtimes_\CB\CM$ is closed. By Proposition \ref{prop:tensor-nonzero}(2), $Y\boxtimes_\CB X$ is nonzero. 
\end{proof}

\begin{thm} \label{thm:z0}
The assignment $\CC\mapsto\Fun(\CC,\CC)$ and $(\CC\xrightarrow{F}\CD)\mapsto(\Fun(\CC,\CD),F)$ defines a symmetric monoidal functor
$$\FZ_0: \Sep^\ind_n \to \Fus^\cl_n.$$
\end{thm}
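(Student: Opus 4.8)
The plan is to check in turn that $\FZ_0$ is well defined on objects and on morphisms, preserves identities and composition, and carries a symmetric monoidal structure. On objects: for an indecomposable separable $n$-category $\CC$, Example~\ref{exam:closed-bimod} shows $\Fun(\CC,\CC)$ is nondegenerate, and it is \emph{fusion} (not merely multi-fusion) because $\CC$ is indecomposable, so that $\one_{\Fun(\CC,\CC)}=\Id_\CC$ is simple; thus $\Fun(\CC,\CC)$ is a legitimate object of $\Fus^\cl_n$. On morphisms: for a nonzero $\C$-linear functor $F:\CC\to\CD$, Example~\ref{exam:closed-bimod} shows $\Fun(\CC,\CD)$ is a closed $\Fun(\CD,\CD)$-$\Fun(\CC,\CC)$-bimodule, and $F$ is a nonzero object of it, so $(\Fun(\CC,\CD),F)$ is a morphism $\Fun(\CC,\CC)\to\Fun(\CD,\CD)$ of $\Fus^\cl_n$.

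First I would verify well-definedness on morphisms: if $F\simeq F'$ as $\C$-linear functors then they are isomorphic \emph{objects} of $\Fun(\CC,\CD)$, so the identity self-equivalence of the bimodule $\Fun(\CC,\CD)$ exhibits $(\Fun(\CC,\CD),F)\sim(\Fun(\CC,\CD),F')$ in the sense required by the definition of $\Fus^\cl_n$. For identities, the identity morphism of a nondegenerate fusion $n$-category $\CA$ is $(\CA,\one_\CA)$ with $\CA$ the regular bimodule (which is closed exactly by nondegeneracy); applied to $\CA=\Fun(\CC,\CC)$ together with $\one_{\Fun(\CC,\CC)}=\Id_\CC$, this gives $\FZ_0(\Id_\CC)=(\Fun(\CC,\CC),\Id_\CC)=\id_{\Fun(\CC,\CC)}$.

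The crux is compatibility with composition. For $F:\CC\to\CD$ and $G:\CD\to\CE$, the composition functor $(G,F)\mapsto G\circ F$ is $\Fun(\CD,\CD)$-balanced, hence descends to a $\Fun(\CE,\CE)$-$\Fun(\CC,\CC)$-bimodule functor
$$c:\Fun(\CD,\CE)\boxtimes_{\Fun(\CD,\CD)}\Fun(\CC,\CD)\to\Fun(\CC,\CE),\qquad G\boxtimes_{\Fun(\CD,\CD)}F\mapsto G\circ F.$$
To see $c$ is an equivalence I would identify every Fun-bimodule with a tensor product via Example~\ref{exam:closed-bimod}: writing $\Fun(\CD,\CE)\simeq\CD^\op\boxtimes\CE$ and $\Fun(\CC,\CD)\simeq\CC^\op\boxtimes\CD$, the relative tensor product over $\Fun(\CD,\CD)\simeq\FZ_0(\CD)$ isolates the factor $\CD^\op\boxtimes_{\FZ_0(\CD)}\CD$, leaving $\CE\boxtimes\CC^\op\boxtimes(\CD^\op\boxtimes_{\FZ_0(\CD)}\CD)$. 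Applying Proposition~\ref{prop:fun-inv}(1) with $\CA=n\Vect$ and $\CM=\CD$, the module $\CD$ is invertible over $\FZ_0(\CD)=\Fun(\CD,\CD)$, so $\CD^\op\boxtimes_{\FZ_0(\CD)}\CD\simeq n\Vect$ and the expression collapses to $\CC^\op\boxtimes\CE\simeq\Fun(\CC,\CE)$. Alternatively, in the spirit of Proposition~\ref{prop:closed-bimod}, the source is closed (a composite of closed bimodules) and the target is closed, both over the same nondegenerate pair, so passing to $\Sigma$ turns $c$ into a map of invertible $(n+1)$-categories, which is an equivalence once it is nonzero. Associativity of composition of functors then makes these equivalences coherent, yielding $\FZ_0(G\circ F)\simeq\FZ_0(G)\circ\FZ_0(F)$.

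Finally, the symmetric monoidal structure is supplied by the canonical equivalences $\Fun(\CC\boxtimes\CC',\CD\boxtimes\CD')\simeq\Fun(\CC,\CD)\boxtimes\Fun(\CC',\CD')$ (sending $F\boxtimes F'\mapsto F\boxtimes F'$) together with $\FZ_0(n\Vect)=\Fun(n\Vect,n\Vect)\simeq n\Vect$; the unit, associativity, and symmetry coherences descend from those of $\boxtimes$ and the symmetry of $\Sep^\ind_n$ and $\Fus^\cl_n$. I expect the main obstacle to be the composition step — specifically, checking that the \emph{natural} composition functor $c$, rather than merely some abstract equivalence, carries $G\boxtimes_{\Fun(\CD,\CD)}F$ to $G\circ F$, and then assembling these identifications coherently; by contrast, the object-level claims and the monoidal coherences are routine consequences of Example~\ref{exam:closed-bimod} and the propositions of Section~\ref{sec:elements}.
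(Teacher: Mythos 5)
Your argument is correct and follows the same route as the paper's (much terser) proof: well-definedness on objects and morphisms via Example \ref{exam:closed-bimod}, and preservation of composition via the equivalence $\Fun(\CD,\CE)\boxtimes_{\Fun(\CD,\CD)}\Fun(\CC,\CD)\simeq\Fun(\CC,\CE)$, which you justify in more detail than the paper does. One small caution: your fallback argument that a map of invertible $(n+1)$-categories is an equivalence ``once it is nonzero'' is not valid as stated (a nonzero object of $(n+1)\Vect$ need not be invertible), but your primary argument via $\Fun(\CC,\CD)\simeq\CC^\op\boxtimes\CD$ and Proposition \ref{prop:fun-inv} does not rely on it.
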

\begin{proof}
By Example \ref{exam:closed-bimod}, the assignment is well-defined on objects and morphisms. 
Applying \cite[Proposition 3.28(2)]{KZ22} to the indecomposable multi-fusion $n$-category $\Fun(\CC\oplus\CD\oplus\CE,\CC\oplus\CD\oplus\CE)$, we obtain an equivalence $\Fun(\CD,\CE)\boxtimes_{\Fun(\CD,\CD)}\Fun(\CC,\CD) \simeq \Fun(\CC,\CE)$, $G\boxtimes_{\Fun(\CD,\CD)}F \mapsto G\circ F$. This shows that the assignment preserves composition of morphisms.
\end{proof}

\begin{prop} \label{prop:inv-z0}
The following conditions are equivalent:
\begin{enumerate}
\item There is no nontrivial invertible separable $n$-category.
\item $\FZ_0: \Sep^\ind_{n-1} \to \Fus^\cl_{n-1}$ is essentially surjective.
\item $\FZ_0: \Sep^\ind_n \to \Fus^\cl_n$ is full.
\item $\FZ_0: \Sep^\ind_{n+1} \to \Fus^\cl_{n+1}$ is faithful.
\end{enumerate}
\end{prop}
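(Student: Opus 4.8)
The plan is to establish, for each $m$, three biconditionals that tie the behaviour of $\FZ_0$ at level $m$ to the non-existence of nontrivial invertible separable categories at a neighbouring level, and then read off the proposition by specialization. Write $P(k)$ for the assertion that there is no nontrivial invertible separable $k$-category. I will prove:
\begin{enumerate}
\item[(i)] $\FZ_0\colon\Sep^\ind_m\to\Fus^\cl_m$ is essentially surjective iff $P(m+1)$ holds;
\item[(ii)] $\FZ_0\colon\Sep^\ind_m\to\Fus^\cl_m$ is full iff $P(m)$ holds;
\item[(iii)] $\FZ_0\colon\Sep^\ind_m\to\Fus^\cl_m$ is faithful iff $P(m-1)$ holds.
\end{enumerate}
Specializing (i) to $m=n-1$, (ii) to $m=n$ and (iii) to $m=n+1$ then gives $(2)\Leftrightarrow(1)$, $(3)\Leftrightarrow(1)$ and $(4)\Leftrightarrow(1)$ respectively. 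Throughout I use that $\FZ_0$ is a well-defined functor (Theorem \ref{thm:z0}) and the identifications $\FZ_0(\CC)=\Fun(\CC,\CC)$, $\Fun(\CC,\CC)^\rev\simeq\Fun(\CC^\op,\CC^\op)$, and $\Sigma\Fun(\CE,\CE)\simeq(m+1)\Vect$.

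For (i), the essential image of $\FZ_0$ consists exactly of the nondegenerate fusion $m$-categories of the form $\Fun(\CC,\CC)$ with $\CC$ indecomposable, and these are precisely the non-chiral ones (note that $\Fun(\CC,\CC)$ fusion forces $\CC$ indecomposable). So essential surjectivity says every nondegenerate fusion $m$-category is non-chiral. By Proposition \ref{prop:chiral-mfc} a nondegenerate $\CA$ is chiral iff $\Sigma\CA$ is a nontrivial invertible separable $(m+1)$-category; hence $P(m+1)$ forces every such $\CA$ to be non-chiral. Conversely, given a nontrivial invertible separable $(m+1)$-category $\CX$, the correspondence $\CA\mapsto\Sigma\CA$ established above realizes $\CX\simeq\Sigma\CA$ for a nondegenerate multi-fusion $m$-category $\CA$; passing to a Morita-equivalent fusion $m$-category (a corner of the indecomposable $\CA$) keeps $\Sigma\CA$ and nondegeneracy unchanged and produces a chiral nondegenerate fusion $m$-category, contradicting essential surjectivity. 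This multi-fusion-to-fusion reduction is the one delicate step here.

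For (ii) and (iii) I first classify the closed bimodules. With $\CE=\CD\boxtimes\CC^\op$ one has $\Fun(\CD,\CD)\boxtimes\Fun(\CC,\CC)^\rev\simeq\Fun(\CE,\CE)$, and since $\Sigma\Fun(\CE,\CE)\simeq(m+1)\Vect$ every $\Fun(\CE,\CE)$-module is of the form $\CW\boxtimes\CE$ for a separable $m$-category $\CW$, with $\Fun(\CW\boxtimes\CE,\CW\boxtimes\CE)\simeq\Fun(\CW,\CW)\boxtimes\Fun(\CE,\CE)$. Thus the module is closed iff $\Fun(\CW,\CW)$ is trivial, i.e.\ (Corollary \ref{cor:inv-trivial}) iff $\CW$ is an invertible separable $m$-category, the standard module $\CE\simeq\Fun(\CC,\CD)$ corresponding to $\CW\simeq m\Vect$. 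Under $P(m)$ every closed $\Fun(\CD,\CD)$-$\Fun(\CC,\CC)$-bimodule is therefore equivalent to $\Fun(\CC,\CD)$, and any nonzero object of it is a nonzero functor $\CC\to\CD$, so $\FZ_0$ is full. Conversely, a nontrivial invertible separable $m$-category $\CW$ yields a morphism $(\CW,X)\colon m\Vect\to m\Vect$ in $\Fus^\cl_m$ (with $X\neq 0$) that is not of the form $\FZ_0(F)$, since $\CW\not\simeq m\Vect=\Fun(m\Vect,m\Vect)$; this proves (ii).

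For (iii) the bimodule self-equivalences of $\Fun(\CC,\CD)$ are the invertible module endofunctors of the standard $\Fun(\CE,\CE)$-module $\CE$. As $\CE$ realizes the Morita equivalence $\Fun(\CE,\CE)\simeq\FZ_0(\CE)$ with $m\Vect$, one computes $\Fun_{\Fun(\CE,\CE)}(\CE,\CE)\simeq m\Vect$, whose invertible objects are the invertible separable $(m-1)$-categories acting on $\CE$ by the canonical tensoring. Under $P(m-1)$ the identity is the only such self-equivalence up to isomorphism, so $\FZ_0(F)\simeq\FZ_0(G)$ forces $F\simeq G$ and $\FZ_0$ is faithful; conversely a nontrivial invertible separable $(m-1)$-category $\CW$ gives, over $\CC=\CD=m\Vect$, a self-equivalence carrying the unit object $(m-1)\Vect$ to $\CW\not\simeq(m-1)\Vect$, so the distinct functors $\id$ and $\CW\boxtimes(-)$ have equivalent images under $\FZ_0$ and faithfulness fails. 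I expect the main obstacle to lie precisely in these two structural inputs---the decomposition $\Fun(\CW\boxtimes\CE,\CW\boxtimes\CE)\simeq\Fun(\CW,\CW)\boxtimes\Fun(\CE,\CE)$ classifying closed bimodules and their self-equivalences via invertible categories one and two levels down, together with the multi-fusion-to-fusion reduction in (i)---rather than in the final bookkeeping.
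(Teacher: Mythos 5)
Your proof is correct and follows essentially the same route as the paper's: $(1)\Leftrightarrow(2)$ via Proposition \ref{prop:chiral-mfc}, fullness via the observation that a closed $\Fun(\CD,\CD)$-$\Fun(\CC,\CC)$-bimodule differs from $\Fun(\CC,\CD)$ by an invertible separable $n$-category, and faithfulness via the bimodule endofunctor category being $(n+1)\Vect$, whose invertible objects are invertible separable $n$-categories. The only cosmetic difference is that for fullness you classify modules over $\Fun(\CE,\CE)$ by Morita theory as $\CW\boxtimes\CE$, whereas the paper extracts the same $\CW$ directly as the relative tensor product $\CD^\op\boxtimes_{\Fun(\CD,\CD)}\CM\boxtimes_{\Fun(\CC,\CC)}\CC$ and invokes Proposition \ref{prop:closed-bimod}.
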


\begin{proof}
$(1)\Leftrightarrow(2)$ is Proposition \ref{prop:chiral-mfc}.
 
$(1)\Rightarrow(3)$ For any closed $\Fun(\CD,\CD)$-$\Fun(\CC,\CC)$-bimodule $\CM$, $\CD^\op\boxtimes_{\Fun(\CD,\CD)}\CM\boxtimes_{\Fun(\CC,\CC)}\CC$ is a closed $n\Vect$-$n\Vect$-bimodule hence equivalent to $n\Vect$ by assumption. Therefore, $\CM \simeq \CD\boxtimes n\Vect\boxtimes\CC^\op \simeq \Fun(\CC,\CD)$.

$(3)\Rightarrow(1)$ An invertible separable $n$-category defines a closed $n\Vect$-$n\Vect$-bimodule which has to be trivial by assumption.

$(1)\Rightarrow(4)$ For $\CC,\CD\in\Sep^\ind_{n+1}$ we have $\Fun_{\Fun(\CD,\CD)|\Fun(\CC,\CC)}(\Fun(\CC,\CD),\Fun(\CC,\CD)) \simeq (n+1)\Vect$. Thus there is no nontrivial bimodule auto-equivalence of $\Fun(\CC,\CD)$.

$(4)\Rightarrow(1)$ An invertible separable $n$-category defines an $(n+1)\Vect$-$(n+1)\Vect$-bimodule auto-equivalence of $(n+1)\Vect$ which has to be trivial.
\end{proof}

\begin{exam} \label{exam:e0-fully-faithful}
(1) For $n=0$, $\FZ_0: \Sep^\ind_n \to \Fus^\cl_n$ is invertible. In fact, there is a unique indecomposable separable 0-category $\C$ and a unique nondegenerate fusion 0-category $\C$.

(2) For $n=1$, $\FZ_0: \Sep^\ind_n \to \Fus^\cl_n$ is invertible. In fact, there is a unique indecomposable separable 1-category $\Vect$ and a unique nondegenerate fusion 1-category $\Vect$.

(3) Condition (1) of Proposition \ref{prop:inv-z0} holds for $n=0,1,2$ but fails at least for $n=3,4$. Therefore, $\FZ_0$ is fully faithful but not essentially surjective for $n=2$; $\FZ_0$ is faithful but neither full nor essentially surjective for $n=3$; $\FZ_0$ is neither faithful nor full for $n=4$.
\end{exam}

\begin{rem}
The results of this subsection except Example \ref{exam:e0-fully-faithful} apply to an arbitrary base field $k$. 
However, when $k$ is not separably closed, the tensor products of $\Sep^\ind_n$ and $\Fus^\cl_n$ are ill-defined therefore $\FZ_0: \Sep^\ind_n \to \Fus^\cl_n$ is merely a functor.

The proof of $(4)\Rightarrow(3)$ of Proposition \ref{prop:fus-nondeg} only applies to a separably closed base field, but the general case can be reduced to the separably closed case by applying a field extension: the equation $\Sigma\FZ_1(\CA) = \FZ_0(\Sigma\CA)$ holds provided it holds after a field extension.
\end{rem}

\subsection{$E_1$-center functors}

Recall that given a braided monoidal $n$-category $\CA$, we use $\bar\CA$ to denote $\CA^{\op(-1)}$. Informally speaking, $\bar\CA$ is obtained from $\CA$ by reversing the braiding.

\begin{defn}
Let $\CA,\CB$ be braided fusion $n$-categories. A {\em multi-fusion $\CA$-$\CB$-bimodule} is a multi-fusion $n$-category $\CX$ equipped with a $\C$-linear braided monoidal functor $\psi_\CX:\CA\boxtimes\bar\CB\to\FZ_1(\CX)$. In the special case where $\psi_\CX$ is invertible, we say that $\CX$ is {\em closed}. We say that $\CA$ is {\em nondegenerate} if the multi-fusion $\CA$-$\CA$-bimodule $\CA$ is closed.
\end{defn}

\begin{rem}
A multi-fusion $\CA$-$\CB$-bimodule was called a multi-fusion $\CB$-$\CA$-bimodule in \cite{KZ18}. We change the convention to avoid unnecessary confusion of notations.
\end{rem}

\begin{prop} \label{prop:nondeg-bfc}
The following conditions are equivalent for a braided fusion $n$-category $\CA$:
\begin{enumerate}
\item $\CA$ is nondegenerate.
\item $\Sigma\CA$ is nondegenerate.
\item $\Sigma^2\CA$ is invertible.
\item $\FZ_0(\Sigma^2\CA)$ is trivial.
\item $\FZ_1(\Sigma\CA)$ is trivial.
\end{enumerate}
\end{prop}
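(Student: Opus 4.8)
The plan is to read off the equivalence of $(2)$--$(5)$ from Proposition \ref{prop:fus-nondeg}, and then to prove $(1)\Leftrightarrow(2)$ by comparing the two ``closedness'' conditions through the suspension functor $\Sigma$.

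First, since $\CA$ is a braided fusion $n$-category, its delooping $\Sigma\CA$ is a fusion (in particular indecomposable multi-fusion) $(n+1)$-category, so Proposition \ref{prop:fus-nondeg} applies to it. Using $\Sigma(\Sigma\CA)=\Sigma^2\CA$, the four conditions of that proposition instantiated at $\Sigma\CA$ read exactly: $\Sigma\CA$ is nondegenerate; $\Sigma^2\CA$ is an invertible separable $(n+2)$-category; $\FZ_0(\Sigma^2\CA)$ is trivial; $\FZ_1(\Sigma\CA)$ is trivial. These are precisely $(2)$, $(3)$, $(4)$, $(5)$, hence all four are equivalent with no extra work.

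It remains to prove $(1)\Leftrightarrow(2)$. Condition $(2)$ asserts that the canonical functor $\Phi\colon \Sigma\CA\boxtimes(\Sigma\CA)^\rev\to\Fun(\Sigma\CA,\Sigma\CA)$ is invertible, while $(1)$ asserts that the canonical braided functor $\psi_\CA\colon \CA\boxtimes\bar\CA\to\FZ_1(\CA)$ is invertible. I would match the source and target of $\Phi$ with the $\Sigma$-images of those of $\psi_\CA$. On the target, Example \ref{exam:closed-bimod} gives $\Fun(\Sigma\CA,\Sigma\CA)\simeq\FZ_0(\Sigma\CA)$, and \cite[Lemma 3.50]{KZ22} (applicable since $\CA$, hence $\Sigma\CA$, is indecomposable) gives $\FZ_0(\Sigma\CA)\simeq\Sigma\FZ_1(\CA)$. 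On the source, the monoidal structure of $\Sigma\CA$ is induced by the braiding of $\CA$, so reversing it amounts to reversing the braiding, giving $(\Sigma\CA)^\rev\simeq\Sigma\bar\CA$; combined with the symmetric monoidality of $\Sigma$ with respect to $\boxtimes$ (already invoked in the proof of Proposition \ref{prop:fus-nondeg}, where $\Sigma\CA\boxtimes(\Sigma\CA)^\op\simeq\Sigma\FZ_0(\CA)$ is used), this yields $\Sigma\CA\boxtimes(\Sigma\CA)^\rev\simeq\Sigma(\CA\boxtimes\bar\CA)$. Under these identifications I expect $\Phi\simeq\Sigma(\psi_\CA)$ as $\C$-linear functors.

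The main obstacle is precisely this last identification $\Phi\simeq\Sigma(\psi_\CA)$: one must check that the left/right multiplication action defining $\Phi$ is carried to $\psi_\CA$ under $\Sigma$. This is a naturality computation that I would settle by evaluating both functors on the generating object $\CA\in\Sigma\CA$ and tracking the resulting bimodule structures. Granting it, $(1)\Leftrightarrow(2)$ follows because $\Sigma$ both preserves and reflects invertibility of functors: if $\psi_\CA$ is an equivalence then so is $\Sigma(\psi_\CA)$ by functoriality, and conversely, since $\Omega\Sigma\simeq\mathrm{id}$ recovers $\psi_\CA\simeq\Omega\Sigma(\psi_\CA)$ and $\Omega$ preserves equivalences, invertibility of $\Sigma(\psi_\CA)$ forces invertibility of $\psi_\CA$. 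Thus $\Phi$ is invertible if and only if $\psi_\CA$ is, i.e. $(2)\Leftrightarrow(1)$, completing the cycle.
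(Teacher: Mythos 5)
Your proposal is correct and follows essentially the same route as the paper: the equivalence of $(2)$--$(5)$ is read off from Proposition \ref{prop:fus-nondeg} applied to the fusion $(n+1)$-category $\Sigma\CA$, and $(1)\Leftrightarrow(2)$ is obtained by identifying the canonical functor $\CA\boxtimes\bar\CA\to\FZ_1(\CA)$ with (the loop of) $\Sigma\CA\boxtimes\Sigma\bar\CA\to\FZ_0(\Sigma\CA)$ via \cite[Lemma 3.50]{KZ22}. The paper states this identification in one line without spelling out the $\Phi\simeq\Sigma(\psi_\CA)$ check or the $\Omega\Sigma\simeq\mathrm{id}$ reflection argument, both of which you make explicit; the substance is the same.
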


\begin{proof}
$(1)\Leftrightarrow(2)$ By \cite[Lemma 3.50]{KZ22}, $\Sigma\FZ_1(\CA) = \FZ_0(\Sigma\CA)$. Therefore, the canonical braided monoidal functor $\CA\boxtimes\bar\CA \to \FZ_1(\CA)$ is invertible if and only if $\Sigma\CA\boxtimes\Sigma\bar\CA \to \FZ_0(\Sigma\CA)$ is invertible.
$(2)\Leftrightarrow(3)\Leftrightarrow(4)\Leftrightarrow(5)$ is due to Proposition \ref{prop:fus-nondeg}.
\end{proof}

\begin{cor} \label{cor:z1-nondeg}
If $\CA$ is an indecomposable multi-fusion $n$-category, then the braided fusion $n$-category $\FZ_1(\CA)$ is nondegenerate.
\end{cor}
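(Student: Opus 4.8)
The plan is to reduce the braided nondegeneracy of $\FZ_1(\CA)$ to the already-recorded fact that $E_0$-centers are automatically nondegenerate. By Proposition \ref{prop:nondeg-bfc}, a braided fusion $n$-category $\CB$ is nondegenerate if and only if the multi-fusion $(n+1)$-category $\Sigma\CB$ is nondegenerate (this is the equivalence $(1)\Leftrightarrow(2)$ there). Applying this with $\CB=\FZ_1(\CA)$, it suffices to show that $\Sigma\FZ_1(\CA)$ is a nondegenerate multi-fusion $(n+1)$-category, which moves the problem out of the braided world and into the $E_0$-center formalism of Subsection \ref{sec:cen-fun}.

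The key step is the identification $\Sigma\FZ_1(\CA)\simeq\FZ_0(\Sigma\CA)$, which is exactly \cite[Lemma 3.50]{KZ22} and is precisely where the hypothesis that $\CA$ is indecomposable is used (the same identity was invoked in the proof of $(4)\Rightarrow(3)$ of Proposition \ref{prop:fus-nondeg}). Since $\CA$ is nonzero, $\Sigma\CA$ is a nonzero separable $(n+1)$-category, and by definition $\FZ_0(\Sigma\CA)=\Fun(\Sigma\CA,\Sigma\CA)$ is its endomorphism category. Example \ref{exam:closed-bimod} then tells us immediately that any such $\Fun(\CC,\CC)$ is nondegenerate.

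Combining these, $\Sigma\FZ_1(\CA)\simeq\FZ_0(\Sigma\CA)=\Fun(\Sigma\CA,\Sigma\CA)$ is nondegenerate, so the direction $(2)\Rightarrow(1)$ of Proposition \ref{prop:nondeg-bfc} yields that $\FZ_1(\CA)$ is nondegenerate, as desired. The only genuine content is the identity $\Sigma\FZ_1(\CA)\simeq\FZ_0(\Sigma\CA)$; everything else is bookkeeping through the $\Sigma$–$\FZ_0$–$\FZ_1$ dictionary. I expect the only point requiring care to be confirming that indecomposability of $\CA$ is exactly what licenses this identity (for a decomposable $\CA$ the center acquires extra summands and the identification can fail), but since we may cite \cite[Lemma 3.50]{KZ22} directly, even this is immediate.
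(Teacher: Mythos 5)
Your proposal is correct and is essentially the paper's own proof: the paper also invokes \cite[Lemma 3.50]{KZ22} to identify $\Sigma\FZ_1(\CA)$ with $\FZ_0(\Sigma\CA)=\Fun(\Sigma\CA,\Sigma\CA)$ and then cites Example \ref{exam:closed-bimod} for nondegeneracy. The only difference is that you make explicit the final descent from $\Sigma\FZ_1(\CA)$ to $\FZ_1(\CA)$ via Proposition \ref{prop:nondeg-bfc}$(1)\Leftrightarrow(2)$, which the paper leaves implicit.
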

\begin{proof}
By \cite[Lemma 3.50]{KZ22}, $\Sigma\FZ_1(\CA) = \FZ_0(\Sigma\CA)$, which is nondegenerate by Example \ref{exam:closed-bimod}.
\end{proof}

\begin{rem}
According to \cite[Corollary IV.3]{JF22}, Condition $(5)$ of Proposition \ref{prop:nondeg-bfc} is equivalent to the condition that $\FZ_2(\CA)$ is trivial. We give a different proof of this fact in Proposition \ref{prop:nondeg-bfc2}.
\end{rem}

\begin{defn}
We say that a nondegenerate braided fusion $n$-category is {\em chiral} if it is not the $E_1$-center of a fusion $n$-category. We say that two nondegenerate braided fusion $n$-categories $\CA$ and $\CB$ are {\em Witt equivalent} if $\CA\boxtimes\bar\CB$ is non-chiral.
\end{defn}

\begin{prop} \label{prop:chiral-bfc}
The following conditions are equivalent for a nondegenerate braided fusion $n$-category $\CA$:
\begin{enumerate}
\item $\CA$ is chiral.
\item $\Sigma\CA$ is chiral.
\item $\Sigma^2\CA$ is a nontrivial invertible separable $(n+2)$-category.
\end{enumerate}
\end{prop}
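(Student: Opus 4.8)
The plan is to treat condition (2) as the hub, establishing $(2)\Leftrightarrow(3)$ and $(1)\Leftrightarrow(2)$ separately; the genuine work lies in a single implication of the latter. The equivalence $(2)\Leftrightarrow(3)$ is essentially a restatement of Proposition \ref{prop:chiral-mfc}. Since $\CA$ is nondegenerate, $\Sigma\CA$ is a nondegenerate fusion $(n+1)$-category by Proposition \ref{prop:nondeg-bfc}. Applying Proposition \ref{prop:chiral-mfc} to the nondegenerate multi-fusion $(n+1)$-category $\Sigma\CA$ then says precisely that $\Sigma\CA$ is chiral if and only if $\Sigma(\Sigma\CA)=\Sigma^2\CA$ is a nontrivial invertible separable $(n+2)$-category, which is exactly $(2)\Leftrightarrow(3)$.

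For $(1)\Leftrightarrow(2)$ I would exploit the identity $\Sigma\FZ_1(\CX)\simeq\FZ_0(\Sigma\CX)$ from \cite[Lemma 3.50]{KZ22}, together with the fact that $\CX\mapsto\Sigma\CX$ and $\CC\mapsto\Omega\CC$ exchange fusion $n$-categories and connected separable $(n+1)$-categories. The implication $\neg(1)\Rightarrow\neg(2)$ is immediate: if $\CA\simeq\FZ_1(\CX)$ with $\CX$ a fusion $n$-category, then $\Sigma\CA\simeq\Sigma\FZ_1(\CX)\simeq\FZ_0(\Sigma\CX)$ exhibits $\Sigma\CA$ as the $E_0$-center of the separable $(n+1)$-category $\Sigma\CX$, so $\Sigma\CA$ is non-chiral.

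The substantive implication is $\neg(2)\Rightarrow\neg(1)$. Assume $\Sigma\CA$ is non-chiral, so that $\Sigma\CA\simeq\FZ_0(\CC)\simeq\Fun(\CC,\CC)$ for some separable $(n+1)$-category $\CC$. As $\Fun(\CC,\CC)\simeq\Sigma\CA$ is fusion (indecomposable with simple unit), $\CC$ must itself be indecomposable. I would then deloop $\CC$: an indecomposable separable $(n+1)$-category is connected, hence $\CC\simeq\Sigma\CX$ with $\CX:=\Omega\CC$ a fusion $n$-category. Consequently $\Sigma\CA\simeq\FZ_0(\Sigma\CX)\simeq\Sigma\FZ_1(\CX)$, and looping this equivalence of fusion $(n+1)$-categories yields a braided equivalence $\CA\simeq\Omega\Sigma\CA\simeq\Omega\Sigma\FZ_1(\CX)\simeq\FZ_1(\CX)$. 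Thus $\CA$ is non-chiral, which completes $(1)\Leftrightarrow(2)$.

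The main obstacle is exactly this delooping step. One must justify that the separable $(n+1)$-category $\CC$ satisfying $\Fun(\CC,\CC)\simeq\Sigma\CA$ is connected — the structural fact that an indecomposable separable higher category admits a generating object and satisfies $\CC\simeq\Sigma\Omega\CC$ — since this is what allows the object-level equivalence to be promoted to a monoidal equivalence of the form $\Sigma\CA\simeq\Sigma\FZ_1(\CX)$. One then has to check that this monoidal equivalence loops to an equivalence of \emph{braided} (not merely monoidal) fusion $n$-categories; this follows from the general principle that $\Omega$ carries a monoidal equivalence of deloopings to a braided one, so that $\Omega\Sigma$ recovers a braided fusion $n$-category together with its braiding. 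The connectedness input is where the separability and semisimplicity hypotheses do the real work, and it is the place I would expect to spend the most care.
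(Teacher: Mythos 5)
Your proposal is correct and follows essentially the same route as the paper: the equivalence $(2)\Leftrightarrow(3)$ is quoted from Proposition \ref{prop:chiral-mfc}, and $(1)\Leftrightarrow(2)$ is proved in both directions via the identity $\Sigma\FZ_1(\CB)=\FZ_0(\Sigma\CB)$, with the paper leaving implicit the connectedness/delooping step ($\CC$ indecomposable, hence $\CC\simeq\Sigma\CX$ for a fusion $n$-category $\CX$) that you rightly flag and justify as the only substantive point.
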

\begin{proof}
$(1)\Rightarrow(2)$ If $\Sigma\CA \simeq \FZ_0(\Sigma\CB)$ for a fusion $n$-category $\CB$, then $\CA\simeq\FZ_1(\CB)$ by \cite[Theorem 3.43]{KZ22}.

$(2)\Rightarrow(1)$ If $\CA\simeq\FZ_1(\CB)$ for a fusion $n$-category $\CB$, then $\Sigma\CA \simeq \Sigma\FZ_1(\CB) = \FZ_0(\Sigma\CB)$.

$(2)\Leftrightarrow(3)$ is due to Proposition \ref{prop:chiral-mfc}.
\end{proof}

\begin{prop} \label{prop:close-bimod2}
Let $\CA,\CB$ be nondegenerate braided fusion $n$-categories. A multi-fusion $\CA$-$\CB$-bimodule $\CM$ is closed if and only if the induced $\C$-linear functor $\Sigma\CM\boxtimes_{\Sigma\CA}-:\Sigma^2\CA\to\Sigma^2\CB$ is invertible.
\end{prop}
\begin{proof}
The multi-fusion $\CA$-$\CB$-bimodule $\CM$ is closed if and only if the $\Sigma\CA$-$\Sigma\CB$-bimodule $\Sigma\CM$ is closed, if and only if $\Sigma\CM\boxtimes_{\Sigma\CA}-:\Sigma^2\CA\to\Sigma^2\CB$ is invertible by Proposition \ref{prop:close-bimod1}. 
\end{proof}

Let $\MFus^\ind_n$ denote the symmetric monoidal 1-category where an object is an indecomposable multi-fusion $n$-category and a morphism is an equivalence class of nonzero separable bimodules. The tensor product is given by $\boxtimes$. The composition law is given by relative tensor product of bimodules.

By \cite[Theorem 3.22]{KZ22}, we have a symmetric monoidal equivalence:
$$\Sigma: \MFus^\ind_n \xrightarrow\sim \Sep^\ind_{n+1}, \quad \CA\mapsto\Sigma\CA\simeq\RMod_\CA((n+1)\Vect), \quad {}_\CA\CM_\CB\mapsto-\otimes_\CA\CM.$$

Let $\BFus^\cl_n$ denote the symmetric monoidal 1-category where an object is a nondegenerate braided fusion $n$-category $\CA$ and a morphism $\CA\to\CB$ is an equivalence class of closed multi-fusion $\CB$-$\CA$-bimodules. The tensor product is given by $\boxtimes$. The composition law is given by relative tensor product of bimodules, which is well-defined due to Proposition \ref{prop:close-bimod2}.

\begin{prop}
We have a symmetric monoidal equivalence
\begin{equation} \label{eqn:sigma-cl}
\Sigma: \BFus^\cl_n \xrightarrow\sim \Fus^\cl_{n+1}, \quad \CA\mapsto\Sigma\CA, \quad \CX\mapsto(\Sigma\CX,\CX).
\end{equation}
\end{prop}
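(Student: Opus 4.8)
The plan is to transport the entire statement across the symmetric monoidal equivalence $\Sigma\colon\MFus^\ind_n\xrightarrow{\sim}\Sep^\ind_{n+1}$ of \cite[Theorem 3.22]{KZ22}, so that the braided ($E_1$) data of $\BFus^\cl_n$ is converted into the monoidal ($E_0$) data of $\Fus^\cl_{n+1}$, where Proposition \ref{prop:closed-bimod} and Theorem \ref{thm:z0} already do the relevant work. The key bridge is the pair of identities $\FZ_0(\Sigma\CX)\simeq\Sigma\FZ_1(\CX)$ (valid for indecomposable $\CX$ by \cite[Lemma 3.50]{KZ22}) and $\FZ_1(\CX)\simeq\Omega\FZ_0(\Sigma\CX)$, together with $\Sigma\bar\CA\simeq(\Sigma\CA)^\rev$ (the delooping of braiding reversal, already used in the proof of Proposition \ref{prop:nondeg-bfc}). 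On objects the correspondence is immediate: $\Sigma\CA$ is a fusion $(n+1)$-category, and it is nondegenerate exactly when $\CA$ is, by Proposition \ref{prop:nondeg-bfc}; conversely every nondegenerate fusion $(n+1)$-category $\CD$ is connected, hence $\CD\simeq\Sigma\Omega\CD$ with $\Omega\CD$ a nondegenerate braided fusion $n$-category, which gives essential surjectivity on objects.

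For the morphism correspondence I would argue as follows. A closed multi-fusion $\CB$-$\CA$-bimodule $\CX$ satisfies $\FZ_1(\CX)\simeq\CB\boxtimes\bar\CA$, which has simple unit because $\CA,\CB$ are fusion; since $\FZ_1$ of a decomposable multi-fusion category has non-simple unit, one checks that $\CX$ is indecomposable. Thus $\FZ_0(\Sigma\CX)\simeq\Sigma\FZ_1(\CX)\simeq\Sigma\CB\boxtimes(\Sigma\CA)^\rev$, exhibiting $\Sigma\CX$ as a closed $\Sigma\CB$-$\Sigma\CA$-bimodule in the sense of $\Fus^\cl_{n+1}$; pairing it with the tiny generator $\CX\in\Sigma\CX$ produces a morphism $(\Sigma\CX,\CX)$. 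Conversely, given a pair $(\CM,X)$ with $\CM$ a closed $\Sigma\CB$-$\Sigma\CA$-bimodule, the identity $\Fun(\CM,\CM)\simeq\Sigma\CB\boxtimes(\Sigma\CA)^\rev$ shows that $\FZ_0(\CM)$ is connected, so $\CM$ is indecomposable; a nonzero object of an indecomposable separable $(n+1)$-category generates it, whence $\CM\simeq\Sigma\End_\CM(X)$ with $X$ the generator, and $\End_\CM(X)$ is a closed multi-fusion bimodule (apply $\Omega$ to the displayed equivalence). These two assignments are mutually inverse on equivalence classes, giving fullness and faithfulness.

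Composition and its preservation I would reduce to the $E_0$-setting as well: $\Sigma$ carries the relative tensor product $\CY\boxtimes_\CB\CX$ to $\Sigma\CY\boxtimes_{\Sigma\CB}\Sigma\CX$, which is a closed $\Sigma\CC$-$\Sigma\CA$-bimodule by Proposition \ref{prop:closed-bimod}, while the generating object $\CY\boxtimes_\CB\CX$ is nonzero by Proposition \ref{prop:tensor-nonzero}(2); applying $\Omega$ then shows $\CY\boxtimes_\CB\CX$ is again a closed multi-fusion bimodule, so composition in $\BFus^\cl_n$ is well-defined and is visibly sent by $\Sigma$ to the composite $(\Sigma\CY,\CY)\circ(\Sigma\CX,\CX)$. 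The symmetric monoidal structure is inherited from the symmetric monoidal equivalence $\Sigma\colon\MFus^\ind_n\xrightarrow{\sim}\Sep^\ind_{n+1}$, since $\boxtimes$ of braided fusion categories deloops to $\boxtimes$ of their deloopings and the braided and symmetric coherences are preserved.

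The main obstacle is the bookkeeping around the distinguished object: one must check that a nonzero object of an indecomposable separable $(n+1)$-category genuinely generates it (so that $\CM\simeq\Sigma\End_\CM(X)$ canonically, with $X$ matched to the generator), and that under this identification the two closedness conditions—invertibility of $\CB\boxtimes\bar\CA\to\FZ_1(\CX)$ on the one hand and of $\Sigma\CB\boxtimes(\Sigma\CA)^\rev\to\Fun(\Sigma\CX,\Sigma\CX)$ on the other—correspond precisely under $\FZ_0(\Sigma-)\simeq\Sigma\FZ_1(-)$, rather than merely abstractly. Once these canonicity checks are in place, every remaining step is a formal consequence of the already-established equivalence $\Sigma$ and of Propositions \ref{prop:closed-bimod} and \ref{prop:tensor-nonzero}.
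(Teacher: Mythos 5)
Your proposal is correct and follows essentially the same route as the paper: transport everything through the equivalence $\Sigma:\MFus^\ind_n\to\Sep^\ind_{n+1}$, use $\Sigma\FZ_1(\CX)\simeq\FZ_0(\Sigma\CX)$ and $\Sigma\bar\CA\simeq(\Sigma\CA)^\rev$ to match the two closedness conditions, deduce essential surjectivity from connectedness of nondegenerate fusion $(n+1)$-categories, and reduce well-definedness of composition to Propositions \ref{prop:closed-bimod} and \ref{prop:tensor-nonzero}(2). You are in fact slightly more explicit than the paper on why $\CX$ and $\CM$ are indecomposable (a hypothesis needed to invoke \cite[Lemma 3.50]{KZ22}) and on the bookkeeping of the distinguished object, which is a welcome addition rather than a deviation.
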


\begin{proof}
The assignment \eqref{eqn:sigma-cl} is well-defined on objects by Proposition \ref{prop:nondeg-bfc}. 
For $\CC\in\Fus^\cl_{n+1}$, we have $\Fun(\CC,\CC)\simeq\CC\boxtimes\CC^\rev$, which is of fusion type by \cite[Lemma 3.49(1)]{KZ22}. Thus $\CC$ is connected so that $\CC \simeq \Sigma\Omega\CC$. By Proposition \ref{prop:nondeg-bfc}, \eqref{eqn:sigma-cl} is essentially surjective.

Let $\CX:\CA\to\CB$ be a morphism of $\BFus^\cl_n$. We have $(\Sigma\CA)^\rev\boxtimes\Sigma\CB \simeq \Sigma\FZ_1(\CX) \simeq \FZ_0(\Sigma\CX)$.
Therefore, $(\Sigma\CX,\CX):\Sigma\CA\to\Sigma\CB$ is a well-defined morphism of $\Fus^\cl_{n+1}$.

Conversely, let $(\CM,x):\Sigma\CA\to\Sigma\CB$ be a morphism of $\Fus^\cl_{n+1}$. Since $\Sigma\CA$ and $\Sigma\CB$ are indecomposable as separable $(n+1)$-categories, so is $\CM$. Thus $\FZ_1(\Omega(\CM,x)) \simeq \Omega\FZ_0(\Sigma\CM)\simeq\bar\CA\boxtimes\CB$, i.e. $\Omega(\CM,x):\CA\to\CB$ is a well-defined morphism of $\BFus^\cl_n$.

Since $\Omega(\Sigma\CX,\CX)\simeq\CX$ and $\Sigma\Omega(\CM,x)\simeq\CM$, the construction $\CX\mapsto(\Sigma\CX,\CX)$ on morphism is bijective. This shows that \eqref{eqn:sigma-cl} is fully faithful.

Let $\CX:\CA\to\CB$ and $\CY:\CB\to\CC$ be two morphisms of $\BFus^\cl_n$. We have $\Sigma(\CY\boxtimes_\CB\CX) \simeq \Sigma\CY\boxtimes_{\Sigma\CB}\Sigma\CX$ because $\Sigma$ is a left adjoint functor. Therefore, \eqref{eqn:sigma-cl} is a well-defined functor.
\end{proof}

\begin{cor} \label{cor:witt=morita=inv}
The construction $\CA\mapsto\Sigma\CA\mapsto\Sigma^2\CA$ gives a one-to-one correspondence between the following three types of data:
\begin{enumerate}
\renewcommand{\labelenumi}{$(\alph{enumi})$}
\item Witt equivalence classes of nondegenerate braided fusion $n$-categories.
\item Morita equivalence classes of nondegenerate fusion $(n+1)$-categories.
\item Equivalence classes of invertible separable $(n+2)$-categories.
\end{enumerate}
\end{cor}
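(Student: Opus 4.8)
The plan is to realize the correspondence as the composite of two bijections $\sigma_1\colon[\CA]\mapsto[\Sigma\CA]$ (from (a) to (b)) and $\sigma_2\colon[\CP]\mapsto[\Sigma\CP]$ (from (b) to (c)), so that $[\CA]\mapsto[\Sigma^2\CA]$ is the asserted map. For $\sigma_2$, if $\CP$ is a nondegenerate fusion $(n+1)$-category then $\Sigma\CP$ is invertible by Proposition~\ref{prop:fus-nondeg}, and by Proposition~\ref{prop:morita-braid} one has $\Sigma\CP\simeq\Sigma\CQ$ exactly when $\CP$ and $\CQ$ are Morita equivalent; this makes $\sigma_2$ well-defined and injective.

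The substantive point for $\sigma_2$ is surjectivity, and this is where I expect the main difficulty. Given an arbitrary invertible separable $(n+2)$-category $\CD$, invertibility forces $\CD$ to be indecomposable, so by the essential surjectivity of $\Sigma\colon\MFus^\ind_{n+1}\xrightarrow{\sim}\Sep^\ind_{n+2}$ (\cite[Theorem 3.22]{KZ22}) I may write $\CD\simeq\Sigma\CB$ with $\CB$ an indecomposable multi-fusion $(n+1)$-category. The issue is that $\CB$ need not be fusion, so I must replace it by a fusion category in its Morita class. I would do this by choosing a simple object $m\in\CD$—whose existence should follow from the separability of $\CD$—and setting $\CP=\End_\CD(m)$, which is a fusion $(n+1)$-category precisely because $m$ is simple. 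Viewing $m$ as a nonzero module, Proposition~\ref{prop:fun-inv} produces an invertible bimodule realizing a Morita equivalence between $\CB$ and $\CP$, hence $\Sigma\CB\simeq\Sigma\CP$ and $\CD\simeq\Sigma\CP$; finally $\CP$ is nondegenerate by Proposition~\ref{prop:fus-nondeg} because $\Sigma\CP\simeq\CD$ is invertible. This yields surjectivity, so $\sigma_2$ is a bijection.

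For $\sigma_1$, the map is well-defined and surjective by softer means: $\Sigma\CA$ is a nondegenerate fusion $(n+1)$-category (nondegeneracy by Proposition~\ref{prop:nondeg-bfc}, the fusion property—indecomposability with simple unit—being inherited from $\CA$), and conversely every nondegenerate fusion $(n+1)$-category $\CP$ is connected, so $\CP\simeq\Sigma\Omega\CP$ with $\Omega\CP$ nondegenerate braided fusion, exactly as in the proof of \eqref{eqn:sigma-cl}. The heart of $\sigma_1$ is that it matches Witt equivalence with Morita equivalence, and I would deduce this from the cleaner equivalence
\[
\CA \text{ Witt equivalent to } \CB \iff \Sigma^2\CA\simeq\Sigma^2\CB,
\]
together with Proposition~\ref{prop:morita-braid} (which identifies Morita equivalence of $\Sigma\CA,\Sigma\CB$ with $\Sigma^2\CA\simeq\Sigma^2\CB$). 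The key computation is that, for nondegenerate braided fusion $\CB$,
\[
\Sigma^2\CB\boxtimes\Sigma^2\bar\CB\simeq\Sigma^2(\CB\boxtimes\bar\CB)\simeq\Sigma^2\FZ_1(\CB)\simeq(n+2)\Vect,
\]
where the first step is the monoidality of $\Sigma$, the second is nondegeneracy of $\CB$, and the last holds because $\FZ_1(\CB)$ is the $E_1$-center of a fusion category, hence non-chiral, so $\Sigma^2\FZ_1(\CB)$ is trivial by Proposition~\ref{prop:chiral-bfc}. Thus $\Sigma^2\bar\CB$ is inverse to $\Sigma^2\CB$; since $\CA\boxtimes\bar\CB$ is again nondegenerate, Proposition~\ref{prop:chiral-bfc} shows it is non-chiral iff $\Sigma^2(\CA\boxtimes\bar\CB)\simeq\Sigma^2\CA\boxtimes\Sigma^2\bar\CB$ is trivial, iff $\Sigma^2\CA\simeq\Sigma^2\CB$—which by definition is Witt equivalence.

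Assembling these, $\sigma_1$ and $\sigma_2$ are bijections and the composite $\CA\mapsto\Sigma\CA\mapsto\Sigma^2\CA$ gives the three-way correspondence. The formal center-and-delooping calculus handles everything except the surjectivity of $\sigma_2$; the one step genuinely using internal structure—and hence the main obstacle—is the passage from an indecomposable multi-fusion representative to a fusion one via a simple object and Proposition~\ref{prop:fun-inv}, and I would want to pin down the existence of such a simple object from the separability of $\CD$.
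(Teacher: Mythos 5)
Your proposal is correct and follows essentially the same route the paper intends: the corollary is stated without proof as an assembly of Proposition \ref{prop:fus-nondeg}, Proposition \ref{prop:nondeg-bfc}, Proposition \ref{prop:chiral-bfc}, Proposition \ref{prop:morita-braid} and the connectedness of nondegenerate fusion $(n+1)$-categories established in the proof of \eqref{eqn:sigma-cl}, which is exactly what you assemble. The one step you rightly flag as needing internal structure --- passing from an indecomposable multi-fusion representative of an invertible $(n+2)$-category to a fusion one via a simple object and Proposition \ref{prop:fun-inv} --- is left implicit in the paper (its earlier corollary is stated only for multi-fusion categories), and your treatment of it is sound, the existence of a simple object being part of the simple decomposition of a nonzero separable category used freely throughout.
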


\begin{thm} \label{thm:z1}
The assignment $\CA\mapsto\FZ_1(\CA)$ and ${}_\CA\CM_\CB\mapsto\Fun_{\CA|\CB}(\CM,\CM)$ defines a symmetric monoidal functor
$$\FZ_1: \MFus^\ind_n \to \BFus^\cl_n.$$
\end{thm}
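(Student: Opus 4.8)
The plan is to reduce the functoriality of $\FZ_1$ to that of $\FZ_0$, which is already established in Theorem \ref{thm:z0}, by transporting everything along the delooping equivalences. Concretely, I would consider the square of functors whose vertical arrows are the symmetric monoidal equivalences $\Sigma:\MFus^\ind_n\xrightarrow\sim\Sep^\ind_{n+1}$ (from \cite[Theorem 3.22]{KZ22}) and $\Sigma:\BFus^\cl_n\xrightarrow\sim\Fus^\cl_{n+1}$ (from \eqref{eqn:sigma-cl}), and whose bottom arrow is the symmetric monoidal functor $\FZ_0:\Sep^\ind_{n+1}\to\Fus^\cl_{n+1}$. Since all three are symmetric monoidal and the two vertical arrows are equivalences, the composite $\Sigma^{-1}\circ\FZ_0\circ\Sigma:\MFus^\ind_n\to\BFus^\cl_n$ is automatically a symmetric monoidal functor. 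It therefore suffices to verify that this composite agrees with the stated assignment, i.e. that the square commutes on objects and on morphisms; writing $\Omega$ for the inverse of $\Sigma:\BFus^\cl_n\to\Fus^\cl_{n+1}$, this means checking that $\Omega\circ\FZ_0\circ\Sigma$ computes $\CA\mapsto\FZ_1(\CA)$ and ${}_\CA\CM_\CB\mapsto\Fun_{\CA|\CB}(\CM,\CM)$. As a bonus, well-definedness of the target data (that $\Fun_{\CA|\CB}(\CM,\CM)$ really is a closed multi-fusion $\FZ_1(\CB)$-$\FZ_1(\CA)$-bimodule) then comes for free, since it is the image under the equivalence $\Omega$ of an honest morphism of $\Fus^\cl_{n+1}$.

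On objects the computation is immediate: $\FZ_0(\Sigma\CA)=\Sigma\FZ_1(\CA)$ by \cite[Lemma 3.50]{KZ22}, so $\Omega\FZ_0(\Sigma\CA)\simeq\Omega\Sigma\FZ_1(\CA)\simeq\FZ_1(\CA)$, the braiding on the left produced by looping the monoidal $(n+1)$-category $\FZ_0(\Sigma\CA)$ matching the $E_1$-center braiding on the right. That $\FZ_1(\CA)$ is nondegenerate, hence a genuine object of $\BFus^\cl_n$, is Corollary \ref{cor:z1-nondeg}.

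On morphisms I would trace a separable bimodule ${}_\CA\CM_\CB$ through the square. First $\Sigma$ sends it to the $\C$-linear functor $-\otimes_\CA\CM:\Sigma\CA\to\Sigma\CB$, and then $\FZ_0$ sends this to the morphism $(\Fun(\Sigma\CA,\Sigma\CB),\,-\otimes_\CA\CM)$ of $\Fus^\cl_{n+1}$. Applying $\Omega$ amounts to looping the monoidal $(n+1)$-category $\Fun(\Sigma\CA,\Sigma\CB)$ at its distinguished object $-\otimes_\CA\CM$, i.e. forming the category $\End(-\otimes_\CA\CM)$ of natural endomorphisms with composition as its tensor product. The key identification, of Eilenberg--Watts type, is that evaluation at the regular module $\CA\in\Sigma\CA$ yields a monoidal equivalence
$$\End(-\otimes_\CA\CM)\xrightarrow\sim\Fun_{\CA|\CB}(\CM,\CM),$$
because a natural endomorphism of $-\otimes_\CA\CM$ is determined by, and freely specified by, its value on $\CM\simeq\CA\otimes_\CA\CM$, which is precisely an $\CA$-$\CB$-bimodule endomorphism of $\CM$. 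This is exactly $\FZ_1({}_\CA\CM_\CB)$, completing the identification of the composite with the stated assignment.

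The step I expect to carry the real weight is this last identification, and specifically checking that the equivalence $\End(-\otimes_\CA\CM)\simeq\Fun_{\CA|\CB}(\CM,\CM)$ is compatible with the braidings and with the two-sided actions of $\FZ_1(\CB)$ and $\overline{\FZ_1(\CA)}$ arising from the two descriptions. Compatibility of the monoidal structures (composition on both sides) is formal, and the braiding and bimodule data are transported along the equivalence $\Omega$ of \eqref{eqn:sigma-cl} together with \cite[Lemma 3.50]{KZ22}; so while this is the crux, it requires no genuinely new input beyond functoriality of $\Omega$ and the object-level identification. Preservation of composition of morphisms and the symmetric monoidal coherences are then inherited verbatim from $\FZ_0$, $\Sigma$ and $\Omega$, so no separate verification is needed.
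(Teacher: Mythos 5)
Your proposal is correct and is essentially the paper's own argument: the paper proves Theorem \ref{thm:z1} precisely by combining Theorem \ref{thm:z0} with the symmetric monoidal equivalences $\Sigma:\MFus^\ind_n\xrightarrow\sim\Sep^\ind_{n+1}$ and $\Sigma:\BFus^\cl_n\xrightarrow\sim\Fus^\cl_{n+1}$, exactly as in your square. The additional identifications you spell out (objects via $\FZ_0(\Sigma\CA)=\Sigma\FZ_1(\CA)$, morphisms via $\Omega(\Fun(\Sigma\CA,\Sigma\CB),-\otimes_\CA\CM)\simeq\Fun_{\CA|\CB}(\CM,\CM)$) are left implicit in the paper but match its subsequent unwinding remark.
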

\begin{proof}
According to Theorem \ref{thm:z0}, we have a composite symmetric monoidal functor
$$\MFus^\ind_n \simeq \Sep^\ind_{n+1} \xrightarrow{\FZ_0} \Fus^\cl_{n+1} \simeq \BFus^\cl_n.$$
The composition maps on objects
$$\CA \mapsto \Sigma\CA \mapsto \FZ_0(\Sigma\CA) \mapsto \Omega\FZ_0(\Sigma\CA) = \FZ_1(\CA),$$
and maps on morphisms
$${}_\CA\CM_\CB \mapsto (\Sigma\CA\xrightarrow{-\boxtimes_\CA\CM}\Sigma\CB) \mapsto (\BMod_{\CA|\CB}((n+1)\Vect),\CM) $$ 
$$\simeq \Sigma\Fun_{\CA|\CB}(\CM,\CM) \mapsto \Fun_{\CA|\CB}(\CM,\CM),$$
as desired.
\end{proof}

\begin{rem}
Unwinding the constructions, we see that an isomorphism $\CM:\CA\to\CB$ in $\MFus^\ind_n$, i.e. an invertible bimodule, induces a $\C$-linear equivalence $\Sigma\CA\simeq\Sigma\CB$ then a monoidal equivalence $\FZ_0(\Sigma\CA)\simeq\FZ_0(\Sigma\CB)$ and then a braided monoidal equivalence $\FZ_1(\CA)\simeq\FZ_1(\CB)$ giving rise to the isomorphism $\FZ_1(\CM):\FZ_1(\CA)\to\FZ_1(\CB)$ in $\BFus^\cl_n$.
\end{rem}


\begin{cor} \label{cor:e1-center}
Let $\CA,\CB,\CC$ be multi-fusion $n$-categories where $\CB$ is indecomposable. We have an equivalence of multi-fusion $\FZ_1(\CC)$-$\FZ_1(\CA)$-bimodules 
$$\Fun_{\CB|\CC}(\CN,\CN) \boxtimes_{\FZ_1(\CB)} \Fun_{\CA|\CB}(\CM,\CM) \simeq \Fun_{\CA|\CC}(\CM\boxtimes_\CB\CN,\CM\boxtimes_\CB\CN), \quad g\boxtimes_{\FZ_1(\CB)}f \mapsto f\boxtimes_\CB g$$
for separable bimodules ${}_\CA\CM_\CB$ and ${}_\CB\CN_\CC$.
Moreover, we have a $\C$-linear equivalence
$$\Fun_{\CB|\CC}(\CQ,\CQ') \boxtimes_{\FZ_1(\CB)} \Fun_{\CA|\CB}(\CP,\CP') \simeq \Fun_{\CA|\CC}(\CP\boxtimes_\CB\CQ,\CP'\boxtimes_\CB\CQ'), \quad g\boxtimes_{\FZ_1(\CB)}f \mapsto f\boxtimes_\CB g$$
for separable bimodules ${}_\CA\CP_\CB$, ${}_\CA\CP'_\CB$, ${}_\CB\CQ_\CC$, ${}_\CB\CQ'_\CC$.
\end{cor}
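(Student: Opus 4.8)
The first displayed equivalence is precisely the assertion that the functor $\FZ_1$ of Theorem \ref{thm:z1} preserves composition, so my plan is to read it off from there. Assuming first that $\CA$ and $\CC$ are indecomposable, the data ${}_\CA\CM_\CB$ and ${}_\CB\CN_\CC$ are morphisms $\CA\to\CB$ and $\CB\to\CC$ in $\MFus^\ind_n$ whose composite is ${}_\CA(\CM\boxtimes_\CB\CN)_\CC$. Since composition of morphisms in $\BFus^\cl_n$ is the relative tensor product over the middle braided fusion object, the composite of $\FZ_1(\CM)=\Fun_{\CA|\CB}(\CM,\CM)$ and $\FZ_1(\CN)=\Fun_{\CB|\CC}(\CN,\CN)$ is $\Fun_{\CB|\CC}(\CN,\CN)\boxtimes_{\FZ_1(\CB)}\Fun_{\CA|\CB}(\CM,\CM)$, the tensor being over $\FZ_1(\CB)$, which is a genuine object of $\BFus^\cl_n$ because $\CB$ is indecomposable (Corollary \ref{cor:z1-nondeg}). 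Functoriality of $\FZ_1$ then identifies this with $\FZ_1$ of the composite, i.e. with $\Fun_{\CA|\CC}(\CM\boxtimes_\CB\CN,\CM\boxtimes_\CB\CN)$, as an equivalence of multi-fusion $\FZ_1(\CC)$-$\FZ_1(\CA)$-bimodules.

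To pin down the explicit formula I would unwind $\FZ_1=\Omega\circ\FZ_0\circ\Sigma$ as in the Remark following Theorem \ref{thm:z1}: $\Sigma$ carries $\boxtimes_{\FZ_1(\CB)}$ to $\boxtimes_{\Sigma\CB}$ and carries the composite in $\Sep^\ind_{n+1}$ to $-\boxtimes_\CB-$ on bimodule functors, so passing back through $\Omega$ identifies the image of $g\boxtimes_{\FZ_1(\CB)}f$ with $f\boxtimes_\CB g$. I would then remove the indecomposability hypothesis on $\CA,\CC$ by decomposing $\Sigma\CA$ and $\Sigma\CC$ into indecomposable separable $(n+1)$-categories (equivalently, splitting the units of $\CA,\CC$): the bimodules $\CM,\CN$ and all the $\Fun$-categories and relative tensor products split accordingly, and because every operation in sight is $\C$-linear and additive, both the equivalence and its explicit description assemble componentwise from the indecomposable case.

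For the \textbf{Moreover} clause I would feed the endomorphism version the direct sums $\CM=\CP\oplus\CP'$ and $\CN=\CQ\oplus\CQ'$, which are again separable bimodules. Writing $\CP_1=\CP,\CP_2=\CP',\CQ_1=\CQ,\CQ_2=\CQ'$, additivity gives $\CM\boxtimes_\CB\CN\simeq\bigoplus_{i,j}\CP_i\boxtimes_\CB\CQ_j$, so both sides break into blocks: $\Fun_{\CA|\CC}(\CP_i\boxtimes_\CB\CQ_j,\CP_k\boxtimes_\CB\CQ_l)$ on the right, and $\Fun_{\CB|\CC}(\CQ_j,\CQ_l)\boxtimes_{\FZ_1(\CB)}\Fun_{\CA|\CB}(\CP_i,\CP_k)$ on the left. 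Under $g\boxtimes_{\FZ_1(\CB)}f\mapsto f\boxtimes_\CB g$ the block indexed by $(i,k)$ and $(j,l)$ on the left lands in the block indexed by $(\CP_i\boxtimes_\CB\CQ_j,\CP_k\boxtimes_\CB\CQ_l)$ on the right, so extracting the blocks with $(i,k)=(j,l)=(1,2)$ yields the asserted $\C$-linear equivalence, carried by the same formula.

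The conceptual content is entirely in Theorem \ref{thm:z1}; the remaining work is bookkeeping. I expect the main obstacle to be the last step: verifying that the block decompositions of the two $\Fun$-categories are genuinely interchanged by the center-functor equivalence, i.e. that extracting summands commutes with the equivalence rather than merely with its source and target. This is exactly what the explicitness of the map $g\boxtimes_{\FZ_1(\CB)}f\mapsto f\boxtimes_\CB g$ established in the second step is there to guarantee, so I would organize the proof so that the explicit formula is available before the direct-sum reduction is carried out.
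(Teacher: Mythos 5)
Your proposal is correct and follows essentially the same route as the paper: reduce to the case where $\CA,\CC$ are indecomposable (and $\CM,\CN$ nonzero), read the first equivalence off from the fact that the functor $\FZ_1$ of Theorem \ref{thm:z1} preserves composition of morphisms, and obtain the ``Moreover'' clause by applying the first statement to $\CM=\CP\oplus\CP'$, $\CN=\CQ\oplus\CQ'$ and extracting the relevant blocks. The paper states this in three lines; your elaborations (unwinding the explicit formula through $\Sigma$ and $\Omega$, and checking that the block decomposition is respected by the equivalence) are exactly the bookkeeping it leaves implicit.
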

\begin{proof}
We may assume that $\CA,\CC$ are indecomposable and that $\CM,\CN$ are nonzero. Then the first claim follows from Theorem \ref{thm:z1}. Taking $\CM=\CP\oplus\CP'$ and $\CN=\CQ\oplus\CQ'$ we obtain the second claim.
\end{proof}

\begin{cor} \label{cor:fun-centralizer}
Let $F:\CA\to\CB$ be a $\C$-linear monoidal functor between multi-fusion $n$-categories where $\CA$ is indecomposable. We have a $\C$-linear monoidal equivalence
$$\Fun_{\CA|\CB}(\CB,\CB) \boxtimes_{\FZ_1(\CA)} \CA \simeq \CB, \quad f\boxtimes_{\FZ_1(\CA)}X \mapsto f F(X).$$
\end{cor}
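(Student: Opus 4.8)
The plan is to deduce this as a specialization of Corollary \ref{cor:e1-center}. The monoidal functor $F:\CA\to\CB$ makes $\CB$ into an $\CA$-$\CB$-bimodule: the right $\CB$-action is the regular one, and the left $\CA$-action is obtained by restricting the regular left $\CB$-action along $F$. This bimodule is separable because $\CB$ is a separable $n$-category, so the restriction of scalars changes nothing essential. Together with the regular $n\Vect$-$\CA$-bimodule $\CA$, these are exactly the data to feed into Corollary \ref{cor:e1-center}.

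Concretely, first I would apply Corollary \ref{cor:e1-center} with the roles of $\CA,\CB,\CC$ there played by $n\Vect,\CA,\CB$ here, and with the bimodules ${}_{n\Vect}\CA_\CA$ and ${}_\CA\CB_\CB$. The middle category $\CA$ is indecomposable by hypothesis, so the Corollary applies and yields a $\C$-linear monoidal equivalence
$$\Fun_{\CA|\CB}(\CB,\CB) \boxtimes_{\FZ_1(\CA)} \Fun_{n\Vect|\CA}(\CA,\CA) \simeq \Fun_{n\Vect|\CB}(\CA\boxtimes_\CA\CB,\CA\boxtimes_\CA\CB).$$
Next I would simplify the two regular factors. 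Right $\CA$-module endofunctors of the regular module are given by left multiplication, so $\Fun_{n\Vect|\CA}(\CA,\CA)\simeq\CA$ monoidally; likewise $\CA\boxtimes_\CA\CB\simeq\CB$ as an $n\Vect$-$\CB$-bimodule and $\Fun_{n\Vect|\CB}(\CB,\CB)\simeq\CB$. Substituting these identifications produces the asserted monoidal equivalence $\Fun_{\CA|\CB}(\CB,\CB)\boxtimes_{\FZ_1(\CA)}\CA\simeq\CB$.

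Finally I would read off the explicit formula. Under the map of Corollary \ref{cor:e1-center} an element $f\boxtimes_{\FZ_1(\CA)}X$, with $X\in\CA$ viewed as the left-multiplication functor $L_X$, is sent to $L_X\boxtimes_\CA f$. Tracing this through $\CA\boxtimes_\CA\CB\simeq\CB$, $a\boxtimes_\CA b\mapsto F(a)\otimes b$, and using the left $\CA$-linearity of $f$, it becomes the endofunctor $c\mapsto F(X)\otimes f(c)$ of $\CB$, hence the object $F(X)\otimes f(\one_\CB)$. The one point requiring care is the last step: the left $\CA$-linear structure carried by $f\in\Fun_{\CA|\CB}(\CB,\CB)$ is precisely a half-braiding of $f(\one_\CB)$ against the image of $F$, and it identifies $F(X)\otimes f(\one_\CB)$ with $f(\one_\CB)\otimes F(X)=f(F(X))$. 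This reconciles the a priori order of the tensor factors with the stated formula $f\boxtimes_{\FZ_1(\CA)}X\mapsto fF(X)$, and I expect this bookkeeping of the half-braiding to be the only genuine subtlety, the regular-module identifications being routine.
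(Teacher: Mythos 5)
Your proposal is correct and is exactly the paper's argument: the printed proof reads ``Use $\CA \simeq \Fun_{n\Vect|\CA}(\CA,\CA)$ and $\CB \simeq \Fun_{n\Vect|\CB}(\CA\boxtimes_\CA\CB,\CA\boxtimes_\CA\CB)$,'' i.e.\ precisely the specialization of Corollary \ref{cor:e1-center} to the bimodules ${}_{n\Vect}\CA_\CA$ and ${}_\CA\CB_\CB$ that you carry out, with the indecomposability of the middle category $\CA$ supplied by hypothesis. Your tracing of the explicit formula (including the observation that left $\CA$-linearity of $f$ identifies $F(X)\otimes f(\one_\CB)$ with $f(F(X))$) is the correct unwinding of what the paper leaves implicit.
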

\begin{proof}
Use $\CA \simeq \Fun_{n\Vect|\CA}(\CA,\CA)$ and $\CB \simeq \Fun_{n\Vect|\CB}(\CA\boxtimes_\CA\CB,\CA\boxtimes_\CA\CB)$.
\end{proof}

\begin{rem} \label{rem:fun-centralizer}
In Corollary \ref{cor:fun-centralizer}, the composition $\CA \xrightarrow{\Id_\CB\boxtimes_{\FZ_1(\CA)}\Id_\CA} \Fun_{\CA|\CB}(\CB,\CB)\boxtimes_{\FZ_1(\CA)}\CA \simeq \CB$ is equivalent to $F$.
Conversely, if there is a multi-fusion $n\Vect$-$\FZ_1(\CA)$-bimodule $\CE$ and a $\C$-linear monoidal equivalence $\CE\boxtimes_{\FZ_1(\CA)}\CA \simeq \CB$ such that the composition $\CA \xrightarrow{\one_\CE\boxtimes_{\FZ_1(\CA)}\Id_\CA} \CE\boxtimes_{\FZ_1(\CA)}\CA \simeq \CB$ is equivalent to $F$, then $\CE \simeq \CE\boxtimes_{\FZ_1(\CA)}\Fun_{\CA|\CA}(\CA,\CA) \simeq \Fun_{\CA|\CA}(\CA,\CE\boxtimes_{\FZ_1(\CA)}\CA) \simeq \Fun_{\CA|\CA}(\CA,\CB) \simeq \Fun_{\CA|\CB}(\CB,\CB)$.
\end{rem}

\begin{rem}
This subsection relies heavily on \cite[Lemma 3.49 and Lemma 3.50]{KZ22}, thus the results only apply to separably closed base fields. 
However, Proposition \ref{prop:nondeg-bfc} and Proposition \ref{prop:chiral-bfc} apply to an arbitrary base field, because nondegeneracy is stable under field extensions and because the equation $\Sigma\FZ_1(\CA) = \FZ_0(\Sigma\CA)$ holds provided it holds after a field extension.
Proposition \ref{prop:close-bimod2} also works for an arbitrary base field.
We conjecture that Corollary \ref{cor:witt=morita=inv} holds for an arbitrary base field.
\end{rem}

\begin{rem}
Direction $(2)\Leftrightarrow(3)$ of Proposition \ref{prop:fus-nondeg} is due to \cite[Theorem 2]{JF22}.
Direction $(1)\Leftrightarrow(4)$ of Proposition \ref{prop:nondeg-bfc} for $n=1$ is due to \cite[Proposition 4.16]{DN20}.
Corollary \ref{cor:z1-nondeg} for $n=1$ is due to \cite[Proposition 4.4]{ENO04}.
The notion of Witt equivalence was introduced in \cite{DMNO13} for nondegenerate braided fusion 1-categories.
Theorem \ref{thm:z1} for $n=1$ is a main result of \cite{KZ18}.
\end{rem}

\section{Centralizers and modular extensions} \label{sec:cen-mext}

The notion of a modular extension involves a pair of braided fusion $n$-categories centralizing each other within a nondegenerate one. Driven by this challenge, we develop the mathematical theory of $E_m$-centralizers in the first subsection which is largely parallel to that of $E_m$-centers developed in \cite{KZ22}. Then we introduce and study the notion of perfect pairing in the next subsection. With these preparations, we explore the theory of modular extensions in the final subsection. In particular, we achieve several classification outcomes, encompassing those conjectures posited in the physics literature. Once more, we employ the tactic of simplifying the original problem to the relatively straightforward $E_0$-context.

\subsection{Centralizers} \label{sec:centralizers}

The following definition is standard. See \cite[Section 5.3]{Lur14}. 

\begin{defn} \label{defn:centralizers}
Let $F:\CA\to\CB$ be a $\C$-linear $E_m$-monoidal functor where $\CA,\CB \in E_m\KarCat^\C_n$. The {\em $E_m$-centralizer} of $F$ is the universal condensation-complete $\C$-linear $E_m$-monoidal $n$-category $\FZ_m(F)$ equipped with a unital action $G:\FZ_m(F)\boxtimes\CA\to\CB$, i.e. a $\C$-linear $E_m$-monoidal functor rendering the following diagram in $E_m\KarCat^\C_n$ commutative up to equivalence:
$$\xymatrix{
  & \FZ_m(F)\boxtimes\CA \ar[rd]^G \\
  \CA \ar[ru]^{\one_{\FZ_m(F)}\boxtimes\Id_\CA} \ar[rr]^{F} && \CB .
}
$$
By slightly abusing notation, we use $\FZ_m(\CA,\CB)$ to denote $\FZ_m(F)$ and refer to it as the $E_m$-centralizer of $\CA$ in $\CB$ when $F$ is clear from the context.
\end{defn}

\begin{rem}
The $E_m$-centralizer $\FZ_m(\Id_\CA)$ is the $E_m$-center $\FZ_m(\CA)$ for $\CA \in E_m\KarCat^\C_n$.
The $E_m$-centralizer of the canonical $\C$-linear $E_m$-monoidal functor $n\Vect\to\CA$ is $\CA$.
\end{rem}

\begin{rem}
By definition, $\FZ_m(\CA^{\op k},\CB^{\op k}) = \FZ_m(\CA,\CB)^{\op k}$. 
\end{rem}

\begin{exam}
For $m=0$, we have $\FZ_0(F) = (\Fun(\CA,\CB),F)$. In fact, giving a unital action $\CC\boxtimes\CA\to\CB$ is equivalent to giving a $\C$-linear functor $\CC\to\Fun(\CA,\CB)$ that maps $\one_\CC$ to $F$.
\end{exam}

\begin{exam} \label{exam:centralizer-e1}
For $\CA\in E_1\KarCat^\C_n$ and $\CM\in\LMod_\CA(\KarCat^\C_n)$, 
we have $\FZ_1(\CA,\FZ_0(\CM)) = \Fun_\CA(\CM,\CM)$. In fact, giving a unital action $\CC\boxtimes\CA\to\FZ_0(\CM)$ is equivalent to giving a $\C$-linear monoidal functor $\CC\to\Fun_\CA(\CM,\CM)$.
\end{exam}

\begin{exam} \label{exam:centralizer-e2}
For $n=1$, $\FZ_2(\CA,\CB)$ is the M\"uger centralizer of $\CA$ in $\CB$, i.e. the maximal full subcategory of $\CB$ that is transparent to the essential image of $F:\CA\to\CB$. This is because, for a unital action $\CC\boxtimes\CA\to\CB$, the essential images of $\CC$ and $\CA$ in $\CB$ are transparent to each other as $\CC$ and $\CA$ are transparent to each other in $\CC\boxtimes\CA$. However, $\FZ_2(\CA,\CB)$ is no longer a full subcategory of $\CB$ for $n\ge2$ since the transparency is data rather than a property.
\end{exam}

\begin{exam}
For $m=\infty$, we have $\FZ_\infty(\CA,\CB) = \CB$. In particular, $\FZ_\infty(\CA) = \CA$. In fact, giving a $\C$-linear symmetric monoidal functor $\CC\boxtimes\CA\to\CB$ is equivalent to giving a pair of $\C$-linear symmetric monoidal functors $\CC\to\CB$ and $\CA\to\CB$. In another word, $\CC\boxtimes\CA$ is the coproduct of $\CC$ and $\CA$ in $E_\infty\KarCat^\C_n$ (see \cite[Proposition 3.2.4.7]{Lur14}).
\end{exam}

\begin{lem} \label{lem:centralizer-loop}
Let $F:\CA\to\Omega\CB$ be a $\C$-linear $E_m$-monoidal functor where $\CA\in E_m\KarCat^\C_n$ and $\CB \in E_{m-1}\KarCat^\C_{n+1}$. We have $\FZ_m(\CA,\Omega\CB) = \Omega\FZ_{m-1}(\Sigma\CA,\CB)$.
\end{lem}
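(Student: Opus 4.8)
The plan is to identify $\Omega\FZ_{m-1}(\Sigma\CA,\CB)$ with $\FZ_m(\CA,\Omega\CB)$ by showing that the former represents the same universal problem as the latter. The two structural inputs are the adjunction $\Sigma\dashv\Omega$ between $E_m\KarCat^\C_n$ and $E_{m-1}\KarCat^\C_{n+1}$ (delooping is left adjoint to looping, with $\Sigma$ raising the categorical level by one and lowering the $E$-degree by one, and $\Omega$ doing the reverse) together with the symmetric monoidality of $\Sigma$, i.e. the natural equivalence $\Sigma(\CC\boxtimes\CA)\simeq\Sigma\CC\boxtimes\Sigma\CA$ (the absolute case of the relative identity $\Sigma(\CY\boxtimes_\CB\CX)\simeq\Sigma\CY\boxtimes_{\Sigma\CB}\Sigma\CX$ already in use above). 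First I would record that the given $E_m$-monoidal functor $F\colon\CA\to\Omega\CB$ corresponds under $\Sigma\dashv\Omega$ to an $E_{m-1}$-monoidal functor $\tilde F\colon\Sigma\CA\to\CB$, and that $\FZ_{m-1}(\Sigma\CA,\CB)$ is the centralizer taken relative to $\tilde F$; I would also note that $\Omega$ of a condensation-complete $E_{m-1}$-monoidal $(n+1)$-category is again a condensation-complete $E_m$-monoidal $n$-category, so the right-hand side indeed lives in $E_m\KarCat^\C_n$.

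The core of the argument is a chain of natural equivalences, for an arbitrary test object $\CC\in E_m\KarCat^\C_n$, between the spaces of $E_m$-monoidal functors and the spaces of unital actions defining the two centralizers:
$$\Map(\CC,\Omega\FZ_{m-1}(\Sigma\CA,\CB)) \simeq \Map(\Sigma\CC,\FZ_{m-1}(\Sigma\CA,\CB)) \simeq \{\Sigma\CC\boxtimes\Sigma\CA\to\CB\} \simeq \{\Sigma(\CC\boxtimes\CA)\to\CB\} \simeq \{\CC\boxtimes\CA\to\Omega\CB\} \simeq \Map(\CC,\FZ_m(\CA,\Omega\CB)).$$
Here the first equivalence is $\Sigma\dashv\Omega$; the second is the universal property of $\FZ_{m-1}(\Sigma\CA,\CB)$ (unital actions relative to $\tilde F$); the third is monoidality of $\Sigma$; the fourth is $\Sigma\dashv\Omega$ applied to $\CC\boxtimes\CA$; and the last is the universal property of $\FZ_m(\CA,\Omega\CB)$. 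Since this chain is natural in $\CC$, the Yoneda lemma yields the asserted equivalence $\Omega\FZ_{m-1}(\Sigma\CA,\CB)\simeq\FZ_m(\CA,\Omega\CB)$.

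The step I expect to demand the most care is the matching of the decorations on the two universal problems, rather than the bare mapping spaces. Concretely, I must check that under the monoidality equivalence $\Sigma\CC\boxtimes\Sigma\CA\simeq\Sigma(\CC\boxtimes\CA)$ and the adjunction $\Sigma\dashv\Omega$, a \emph{unital} $E_{m-1}$-action $\Sigma\CC\boxtimes\Sigma\CA\to\CB$ extending $\tilde F$ goes precisely to a \emph{unital} $E_m$-action $\CC\boxtimes\CA\to\Omega\CB$ extending $F$: the unit $\one_{\Sigma\CC}$ corresponds to $\one_\CC$ under $\Sigma$, so the triangle $\Sigma\CA\xrightarrow{\one_{\Sigma\CC}\boxtimes\Id}\Sigma\CC\boxtimes\Sigma\CA\to\CB$ transposes to the triangle $\CA\xrightarrow{\one_\CC\boxtimes\Id}\CC\boxtimes\CA\to\Omega\CB$, identifying the two extension conditions via the correspondence $\tilde F\leftrightarrow F$. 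Verifying that these identifications are genuinely natural in $\CC$ and respect the commuting triangles up to coherent equivalence is the only nontrivial bookkeeping; once it is in place, the two universal properties force the result.
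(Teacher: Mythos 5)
Your proposal is correct and follows essentially the same route as the paper: the paper's proof is exactly the adjunction $\Sigma\dashv\Omega$ combined with monoidality of $\Sigma$ to get $\Fun^{E_m}(\CC\boxtimes\CA,\Omega\CB)\simeq\Fun^{E_{m-1}}(\Sigma\CC\boxtimes\Sigma\CA,\CB)$, followed by the two universal properties and Yoneda. Your additional bookkeeping about transposing the unitality triangle is a detail the paper leaves implicit, but it is the right thing to check.
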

\begin{proof}
Since $\Sigma$ is left adjoint to $\Omega$, we have 
$$\Fun^{E_m}(\CC\boxtimes\CA,\Omega\CB) \simeq \Fun^{E_{m-1}}(\Sigma\CC\boxtimes\Sigma\CA,\CB)$$ 
for $\CC\in E_m\KarCat^\C_n$.
Therefore, 
$$\Fun^{E_m}(\CC,\FZ_m(\CA,\Omega\CB)) \simeq \Fun^{E_{m-1}}(\Sigma\CC,\FZ_{m-1}(\Sigma\CA,\CB))$$
where the right hand side is equivalent to $\Fun^{E_m}(\CC,\Omega\FZ_{m-1}(\Sigma\CA,\CB))$.
Hence $\FZ_m(\CA,\Omega\CB) = \Omega\FZ_{m-1}(\Sigma\CA,\CB)$.
\end{proof}

\begin{thm} \label{thm:mfc-centralizer}
We have $\FZ_m(\CA,\CB) = \Omega^k\FZ_{m-k}(\Sigma^k\CA,\Sigma^k\CB)$ where $0\le k\le m$. In particular, $\FZ_m(\CA,\CB) = \Omega^m(\Fun(\Sigma^m\CA,\Sigma^m\CB),\Sigma^m F)$.
\end{thm}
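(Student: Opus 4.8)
The plan is to prove the identity $\FZ_m(\CA,\CB) = \Omega^k\FZ_{m-k}(\Sigma^k\CA,\Sigma^k\CB)$ by induction on $k$, with Lemma \ref{lem:centralizer-loop} supplying the single inductive step. The case $k=0$ is the tautology $\FZ_m(\CA,\CB) = \FZ_m(\CA,\CB)$, so everything rests on upgrading $k-1$ to $k$.

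For the inductive step, suppose $\FZ_m(\CA,\CB) = \Omega^{k-1}\FZ_{m-k+1}(\Sigma^{k-1}\CA,\Sigma^{k-1}\CB)$. I would like to apply Lemma \ref{lem:centralizer-loop} to the inner centralizer $\FZ_{m-k+1}(\Sigma^{k-1}\CA,\Sigma^{k-1}\CB)$, with the roles of $m,n,\CA,\CB$ in the lemma played by $m-k+1$, $n+k-1$, $\Sigma^{k-1}\CA$, and $\Sigma^k\CB$. The one thing to arrange first is that the target of the relevant $E_{m-k+1}$-monoidal functor be presented as $\Omega$ of a delooped category: since $\Sigma$ is left adjoint to $\Omega$ and $\Omega\Sigma \simeq \Id$ on each of the categories $E_j\KarCat^\C_\ell$ in play, I may rewrite $\Sigma^{k-1}\CB \simeq \Omega\Sigma^k\CB$ and transport the structure functor $\Sigma^{k-1}F$ accordingly. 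The lemma then yields $\FZ_{m-k+1}(\Sigma^{k-1}\CA,\Sigma^{k-1}\CB) = \Omega\FZ_{m-k}(\Sigma^k\CA,\Sigma^k\CB)$, and applying $\Omega^{k-1}$ to both sides completes the step.

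The ``in particular'' clause follows by specializing to $k=m$, giving $\FZ_m(\CA,\CB) = \Omega^m\FZ_0(\Sigma^m\CA,\Sigma^m\CB)$, and then invoking the computation of the $E_0$-centralizer, namely $\FZ_0(\CX,\CY) = (\Fun(\CX,\CY),F)$, with $\CX = \Sigma^m\CA$, $\CY = \Sigma^m\CB$, and structure functor $\Sigma^m F$.

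The step I expect to be the main obstacle is bookkeeping rather than conceptual: verifying at each stage that the categorical level and the $E$-degree match the hypotheses of Lemma \ref{lem:centralizer-loop} (so that $\Sigma^{k-1}\CA \in E_{m-k+1}\KarCat^\C_{n+k-1}$ and $\Sigma^k\CB \in E_{m-k}\KarCat^\C_{n+k}$), and that the identification $\Sigma^{k-1}\CB \simeq \Omega\Sigma^k\CB$ is compatible with the $E_{m-k+1}$-monoidal structure and carries $\Sigma^{k-1}F$ to a functor into $\Omega\Sigma^k\CB$ of precisely the form required by the lemma. Once the adjunction $\Sigma\dashv\Omega$ together with $\Omega\Sigma\simeq\Id$ is granted at every level, the induction runs without further input.
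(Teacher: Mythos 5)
Your proposal is correct and follows the paper's proof exactly: the paper's argument is literally ``Apply Lemma \ref{lem:centralizer-loop} for $k$ times,'' and your induction on $k$, with the rewriting $\Sigma^{k-1}\CB\simeq\Omega\Sigma^k\CB$ enabling each application of the lemma, is just that argument spelled out with the bookkeeping made explicit. The derivation of the ``in particular'' clause from the $k=m$ case and the identification $\FZ_0(\CX,\CY)=(\Fun(\CX,\CY),F)$ is likewise exactly what the paper intends.
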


\begin{proof}
Apply Lemma \ref{lem:centralizer-loop} for $k$ times.
\end{proof}

\begin{exam} \label{exam:centralizer:z1}
In the special case where $m=1$, we have a pullback diagram
$$\xymatrix{
  \Fun(\Sigma\CA,\Sigma\CB) \ar@{^(->}[r] \ar[d] & \BMod_{\CA|\CB}(\KarCat^\C_n) \ar[d] \\
  \Sigma\CB \ar@{^(->}[r] & \RMod_\CB(\KarCat^\C_n) .
}$$
That is, giving a $\C$-linear functor $\Sigma\CA\to\Sigma\CB$ is equivalent to giving an $\CA$-$\CB$-bimodule contained in $\Sigma\CB$. Therefore, $\FZ_1(\CA,\CB) = \Omega(\Fun(\Sigma\CA,\Sigma\CB),\Sigma F) \simeq \Fun_{\CA|\CB}(\CB,\CB) \simeq \Fun_{\CA|\CA}(\CA,\CB)$. 
\end{exam}

\begin{cor}
If $\CA$ and $\CB$ are $E_m$-multi-fusion $n$-categories, then $\FZ_m(\CA,\CB)$ is also an $E_m$-multi-fusion $n$-category. 
\end{cor}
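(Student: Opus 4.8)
The plan is to reduce the claim to the description of the centralizer as an iterated loop space furnished by Theorem \ref{thm:mfc-centralizer}, and then to verify that looping preserves the multi-fusion property. Taking $k=m$ in Theorem \ref{thm:mfc-centralizer} gives $\FZ_m(\CA,\CB) = \Omega^m(\Fun(\Sigma^m\CA,\Sigma^m\CB),\Sigma^m F)$. Since $\CA$ and $\CB$ are $E_m$-multi-fusion $n$-categories, the suspensions $\Sigma^m\CA$ and $\Sigma^m\CB$ are separable $(n+m)$-categories, so $\Fun(\Sigma^m\CA,\Sigma^m\CB)\simeq(\Sigma^m\CA)^\op\boxtimes\Sigma^m\CB$ (as in Example \ref{exam:closed-bimod}) is again a separable $(n+m)$-category. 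Thus it suffices to prove the general assertion that, for every object $x$ of a separable $(n+m)$-category $\CX$, the $m$-fold loop $n$-category $\Omega^m(\CX,x)$ is an $E_m$-multi-fusion $n$-category.

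I would prove this by induction on $m$ (for all $n$), peeling off one loop at a time and writing $\Omega^m=\Omega\circ\Omega^{m-1}$. For the base case $m=1$, the category $\Omega(\CX,x)=\End_\CX(x)$ is the endomorphism $n$-category of an object of a separable $(n+1)$-category, hence multi-fusion; this is the foundational fact that hom-categories of separable categories are multi-fusion. For the inductive step it is enough to establish the looping lemma: if $\CC$ is an $E_{m-1}$-multi-fusion $(n+1)$-category, then $\Omega\CC$ is an $E_m$-multi-fusion $n$-category. Applying it to $\CC=\Omega^{m-1}(\CX,x)$, which is $E_{m-1}$-multi-fusion by the induction hypothesis, yields $\Omega^m(\CX,x)=\Omega\CC$. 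Equivalently, one may run the induction directly on the centralizer: since $\Sigma\CA$ and $\Sigma\CB$ are $E_{m-1}$-multi-fusion $(n+1)$-categories, Lemma \ref{lem:centralizer-loop} gives $\FZ_m(\CA,\CB)=\Omega\FZ_{m-1}(\Sigma\CA,\Sigma\CB)$, and $\FZ_{m-1}(\Sigma\CA,\Sigma\CB)$ is $E_{m-1}$-multi-fusion by the inductive hypothesis.

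The heart of the matter, and the step I expect to be the main obstacle, is the looping lemma. The $E$-degree bookkeeping is automatic, as $\Omega$ raises the monoidal degree from $E_{m-1}$ to $E_m$; the real content is that $\Omega\CC=\End_\CC(\one_\CC)$ is again separable and rigid. For separability I would use that the counit exhibits $\Sigma\Omega\CC$ as the condensation closure of the unit $\one_\CC$ inside $\CC$, that is, as a condensation-complete full subcategory of $\CC$. Since the underlying $(n+1)$-category of the multi-fusion category $\CC$ is separable and separability is inherited by condensation-complete full subcategories, $\Sigma\Omega\CC$ is separable, so $\Omega\CC$ is multi-fusion; rigidity of $\Omega\CC$ is inherited from that of $\CC$. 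The crucial subtlety that dictates the one-loop-at-a-time organization is that only the single counit $\Sigma\Omega\CC\to\CC$ is fully faithful onto the condensation closure of $\one_\CC$, whereas the iterated counit $\Sigma^m\Omega^m(\CX,x)\to\CX$ is not fully faithful in general; this is exactly why I would not attempt to identify $\Sigma^m\Omega^m(\CX,x)$ with a full subcategory of $\CX$ in one stroke, but instead deloop once per induction step.
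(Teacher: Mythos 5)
Your proof is correct and takes the route the paper intends: the corollary is stated without proof as an immediate consequence of Theorem \ref{thm:mfc-centralizer}, which identifies $\FZ_m(\CA,\CB)$ with $\Omega^m(\Fun(\Sigma^m\CA,\Sigma^m\CB),\Sigma^m F)$ for the separable $(n+m)$-category $\Fun(\Sigma^m\CA,\Sigma^m\CB)\simeq(\Sigma^m\CA)^\op\boxtimes\Sigma^m\CB$. Your one-loop-at-a-time induction simply unpacks the delooping step the paper treats as known (essentially \cite[Theorem 3.35]{KZ22}, that an $E_m$-monoidal $n$-category whose underlying monoidal category is multi-fusion is $E_m$-multi-fusion, together with the fact that hom-categories of separable categories are multi-fusion), and your remark that the iterated counit $\Sigma^m\Omega^m(\CX,x)\to\CX$ need not be fully faithful correctly identifies why the induction must be organized this way.
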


\begin{cor}
Let $F:\CA\boxtimes\CB\to\CC$ be a $\C$-linear $E_m$-monoidal functor where $\CA,\CB,\CC \in E_m\KarCat^\C_n$. Then $\FZ_m(\CA\boxtimes\CB,\CC) = \FZ_m(\CA,\FZ_m(\CB,\CC))$.
\end{cor}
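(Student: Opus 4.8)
The plan is to prove the identity by a direct Yoneda argument, showing that both sides represent the same functor on $E_m\KarCat^\C_n$. The starting point is to record the defining universal property of Definition \ref{defn:centralizers} in its mapping-space form: for a $\C$-linear $E_m$-monoidal functor $F\colon\CA\to\CB$ and any $\CD\in E_m\KarCat^\C_n$, the centralizer $\FZ_m(\CA,\CB)$ is characterized by a natural equivalence
$$\Fun^{E_m}(\CD,\FZ_m(\CA,\CB)) \simeq \Fun^{E_m}_F(\CD\boxtimes\CA,\CB),$$
where the right-hand side denotes the space of unital actions, i.e.\ $E_m$-monoidal functors $G\colon\CD\boxtimes\CA\to\CB$ together with an equivalence $G\circ(\one_\CD\boxtimes\Id_\CA)\simeq F$. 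This is exactly the terminality demanded of $\FZ_m(\CA,\CB)$.

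First I would make precise the structure functor implicit in the right-hand side $\FZ_m(\CA,\FZ_m(\CB,\CC))$. Viewing $F\colon\CA\boxtimes\CB\to\CC$ as an $\CA$-parametrized unital action of $\CB$ in $\CC$, whose restriction $F\circ(\one_\CA\boxtimes\Id_\CB)$ is the structure functor of $\FZ_m(\CB,\CC)$, the universal property of $\FZ_m(\CB,\CC)$ applied with parameter space $\CD=\CA$ produces a canonical $E_m$-monoidal functor $\tilde F\colon\CA\to\FZ_m(\CB,\CC)$ corresponding to $F$. It is with respect to $\tilde F$ that the iterated centralizer $\FZ_m(\CA,\FZ_m(\CB,\CC))$ is formed.

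Next I would apply the universal property of $\FZ_m(\CB,\CC)$ with the enlarged parameter space $\CD\boxtimes\CA$, giving $\Fun^{E_m}(\CD\boxtimes\CA,\FZ_m(\CB,\CC)) \simeq \Fun^{E_m}_{F|_\CB}((\CD\boxtimes\CA)\boxtimes\CB,\CC)$. Restricting the left-hand side to those functors lying over $\tilde F$ yields $\Fun^{E_m}(\CD,\FZ_m(\CA,\FZ_m(\CB,\CC)))$ by the universal property of $\FZ_m(\CA,-)$; restricting the right-hand side to those functors lying over $F$ yields $\Fun^{E_m}(\CD,\FZ_m(\CA\boxtimes\CB,\CC))$ by the universal property of $\FZ_m(\CA\boxtimes\CB,-)$. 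The crux is to verify that these two restriction conditions correspond under the displayed equivalence. By naturality of the universal property of $\FZ_m(\CB,\CC)$ in the parameter variable, applied to the inclusion $\one_\CD\boxtimes\Id_\CA\colon\CA\to\CD\boxtimes\CA$, the condition that $H\colon\CD\boxtimes\CA\to\FZ_m(\CB,\CC)$ lie over $\tilde F$ transposes to the condition that its adjunct $\hat H\colon\CD\boxtimes\CA\boxtimes\CB\to\CC$ restrict along $\one_\CD\boxtimes\Id_{\CA\boxtimes\CB}$ to the unital action corresponding to $\tilde F$, which is $F$ by construction. This is precisely the condition defining $\Fun^{E_m}_F(\CD\boxtimes\CA\boxtimes\CB,\CC)$, so both sides represent $\CD\mapsto\Fun^{E_m}_F(\CD\boxtimes\CA\boxtimes\CB,\CC)$ and Yoneda delivers the equivalence, compatibly with structure functors.

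The main obstacle I anticipate is this last bookkeeping step: matching the two ``lies over'' conditions through the nested adjunctions, which rests on the naturality of the centralizer's universal property in the parameter variable, so that restricting along $\one_\CD\boxtimes\Id_\CA$ before transposing agrees with transposing and then restricting along $\one_\CD\boxtimes\Id_{\CA\boxtimes\CB}$. Everything else is a formal manipulation of mapping spaces, requiring no categorical input beyond Definition \ref{defn:centralizers}; this accords with the brevity of the surrounding corollaries, and one may alternatively present the whole argument as the special case $m=0$ (the tensor--hom adjunction $\Fun(\CA\boxtimes\CB,\CC)\simeq\Fun(\CA,\Fun(\CB,\CC))$) propagated upward through Theorem \ref{thm:mfc-centralizer}.
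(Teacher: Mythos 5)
Your argument is correct, but it takes a different route from the paper. The paper's proof is a one-liner: it invokes Theorem \ref{thm:mfc-centralizer} to identify $\FZ_m(\CX,\CY)$ with $\Omega^m(\Fun(\Sigma^m\CX,\Sigma^m\CY),\Sigma^m F)$, so that the whole statement collapses to the tensor--hom adjunction $\Fun(\Sigma^m\CA\boxtimes\Sigma^m\CB,\Sigma^m\CC)\simeq\Fun(\Sigma^m\CA,\Fun(\Sigma^m\CB,\Sigma^m\CC))$ at the $E_0$ level, using that $\Sigma^m(\CA\boxtimes\CB)\simeq\Sigma^m\CA\boxtimes\Sigma^m\CB$ --- exactly the alternative you mention in your closing sentence. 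Your main argument instead stays at level $m$ and proves the identity directly from the mapping-space form of Definition \ref{defn:centralizers}, by exhibiting both sides as representing $\CD\mapsto\Fun^{E_m}_F(\CD\boxtimes\CA\boxtimes\CB,\CC)$; the bookkeeping you flag (matching the two ``lies over'' conditions, and noting that lying over $F$ along $\one_\CD\boxtimes\Id_{\CA\boxtimes\CB}$ automatically implies lying over $F|_\CB$ along $\one_{\CD\boxtimes\CA}\boxtimes\Id_\CB$) is the genuine content and you handle it correctly via naturality. What each approach buys: yours is self-contained, needing nothing beyond the universal property, and would work in any context where centralizers are defined by that property; the paper's leverages the already-established delooping machinery to reduce everything to the transparent $E_0$ case, which is why its proof occupies a single line and is stylistically consistent with the neighboring corollaries.
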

\begin{proof}
Combine Theorem \ref{thm:mfc-centralizer} and the equivalence $\Fun(\Sigma^m\CA\boxtimes\Sigma^m\CB,\Sigma^m\CC) = \Fun(\Sigma^m\CA,\Fun(\Sigma^m\CB,\Sigma^m\CC))$.
\end{proof}

\begin{prop} \label{prop:centralizer-center}
Let $\CM$ be a condensation-complete $\C$-linear $E_m$-monoidal $n$-category over $\CA^{\op(-m)}$ where $\CA\in E_{m+1}\KarCat^\C_n$. We have 
$$\FZ_{m+1}(\CA^{\op(-m)},\FZ_m(\CM)) = \Omega\FZ_m(\Sigma\CA^{\op(-m)},\FZ_{m-1}(\Sigma\CM))$$
for $m\ge1$. Therefore, 
$$\FZ_{m+1}(\CA^{\op(-m)},\FZ_m(\CM)) 
= \Omega^{m+1}(\Sigma^{m+1}\CA,\Sigma^m\CM).$$
\end{prop}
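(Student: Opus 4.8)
The plan is to reduce the $E_{m+1}$-centralizer on the left-hand side to lower centralizers by repeatedly invoking Lemma \ref{lem:centralizer-loop}, using at each stage the center relation $\FZ_j(\CN) = \Omega\FZ_{j-1}(\Sigma\CN)$, which is the special case $\CA=\CB=\CN$, $F=\Id$ of Theorem \ref{thm:mfc-centralizer} with $k=1$ (recall $\FZ_j(\CN)=\FZ_j(\CN,\CN)$). The final identification is then read off from Example \ref{exam:centralizer-e1}.

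First I would establish the first displayed equation. The target $\FZ_m(\CM)$ rewrites as $\Omega\FZ_{m-1}(\Sigma\CM)$, where $\Sigma\CM \in E_{m-1}\KarCat^\C_{n+1}$ and hence $\FZ_{m-1}(\Sigma\CM) \in E_m\KarCat^\C_{n+1}$. Thus the structure functor $\psi_\CM : \CA^{\op(-m)} \to \FZ_m(\CM)$ becomes a $\C$-linear $E_{m+1}$-monoidal functor $\CA^{\op(-m)} \to \Omega\FZ_{m-1}(\Sigma\CM)$ with $\CA^{\op(-m)} \in E_{m+1}\KarCat^\C_n$. Applying Lemma \ref{lem:centralizer-loop} with $m$ replaced by $m+1$ (taking its $\CA$ to be $\CA^{\op(-m)}$ and its $\CB$ to be $\FZ_{m-1}(\Sigma\CM)$) yields precisely $\FZ_{m+1}(\CA^{\op(-m)}, \FZ_m(\CM)) = \Omega\FZ_m(\Sigma\CA^{\op(-m)}, \FZ_{m-1}(\Sigma\CM))$.

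For the second displayed equation I would iterate this step $m$ times. After the $k$-th step the right-hand side reads $\Omega^k\FZ_{m+1-k}(\Sigma^k\CA^{\op(-m)}, \FZ_{m-k}(\Sigma^k\CM))$; rewriting the inner center as $\FZ_{m-k}(\Sigma^k\CM) = \Omega\FZ_{m-k-1}(\Sigma^{k+1}\CM)$ and applying Lemma \ref{lem:centralizer-loop} with parameter $m+1-k$ advances $k$ by one. At each stage one checks that $\Sigma^{k+1}\CM \in E_{m-k-1}\KarCat^\C_{n+k+1}$ and $\FZ_{m-k-1}(\Sigma^{k+1}\CM) \in E_{m-k}\KarCat^\C_{n+k+1}$, while $\Sigma^k\CA^{\op(-m)} \in E_{m+1-k}\KarCat^\C_{n+k}$, so that the hypotheses of the lemma are met. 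After $m$ steps this produces $\Omega^m\FZ_1(\Sigma^m\CA^{\op(-m)}, \FZ_0(\Sigma^m\CM))$. Since $\FZ_0(\Sigma^m\CM) = \Fun(\Sigma^m\CM, \Sigma^m\CM)$, Example \ref{exam:centralizer-e1} identifies the $E_1$-centralizer with $\Fun_{\Sigma^m\CA^{\op(-m)}}(\Sigma^m\CM, \Sigma^m\CM) = \Fun_{(\Sigma^m\CA)^\rev}(\Sigma^m\CM, \Sigma^m\CM)$, giving the claim.

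The main obstacle I anticipate is bookkeeping rather than conceptual content: keeping the $E$-levels and categorical dimensions aligned at every application of Lemma \ref{lem:centralizer-loop}, and pinning down the reversal convention so that $\Sigma^m(\CA^{\op(-m)})$ is correctly identified with $(\Sigma^m\CA)^\rev$ and the induced left action of $(\Sigma^m\CA)^\rev$ on $\Sigma^m\CM$ is exactly the one coming from $\psi_\CM$. Tracking how the top-layer reversal $\op(-m)$ of $\CA$ collapses, under $\Sigma^m$, to the monoidal reversal $\rev$ is the one place where care with conventions is genuinely required; everything else is a routine consequence of the two cited results.
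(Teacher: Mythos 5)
Your proposal is correct and follows essentially the same route as the paper: the first identity is Lemma \ref{lem:centralizer-loop} applied to $\psi_\CM$ viewed as a functor into $\Omega\FZ_{m-1}(\Sigma\CM)$, and the second is obtained by iterating this $m$ times and concluding with Example \ref{exam:centralizer-e1}, with the shift $\Sigma^m(\CA^{\op(-m)})\simeq(\Sigma^m\CA)^\rev$ handled as you describe.
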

\begin{proof}
Combining Lemma \ref{lem:centralizer-loop} with the equivalence $\FZ_m(\CM) = \Omega\FZ_{m-1}(\Sigma\CM)$, we obtain the first identity. The second identity follows from the first one and Example \ref{exam:centralizer-e1}.
\end{proof}

\begin{lem} \label{lem:e1-cent}
Let $F:\CA\to\CB$ be a $\C$-linear monoidal functor between indecomposable multi-fusion $n$-categories. Then $\Sigma\FZ_1(\CA,\CB)=\FZ_0(\Sigma\CA,\Sigma\CB)$.
\end{lem}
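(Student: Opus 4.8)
The plan is to reduce the statement to the connectedness of a functor category, treating it as the relative version of the identity $\Sigma\FZ_1(\CA)=\FZ_0(\Sigma\CA)$ of \cite[Lemma 3.50]{KZ22} (the case $\CA=\CB$, $F=\Id$). First I would rewrite both sides through the loop--delooping adjunction. Writing $\CB=\Omega\Sigma\CB$ and applying Lemma \ref{lem:centralizer-loop} with $m=1$ gives $\FZ_1(\CA,\CB)=\Omega\FZ_0(\Sigma\CA,\Sigma\CB)$, while the $E_0$-centralizer is computed directly as $\FZ_0(\Sigma\CA,\Sigma\CB)=(\Fun(\Sigma\CA,\Sigma\CB),\Sigma F)$. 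Thus the claim $\Sigma\FZ_1(\CA,\CB)=\FZ_0(\Sigma\CA,\Sigma\CB)$ becomes the assertion that the canonical comparison $\Sigma\Omega(\Fun(\Sigma\CA,\Sigma\CB),\Sigma F)\to(\Fun(\Sigma\CA,\Sigma\CB),\Sigma F)$ is an equivalence; equivalently, that the pointed separable $(n+1)$-category $(\Fun(\Sigma\CA,\Sigma\CB),\Sigma F)$ is connected, with $\Sigma F$ generating it under condensation.

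The next step is to identify the functor category. By Example \ref{exam:closed-bimod}, $\Fun(\Sigma\CA,\Sigma\CB)\simeq(\Sigma\CA)^\op\boxtimes\Sigma\CB$. Since $\CA$ and $\CB$ are indecomposable, both $\Sigma\CB$ and $(\Sigma\CA)^\op$ are objects of $\Sep^\ind_{n+1}$; as $\Sep^\ind_{n+1}$ is symmetric monoidal, the box product $(\Sigma\CA)^\op\boxtimes\Sigma\CB$ again lies in $\Sep^\ind_{n+1}$, i.e. it is an indecomposable (connected) separable $(n+1)$-category.

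It then remains to check that the distinguished object $\Sigma F$ generates this connected category. Here I would invoke Proposition \ref{prop:fun-inv}: in an indecomposable separable $(n+1)$-category, i.e. in $\Sigma\CG$ for the indecomposable multi-fusion $n$-category $\CG=\Omega$ of it, every nonzero object is an invertible bimodule and hence Morita-generates the whole category under condensation. Since $F$ is monoidal, $\Sigma F$ is nonzero, so $\Sigma F$ generates $\Fun(\Sigma\CA,\Sigma\CB)$, giving $\Sigma\Omega(\Fun(\Sigma\CA,\Sigma\CB),\Sigma F)\simeq\Fun(\Sigma\CA,\Sigma\CB)$. Combined with the identification $\Omega(\Fun(\Sigma\CA,\Sigma\CB),\Sigma F)=\FZ_1(\CA,\CB)$ from the first paragraph, this yields the desired equality.

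The routine steps are the two centralizer computations of the first paragraph. The main obstacle is the final step: making the recognition principle ``$\Sigma\Omega(\CC,c)\simeq\CC$ for a generating object $c$'' precise and matching the handedness conventions of $\Sigma$ and of the invertible bimodule in Proposition \ref{prop:fun-inv}, so that no spurious $(-)^\rev$ intervenes between $\Sigma\Omega(\Fun(\Sigma\CA,\Sigma\CB),\Sigma F)$ and $\Fun(\Sigma\CA,\Sigma\CB)$. I expect this to go through cleanly, exactly as in the absolute case recorded in \cite[Lemma 3.50]{KZ22}.
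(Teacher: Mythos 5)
Your proof is correct and follows essentially the same route as the paper's: identify $\FZ_1(\CA,\CB)=\Omega\FZ_0(\Sigma\CA,\Sigma\CB)$ via the loop--delooping adjunction, recognize $\Fun(\Sigma\CA,\Sigma\CB)\simeq(\Sigma\CA)^\op\boxtimes\Sigma\CB$ as indecomposable (the paper cites \cite[Lemma 3.49(1)]{KZ22} for this), and conclude that $\Sigma\Omega$ recovers the whole category. Your extra appeal to Proposition \ref{prop:fun-inv} for the generation step is not needed --- in this framework any nonzero object of an indecomposable separable category generates it under condensation, which is all the paper uses.
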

\begin{proof}
We have $\Fun(\Sigma\CA,\Sigma\CB) \simeq (\Sigma\CA)^\op\boxtimes\Sigma\CB$, which is indecomposable by \cite[Lemma 3.49(1)]{KZ22}. Hence $\Sigma\FZ_1(\CA,\CB) = \Sigma\Omega\FZ_0(\Sigma\CA,\Sigma\CB) = \FZ_0(\Sigma\CA,\Sigma\CB)$.
\end{proof}

\begin{prop}
Let $F:\CA\to\CB$ be a $\C$-linear monoidal functor between multi-fusion $n$-categories where $\CA$ is nondegenerate then $\CB \simeq \FZ_1(\CA,\CB)\boxtimes\CA$. In particular, $\CB$ is nondegenerate if and only if $\FZ_1(\CA,\CB)$ is nondegenerate.
\end{prop}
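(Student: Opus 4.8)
The plan is to read off the first equivalence directly from the centralizer description already available in the text, and then to obtain the nondegeneracy statement by applying the $E_1$-center and using its multiplicativity.

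First, note that a nondegenerate multi-fusion $n$-category is in particular indecomposable (nondegeneracy is only defined for indecomposable multi-fusion $n$-categories), so Corollary \ref{cor:fun-centralizer} applies to $F:\CA\to\CB$. Combined with the identification $\FZ_1(\CA,\CB)\simeq\Fun_{\CA|\CB}(\CB,\CB)$ from Example \ref{exam:centralizer:z1}, it yields a $\C$-linear monoidal equivalence $\FZ_1(\CA,\CB)\boxtimes_{\FZ_1(\CA)}\CA\simeq\CB$. Since $\CA$ is nondegenerate, Proposition \ref{prop:fus-nondeg} (with Proposition \ref{prop:fus-trivial}) gives that $\FZ_1(\CA)$ is trivial, i.e. $\FZ_1(\CA)\simeq n\Vect$. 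As $n\Vect$ is the unit for $\boxtimes$, the relative tensor product over $\FZ_1(\CA)$ collapses to the plain $\boxtimes$, yielding $\CB\simeq\FZ_1(\CA,\CB)\boxtimes\CA$, which is the first assertion.

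For the second assertion I would apply $\FZ_1$ to this equivalence and use that the $E_1$-center is multiplicative with respect to $\boxtimes$:
$$\FZ_1(\CB)\simeq\FZ_1(\FZ_1(\CA,\CB))\boxtimes\FZ_1(\CA)\simeq\FZ_1(\FZ_1(\CA,\CB)),$$
the last step again because $\FZ_1(\CA)\simeq n\Vect$. Writing $\CE:=\FZ_1(\CA,\CB)$, the equivalence $\CB\simeq\CE\boxtimes\CA$ with $\CA$ indecomposable forces $\CB$ and $\CE$ to be simultaneously indecomposable (a tensor product of indecomposable multi-fusion $n$-categories is indecomposable by \cite[Lemma 3.49(1)]{KZ22}), so that the notion of nondegeneracy is legitimately defined for one exactly when it is for the other. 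Invoking the equivalence $(1)\Leftrightarrow(4)$ of Proposition \ref{prop:fus-nondeg}, $\CB$ is nondegenerate iff $\FZ_1(\CB)$ is trivial iff $\FZ_1(\CE)$ is trivial iff $\CE=\FZ_1(\CA,\CB)$ is nondegenerate.

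The conceptual content is entirely concentrated in Corollary \ref{cor:fun-centralizer} and in the characterization nondegeneracy $=$ triviality of the $E_1$-center; the remaining steps are formal. The only points requiring care — and hence the likeliest source of friction — are the bookkeeping of indecomposability (so that every occurrence of ``nondegenerate'' is actually defined) and the appeal to multiplicativity of $\FZ_1$ under $\boxtimes$, which I would cite as a standard property of $E_1$-centers rather than reprove here.
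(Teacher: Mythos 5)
Your proof is correct, but it reaches the first equivalence by a different mechanism than the paper. The paper works one categorical level up: since $\CA$ is nondegenerate, $\Sigma\CA$ is invertible (Proposition \ref{prop:fus-nondeg}), so $\Sigma\CB \simeq \Fun(\Sigma\CA,\Sigma\CB)\boxtimes\Sigma\CA = \FZ_0(\Sigma\CA,\Sigma\CB)\boxtimes\Sigma\CA$ by a direct computation with the inverse $(\Sigma\CA)^\op$, and then Lemma \ref{lem:e1-cent} identifies $\FZ_0(\Sigma\CA,\Sigma\CB)$ with $\Sigma\FZ_1(\CA,\CB)$ before delooping back down. You instead start from the ``absolute'' identity $\Fun_{\CA|\CB}(\CB,\CB)\boxtimes_{\FZ_1(\CA)}\CA\simeq\CB$ of Corollary \ref{cor:fun-centralizer} (valid for any indecomposable $\CA$) and only then use nondegeneracy, in the form that $\FZ_1(\CA)$ is trivial, to collapse the relative tensor product. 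Your route leans on the full center-functor machinery of Theorem \ref{thm:z1} (from which Corollary \ref{cor:fun-centralizer} is derived), whereas the paper's argument is more self-contained, needing only the invertibility of $\Sigma\CA$; on the other hand, your version makes transparent how the statement degenerates from the general relative formula. For the ``in particular'' clause, which the paper leaves implicit, your argument via multiplicativity of $\FZ_1$ under $\boxtimes$ (which the paper itself invokes as \cite[Proposition 3.51(2)]{KZ22} in the proof of Proposition \ref{prop:fus-nondeg}) together with criterion $(1)\Leftrightarrow(4)$ is sound, and your bookkeeping of indecomposability --- needed for ``nondegenerate'' to be well defined on both sides --- is a point worth making explicit, as you do. An equivalent shortcut in the spirit of the paper would be to note that $\Sigma\CB\simeq\Sigma\FZ_1(\CA,\CB)\boxtimes\Sigma\CA$ is invertible if and only if $\Sigma\FZ_1(\CA,\CB)$ is, since $\Sigma\CA$ is invertible.
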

\begin{proof}
Since $\Sigma\CA$ is invertible, we have $\Sigma\CB \simeq \FZ_0(\Sigma\CA,\Sigma\CB)\boxtimes\Sigma\CA$ which is equivalent by Lemma \ref{lem:e1-cent} to $\Sigma\FZ_1(\CA,\CB)\boxtimes\Sigma\CA$. Therefore, $\CB \simeq \FZ_1(\CA,\CB)\boxtimes\CA$.
\end{proof}

\begin{cor} \label{cor:braid-nondeg}
Let $F:\CA\to\CB$ be a $\C$-linear braided monoidal functor between braided fusion $n$-categories where $\CA$ is nondegenerate. Then $\CB \simeq \FZ_2(\CA,\CB)\boxtimes\CA$. In particular, $\CB$ is nondegenerate if and only if $\FZ_2(\CA,\CB)$ is nondegenerate.
\end{cor}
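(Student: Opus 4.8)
The plan is to read this corollary as the $E_2$-analogue of the preceding proposition and to deduce it by delooping, transporting the monoidal statement at level $n$ up to the $E_1$ statement at level $n+1$ through the adjunction $\Sigma\dashv\Omega$. The starting observation is that a braided fusion $n$-category deloops to a fusion $(n+1)$-category: $\Sigma\CA$ and $\Sigma\CB$ are indecomposable multi-fusion $(n+1)$-categories, $\Sigma F\colon\Sigma\CA\to\Sigma\CB$ is a $\C$-linear monoidal functor, and by Proposition \ref{prop:nondeg-bfc} the nondegeneracy of $\CA$ is equivalent to the nondegeneracy of $\Sigma\CA$. Hence $\Sigma F$ satisfies exactly the hypotheses of the preceding proposition.

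First I would apply the preceding proposition to $\Sigma F$, obtaining a $\C$-linear equivalence $\Sigma\CB \simeq \FZ_1(\Sigma\CA,\Sigma\CB)\boxtimes\Sigma\CA$. Next I would identify the $E_2$-centralizer with the loop of the $E_1$-centralizer one level up: applying Lemma \ref{lem:centralizer-loop} with $m=2$ to $F\colon\CA\to\CB\simeq\Omega\Sigma\CB$ gives $\FZ_2(\CA,\CB) = \Omega\FZ_1(\Sigma\CA,\Sigma\CB)$. Finally I would loop the displayed equivalence back down. Applying $\Omega$, the left-hand side becomes $\Omega\Sigma\CB\simeq\CB$, while the right-hand side becomes $\Omega\bigl(\FZ_1(\Sigma\CA,\Sigma\CB)\boxtimes\Sigma\CA\bigr) \simeq \Omega\FZ_1(\Sigma\CA,\Sigma\CB)\boxtimes\Omega\Sigma\CA \simeq \FZ_2(\CA,\CB)\boxtimes\CA$, which yields the first assertion.

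For the ``in particular'' clause I would test nondegeneracy through the criterion of Proposition \ref{prop:nondeg-bfc} that a braided fusion $n$-category is nondegenerate precisely when its double delooping is invertible. Applying $\Sigma^2$ to $\CB\simeq\FZ_2(\CA,\CB)\boxtimes\CA$ gives $\Sigma^2\CB\simeq\Sigma^2\FZ_2(\CA,\CB)\boxtimes\Sigma^2\CA$; since $\CA$ is nondegenerate, $\Sigma^2\CA$ is invertible, and invertible objects cancel under $\boxtimes$, so $\Sigma^2\CB$ is invertible if and only if $\Sigma^2\FZ_2(\CA,\CB)$ is, i.e. $\CB$ is nondegenerate if and only if $\FZ_2(\CA,\CB)$ is.

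The main obstacle I expect is not conceptual but the bookkeeping of the monoidal compatibilities of $\Sigma$ and $\Omega$ with $\boxtimes$ used above, namely $\Omega(\CX\boxtimes\CY)\simeq\Omega\CX\boxtimes\Omega\CY$ and $\Sigma^2(\CX\boxtimes\CY)\simeq\Sigma^2\CX\boxtimes\Sigma^2\CY$, together with the check that the equivalence produced is braided monoidal rather than merely $\C$-linear, so that it is legitimately an equivalence of braided fusion $n$-categories. Both compatibilities follow from $\Omega$ and $\Sigma$ being symmetric monoidal for $\boxtimes$ (equivalently, from $\Omega$ being computed by $\Hom$ out of the tensor-product unit and from the symmetric monoidal equivalence $\MFus^\ind_n\simeq\Sep^\ind_{n+1}$), and the braided structure is carried along naturally since every functor in the chain, namely $\Sigma F$, the proposition's equivalence, and $\Omega$, is monoidal.
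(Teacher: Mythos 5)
Your proof is correct and follows the route the paper intends: the corollary is stated without proof precisely because it is the delooping of the preceding proposition, obtained by applying that proposition to $\Sigma F\colon\Sigma\CA\to\Sigma\CB$ (using Proposition \ref{prop:nondeg-bfc} to transfer nondegeneracy) and then looping down via Lemma \ref{lem:centralizer-loop} and the compatibility of $\Omega$ with $\boxtimes$. Your handling of the ``in particular'' clause through $\Sigma^2$ and invertibility is a clean way to close the argument and matches the paper's standing use of Proposition \ref{prop:nondeg-bfc}.
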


\begin{rem}
When the base field is not separably closed, Lemma \ref{lem:e1-cent} fails as \cite[Lemma 3.49(1)]{KZ22} fails. However, when either $\CA$ or $\CB$ is nondegenerate, \cite[Lemma 3.49(1)]{KZ22} can be bypassed hence Lemma \ref{lem:e1-cent} is always true.
\end{rem}

\begin{rem}
Corollary \ref{cor:braid-nondeg} for $n=1$ is due to \cite[Proposition 4.1]{Mu03b}, \cite[Corollary 3.26]{DMNO13} and \cite[Theorem 3.13]{DGNO10}.
\end{rem}

\subsection{Perfect pairing} \label{sec:pairing}

\begin{defn} \label{defn:pairing}
Let $\CA,\CB,\CM \in E_m\KarCat^\C_n$. We say that a $\C$-linear $E_m$-monoidal functor $\phi:\CA\boxtimes\CB\to\CM$ is a {\em perfect pairing} if the induced $\C$-linear $E_m$-monoidal functors $\CA\to\FZ_m(\CB,\CM)$ and $\CB\to\FZ_m(\CA,\CM)$ are both invertible, i.e. $\phi$ exhibits $\CA$ and $\CB$ as $E_m$-centralizers of each other in $\CM$.
\end{defn}

\begin{exam}
According to Proposition \ref{prop:centralizer-center}, the canonical $\C$-linear $E_{m+1}$-monoidal functor $\CA\boxtimes\CA^{\op(-m)}\to\FZ_m(\CA)$ is a perfect pairing for any $\CA\in E_{m+1}\KarCat^\C_n$.
\end{exam}

\begin{prop} \label{prop:e1-centralizer}
Let $\CA,\CB,\CM$ be indecomposable multi-fusion $n$-categories where $\CM$ is nondegenerate. A $\C$-linear monoidal functor $\phi:\CA\boxtimes\CB\to\CM$ is a perfect pairing if and only if $\Sigma\phi:\Sigma\CA\boxtimes\Sigma\CB\to\Sigma\CM$ is a perfect pairing.
\end{prop}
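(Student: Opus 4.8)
The plan is to reduce the statement about monoidal functors $\phi:\CA\boxtimes\CB\to\CM$ to the already-established relation $\Sigma\FZ_1(\CX,\CY)=\FZ_0(\Sigma\CX,\Sigma\CY)$ from Lemma \ref{lem:e1-cent}, together with the $E_0$-centralizer description $\FZ_0(\CA,\CB)=(\Fun(\CA,\CB),F)$. First I would note that, by Definition \ref{defn:pairing} specialized to $m=1$, the functor $\phi$ is a perfect pairing precisely when the two induced monoidal functors $\CA\to\FZ_1(\CB,\CM)$ and $\CB\to\FZ_1(\CA,\CM)$ are both invertible. The corresponding statement for $\Sigma\phi$ is that $\Sigma\CA\to\FZ_0(\Sigma\CB,\Sigma\CM)$ and $\Sigma\CB\to\FZ_0(\Sigma\CA,\Sigma\CM)$ are both invertible. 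So the whole proposition amounts to translating invertibility of a monoidal functor $\CA\to\FZ_1(\CB,\CM)$ into invertibility of its delooping $\Sigma\CA\to\FZ_0(\Sigma\CB,\Sigma\CM)$, and symmetrically with the roles of $\CA,\CB$ exchanged.

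The key step is to apply Lemma \ref{lem:e1-cent} to the composite monoidal functor $\CB\to\CA\boxtimes\CB\xrightarrow{\phi}\CM$ (and symmetrically to $\CA$). Here I must check the hypotheses of Lemma \ref{lem:e1-cent}: the source $\CB$ and the target $\CM$ must be indecomposable multi-fusion $n$-categories, which is exactly what the proposition assumes. This gives $\Sigma\FZ_1(\CB,\CM)=\FZ_0(\Sigma\CB,\Sigma\CM)$, and the canonical comparison functor $\CA\to\FZ_1(\CB,\CM)$ deloops, under $\Sigma$, to the canonical functor $\Sigma\CA\to\Sigma\FZ_1(\CB,\CM)=\FZ_0(\Sigma\CB,\Sigma\CM)$. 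I would then invoke the fact that $\Sigma$ is an equivalence on the relevant $2$-category of indecomposable multi-fusion $n$-categories and separable bimodules (the symmetric monoidal equivalence $\Sigma:\MFus^\ind_n\xrightarrow{\sim}\Sep^\ind_{n+1}$ recorded just before Theorem \ref{thm:z1}); in particular $\Sigma$ reflects and preserves equivalences, so $\CA\to\FZ_1(\CB,\CM)$ is invertible if and only if $\Sigma\CA\to\FZ_0(\Sigma\CB,\Sigma\CM)$ is. Running the same argument with $\CA$ and $\CB$ interchanged gives the second pair of conditions, and combining the two equivalences of conditions yields the proposition.

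The main obstacle I anticipate is \emph{bookkeeping the naturality}: I must verify that the functor $\Sigma$ sends the specific comparison $1$-morphism $\CA\to\FZ_1(\CB,\CM)$ arising from the universal property of the centralizer to the analogous comparison $1$-morphism $\Sigma\CA\to\FZ_0(\Sigma\CB,\Sigma\CM)$, rather than merely to some abstract equivalence between the same objects. Concretely, this requires knowing that the universal unital action $G:\FZ_1(\CB,\CM)\boxtimes\CB\to\CM$ deloops to the universal unital action defining $\FZ_0(\Sigma\CB,\Sigma\CM)$, which is the content packaged into Lemma \ref{lem:e1-cent} and Theorem \ref{thm:mfc-centralizer} (via $\FZ_1(\CB,\CM)=\Omega\FZ_0(\Sigma\CB,\Sigma\CM)$). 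Once that identification of the universal data is in hand, everything else is a formal consequence of $\Sigma$ being an equivalence. A secondary point to handle carefully is the indecomposability needed to apply Lemma \ref{lem:e1-cent}: it is assumed for $\CA,\CB,\CM$ directly, so no extra argument is required, but I would flag that this is precisely why the hypothesis of nondegeneracy of $\CM$ (hence indecomposability) is imposed in the statement.
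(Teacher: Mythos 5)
Your proposal is correct and follows essentially the same route as the paper: the paper's proof is exactly to apply Lemma \ref{lem:e1-cent} to the composite $\CB\to\CM$ (and symmetrically to $\CA\to\CM$) to get $\Sigma\FZ_1(\CB,\CM)=\FZ_0(\Sigma\CB,\Sigma\CM)$, and then to observe that an equivalence at one level induces one at the other via $\Sigma$ and $\Omega$. Your extra care about the naturality of the comparison functors is a correct and welcome elaboration of what the paper leaves implicit; the only small slip is attributing the indecomposability hypothesis to nondegeneracy of $\CM$, whereas the statement assumes indecomposability of all three categories directly.
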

\begin{proof}
By Lemma \ref{lem:e1-cent}, $\Sigma\FZ_1(\CB,\CM) = \FZ_0(\Sigma\CB,\Sigma\CM)$. Then an equivalence $\Sigma\CA\simeq\FZ_0(\Sigma\CB,\Sigma\CM)$ induces an equivalence $\CA\simeq\FZ_1(\CB,\CM)$ and vice versa.
\end{proof}

\begin{defn}
We say that an $E_0$-multi-fusion $n$-category $\CA$ is {\em nondegenerate} if the underlying separable $n$-category of $\CA$ is invertible. 
\end{defn}

\begin{prop} \label{prop:e0-per-pair}
Let $F:\CA\to\CB$ be a $\C$-linear $E_0$-monoidal functor between $E_0$-multi-fusion $n$-categories where $\CB$ is nondegenerate. Then the canonical $\C$-linear $E_0$-monoidal functor $\FZ_0(\CA,\CB)\boxtimes\CA\to\CB$ is a perfect pairing.
\end{prop}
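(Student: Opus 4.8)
The plan is to unwind Definition \ref{defn:pairing} for the canonical action $G\colon\FZ_0(\CA,\CB)\boxtimes\CA\to\CB$, $(f,X)\mapsto f(X)$, and verify its two clauses separately. Writing $\CA' := \FZ_0(\CA,\CB)$, which as an $E_0$-monoidal $n$-category is $(\Fun(\CA,\CB),F)$, the pairing is perfect precisely when the two induced $\C$-linear $E_0$-monoidal functors $\CA'\to\FZ_0(\CA,\CB)$ and $\CA\to\FZ_0(\CA',\CB)$ are invertible. The first is the identity: $G$ is by construction the universal unital action classified by $\Id_{\CA'}$, so that clause is formal and only the second functor requires argument.

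For the second clause, the $E_0$-centralizer computation gives $\FZ_0(\CA',\CB)=\bigl(\Fun(\Fun(\CA,\CB),\CB),\ev_{\one_\CA}\bigr)$, and the induced functor $\CA\to\FZ_0(\CA',\CB)$ is the double evaluation $X\mapsto\ev_X$ with $\ev_X(f)=f(X)$. This functor is automatically pointed, since it sends $\one_\CA$ to $\ev_{\one_\CA}$, the distinguished object of $\FZ_0(\CA',\CB)$; hence it is $E_0$-monoidal, and it remains to prove that it is a $\C$-linear equivalence.

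First I would compute the target abstractly. Applying $\Fun(\CC,\CD)\simeq\CC^\op\boxtimes\CD$ from Example \ref{exam:closed-bimod} twice yields
$$\Fun(\Fun(\CA,\CB),\CB)\simeq\Fun(\CA^\op\boxtimes\CB,\CB)\simeq\CA\boxtimes\CB^\op\boxtimes\CB=\CA\boxtimes\FZ_0(\CB).$$
Since $\CB$ is nondegenerate it is invertible, so $\FZ_0(\CB)\simeq n\Vect$ by Corollary \ref{cor:inv-trivial}, and the target is equivalent to $\CA$. This shows the two sides share an underlying $n$-category; the content of the proposition is that the canonical double evaluation realizes this equivalence.

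The main obstacle is exactly this last identification: matching the specific natural functor $X\mapsto\ev_X$ with the abstract chain above. I would handle it by currying, $\Fun(\CA^\op\boxtimes\CB,\CB)\simeq\Fun(\CA^\op,\Fun(\CB,\CB))=\Fun(\CA^\op,\FZ_0(\CB))$, under which the trivialization $\FZ_0(\CB)\simeq n\Vect$ carries $\Id_\CB$ to $\one$. In this description $\ev_X$ corresponds to the functor $\CA^\op\to\FZ_0(\CB)$ classified by $X$ paired with the canonical action of $\FZ_0(\CB)$ on $\CB$; the standard reflexivity $\CA\simeq(\CA^\op)^\op$ of separable $n$-categories, which is the $\CB=n\Vect$ case of the statement in which the double evaluation is the reflexivity equivalence, then upgrades to general invertible $\CB$ after transport along $\FZ_0(\CB)\simeq n\Vect$, identifying $X\mapsto\ev_X$ with an equivalence. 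Finally, the hypothesis $\one_\CB\neq0$ ensures the pointings involved are nonzero, so the resulting pointed functor is a genuine $E_0$-monoidal equivalence.
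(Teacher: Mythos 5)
Your proof is correct and follows essentially the same route as the paper: the paper's entire argument is that, since $\CB$ is an invertible separable $n$-category, the canonical double-dual functor $\CA\to\Fun(\Fun(\CA,\CB),\CB)$ is invertible, which is exactly the content of your computation $\Fun(\Fun(\CA,\CB),\CB)\simeq\CA\boxtimes\FZ_0(\CB)\simeq\CA$ together with the identification of the double evaluation with this equivalence. Your extra care with the first clause and with matching $X\mapsto\ev_X$ to the abstract equivalence only makes explicit what the paper leaves implicit.
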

\begin{proof}
Since $\CB$ is an invertible separable $n$-category, the canonical $\C$-linear functor $\CA \to \Fun(\Fun(\CA,\CB),\CB)$ is invertible. Hence the canonical $\C$-linear $E_0$-monoidal functor $\CA \to \FZ_0(\FZ_0(\CA,\CB),\CB)$ is invertible.
\end{proof}

\begin{cor} \label{cor:e1-per-pair}
Let $F:\CA\to\CB$ be a $\C$-linear monoidal functor between indecomposable multi-fusion $n$-categories where $\CB$ is nondegenerate. Then the canonical $\C$-linear monoidal functor $\FZ_1(\CA,\CB)\boxtimes\CA\to\CB$ is a perfect pairing.
\end{cor}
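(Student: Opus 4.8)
The plan is to reduce this $E_1$-statement to the $E_0$-statement already proved in Proposition \ref{prop:e0-per-pair}, by applying the suspension functor $\Sigma$ exactly as in Proposition \ref{prop:e1-centralizer}. Write $\phi\colon\FZ_1(\CA,\CB)\boxtimes\CA\to\CB$ for the canonical monoidal functor in question (the unital action of the centralizer). The idea is that $\Sigma\phi$ becomes the canonical $E_0$-action of an $E_0$-centralizer, to which Proposition \ref{prop:e0-per-pair} applies directly, and Proposition \ref{prop:e1-centralizer} lets us transport the conclusion back down.

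First I would check that the triple $\bigl(\FZ_1(\CA,\CB),\CA,\CB\bigr)$ satisfies the hypotheses of Proposition \ref{prop:e1-centralizer}: all three must be indecomposable multi-fusion $n$-categories with $\CB$ nondegenerate. The only nontrivial point is $\FZ_1(\CA,\CB)$. By Example \ref{exam:centralizer:z1} it is multi-fusion, and by Lemma \ref{lem:e1-cent} we have $\Sigma\FZ_1(\CA,\CB)=\FZ_0(\Sigma\CA,\Sigma\CB)=\Fun(\Sigma\CA,\Sigma\CB)$, which is indecomposable by \cite[Lemma 3.49(1)]{KZ22}; hence $\FZ_1(\CA,\CB)$ is indecomposable. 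Thus Proposition \ref{prop:e1-centralizer} is applicable with $\CM=\CB$, and it tells us that $\phi$ is a perfect pairing if and only if $\Sigma\phi\colon\Sigma\FZ_1(\CA,\CB)\boxtimes\Sigma\CA\to\Sigma\CB$ is a perfect pairing.

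Next I would identify $\Sigma\phi$ with the canonical $E_0$-action. Using the symmetric monoidality of $\Sigma$ together with Lemma \ref{lem:e1-cent} to rewrite $\Sigma\FZ_1(\CA,\CB)=\FZ_0(\Sigma\CA,\Sigma\CB)$, the functor $\Sigma\phi$ is precisely the canonical $E_0$-action $\FZ_0(\Sigma\CA,\Sigma\CB)\boxtimes\Sigma\CA\to\Sigma\CB$ coming from the pointed functor $\Sigma F\colon\Sigma\CA\to\Sigma\CB$. Here $\Sigma\CA$ and $\Sigma\CB$ are $E_0$-multi-fusion $(n+1)$-categories, pointed by their regular objects, and $\Sigma\CB$ is a \emph{nondegenerate} such category, i.e. an invertible separable $(n+1)$-category, because $\CB$ is nondegenerate (Proposition \ref{prop:fus-nondeg}, $(1)\Leftrightarrow(2)$). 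Proposition \ref{prop:e0-per-pair} then shows at once that this canonical $E_0$-action, namely $\Sigma\phi$, is a perfect pairing; feeding this back through Proposition \ref{prop:e1-centralizer} yields that $\phi$ is a perfect pairing, as desired.

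The main obstacle is purely bookkeeping rather than analytic: one must make sure that $\FZ_1(\CA,\CB)$ is genuinely an indecomposable multi-fusion $n$-category so that Proposition \ref{prop:e1-centralizer} legitimately applies, and that under the identification $\Sigma\FZ_1(\CA,\CB)=\FZ_0(\Sigma\CA,\Sigma\CB)$ the suspended functor $\Sigma\phi$ really coincides with the canonical $E_0$-centralizer action appearing in Proposition \ref{prop:e0-per-pair}. Both are settled by Lemma \ref{lem:e1-cent} and the naturality of the universal action in Definition \ref{defn:centralizers}, after which no further computation is needed.
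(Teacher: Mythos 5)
Your proposal is correct and follows exactly the paper's own route: the paper's proof is the one-line instruction to combine Lemma \ref{lem:e1-cent}, Proposition \ref{prop:e1-centralizer} and Proposition \ref{prop:e0-per-pair}, which is precisely the reduction you carry out. You have merely filled in the bookkeeping (indecomposability of $\FZ_1(\CA,\CB)$, invertibility of $\Sigma\CB$, identification of $\Sigma\phi$ with the canonical $E_0$-action) that the paper leaves implicit.
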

\begin{proof}
Combine Lemma \ref{lem:e1-cent}, Proposition \ref{prop:e1-centralizer} and Proposition \ref{prop:e0-per-pair}.
\end{proof}

\begin{lem} \label{lem:centralizer-ff}
Let $F:\CA\to\CB$ and $G:\CB\to\CC$ be $\C$-linear $E_m$-monoidal functors where $\CA,\CB,\CC \in E_m\KarCat^\C_n$. If $G$ is fully faithful, then the induced $\C$-linear $E_m$-monoidal functor $G':\FZ_m(\CA,\CB)\to\FZ_m(\CA,\CC)$ is also fully faithful.
\end{lem}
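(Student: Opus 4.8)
The plan is to reduce fully-faithfulness of $G'$ to that of $G$ by exhibiting the mapping $(n-1)$-categories of the centralizer as ends of mapping $(n-1)$-categories of $\CB$, on each of which $G$ is by hypothesis an equivalence. First I would pin down $G'$: composing the universal action $\FZ_m(\CA,\CB)\boxtimes\CA\to\CB$ with $G$ gives a unital action factoring $G\circ F$, and the universal property of $\FZ_m(\CA,\CC)$ produces $G'$. Under the identification of Theorem \ref{thm:mfc-centralizer}, which is natural in the target, $\FZ_m(\CA,\CB)=\Omega^m(\Fun(\Sigma^m\CA,\Sigma^m\CB),\Sigma^m F)$, and $G'$ becomes $\Omega^m$ of the post-composition functor $(\Sigma^m G)_*\colon\Fun(\Sigma^m\CA,\Sigma^m\CB)\to\Fun(\Sigma^m\CA,\Sigma^m\CC)$ based at $\Sigma^m F$.

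The case $m=0$ is the template. Here $G'=G_*$ is post-composition, and for $H,K\colon\CA\to\CB$ the mapping $(n-1)$-category is the end $\int_{a\in\CA}\CB(Ha,Ka)$. Fully faithfulness of $G$ makes each component $\CB(Ha,Ka)\to\CC(GHa,GKa)$ an equivalence, and ends, being limits, preserve equivalences; hence $G_*$ is fully faithful. For $m=1$ in the multi-fusion setting the same mechanism is visible through Example \ref{exam:centralizer:z1}: $\FZ_1(\CA,\CB)\simeq\Fun_{\CA|\CA}(\CA,\CB)$, with $G'$ again post-composition by $G$ and its mapping categories given by ends of $\CB(-,-)$, on each factor of which $G$ is an equivalence.

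For general $m$ I would argue directly that $\Omega^m(\Sigma^m G)_*$ is fully faithful, using one structural observation: fully-faithfulness of $\Omega^m\Psi$ at a basepoint $x$ constrains $\Psi$ only on the mapping $(n-1)$-categories between $m$-morphisms at $x$ — that is, on the morphisms of $\Sigma^m\CB$ in levels $\ge m+1$ — and imposes nothing on the lower levels. These high-level mapping categories are precisely the ones recovered by $\CB=\Omega^m\Sigma^m\CB$, and there $\Sigma^m G$ restricts to $G$. Concretely, the pertinent mapping $(n-1)$-categories of $\Fun(\Sigma^m\CA,\Sigma^m\CB)$ are computed as ends over $\Sigma^m\CA$ of level-$(\ge m+1)$ mapping categories of $\Sigma^m\CB$; on these $(\Sigma^m G)_*$ acts by $\Sigma^m G$, which is an equivalence because it reduces to the fully faithful $G$; ends then preserve these equivalences, so $G'=\Omega^m(\Sigma^m G)_*$ is fully faithful.

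The main obstacle is that one cannot shortcut this by claiming ``$\Sigma^m G$ is fully faithful, hence so is post-composition'': $\Sigma^m G$ is \emph{not} fully faithful once $G$ fails to be essentially surjective — already the fully faithful unit inclusion $n\Vect\to\CB$ has $\Sigma(n\Vect)\to\Sigma\CB$ not fully faithful. What rescues the argument is that $\Omega^m$ discards exactly the low-level mapping data, where $\Sigma^m G$ is deficient, and keeps only the level-$(\ge m+1)$ data, where $\Sigma^m G$ recovers $G$. I would therefore take care to (i) confirm that this level truncation is exactly what fully-faithfulness of $\Omega^m\Psi$ requires, and (ii) check that $\Sigma^m G$ is an equivalence on these deep mapping categories at \emph{every} object appearing in the end — not merely the generating object but all of its condensates — which holds because $\Sigma^m G$ preserves condensations and equivalences of mapping categories pass to retracts.
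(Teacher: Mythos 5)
Your argument is essentially correct, but it is a genuinely different proof from the one in the paper. The paper argues by universal property: let $\CE\subset\FZ_m(\CA,\CC)$ be the full subcategory of condensates of the image of $G'$; since $G$ is fully faithful (and condensates are absolute, so the essential image of $G$ is closed under condensation in $\CC$), the unital action $\FZ_m(\CA,\CC)\boxtimes\CA\to\CC$ restricts to a unital action $\CE\boxtimes\CA\to\CB$, whence $\CE$ satisfies the defining universal property of $\FZ_m(\CA,\CB)$ and $G'$ is an equivalence onto $\CE$. That argument is two lines, model-independent, and never opens up the mapping categories. Your route instead unwinds the explicit model $\FZ_m(\CA,\CB)=\Omega^m(\Fun(\Sigma^m\CA,\Sigma^m\CB),\Sigma^m F)$ and checks fully faithfulness hom-category by hom-category via an end formula. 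Its virtues are that it makes visible exactly where the hypothesis enters --- you correctly isolate the trap that $\Sigma^m G$ is not fully faithful and explain why $\Omega^m$ sees only the level-$(\ge m{+}1)$ mapping data, where $\Sigma^m G$ reduces to $G$; and the condensate/retract step (ii) is genuinely needed and correctly handled, since the end ranges over all condensates of the generator, not just the generator itself. The cost is that you lean on infrastructure the paper deliberately avoids: an end formula for mapping $(n-1)$-categories in $\Fun(\Sigma^m\CA,\Sigma^m\CB)$, naturality of Theorem \ref{thm:mfc-centralizer} in the target, and a level-by-level bookkeeping of what fully faithfulness of $\Omega^m\Psi$ amounts to --- all plausible in any reasonable model of weak $n$-categories, but each a nontrivial verification in its own right. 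If you want a proof at the paper's level of abstraction, the universal-property argument is the one to internalize; your computation is a useful sanity check that also explains \emph{why} the statement is true at the level of hom-categories.
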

\begin{proof}
Let $\CE \subset \FZ_m(\CA,\CC)$ be the full subcategory consisting of the condensates of the image of $G'$. Then the unital action $\FZ_m(\CA,\CC)\boxtimes\CA\to\CC$ restricts to a unital action $\CE\boxtimes\CA\to\CB$ so that $\CE$ shares the same universal property as $\FZ_m(\CA,\CB)$. Therefore, $G'$ is fully faithful onto $\CE$.
\end{proof}

\begin{prop} \label{prop:e2-centralizer}
Let $\CA,\CB,\CM$ be braided fusion $n$-categories where $\CM$ is nondegenerate. A $\C$-linear braided monoidal functor $\phi:\CA\boxtimes\CB\to\CM$ is a perfect pairing if and only if $\Sigma\phi:\Sigma\CA\boxtimes\Sigma\CB\to\Sigma\CM$ is a perfect pairing.
\end{prop}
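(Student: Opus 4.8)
The plan is to run the argument of Proposition \ref{prop:e1-centralizer} one categorical level higher, so the crux is to promote Lemma \ref{lem:e1-cent} to the $E_2$-setting: for the braided monoidal functor $\CB\to\CM$ underlying $\phi$ I claim a canonical equivalence
$$\Sigma\FZ_2(\CB,\CM)\simeq\FZ_1(\Sigma\CB,\Sigma\CM),$$
and symmetrically with $\CA$ in place of $\CB$. Granting this, the proposition follows formally: unwinding Definition \ref{defn:pairing}, $\phi$ is a perfect pairing iff the induced functors $\CA\to\FZ_2(\CB,\CM)$ and $\CB\to\FZ_2(\CA,\CM)$ are equivalences, while $\Sigma\phi$ is a perfect pairing iff $\Sigma\CA\to\FZ_1(\Sigma\CB,\Sigma\CM)$ and $\Sigma\CB\to\FZ_1(\Sigma\CA,\Sigma\CM)$ are equivalences. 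Since $\Omega\Sigma\simeq\Id$ and the displayed equivalence intertwines the two universal action maps, an equivalence $\Sigma\CA\simeq\FZ_1(\Sigma\CB,\Sigma\CM)$ yields, upon applying $\Omega$, an equivalence $\CA\simeq\FZ_2(\CB,\CM)$, and applying $\Sigma$ recovers it; the same holds after exchanging $\CA$ and $\CB$. Hence $\phi$ is a perfect pairing iff $\Sigma\phi$ is.

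To establish the displayed identity I would first invoke Theorem \ref{thm:mfc-centralizer} with $m=2$, $k=1$ to get $\FZ_2(\CB,\CM)=\Omega\FZ_1(\Sigma\CB,\Sigma\CM)$; it therefore suffices to show that the multi-fusion $(n+1)$-category $\FZ_1(\Sigma\CB,\Sigma\CM)$ is connected, for then $\Sigma\Omega\FZ_1(\Sigma\CB,\Sigma\CM)\simeq\FZ_1(\Sigma\CB,\Sigma\CM)$ and the identity follows. Because $\CB$ and $\CM$ are braided fusion, $\Sigma\CB$ and $\Sigma\CM$ are indecomposable fusion $(n+1)$-categories and the induced $\C$-linear monoidal functor $\Sigma\CB\to\Sigma\CM$ lets me apply Lemma \ref{lem:e1-cent} at level $n+1$, giving
$$\Sigma\FZ_1(\Sigma\CB,\Sigma\CM)=\FZ_0(\Sigma^2\CB,\Sigma^2\CM)=\Fun(\Sigma^2\CB,\Sigma^2\CM)\simeq(\Sigma^2\CB)^\op\boxtimes\Sigma^2\CM,$$
the last equivalence being Example \ref{exam:closed-bimod}. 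The right-hand side is indecomposable by \cite[Lemma 3.49(1)]{KZ22}, and since the delooping of a multi-fusion category is indecomposable precisely when the category is connected, this shows that $\FZ_1(\Sigma\CB,\Sigma\CM)$ is connected (hence fusion), as required.

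The nondegeneracy hypothesis on $\CM$ enters through Proposition \ref{prop:nondeg-bfc}: it guarantees that $\Sigma\CM$ is a nondegenerate fusion $(n+1)$-category, which, as in the remark following Lemma \ref{lem:e1-cent}, is exactly the assumption that keeps the level-$(n+1)$ application of Lemma \ref{lem:e1-cent} valid over a general base field (over $\C$ it is harmless). I expect the most delicate point to be this connectedness reduction — that indecomposability of $(\Sigma^2\CB)^\op\boxtimes\Sigma^2\CM$ forces $\FZ_1(\Sigma\CB,\Sigma\CM)$ to be connected so that $\Sigma\Omega$ returns it unchanged — together with the bookkeeping that the equivalence $\Sigma\FZ_2(\CB,\CM)\simeq\FZ_1(\Sigma\CB,\Sigma\CM)$ genuinely intertwines the centralizer structure maps, so that ``perfect pairing'' transports across $\Sigma$ and $\Omega$ rather than merely the underlying categories being equivalent.
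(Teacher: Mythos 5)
Your reduction of the statement to the single identity $\Sigma\FZ_2(\CB,\CM)\simeq\FZ_1(\Sigma\CB,\Sigma\CM)$, i.e.\ to the connectedness of the multi-fusion $(n+1)$-category $\CA'=\FZ_1(\Sigma\CB,\Sigma\CM)$, is exactly the right crux, and your sufficiency direction (apply $\Omega$ and use $\FZ_2(\CB,\CM)=\Omega\FZ_1(\Sigma\CB,\Sigma\CM)$ from Theorem \ref{thm:mfc-centralizer}) agrees with the paper. The gap is in your proof of connectedness. You argue that $\Sigma\CA'=\FZ_0(\Sigma^2\CB,\Sigma^2\CM)\simeq(\Sigma^2\CB)^\op\boxtimes\Sigma^2\CM$ is indecomposable and that ``the delooping of a multi-fusion category is indecomposable precisely when the category is connected.'' That equivalence is false: indecomposability of $\Sigma\CA'$ is equivalent to indecomposability of $\CA'$ as a multi-fusion category, which is strictly weaker than connectedness ($\CA'\simeq\Sigma\Omega\CA'$). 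Already one level down, $\CC_0=\Fun(\Vect\oplus\Vect,\Vect\oplus\Vect)$ is an indecomposable multi-fusion $1$-category with $\Sigma\CC_0\simeq2\Vect$ indecomposable, yet $\Sigma\Omega\CC_0\simeq\Vect\oplus\Vect$ is a proper full subcategory of $\CC_0$, so $\CC_0$ is not connected. More structurally, $\Sigma\CA'$ only remembers $\CA'$ up to Morita equivalence, and connectedness is not Morita invariant, so no property of $\Sigma\CA'$ alone can certify it.

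This is not a repairable slip but the actual content of the hard direction. Note that your argument, were it correct, would give $\Sigma\FZ_2(\CB,\CM)\simeq\FZ_1(\Sigma\CB,\Sigma\CM)$ unconditionally for any braided monoidal functor into a nondegenerate target, whereas the paper establishes connectedness of $\CA'$ only under the hypothesis that $\phi$ is already a perfect pairing, and by an entirely different mechanism: it uses Corollary \ref{cor:e1-per-pair} to identify $\FZ_1(\CA',\Sigma\CM)\simeq\Sigma\CB$ and $\FZ_1(\CB',\Sigma\CM)\simeq\Sigma\CA$, builds a commutative square in $\MFus^\ind_n$ whose vertical arrows are invertible by Proposition \ref{prop:fun-inv}, applies the functoriality of the $E_1$-center (Theorem \ref{thm:z1}) together with Lemma \ref{lem:centralizer-ff} to obtain a fully faithful monoidal functor $\CA'^\rev\hookrightarrow\Fun_{\Sigma\CA}(\CA',\CA')$, and only then concludes that $\Fun_{\Sigma\CA}(\CA',\CA')$ has a simple tensor unit, i.e.\ that $\CA'$ is connected. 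Relatedly, the nondegeneracy of $\CM$ is not merely the base-field technicality you describe: it is what makes Corollary \ref{cor:e1-per-pair} and Proposition \ref{prop:fun-inv} applicable in that argument. You would need to supply a proof of this kind for the connectedness step before the rest of your outline goes through.
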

\begin{proof}
Sufficiency. According to theorem \ref{thm:mfc-centralizer}, an equivalence $\Sigma\CA\simeq\FZ_1(\Sigma\CB,\Sigma\CM)$ induces an equivalence $\CA\simeq\FZ_2(\CB,\CM)$. Therefore, if $\Sigma\phi$ is a perfect pairing, so is $\phi$.

Necessity. Let $\CA'=\FZ_1(\Sigma\CB,\Sigma\CM)$ and $\CB'=\FZ_1(\Sigma\CA,\Sigma\CM)$ so that $\CA\simeq\Omega\CA'$ and $\CB\simeq\Omega\CB'$. We need to show that $\CA'$ is connected whence the induced $\C$-linear monoidal functor $\Sigma\CA\hookrightarrow\CA'$ is invertible.
By Corollary \ref{cor:e1-per-pair}, $\FZ_1(\CA',\Sigma\CM)\simeq\Sigma\CB$ and $\FZ_1(\CB',\Sigma\CM)\simeq\Sigma\CA$. We have a commutative diagram in $\MFus^\ind_n$:
$$\xymatrixcolsep{5em}\xymatrix{
  \Sigma\CA \ar[r]^-{\CA'} \ar[d]_{\Sigma\CM}^\sim & \CA' \ar[d]^{\Sigma\CM}_\sim \\
  \CB'^\rev\boxtimes\Sigma\CM \ar[r]^-{\CB'\boxtimes\Sigma\CM} & \Sigma\bar\CB\boxtimes\Sigma\CM
}
$$
where the vertical morphisms are invertible by Proposition \ref{prop:fun-inv}(1). 
By the functoriality of $E_1$-center, we obtain a commutative diagram (up to equivalence) of fusion $(n+1)$-categories
$$\xymatrix{
  \FZ_1(\CA') \ar[r]^-\psi \ar[d]_\sim & \FZ_1(\Sigma\CA,\CA') \ar[d]^\sim \\
  \overline{\FZ_1(\Sigma\CB\boxtimes\Sigma\bar\CM)} \ar[r]^-{\psi'} & \FZ_1(\Sigma\CB\boxtimes\Sigma\bar\CM,\CB'\boxtimes\Sigma\bar\CM) .  
}
$$
Since $\psi'$ is fully faithful by Lemma \ref{lem:centralizer-ff}, $\psi$ is also fully faithful. Thus we have a fully faithful $\C$-linear monoidal functor $\CA'^\rev \hookrightarrow \CA'^\rev\boxtimes_{\FZ_1(\CA')}\FZ_1(\Sigma\CA,\CA') \simeq \Fun_{\CA'}(\CA',\CA')\boxtimes_{\FZ_1(\CA')}\Fun_{\Sigma\CA|\CA'}(\CA',\CA') \simeq \Fun_{\Sigma\CA}(\CA',\CA')$, where we applied the functoriality of $E_1$-center again. In particular, $\Fun_{\Sigma\CA}(\CA',\CA')$ has a simple tensor unit, i.e. $\CA'$ is connected, as desired.
\end{proof}

\begin{rem}
The proof of Proposition \ref{prop:e2-centralizer} only applies to a separably closed base field, but the general case can be reduced to the separably closed case by applying a field extension because field extensions are compatible with centralizers. 
\end{rem}

\subsection{Modular extensions} \label{sec:mext}

\begin{defn}[$E_0$-version] \label{defn:e0-mext} 
Let $\CA$ be a nonzero $E_0$-multi-fusion $n$-category over $\CE^\rev$ defined by a $\C$-linear monoidal functor $\psi_\CA:\CE^\rev\to\FZ_0(\CA)$, where $\CE$ is a nonzero multi-fusion $n$-category. An {\em $E_0$-modular extension} of $\CA$ is a nondegenerate $E_0$-multi-fusion $n$-category $\CM$ equipped with a $\C$-linear $E_0$-monoidal functor $\CA\to\CM$ such that the composite $\C$-linear $E_0$-monoidal functor $\CA\boxtimes\CE\to\CA\to\CM$ is a perfect pairing. Two modular extensions $\CM$ and $\CN$ of $\CA$ are {\em equivalent} if there is a $\C$-linear $E_0$-monoidal equivalence $\CM\simeq\CN$ such that the composition $\CA\to\CM\simeq\CN$ is equivalent to $\CA\to\CN$.
\end{defn}

\begin{exam} \label{exam:e0-mext}
In the special case where $\CA=(\CE^\op,X)$ equipped with the canonical monoidal functor $\psi_\CA:\CE^\rev\to\FZ_0(\CA)$, $(n\Vect,\Hom_\CE(X,\one_\CE))$ is an $E_0$-modular extension of $\CA$ because the evaluation functor $\CA\boxtimes\CE\to(n\Vect,\Hom_\CE(X,\one_\CE))$ is a perfect pairing.
\end{exam}

\begin{thm} \label{thm:mod-ext-e0}
The equivalence classes of $E_0$-modular extensions of $\CA$ one-to-one correspond to the equivalence classes of $\C$-linear monoidal equivalences $\FZ_0(\CE)^\rev \simeq \FZ_0(\CA)$ such that the composition $\CE^\rev \to \FZ_0(\CE)^\rev \simeq \FZ_0(\CA)$ is equivalent to $\psi_\CA$.
\end{thm}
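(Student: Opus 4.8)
The plan is to reformulate an $E_0$-modular extension so that it becomes visibly the same data as a compatible equivalence of $E_0$-centers, and then to read off the bijection from the contravariant functoriality of $\FZ_0$ together with invertibility of $\CM$.

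First I would unwind the definitions to a single condition. An $E_0$-modular extension is a nondegenerate (hence invertible) $E_0$-multi-fusion $\CM$ with a $\C$-linear functor $f:\CA\to\CM$ such that $\CA\boxtimes\CE\to\CM$ is a perfect pairing. Since $\CM$ is nondegenerate, Proposition \ref{prop:e0-per-pair} applied to $f$ shows that the canonical pairing $\FZ_0(\CA,\CM)\boxtimes\CA\to\CM$ is already perfect, so the perfect-pairing condition for $\CA\boxtimes\CE\to\CM$ reduces to the single requirement that the induced $\C$-linear functor $\CE\to\FZ_0(\CA,\CM)=\Fun(\CA,\CM)$, $e\mapsto f(-\cdot e)$, be an equivalence intertwining the $\CE$-actions; the other half $\CA\xrightarrow\sim\Fun(\CE,\CM)$ then follows from double centralization ($\FZ_0(\FZ_0(\CA,\CM),\CM)\simeq\CA$). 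Thus a modular extension is precisely an invertible $\CM$ together with a right-$\CE$-module equivalence $\CE\simeq\Fun(\CA,\CM)$.

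For the forward map I would apply $\FZ_0$. The essential input is a contravariance computation: if $\CM$ is invertible then $\Fun(\CC,\CM)\simeq\CC^\op\boxtimes\CM$ by Example \ref{exam:closed-bimod} and $\CM^\op\boxtimes\CM\simeq n\Vect$ by Corollary \ref{cor:inv-trivial}, so
$$\FZ_0(\Fun(\CA,\CM)) = \Fun(\Fun(\CA,\CM),\Fun(\CA,\CM))\simeq\Fun(\CA,\CA)^\rev = \FZ_0(\CA)^\rev,$$
the $\rev$ arising from the contravariance of $\Fun(-,\CM)$ in its first argument. Feeding the equivalence $\CE\simeq\Fun(\CA,\CM)$ into this yields $\FZ_0(\CE)\simeq\FZ_0(\CA)^\rev$, i.e. $\FZ_0(\CE)^\rev\simeq\FZ_0(\CA)$; restricting along the canonical functor $\CE^\rev\to\FZ_0(\CE)^\rev$ should recover $\psi_\CA$, which is the assertion that the $\CE$-action on $\CA$ corresponds, under $\CA\simeq\Fun(\CE,\CM)$, to precomposition by left multiplication. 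Conversely, given a compatible $\Lambda:\FZ_0(\CE)^\rev\simeq\FZ_0(\CA)$, I would recover $\CM$ by Morita theory: view $\CA$ as a module over $\FZ_0(\CE)^\rev\simeq\FZ_0(\CE^\op)=\Fun(\CE^\op,\CE^\op)$ via $\Lambda$ and set $\CM:=\Fun_{\FZ_0(\CE)^\rev}(\CE^\op,\CA)$. Since $\CE^\op$ is an invertible $\Fun(\CE^\op,\CE^\op)$-module and $\Lambda$ is an equivalence, $\CM$ is an invertible separable $n$-category with $\CA\simeq\CE^\op\boxtimes\CM\simeq\Fun(\CE,\CM)$; compatibility of $\Lambda$ with the canonical functor identifies $\psi_\CA$ with precomposition, so with $f$ evaluation at $\one_\CE$ and $\one_\CM:=f(\one_\CA)$ the pairing $\CA\boxtimes\CE\to\CM$ is perfect as in Example \ref{exam:e0-mext}. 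That the two assignments are mutually inverse is then formal, as both $\CM$ and $\Lambda$ are reconstructed from the single right-$\CE$-module equivalence $\CA\simeq\Fun(\CE,\CM)$, and the ``in particular'' statement on existence is immediate.

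The main obstacle is the bookkeeping of the $\op$/$\rev$ variances and the naturality in the contravariance computation: one must verify not merely that $\FZ_0(\Fun(\CA,\CM))\simeq\FZ_0(\CA)^\rev$ but that this equivalence is compatible with the canonical monoidal functors out of $\CE^\rev$, so that the composite $\CE^\rev\to\FZ_0(\CE)^\rev\simeq\FZ_0(\CA)$ is genuinely $\psi_\CA$ rather than some twist of it. Once this compatibility is pinned down, the Morita recovery of $\CM$ and the mutual-inverse verification are routine.
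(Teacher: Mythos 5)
Your proposal is correct and follows essentially the same route as the paper: both directions hinge on the identification $\CA\simeq\Fun(\CE,\CM)$ (equivalently $\CE\simeq\Fun(\CA,\CM)$), deducing $\FZ_0(\CE)^\rev\simeq\FZ_0(\CA)$ from closedness of this hom-bimodule (Example \ref{exam:closed-bimod}) together with triviality of $\FZ_0(\CM)$, and recovering $\CM$ as a relative tensor product over $\FZ_0(\CE)$ --- your $\Fun_{\FZ_0(\CE)^\rev}(\CE^\op,\CA)$ agrees with the paper's $\CA\boxtimes_{\FZ_0(\CE)}\CE$ via Proposition \ref{prop:tensor-nonzero}(1). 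The $\op$/$\rev$ bookkeeping and the compatibility with $\psi_\CA$ that you flag as the main remaining obstacle are handled at the same level of detail (``it is clear'') in the paper's own proof.
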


\begin{proof}
Given such an equivalence $\FZ_0(\CE)^\rev \simeq \FZ_0(\CA)$, we let $\CM = \CA\boxtimes_{\FZ_0(\CE)}\CE$ equipped with the $\C$-linear $E_0$-monoidal functor $\Id_\CA\boxtimes_{\FZ_0(\CE)}\one_\CE:\CA\to\CM$. Then $\CM$ is a closed $n\Vect$-$n\Vect$-bimodule by Corollary \ref{cor:closed-bimod}, hence $\CM$ is nondegenerate by Corollary \ref{cor:inv-trivial}. Moreover, $\FZ_0(\CE,\CM) \simeq \CA\boxtimes_{\FZ_0(\CE)}\FZ_0(\CE,\CE) \simeq \CA$ by Proposition \ref{prop:fun-tensor} and similarly $\FZ_0(\CA,\CM)\simeq\CE$. That is, the canonical $\C$-linear $E_0$-monoidal functor $\CA\boxtimes\CE\to\CM$ is a perfect pairing. Since the composition $\CE^\rev \to \FZ_0(\CE)^\rev \simeq \FZ_0(\CA)$ is equivalent to $\psi_\CA$, we have a natural equivalence $A\boxtimes_{\FZ_0(\CE)}E \simeq (A\otimes E)\boxtimes_{\FZ_0(\CE)}\one_\CE$ for $A\in\CA$ and $E\in\CE$. Therefore, the composition $\CA\boxtimes\CE\to\CA\to\CM$ is equivalent to the canonical one hence is also a perfect pairing. This shows that $\CM$ is an $E_0$-modular extension of $\CA$.

We need to show the equivalence class of $\CM$ over $\CA$ depends only on the equivalence class of the given $\FZ_0(\CE)^\rev \simeq \FZ_0(\CA)$. Indeed, a $\C$-linear monoidal equivalence $\phi:\FZ_0(\CE)^\rev \to \FZ_0(\CA)$ promotes $\CA$ to a right $\FZ_0(\CE)$-module, say, $\CA_\phi$. A monoidal natural equivalence $\phi\simeq\varphi$ promotes the identity functor of $\CA$ to a right $\FZ_0(\CE)$-module equivalence $\CA_\phi\simeq\CA_\varphi$. Therefore, $\CA_\phi\boxtimes_{\FZ_0(\CE)}\CE \simeq \CA_\varphi\boxtimes_{\FZ_0(\CE)}\CE$ over $\CA$, as desired. 

Conversely, given an $E_0$-modular extension $\CM$ of $\CA$, we see that the composition $\CA\boxtimes\CE\to\CA\to\CM$ induces a left $\CE$-module equivalence $\CE\simeq\FZ_0(\CA,\CM)$. Then $\CE$ is a closed $\FZ_0(\CM)$-$\FZ_0(\CA)$-bimodule by Example \ref{exam:closed-bimod}. Since $\CM$ is nondegenerate, $\FZ_0(\CM)$ is trivial. We obtain $\FZ_0(\CA) \simeq \FZ_0(\CE)^\rev$. Since $\CE\simeq\FZ_0(\CA,\CM)$ as left $\CE$-modules, the composition $\psi:\CE^\rev \to \FZ_0(\CE)^\rev \simeq \FZ_0(\CA)$ is equivalent to $\psi_\CA$. Since $\FZ_0(\CA) \simeq \FZ_0(\CE)^\rev$ is determined by the right $\FZ_0(\CA)$-action on $\FZ_0(\CA,\CM)$, its equivalence class depends only on the equivalence class of $\CM$ over $\CA$.

It remains to demonstrate that the two constructions are mutually inverse. Indeed, given an $E_0$-modular extension $\CM$ of $\CA$, we have $\CA\boxtimes_{\FZ_0(\CE)}\CE \simeq \CA\boxtimes_{\FZ_0(\CA)^\rev}\FZ_0(\CA,\CM) \simeq \CM$ over $\CA$ where we applied the functoriality of $E_0$-center. On the other hand, for a $\C$-linear monoidal equivalence $\phi:\FZ_0(\CE)^\rev \to \FZ_0(\CA)$, the right action of $\FZ_0(\CA)$ on $\FZ_0(\CA,\CA\boxtimes_{\FZ_0(\CE)}\CE) \simeq \CE$ recovers $\phi$. This completes the proof.
\end{proof}

\begin{exam}
According to Corollary \ref{cor:inv-trivial}, if $\CA$ itself is nondegenerate, then $\CA$ admits an $E_0$-modular extension if and only if $\CE$ is trivial. Moreover, the only $E_0$-modular extension of $\CA$ is $\CA$ itself.
\end{exam}

\begin{rem} \label{rem:mext-e0-group}
Let $\MExt^{E_0}(\CE)$ be the group formed by the (equivalence classes of) $\C$-linear monoidal equivalences $\phi:\FZ_0(\CE)\to\FZ_0(\CE)$ such that the composition $\CE\to\FZ_0(\CE)\xrightarrow\phi\FZ_0(\CE)$ is equivalent to $\CE\to\FZ_0(\CE)$. Then Theorem \ref{thm:mod-ext-e0} implies that the collection of (equivalence classes of) $E_0$-modular extensions of $\CA$, if they exist, form a torsor over $\MExt^{E_0}(\CE)$. In particular, in the situation of Example \ref{exam:e0-mext} where $(\CE^\op,X)$ admits a canonical $E_0$-modular extension, the $E_0$-modular extensions of $(\CE^\op,X)$ form a group that is canonically identified with $\MExt^{E_0}(\CE)$.
\end{rem}

\begin{defn}[$E_1$-version] \label{defn:e1-mext}
Let $\CA$ be an indecomposable multi-fusion $n$-category over $\bar\CE$ where $\CE$ is a braided fusion $n$-category. An {\em $E_1$-modular extension} of $\CA$ is a nondegenerate multi-fusion $n$-category $\CM$ equipped with a $\C$-linear monoidal functor $\CA\to\CM$ such that the composite $\C$-linear monoidal functor $\CA\boxtimes\CE\to\CA\to\CM$ is a perfect pairing. Two $E_1$-modular extensions $\CM$ and $\CN$ of $\CA$ are {\em equivalent} if there is a $\C$-linear monoidal equivalence $\CM\simeq\CN$ such that the composition $\CA\to\CM\simeq\CN$ is equivalent to $\CA\to\CN$.
\end{defn}

\begin{exam}
In the special case where $\CA=\CE^\rev$ equipped with the canonical braided monoidal functor $\psi_\CA:\bar\CE\to\overline{\FZ_1(\CE)}\simeq\FZ_1(\CE^\rev)$, $\FZ_0(\CE)$ is an $E_1$-modular extension of $\CE^\rev$. In fact, according to Example \ref{exam:centralizer-e1}, $\CE$ and $\CE^\rev$ centralize each other in $\FZ_0(\CE)$.
\end{exam}

\begin{cor} \label{cor:mod-ext-e1}
The equivalence classes of $E_1$-modular extensions of $\CA$ one-to-one correspond to the equivalence classes of $\C$-linear braided monoidal equivalences $\overline{\FZ_1(\CE)} \simeq \FZ_1(\CA)$ such that the composition $\bar\CE \to \overline{\FZ_1(\CE)} \simeq \FZ_1(\CA)$ is equivalent to $\psi_\CA$.
\end{cor}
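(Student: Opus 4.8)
The plan is to deduce this from the $E_0$-statement Theorem \ref{thm:mod-ext-e0} by applying the suspension functor $\Sigma$ once, in exactly the way Theorem \ref{thm:z1} was deduced from Theorem \ref{thm:z0}. The first step is to promote the data of $\CA$ over $\CE$ to $E_0$-data one level up. By Definition \ref{defn:e1-mext} the structure making $\CA$ a multi-fusion $n$-category over $\CE$ is a braided monoidal functor $\psi_\CA:\bar\CE\to\FZ_1(\CA)$. Applying $\Sigma$ and using $\Sigma\FZ_1(\CA)=\FZ_0(\Sigma\CA)$ (Lemma 3.50 of \cite{KZ22}) together with $\Sigma\bar\CE\simeq(\Sigma\CE)^\rev$, I obtain a $\C$-linear monoidal functor $(\Sigma\CE)^\rev\to\FZ_0(\Sigma\CA)$. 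Since $\Sigma\CE$ is a nonzero fusion $(n+1)$-category and $\Sigma\CA$ an indecomposable separable $(n+1)$-category with nonzero unit $\CA$, this exhibits $\Sigma\CA$ as an $E_0$-multi-fusion $(n+1)$-category over $\Sigma\CE$ in the sense of Definition \ref{defn:e0-mext}, with structure functor $\Sigma\psi_\CA$.

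The second step is to match the modular extensions via $\CM\mapsto\Sigma\CM$. If $\CM$ is an $E_1$-modular extension of $\CA$, then $\Sigma\CM$ is an invertible separable $(n+1)$-category (i.e. nondegenerate $E_0$-multi-fusion) by Proposition \ref{prop:fus-nondeg}, and the composite $\CA\boxtimes\CE\to\CM$ is an $E_1$-perfect pairing exactly when $\Sigma(\CA\boxtimes\CE\to\CM)$ is an $E_0$-perfect pairing by Proposition \ref{prop:e1-centralizer}; hence $\Sigma\CM$ is an $E_0$-modular extension of $\Sigma\CA$. Conversely, the explicit description in the proof of Theorem \ref{thm:mod-ext-e0} shows that every $E_0$-modular extension $\CN$ of $\Sigma\CA$ is equivalent to $\Sigma\CA\boxtimes_{\FZ_0(\Sigma\CE)}\Sigma\CE\simeq\Sigma(\CA\boxtimes_{\FZ_1(\CE)}\CE)$, using $\FZ_0(\Sigma\CE)=\Sigma\FZ_1(\CE)$ and the compatibility $\Sigma\CA\boxtimes_{\Sigma\FZ_1(\CE)}\Sigma\CE\simeq\Sigma(\CA\boxtimes_{\FZ_1(\CE)}\CE)$ of suspension with relative tensor products. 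Thus $\CN$ is automatically of the form $\Sigma\CM$ with $\CM$ connected, and Proposition \ref{prop:e1-centralizer} applied again shows $\CM=\Omega\CN$ is an $E_1$-modular extension; so $\Omega$ provides the inverse assignment on equivalence classes.

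The third step is to translate the classifying condition. Theorem \ref{thm:mod-ext-e0} at level $n+1$ identifies equivalence classes of $E_0$-modular extensions of $\Sigma\CA$ with equivalence classes of monoidal equivalences $\FZ_0(\Sigma\CE)^\rev\simeq\FZ_0(\Sigma\CA)$ extending $\Sigma\psi_\CA$. Rewriting $\FZ_0(\Sigma\CE)^\rev\simeq(\Sigma\FZ_1(\CE))^\rev\simeq\Sigma\overline{\FZ_1(\CE)}$ and $\FZ_0(\Sigma\CA)=\Sigma\FZ_1(\CA)$, such data is the same as an equivalence $\Sigma\overline{\FZ_1(\CE)}\simeq\Sigma\FZ_1(\CA)$ of fusion $(n+1)$-categories extending $\Sigma\psi_\CA$. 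Applying $\Omega$, which is inverse to the fully faithful delooping $\Sigma:\MFus^\ind_n\to\Sep^\ind_{n+1}$ on connected objects, returns a braided monoidal equivalence $\overline{\FZ_1(\CE)}\simeq\FZ_1(\CA)$ whose precomposition with $\bar\CE\to\overline{\FZ_1(\CE)}$ is $\psi_\CA$, and this is a bijection on equivalence classes. Chaining the three bijections yields the corollary.

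The main obstacle I expect is not any single hard computation but the bookkeeping of equivalence classes under $\Sigma$ and $\Omega$: one must verify that these functors are mutually inverse on the relevant classes rather than merely on objects. The two points that make this work are (i) that every $E_0$-modular extension of $\Sigma\CA$ is connected, hence deloops, which is handled by the explicit formula $\Sigma(\CA\boxtimes_{\FZ_1(\CE)}\CE)$ extracted from the proof of Theorem \ref{thm:mod-ext-e0}, and (ii) the full faithfulness of $\Sigma$ on morphisms of the connected categories involved, so that monoidal equivalences of the suspended centers correspond bijectively to braided equivalences of the original centers. Everything else is formal unwinding of Definitions \ref{defn:e0-mext} and \ref{defn:e1-mext}.
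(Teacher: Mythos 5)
Your proposal is correct and follows essentially the same route as the paper: delooping via $\Sigma$, invoking Proposition \ref{prop:e1-centralizer} to identify $E_1$-modular extensions of $\CA$ with $E_0$-modular extensions of $\Sigma\CA$, and translating the classifying equivalence through $\Sigma\FZ_1=\FZ_0\Sigma$ before applying Theorem \ref{thm:mod-ext-e0}. The extra care you take with connectedness (via the explicit formula $\Sigma\CA\boxtimes_{\FZ_0(\Sigma\CE)}\Sigma\CE$) and with the bijection on equivalence classes is a sound filling-in of details the paper leaves implicit.
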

\begin{proof}
Note that $\Sigma\CA$ is an $E_0$-multi-fusion $(n+1)$-category over $(\Sigma\CE)^\rev$. According to Proposition \ref{prop:e1-centralizer}, giving an $E_1$-modular extension of $\CA$ is equivalent to giving an $E_0$-modular extension of $\Sigma\CA$. Moreover, giving a $\C$-linear braided monoidal equivalence $\overline{\FZ_1(\CE)} \simeq \FZ_1(\CA)$ is equivalent to giving a $\C$-linear monoidal equivalence $\FZ_0(\Sigma\CE)^\rev \simeq \FZ_0(\Sigma\CA)$. Therefore, the claim follows from Theorem \ref{thm:mod-ext-e0}.
\end{proof}

\begin{rem}
Unwinding the proof we see that the $E_1$-modular extension of $\CA$ corresponding to $\overline{\FZ_1(\CE)} \simeq \FZ_1(\CA)$ is $\CA\boxtimes_{\FZ_1(\CE)}\CE$.
\end{rem}

\begin{exam}
If $\CA$ itself is nondegenerate, then $\CA$ admits an $E_1$-modular extension if and only if $\CE$ is trivial. Moreover, the only $E_1$-modular extension of $\CA$ is $\CA$ itself.
\end{exam}

\begin{defn}[$E_2$-version] \label{defn:e2-mext}
Let $\CA$ be a braided fusion $n$-category over $\CE^{\op(-2)}$ where $\CE$ is an $E_3$-fusion $n$-category. A {\em modular extension} of $\CA$ is a nondegenerate braided fusion $n$-category $\CM$ equipped with a $\C$-linear braided monoidal functor $\CA\to\CM$ such that the composite $\C$-linear braided monoidal functor $\CA\boxtimes\CE\to\CA\to\CM$ is a perfect pairing. Two modular extensions $\CM$ and $\CN$ of $\CA$ are {\em equivalent} if there is a $\C$-linear braided monoidal equivalence $\CM\simeq\CN$ such that the composition $\CA\to\CM\simeq\CN$ is equivalent to $\CA\to\CN$.
\end{defn}


\begin{cor} \label{cor:mod-ext-e2}
The equivalence classes of modular extensions of $\CA$ one-to-one correspond to the equivalence classes of $\C$-linear braided monoidal equivalences $\overline{\FZ_1(\Sigma\CE)} \simeq \FZ_1(\Sigma\CA)$ such that the composition $\overline{\Sigma\CE} \to \overline{\FZ_1(\Sigma\CE)} \simeq \FZ_1(\Sigma\CA)$ is equivalent to $\Sigma\psi_\CA$.
\end{cor}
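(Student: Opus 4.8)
The plan is to reduce the $E_2$-statement to the already established $E_1$-statement by delooping once, exactly as Corollary~\ref{cor:mod-ext-e1} was reduced to the $E_0$-case via Proposition~\ref{prop:e1-centralizer}. Observe that $\Sigma\CA$ is an indecomposable multi-fusion $(n+1)$-category and that $\Sigma\CE$ is a braided fusion $(n+1)$-category, while the delooped structure functor $\Sigma\psi_\CA$ promotes $\Sigma\CA$ to a multi-fusion $(n+1)$-category over $\Sigma\CE$. Thus the data classified by Corollary~\ref{cor:mod-ext-e1} applied to $\Sigma\CA$ over $\Sigma\CE$ are precisely the braided monoidal equivalences $\overline{\FZ_1(\Sigma\CE)} \simeq \FZ_1(\Sigma\CA)$ whose precomposition with $\overline{\Sigma\CE} \to \overline{\FZ_1(\Sigma\CE)}$ recovers $\Sigma\psi_\CA$ --- that is, exactly the data appearing in the present statement. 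So it suffices to establish a bijection between modular extensions of $\CA$ and $E_1$-modular extensions of $\Sigma\CA$ that is compatible with passing to centers.

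First I would construct the forward map $\CM \mapsto \Sigma\CM$. Given a modular extension $\CM$ of $\CA$, Proposition~\ref{prop:nondeg-bfc} shows that $\Sigma\CM$ is a nondegenerate multi-fusion $(n+1)$-category, the braided monoidal functor $\CA\to\CM$ deloops to a monoidal functor $\Sigma\CA\to\Sigma\CM$, and Proposition~\ref{prop:e2-centralizer} turns the perfect pairing $\CA\boxtimes\CE\to\CM$ into the perfect pairing $\Sigma\CA\boxtimes\Sigma\CE\to\Sigma\CM$. Hence $\Sigma\CM$ is an $E_1$-modular extension of $\Sigma\CA$, and since $\Omega\Sigma\CM \simeq \CM$ this assignment is injective; by the functoriality of the $E_1$-center (Theorem~\ref{thm:z1}) it is also compatible with the center equivalences produced by Corollary~\ref{cor:mod-ext-e1}. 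Combined with that corollary, this already yields the forward half of the desired correspondence.

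The main obstacle is surjectivity, i.e. showing that every $E_1$-modular extension $\CN$ of $\Sigma\CA$ has the form $\Sigma\CM$ for a braided fusion $n$-category $\CM$. Concretely, Corollary~\ref{cor:mod-ext-e1} realizes the extension as $\CN \simeq \Sigma\CA \boxtimes_{\FZ_1(\Sigma\CE)} \Sigma\CE$, so the issue is to see that $\CN$ is \emph{connected} and therefore deloops to $\CM = \Omega\CN$. I expect to obtain this in one of two equivalent ways: either by checking that $\Sigma$ is compatible with the relevant relative tensor product, using $\Sigma\FZ_2(\CE) \simeq \FZ_1(\Sigma\CE)$, so that $\CN \simeq \Sigma(\CA\boxtimes_{\FZ_2(\CE)}\CE)$ is manifestly a delooping; or by reusing the connectedness argument already carried out in the proof of Proposition~\ref{prop:e2-centralizer}, where a centralizer of precisely this shape was shown to have a simple tensor unit via a fully faithful monoidal functor into an endomorphism category. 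Either route identifies $\Omega\CN$ as the sought modular extension of $\CA$, completing the bijection and hence the corollary.
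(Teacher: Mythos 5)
Your proposal is correct and follows essentially the same route as the paper: the paper's proof is exactly the one-step delooping, observing that $\Sigma\CA$ is a fusion $(n+1)$-category over $\Sigma\CE$, invoking Proposition~\ref{prop:e2-centralizer} to identify modular extensions of $\CA$ with $E_1$-modular extensions of $\Sigma\CA$, and then citing Corollary~\ref{cor:mod-ext-e1}. The surjectivity/connectedness point you flag is real but already absorbed into the cited results (a nondegenerate multi-fusion $(n+1)$-category is connected, as shown in the proof that $\Sigma:\BFus^\cl_n\to\Fus^\cl_{n+1}$ is an equivalence, and Proposition~\ref{prop:e2-centralizer} is stated as an if-and-only-if), so your extra elaboration is consistent with, rather than divergent from, the paper's argument.
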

\begin{proof}
Note that $\Sigma\CA$ is a fusion $(n+1)$-category over $\overline{\Sigma\CE}$. According to Proposition \ref{prop:e2-centralizer}, giving a modular extension of $\CA$ is equivalent to giving an $E_1$-modular extension of $\Sigma\CA$. Therefore, the claim follows from Corollary \ref{cor:mod-ext-e1}.
\end{proof}

\begin{rem}
To summarize, modular extension is a special case of $E_1$-modular extension; $E_1$-modular extension is a special case of $E_0$-modular extension. 
It is possible to generalize the concept of modular extension to the case of $m > 2$ for $E_m$-fusion $n$-categories. However, it lacks a clear physical interpretation and no longer possesses the aforementioned desirable properties. For instance, the proof of Corollary \ref{cor:mod-ext-e2} relies on the technical Proposition \ref{prop:e2-centralizer}, which is difficult to extend to scenarios where $m > 2$.
\end{rem}

\begin{rem}
The notion of (minimal) modular extension was originally considered for braided fusion 1-categories \cite[Section 5.1]{Mu03b} and was further developed in \cite{LKW15,LKW17,BGHN+17,GVR17,VR19,DN20,Nik22,OY23,JFR24}. In fact, in the special case where $n=1$, if $\CM$ is a modular extension of $\CA$ then $\CE$ and $\CA$ are both full subcategories of $\CM$ by Example \ref{exam:centralizer-e2}. In particular, $\CE$ must be the M\"uger center $\FZ_2(\CA)$. Moreover, we have $\dim\CM = \dim\CA\cdot\dim\FZ_2(\CA)$ by \cite[Theorem 3.10]{DGNO10}, as formulated in \cite[Conjecture 5.2]{Mu03b}.
Modular extensions of higher braided fusion categories were first considered in \cite{KLWZZ20a}. 
\end{rem}

\begin{rem}
Corollary \ref{cor:mod-ext-e2} was conjectured in \cite{KLWZZ20a}. A proof in the case $n=1$ was obtained by Johnson-Freyd and Reutter \cite{JFR24} when this paper was in preparation.
\end{rem}

The following theorem generalizes an exact sequence for a symmetric fusion 1-category announced by Nikshych \cite{Nik22b}.

\begin{thm} \label{thm:mext-exseq}
Let $\CE$ be a nonzero multi-fusion $n$-category. There is an exact sequence
$$0\to \pi_{n+1}(\Sigma n\Vect)  \xrightarrow{\alpha_0} \pi_{n+1}(\Sigma\CE)  \xrightarrow{\beta_0}  \MExt^{E_0}(\Omega^n \CE) \xrightarrow{\gamma_0} \cdots $$
$$\xrightarrow{\gamma_{n-1}} \pi_1(\Sigma n\Vect)  \xrightarrow{\alpha_n} \pi_1(\Sigma\CE)  \xrightarrow{\beta_n}  \MExt^{E_0}(\CE)   \xrightarrow{\gamma_n}  \pi_0(\Sigma n\Vect)^\times \xrightarrow{\alpha_{n+1}} \pi_0(\Sigma\CE)
$$
where $\pi_i(\Sigma\CE)$ denotes the homotopy group of the underlying $(n+1)$-groupoid of $\Sigma\CE$.
\end{thm}
\begin{proof}
Let $\alpha_i$ be induced by the canonical monoidal functor $n\Vect\to\CE$. We identify $\MExt^{E_0}(\CE)$ with the group of $E_0$-modular extensions of $\CE^\op$ (see Remark \ref{rem:mext-e0-group}). 
Let $\gamma_n([\CM]) = [\CM]$. Then $\gamma_n$ is a group homomorphism as evident from the composite $\C$-linear equivalence $$(\CE^\op\boxtimes_{\FZ_0(\CE)}\CE) \boxtimes (\CE^\op\boxtimes_{\FZ_0(\CE)}\CE) \simeq \CE^\op\boxtimes_{\FZ_0(\CE)}\FZ_0(\CE)\boxtimes_{\FZ_0(\CE)}\CE \simeq \CE^\op\boxtimes_{\FZ_0(\CE)}\CE.$$

For an object $E\in\CE$, the composition $\CE^\op\boxtimes\CE \to \CE^\op \xrightarrow{E} n\Vect$ maps $X\boxtimes Y \mapsto \Hom_\CE(Y^L\otimes X,E)$, hence the induced $\C$-linear functor $\CE\to\FZ_0(\CE^\op,n\Vect)=\CE$ maps $Y\mapsto Y\otimes E$. In particular, $(n\Vect,\Hom_\CE(\one_\CE,E))$ is an $E_0$-modular extension of $\CE^\op$ if $E$ is invertible. We define $$\beta_n([E]) = [(n\Vect,\Hom_\CE(\one_\CE,E))].$$ To see that $\beta_n$ is a group homomorphism, we note that the induced monoidal equivalence $\FZ_0(\CE)\simeq\FZ_0(\CE)$ maps $F\mapsto F(-\otimes E)\otimes E^{-1}$. Reversing the construction, we obtain $\Ker\gamma_n=\Img\beta_n$, i.e. the sequence is exact at $\MExt^{E_0}(\CE)$. Moreover, we have $\beta_n([E]) = \beta_n([E'])$ if and only if the functors $E,E': \CE^\op \to n\Vect$ differ by a $\C$-linear auto-equivalence of $n\Vect$. Thus the sequence is exact at $\pi_1(\Sigma\CE)$.

If $\CM$ is an $E_0$-modular extensions of $\CE^\op$, then $\CE \simeq \FZ_0(\CE^\op,\CM) = \CE\boxtimes\CM$ as left $\CE$-modules, hence $\CE\simeq\CM^\op\boxtimes\CE$ as right $\CE$-modules by Proposition \ref{prop:dual-bimod}. Thus $\alpha_{n+1}\gamma_n([\CM]) = [\CM\boxtimes\CE] = [\CE]$. Conversely, if $\alpha_{n+1}([\CC]) = [\CE]$, then $\CE \simeq \CE\boxtimes\CC = \FZ_0(\CE^\op,\CC)$ as left $\CE$-modules, promoting $\CC$ to an $E_0$-modular extension of $\CE^\op$. Thus $\Ker\alpha_{n+1}=\Img\gamma_n$, i.e. the sequence is exact at $\pi_0(\Sigma n\Vect)^\times$.

Note that $\Ker\alpha_n$ is identical to the kernel of $\pi_0(\Sigma\Omega n\Vect)^\times \to \pi_0(\Sigma\Omega\CE)$. By induction on $n$, we conclude the theorem.
\end{proof}

\begin{rem}
Theorem \ref{thm:mext-exseq} may be proved by establishing a group isomorphism $$\MExt^{E_0}(\CE) \simeq \pi_1(\widetilde{\Sigma\CE}\sslash\widetilde{\Sigma n\Vect}{}^\times)$$ straightforwardly where $\widetilde{\Sigma\CE}$ is the underlying $(n+1)$-groupoid of $\Sigma\CE$. In particular, $\MExt^{E_0}(\CE)$ is an abelian group if $\CE$ is braided.
In fact, an element of $\pi_1(\widetilde{\Sigma\CE}\sslash\widetilde{\Sigma n\Vect}{}^\times)$ is represented by a right $\CE$-module equivalence $\CE\simeq\CC\boxtimes\CE$ where $\CC$ is an invertible object of $\Sigma n\Vect$, or, equivalently by Proposition \ref{prop:dual-bimod}, a left $\CE$-module equivalence $\CE\simeq\CE\boxtimes\CC^\op$. This induces a $\C$-linear monoidal equivalence $\FZ_0(\CE) \simeq \FZ_0(\CE\boxtimes\CC^\op) = \FZ_0(\CE)$ over $\CE$. Conversely, a $\C$-linear monoidal equivalence $\phi:\FZ_0(\CE) \simeq \FZ_0(\CE)$ over $\CE$ induces a left $\CE$-module equivalence $F:\CE \simeq \FZ_0(\CE)\boxtimes_{\FZ_0(\CE)}\CE = \CE\boxtimes(\CE^\op\boxtimes_{\FZ_0(\CE)}\CE)$. Note that the equivalence class of $F$ depends only on that of $F(\one_\CE)$ hence only on that of $\phi$.
\end{rem}

In the remainder of this subsection, we consider the special case of $E_0$-modular extensions of {\em local fusion $n$-categories} \cite{KLWZZ20b}, that is, fusion $n$-categories admitting fiber functors to $n\Vect$. This special case shares many desirable properties such as the fact that the extensions are always non-chiral, its classification is more explicit.

\begin{thm} \label{thm:sym-breaking}
Let $\CE$ be a fusion $n$-category which admits a $\C$-linear monoidal functor $F:\CE\to n\Vect$. Regard $\CE^\op$ as an $E_0$-fusion $n$-category over $\CE^\rev$. 

$(1)$ If $\CM$ is an $E_0$-modular extension of $\CE^\op$,
then the $\C$-linear functor $$F\circ G:\CM^\op\to n\Vect$$ is invertible where $G$ is the right adjoint functor to $\CE\to\CM^\op$.

$(2)$ The following three groups are canonically isomorphic to each other:
\begin{enumerate}
\renewcommand{\labelenumi}{$(\alph{enumi})$}
\item The group of equivalence classes of $E_0$-modular extensions of $\CE^\op$ (i.e. the group $\MExt^{E_0}(\CE)$, see Remark \ref{rem:mext-e0-group}).
\item The group of equivalence classes of invertible objects of $\CE$ modulo that of $n\Vect$.
\item The group of equivalence classes of invertible objects $E\in\CE$ such that $F(E)\simeq\one_{n\Vect}$.
\end{enumerate}
\end{thm}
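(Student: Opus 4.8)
The plan is to use the monoidal functor $F$ as a \emph{fiber functor}: first trivialize the underlying separable $n$-category of every $E_0$-modular extension (this is Part (1)), thereby reducing an extension to the datum of a single object of $\CE$, and then pin down exactly which objects occur (this is Part (2)). Throughout I will exploit that the unit $u\colon n\Vect\to\CE$ satisfies $F\circ u\simeq\Id_{n\Vect}$, since $F$ is $\C$-linear monoidal with $F(\one_\CE)\simeq\one$.

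For Part (1), let $\CM$ be an $E_0$-modular extension, write $H\colon\CE\to\CM^\op$ for the image of the structural functor $\CA\to\CM$ under $(-)^\op$ (recall $\CA^\op=\CE$), let $G$ be its right adjoint, and set $i:=F\circ G\colon\CM^\op\to n\Vect$ and $r:=H\circ u\colon n\Vect\to\CM^\op$. First I would extract from the perfect pairing that $H$ is fully faithful, so that $G\circ H\simeq\Id_\CE$ and hence $i\circ r=F\circ G\circ H\circ u\simeq F\circ u\simeq\Id_{n\Vect}$. Since $\CM$ is nondegenerate, $\CM^\op$ is an invertible separable $n$-category, so $\FZ_0(\CM^\op)\simeq n\Vect$ by Corollary \ref{cor:inv-trivial}; the endofunctor $e:=r\circ i$ is then a nonzero idempotent object of $n\Vect$, whence $e\simeq\Id_{\CM^\op}$ by Proposition \ref{prop:sep-idem}. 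Thus $i$ and $r$ are mutually inverse, which is exactly the claim that $F\circ G$ is invertible; this is the mechanism of Corollary \ref{cor:ret-nvec}. Conceptually, the same conclusion comes from reading the perfect-pairing equivalence $\CE\xrightarrow{\sim}\Fun(\CA,\CM)\simeq\CE\boxtimes\CM$ as a \emph{left $\CE$-module} equivalence $\CE\boxtimes\CM\simeq\CE$ and base-changing along $F$ via $n\Vect\boxtimes_\CE-$, which collapses the $\CE$-factor to give $\CM\simeq n\Vect$.

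For Part (2), the bijection (a)$\leftrightarrow$(c) proceeds as follows. By Part (1) every extension has $\CM\simeq n\Vect$, so an extension is equivalent data to an $E_0$-monoidal functor $\CA=\CE^\op\to n\Vect$ whose induced pairing is perfect. The remaining half of the perfect pairing, $\CE\xrightarrow{\sim}\Fun(\CA,\CM)$, identifies such a functor with an object $E\in\CE$; I expect nondegeneracy of $\CM$ to translate into invertibility of $E$, and the unit preservation $\one_\CA\mapsto\one_\CM$ to translate into $F(E)\simeq\one$. Conversely, from an invertible $E$ with $F(E)\simeq\one$ I would build the extension $\CA\boxtimes_{\FZ_0(\CE)}\CE$ precisely as in the proof of Theorem \ref{thm:mod-ext-e0}, and then verify that the two assignments are mutually inverse and compatible with equivalences. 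For (b)$\leftrightarrow$(c), the functor $F$ induces a homomorphism $F_*$ from the group of invertible objects of $\CE$ to that of $n\Vect$, split by $u_*$ because $F\circ u\simeq\Id$; hence $F_*$ is surjective, its kernel is exactly the data (c), and the quotient by the image of $u_*$ is exactly (b), giving (b)$\cong$(c).

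The hard part will be Part (1): a priori $\CM$ is only an invertible separable $n$-category, which for $n\ge3$ can be nontrivial (chiral), and the whole content is that the mere existence of $F$ forbids this. Making the trivialization rigorous requires care on one of two points, according to which route one takes: either that the pairing-induced equivalence is genuinely $\CE$-linear (for the base-change argument), or that $H$ is fully faithful so that $G\circ H\simeq\Id_\CE$ (for the idempotent argument). The second delicate point is in (a)$\leftrightarrow$(c), namely reading off invertibility of $E$ and the normalization $F(E)\simeq\one$ from the defining conditions of a modular extension, and checking these conditions are not merely necessary but sufficient.
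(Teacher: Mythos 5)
Part (1) of your proposal rests on the claim that the structural functor $H\colon\CE=\CA^\op\to\CM^\op$ is fully faithful, ``extracted from the perfect pairing''; this claim is false, and it is exactly where the argument breaks. The perfect-pairing condition says that $\CE\to\FZ_0(\CA,\CM)=\Fun(\CA,\CM)$ and $\CA\to\FZ_0(\CE,\CM)=\Fun(\CE,\CM)$ are equivalences; it says nothing about full faithfulness of $\CA\to\CM$ itself. Concretely, take $n=1$, $\CE=\Rep G$ for a nontrivial finite group $G$, $F$ the forgetful functor, and the canonical $E_0$-modular extension $(\Vect,\Hom_\CE(\one_\CE,\one_\CE))$ of $(\CE^\op,\one_\CE)$ from Example \ref{exam:e0-mext}: the structural functor is $A\mapsto\Hom_{\Rep G}(A,\C)$, which sends the regular representation (whose endomorphism space in $\CE^\op$ has dimension $|G|$) to a one-dimensional space, so it is not fully faithful. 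Consequently $G\circ H\not\simeq\Id_\CE$, the identity $i\circ r\simeq F\circ u\simeq\Id_{n\Vect}$ is unproved, and the retract/idempotent mechanism of Corollary \ref{cor:ret-nvec} never gets started. Your alternative ``conceptual'' route --- base-changing the $\CE$-module equivalence $\CE\simeq\CE\boxtimes\CM$ along $F$ --- is essentially the paper's actual argument (the paper computes $(\CA,F)\boxtimes_{\FZ_0(\CE)}\CE\simeq n\Vect$ and $\CE\boxtimes(\CM^\op,X)\simeq(\CA^\op,G(X))$, the latter using invertibility of $\CM$ and the adjunction), but as sketched it only yields an abstract equivalence $\CM\simeq n\Vect$. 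The content of the statement is that the \emph{specific} functor $F\circ G$ is an equivalence, and that requires tracking the distinguished object $X$ through the whole chain to land on $FG(X)$; this bookkeeping is the substance of the paper's proof and is absent from your sketch.

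In Part (2), the step (b)$\leftrightarrow$(c) via the splitting $u_*$ of $F_*$ is fine. For (a)$\leftrightarrow$(c), note first that invertibility of $E$ comes from the perfect-pairing equivalence $\CE\to\FZ_0(\CA,\CM)\simeq\CE$, $Y\mapsto Y\otimes E$, not from nondegeneracy of $\CM$ (which is vacuous once $\CM\simeq n\Vect$). More importantly, the normalization $F(E)\simeq\one_{n\Vect}$ is not a consequence of unit-preservation of $\CA\to\CM$: the object $E$ is only well defined up to an invertible object of $n\Vect$, since it depends on the chosen identification $\CM\simeq n\Vect$. This is why the paper establishes (a)$\leftrightarrow$(b) first and then passes to (c) by choosing the normalized representative of each class; if you want (a)$\leftrightarrow$(c) directly you must fix the identification to be the one produced in Part (1) and then actually verify $F(E)\simeq\one_{n\Vect}$, which your proposal does not do.
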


\begin{proof}
(2) Since the composition $n\Vect\to\CE\xrightarrow{F}n\Vect$ is equivalent to the identity functor, the maps $\alpha_n,\alpha_{n+1}$ from Theorem \ref{thm:mext-exseq} are injective and hence $\gamma_n$ vanishes. We obtain a group isomorphism between $(a)$ and $(b)$. Since $F$ transforms invertible objects into invertible objects, we obtain a group isomorphism between $(b)$ and $(c)$. 

(1) Since $\gamma_n$ vanishes, we may choose a $\C$-linear identification $\CM = n\Vect$. Let $E\in\CE$ be the invertible object representing the defining $\C$-linear functor $\CE^\op\to\CM$. Then $G:\CM^\op\to\CE$ maps $V\mapsto V^\vee\boxtimes E$. Therefore, $F\circ G$ is invertible. 
\end{proof}

\begin{cor}
Let $\CE$ be a braided fusion $n$-category which admits a $\C$-linear braided monoidal functor $F:\CE\to n\Vect$. Regard $\CE^\rev$ as a fusion $n$-category over $\bar\CE$. If $\CM$ is an $E_1$-modular extension of $\CE^\rev$, then the evident $\C$-linear monoidal functor $$\CM^\rev \to \FZ_0(\CM\boxtimes_\CE n\Vect)$$ is invertible.
\end{cor}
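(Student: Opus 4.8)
The plan is to deduce this from Theorem \ref{thm:sym-breaking}(1) by suspending once, in the same way that Corollary \ref{cor:mod-ext-e1} is deduced from Theorem \ref{thm:mod-ext-e0}. First I would pass to suspensions. Since $\CE$ is braided fusion, $\Sigma\CE$ is a fusion $(n+1)$-category and $\Sigma F:\Sigma\CE\to\Sigma(n\Vect)=(n+1)\Vect$ is a $\C$-linear monoidal functor, so the pair $(\Sigma\CE,\Sigma F)$ satisfies the hypotheses of Theorem \ref{thm:sym-breaking} one categorical level up. Under the convention that suspension raises the reversal level by one, $\Sigma\CA=\Sigma(\CE^\rev)\simeq(\Sigma\CE)^\op$ is exactly the canonical $E_0$-multi-fusion $(n+1)$-category over $\Sigma\CE$ that occurs in Theorem \ref{thm:sym-breaking}. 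By Proposition \ref{prop:e1-centralizer} (as applied in Corollary \ref{cor:mod-ext-e1}), an $E_1$-modular extension $\CM$ of $\CA$ is the same datum as an $E_0$-modular extension $\Sigma\CM$ of $\Sigma\CA$; hence $\Sigma\CM$ is an $E_0$-modular extension of $(\Sigma\CE)^\op$.

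Next I would apply Theorem \ref{thm:sym-breaking}(1) to $\Sigma\CE,\Sigma F,\Sigma\CA,\Sigma\CM$. It yields that $\Sigma F\circ G':(\Sigma\CM)^\op\to(n+1)\Vect$ is invertible, where $G'$ is right adjoint to $(\Sigma\CA)^\op\to(\Sigma\CM)^\op$. In particular $\Sigma\CM\simeq(n+1)\Vect$, so by Proposition \ref{prop:chiral-mfc} the nondegenerate multi-fusion $n$-category $\CM$ is non-chiral, and $\CM^\rev\simeq\Fun(\CC,\CC)=\FZ_0(\CC)$ for a suitable nonzero separable $n$-category $\CC$. This already gives the qualitative statement; what remains is to identify $\CC$ with $n\Vect\boxtimes_\CA\CM$ and to check that the resulting equivalence is the \emph{evident} functor named in the statement.

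To do this I would use that $n\Vect\boxtimes_\CA\CM$ carries a residual right $\CM$-action (the regular action on $\CM$ survives the relative tensor product), so it is an object of $\Sigma\CM$, and that the evident monoidal functor $\CM^\rev\to\FZ_0(n\Vect\boxtimes_\CA\CM)=\Fun(n\Vect\boxtimes_\CA\CM,n\Vect\boxtimes_\CA\CM)$ is precisely this module-structure functor. Suspending and using compatibility of $\Sigma$ with relative tensor products gives $\Sigma(n\Vect\boxtimes_\CA\CM)\simeq(n+1)\Vect\boxtimes_{\Sigma\CA}\Sigma\CM$, together with $\Sigma(\CM^\rev)\simeq(\Sigma\CM)^\op$ and $\Sigma\FZ_0(\CC)\simeq(n+1)\Vect$ (the computation $\Sigma\Fun(\CC,\CC)\simeq(n+1)\Vect$ from the proof of Proposition \ref{prop:chiral-mfc}). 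Since a $\C$-linear monoidal functor of multi-fusion $n$-categories is invertible if and only if its suspension is (a monoidal $P$ is recovered as $\Omega\Sigma P$ at the regular module), it then suffices to match the suspended module functor with $\Sigma F\circ G'$.

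The hard part will be exactly this last identification: tracking the opposite/reversal conventions through $\Sigma$, confirming the $\Sigma$-compatibility of $\boxtimes_\CA$ and of $\FZ_0$, and pinning down that the induced module $n\Vect\boxtimes_\CA\CM$ is the simple generator of $\Sigma\CM\simeq(n+1)\Vect$ rather than some other object — equivalently, that it is carried to the distinguished generator $n\Vect$ of $(n+1)\Vect$ under the Theorem \ref{thm:sym-breaking} equivalence. Once this bookkeeping is settled, the invertibility of $\Sigma F\circ G'$ supplied by Theorem \ref{thm:sym-breaking}(1) descends to invertibility of the evident functor $\CM^\rev\to\FZ_0(n\Vect\boxtimes_\CA\CM)$, as desired.
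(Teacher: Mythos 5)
Your proposal matches the paper's own argument: the paper's entire proof is that the $\C$-linear functor $n\Vect\boxtimes_\CA-:\Sigma\CM^\rev\to(n+1)\Vect$ is invertible by Theorem \ref{thm:sym-breaking}(1), i.e.\ exactly your reduction via suspension to the $E_0$-statement, followed by looping at the distinguished object $\CM$. The final ``bookkeeping'' you flag is lighter than you fear, since the equivalence is literally $n\Vect\boxtimes_\CA-$ and so $\Omega$ at $\CM$ immediately yields the evident functor $\CM^\rev\simeq\Fun(n\Vect\boxtimes_\CA\CM,n\Vect\boxtimes_\CA\CM)=\FZ_0(n\Vect\boxtimes_\CA\CM)$.
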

\begin{proof}
The induced functor $\Sigma\CE\to\Sigma\CM^\rev$ is left adjoint to the forgetful functor $G:\Sigma\CM^\rev \to \Sigma\CE$. By Theorem \ref{thm:sym-breaking}(1), the $\C$-linear functor $\Sigma F\circ G:\Sigma\CM^\rev \to (n+1)\Vect$ is invertible. Note that $\Sigma F\circ G = -\boxtimes_\CE n\Vect$ which maps to $\CM$ to $\CM\boxtimes_\CE n\Vect$. Therefore, $\CM^\rev \simeq \FZ_0(\CM\boxtimes_\CE n\Vect)$.
\end{proof}

\begin{cor} \label{cor:sym-breaking2}
Let $\CE$ be an $E_3$-fusion $n$-category which admits a $\C$-linear $E_3$-monoidal functor $F:\CE\to n\Vect$. Regard $\bar\CE$ as a braided fusion $n$-category over $\CE^{\op(-2)}$. If $\CM$ is a modular extension of $\bar\CE$, then the evident $\C$-linear braided monoidal functor $$\bar\CM \to \FZ_1(\CM\boxtimes_\CE n\Vect)$$ is invertible.
\end{cor}

\begin{rem}
For a field $k$ that is not separably closed, Corollary \ref{cor:mod-ext-e2} needs an additional assumption that $\Omega^n\CE\simeq\Omega^n\CA\simeq k$, i.e. $\CE$ and $\CA$ remain of fusion type after base extensions. Otherwise, the proof of Corollary \ref{cor:mod-ext-e2} fails because a nondegenerate fusion $n$-category might be disconnected. For example, the braided fusion 0-category $\C$ over $\R$ admits no modular extension while the nondegenerate fusion 1-category $\Fun_\R(\Vect_\C,\Vect_\C)$ over $\R$ is disconnected. 
\end{rem}


\section{Group theoretical fusion $n$-categories} \label{sec:group-cat}

Group theoretical higher fusion categories represent a broad and largely unexplored subject, finding extensive application in the study of symmetries in topological phases of matter. In this section, we focus on two specific issues within this topic. In the first subsection, we provide precise definitions for $n\Rep\CG$ and $n\Vect_\CG$ of a finite higher group $\CG$, and prove that they are indeed fusion $n$-categories. In the second subsection, we investigate pointed fusion $n$-categories, which are natural extensions of pointed fusion 1-categories. Surprisingly, a fusion $n$-category $\CA$ for $n\ge2$ is pointed if and only if $\Omega\CA$ is trivial, contrasting drastically to the case $n=1$.

\subsection{Linear representations of higher groups} \label{rep-high-group}

Let $G$ be a (discrete) group, regarded as a monoidal 0-category. We use $n\Rep G$ to denote the condensation-complete $\C$-linear symmetric monoidal $n$-category $\LMod_G(n\Vect) = \Fun(B G,n\Vect)$. Note that $\Omega\Fun(B G,\CC)\simeq\Fun(B G,\Omega\CC)$ for any symmetric monoidal $n$-category $\CC$. In particular, $$\Omega (n+1)\Rep G\simeq n\Rep G.$$

We use $n\Vect_G$ to denote the evident condensation-complete $\C$-linear monoidal $n$-category $\bigoplus_{g\in G}n\Vect$ whose tensor product is induced by the product of $G$ (a formal definition is given below).  
Note that 
$$\LMod_{n\Vect_G}((n+1)\Vect) \simeq (n+1)\Rep G,$$
$$\Fun_{n\Vect_G}(n\Vect,n\Vect) \simeq \Omega(n+1)\Rep G.$$

\begin{prop} \label{prop:nvecg-point}
Let $G$ be a finite group and $n\ge1$ be an integer. $(1)$ $n\Vect_G$ is a fusion $n$-category. $(2)$ $n\Rep G$ is a symmetric fusion $n$-category and we have $(n+1)\Rep G \simeq \Sigma n\Rep G$.
\end{prop}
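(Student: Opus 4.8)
The plan is to establish (1) first and then bootstrap (2) from it by a Morita argument, with the entire proposition resting on a single finiteness input: the separability of $(n+1)\Rep G$. The organizing observation is that the delooping of $n\Vect_G$ is computable. Reversing the monoidal structure of $n\Vect_G = G\times n\Vect$ replaces $G$ by $G^\op$, and $g\mapsto g^{-1}$ furnishes a monoidal equivalence $(n\Vect_G)^\rev\simeq n\Vect_G$; combining this with the hypothesis $\LMod_{n\Vect_G}((n+1)\Vect)\simeq(n+1)\Rep G$ gives
$$\Sigma(n\Vect_G)\simeq\RMod_{n\Vect_G}((n+1)\Vect)\simeq\LMod_{(n\Vect_G)^\rev}((n+1)\Vect)\simeq\LMod_{n\Vect_G}((n+1)\Vect)\simeq(n+1)\Rep G.$$

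For (1), rigidity and simplicity of the unit are immediate: each object $(g,V)$ has left dual $(g^{-1},V^L)$ because $G$ is a group and $n\Vect$ has duals, and $\Omega(n\Vect_G)\simeq(n-1)\Vect$ since endomorphisms of $(e,\one)$ are computed in the identity component, whence $\Omega^n(n\Vect_G)\simeq\C$. As $n\Vect_G\simeq\bigoplus_{g\in G}n\Vect$ is a finite direct sum of separable $n$-categories it is a separable $n$-category. To upgrade this to the multi-fusion property I would invoke \cite[Theorem 3.35]{KZ22}: since $n\Vect_G$ has duals, it is multi-fusion as soon as its delooping $\Sigma(n\Vect_G)\simeq(n+1)\Rep G$ is a separable $(n+1)$-category. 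Together with the simple unit this makes $n\Vect_G$ a fusion $n$-category, and in particular indecomposable.

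The main obstacle is therefore exactly the separability of $(n+1)\Rep G=\Fun(BG,(n+1)\Vect)$, and this is the sole place where the hypotheses that $G$ is finite and that the base field is $\C$ (more precisely $|G|\ne0$) are used. I would present $(n+1)\Rep G$ as $\LMod_{\C[G]}((n+1)\Vect)$, the modules over the regular algebra $\C[G]=\bigoplus_{g\in G}\one$, an $E_1$-algebra in $(n+1)\Vect$. Because $|G|$ is invertible, the Maschke element $\frac1{|G|}\sum_{g\in G}[g]\otimes[g^{-1}]$ is a separability idempotent, so $\C[G]$ is a separable algebra in the sense of \cite{GJF19}; the category of modules over a separable algebra in a separable $n$-category is again separable, so $(n+1)\Rep G$ is separable. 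This completes (1).

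For (2), I would apply Proposition \ref{prop:fun-inv} to the indecomposable multi-fusion category $\CA=n\Vect_G$ and the nonzero separable left module $\CM=n\Vect$ coming from the monoidal projection $n\Vect_G\to n\Vect$, $(g,V)\mapsto V$. By hypothesis $\Fun_{n\Vect_G}(n\Vect,n\Vect)\simeq\Omega(n+1)\Rep G\simeq n\Rep G$, so this category is multi-fusion and Proposition \ref{prop:fun-inv} exhibits $n\Vect$ as an invertible $n\Vect_G$-$(n\Rep G)^\rev$-bimodule. Since $n\Rep G$ is symmetric we have $(n\Rep G)^\rev\simeq n\Rep G$, so Proposition \ref{prop:morita-braid} yields $(n+1)\Rep G\simeq\Sigma(n\Vect_G)\simeq\Sigma(n\Rep G)$, which is the second assertion of (2). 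It remains to see that the multi-fusion category $n\Rep G$ has a simple unit, and here I would induct on $n$, the base case $n=1$ being the classical fact that $\Rep G$ is a symmetric fusion $1$-category: by the given equivalence $\Omega(n\Rep G)\simeq(n-1)\Rep G$ and the inductive hypothesis, $\Omega(n\Rep G)$ is a fusion $(n-1)$-category, which forces the unit of $n\Rep G$ to be simple. Finally $n\Rep G=\Fun(BG,n\Vect)$ carries the pointwise symmetric monoidal structure inherited from $n\Vect$, so it is a symmetric fusion $n$-category.
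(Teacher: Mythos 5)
Your overall strategy is essentially the paper's: both proofs reduce everything to the single finiteness input that $n\Vect_G$ is a separable algebra in $(n+1)\Vect$ (equivalently, that $(n+1)\Rep G=\LMod_{n\Vect_G}((n+1)\Vect)$ is a separable $(n+1)$-category), with the scalar $1/|G|$ doing the work, and both then conclude via \cite[Theorem 3.35]{KZ22}. Your part (2) takes a mildly different route: you produce $n\Vect$ as an invertible $n\Vect_G$-$(n\Rep G)^\rev$-bimodule via Proposition \ref{prop:fun-inv} and invoke Proposition \ref{prop:morita-braid}, whereas the paper simply observes that $(n+1)\Rep G$ is connected once $n\Vect_G$ is fusion, so that $(n+1)\Rep G\simeq\Sigma\Omega(n+1)\Rep G=\Sigma n\Rep G$. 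Both work (your version additionally needs the trivial module to be separable, which is again Maschke), and you are more explicit than the paper about the simplicity of the unit of $n\Rep G$.

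The one genuine gap is at the crux. The sentence ``the Maschke element $\frac1{|G|}\sum_{g}[g]\otimes[g^{-1}]$ is a separability idempotent, so $\C[G]$ is a separable algebra in the sense of \cite{GJF19}'' imports a $1$-categorical argument that does not literally make sense for an algebra in $(n+1)\Vect$: there is no object ``$\frac1{|G|}\sum_g[g]\otimes[g^{-1}]$'' of $\CA\boxtimes\CA$, because objects of a higher category cannot be rescaled by $1/|G|$. What the object-level comultiplication $\otimes^R$ actually gives is $\otimes\circ\otimes^R\simeq\Id_\CA^{\oplus|G|}$, not $\Id_\CA$, so one does not get a bimodule retraction of $\otimes$ on the nose; one must instead exhibit $\Id_\CA$ as a \emph{condensate} of $\otimes\circ\otimes^R$, which requires descending through all $n$ layers of higher morphisms and can only absorb the scalar $1/|G|$ at the very top. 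This is precisely the tower the paper constructs: counits $v_1:\otimes\circ\otimes^R\to\Id_\CA$, $v_2:v_1\circ v_1^R\to\Id_{\Id_\CA}$, \dots, terminated by the identity $v_n\circ\frac1{|G|}u=1$ at the level of $n$-morphisms. Your proof needs either this explicit tower or a precise citation packaging it; the one-line ``separability idempotent'' claim as written does not establish that the multiplication extends to a condensation. Everything else in your argument goes through once this is repaired.
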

\begin{proof}
(1) Let $\CA=n\Vect_G$, $\CR=\RMod_\CA((n+1)\Vect)$ and $\CB=\BMod_{\CA|\CA}((n+1)\Vect)$. Let $v_0$ denote the tensor product functor $\otimes:\CA\boxtimes\CA\to\CA$ and let $v_{i+1}:v_i\circ v_i^R\to\Id_{\cdots\Id_\CA}$ be the iterated counit map in $\CR$, $0\le i<n$. 
Note that 
the equivalence $\Fun_{\CA^\rev}(\CA,\CA) \simeq \CA$ transforms $\Id_\CA$, $v_0\circ v_0^R$ and $v_1$ into $\one_\CA$, $\oplus_{g\in G}\one_\CA$ and the folding map, respectively. Thus $v_i^R$ and $v_{i+1}$ are transformed into the diagonal map and the folding map, respectively, by induction on $1\le i<n$. In particular, 
$\Hom_\CR(\Id,v_{n-1}\circ v_{n-1}^R) \simeq \Hom_\CR(v_{n-1}\circ v_{n-1}^R,\Id) \simeq \oplus_{g\in G}\C$.

Lift the counit maps $v_i$ to $\CB$. Note that $\Fun_{\CA|\CA}(\CA,\CA)$ is the homotopic $G$-invariants in $\Fun_{\CA^\rev}(\CA,\CA)$. Thus $\Hom_\CB(\Id,v_{n-1}\circ v_{n-1}^R)$ and $\Hom_\CB(v_{n-1}\circ v_{n-1}^R,\Id)$ are the $G$-invariants in $\Hom_\CR(\Id,v_{n-1}\circ v_{n-1}^R)$ and $\Hom_\CR(v_{n-1}\circ v_{n-1}^R,\Id)$, respectively. Observe that $G$ acts on $v_0\circ v_0^R$ by translating its direct summands. Therefore, there exists a unique morphism $u:\Id\to v_{n-1}\circ v_{n-1}^R$ in $\CB$ such that $v_n\circ u=|G|$.

The morphisms $v_0^R,v_1,v_1^R,\dots,v_n$ extend the tensor product functor $\otimes:\CA\boxtimes\CA\to\CA$ to a condensation in $\CB$, terminated by the identity $v_n\circ \frac1{|G|}u=1$. We obtain a condensation $-\boxtimes\CA\condense\Id_{\Sigma\CA}$, which extends $-\boxtimes\CA:(n+1)\Vect\to\Sigma\CA$ to a condensation. This shows that $\Sigma\CA$ is separable hence $\CA$ is a fusion $n$-category.

(2) Since $n\Vect_G$ is a fusion $n$-category, $(n+1)\Rep G$ is connected. Then from $\Omega (n+1)\Rep G\simeq n\Rep G$ we obtain $(n+1)\Rep G \simeq \Sigma n\Rep G$. Invoking \cite[Theorem 3.35]{KZ22}, we conclude that $n\Rep G$ is a symmetric fusion $n$-category.
\end{proof}

\begin{defn}
An {\em $n$-groupoid} $\CC$ is an $n$-category where all (higher) morphisms are invertible. An {\em $n$-group} is a monoidal $(n-1)$-category $\CG$ such that $B\CG$ is an $n$-groupoid. We say that $\CG$ is {\em finite} if all the homotopy groups $\pi_i(\CG)$ are finite. A {\em group homomorphism} between two $n$-groups is a monoidal functor.
\end{defn}

\begin{rem}
A 1-group is simply a group. An $n$-group can be regarded as an $m$-group for $m>n$. We adopt the convention that a 0-group is a trivial group.
\end{rem}

For an $m$-group $\CG$, we use $n\Rep\CG$ to denote the condensation-complete $\C$-linear symmetric monoidal $n$-category $\LMod_\CG(n\Vect) = \Fun(B\CG,n\Vect)$. Again, $$\Omega (n+1)\Rep \CG\simeq n\Rep\CG.$$ Moreover, $n\Rep\CG\simeq n\Rep(\tau_{\le n-1}\CG)$, where $\tau_{\le n-1}\CG$ is the truncation of $\CG$ (i.e. $\pi_i(\tau_{\le n-1}\CG) = \pi_i(\CG)$ for $i\le n-1$ and vanishes for $i\ge n$).

The forgetful functor $\KarCat^\C_{n+1} \to \KarCat_{n+1}$ has a left adjoint\footnote{
A formal argument shows that the subcategory $\Cat^+_{n+1} \subset \Cat_{n+1}$ of additive categories and additive functors (see \cite[Subsection 2.2]{KZ22}) is closed under limits and filtered colimits. For example, in a limit category $\varprojlim\CA_\alpha$, an object $X$ is represented by a coherent family of objects $X_\alpha\in\CA_\alpha$, and the direct sum $X\oplus Y$ is represented by the coherent family $X_\alpha\oplus Y_\alpha\in\CA_\alpha$. In a filtered colimit category $\varinjlim\CA_\alpha$, an object $X$ is the image of an object $X_\alpha\in\CA_\alpha$ for sufficiently large $\alpha$, and the direct sum $X\oplus Y$ is that of $X_\alpha\oplus Y_\alpha\in\CA_\alpha$. Invoking the adjoint functor theorem (c.f. \cite[Corollary 5.5.2.9]{Lur09}) which we assume in a concrete model, we conclude that the inclusion functor $\Cat^+_{n+1} \to \Cat_{n+1}$ admits a left adjoint $L':\Cat_{n+1} \to \Cat^+_{n+1}$. 
Moreover, the forgetful functor $\Cat^\C_{n+1} = \LMod_{B^{n+1}\C}(\Cat^+_{n+1}) \to \Cat^+_{n+1}$ admits a left adjoint $L''=B^{n+1}\C\boxtimes-$. The composition $\KarCat_{n+1} \hookrightarrow \Cat_{n+1} \xrightarrow{L'} \Cat^+_{n+1} \xrightarrow{L''} \Cat^\C_{n+1} \xrightarrow{\Kar} \KarCat^\C_{n+1}$ is the desired left adjoint $L$.
}
$$L:\KarCat_{n+1} \to \KarCat^\C_{n+1}.$$ 
For an $(n+1)$-group $\CG$, we use $n\Vect_\CG$ to denote the condensation-complete $\C$-linear monoidal $n$-category $\Omega L(B\CG)$.

\begin{rem} \label{rem:VecG-decomp}
Since $L$ is symmetric monoidal, it induces a symmetric monoidal functor $E_m\KarCat_{n+1} \to E_m\KarCat^\C_{n+1}$ for $m\ge0$ that is left adjoint to the forgetful functor. Thus we have a $\C$-linear equivalence $n\Vect_\CG \simeq L(\CG)$. That is, $$n\Vect_\CG \simeq \bigoplus_{g\in\pi_0(\CG)} \Sigma(n-1)\Vect_{\Omega\CG}.$$
It follows immediately that $n\Vect_\CG$ is a fusion $n$-category only if $\CG$ is finite.
In the special case where $\CG$ is a group $G$, $n\Vect_\CG$ is precisely $n\Vect_G$ defined above.
\end{rem}

\begin{prop} \label{prop:nVec-RepG}
For an $(n+1)$-group $\CG$, we have
$$\LMod_{n\Vect_\CG}((n+1)\Vect) \simeq (n+1)\Rep\CG,$$
$$\Fun_{n\Vect_\CG}(n\Vect,n\Vect) \simeq \Omega(n+1)\Rep\CG.$$ 
\end{prop}
\begin{proof}
By definition, we have $\LMod_{n\Vect_\CG}((n+1)\Vect) = \Fun_\C(L(B\CG),(n+1)\Vect)$ and $(n+1)\Rep\CG = \Fun(B\CG,(n+1)\Vect)$. The former equivalence follows. The latter equivalence follows from the former one.
\end{proof}

\begin{rem}
$n\Rep\CG$ might be a symmetric fusion $n$-category for infinite $\CG$. For example, $\Rep S_\infty \simeq \Rep\Z_2$ where $S_\infty = \varinjlim S_n$ is the infinite symmetric group.
\end{rem}

\begin{rem}
Note that $L(\CC\times\CD) \simeq L(\CC)\boxtimes L(\CD)$. Thus $n\Vect_{\CG_1\times\CG_2} \simeq n\Vect_{\CG_1}\boxtimes n\Vect_{\CG_2}$. 
\end{rem}

\begin{exam} \label{exam:em-nvecg}
Let $G$ be a finite abelian group. Then $B^{n-1}G$ is a finite $n$-group for $n\ge1$. By induction on $n$, we have $n\Vect_{B^{n-1}G} \simeq n\Rep\chi(G)$ where $\chi(G)=\hom(G,\C^\times)$ is the character group. It follows that $n\Rep(B^{n-1}G) \simeq n\Vect_{\chi(G)}$ 
\end{exam}

\begin{thm} \label{thm:nrepg-fus}
Let $\CG$ be a finite $n$-group. $(1)$ $n\Vect_\CG$ is a fusion $n$-category. $(2)$ $n\Rep\CG$ is a symmetric fusion $n$-category and we have $(n+1)\Rep\CG \simeq \Sigma n\Rep\CG$.
\end{thm}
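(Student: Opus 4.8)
The plan is to deduce part $(2)$ from part $(1)$ exactly as in Proposition \ref{prop:nvecg-point}$(2)$, and to prove part $(1)$ by induction on $n$, the base case $n=1$ being Proposition \ref{prop:nvecg-point}. Granting $(1)$, $n\Vect_\CG$ is fusion, so $(n+1)\Rep\CG \simeq \LMod_{n\Vect_\CG}((n+1)\Vect)$ is a connected separable $(n+1)$-category (modules over a fusion $n$-category); combined with $\Omega(n+1)\Rep\CG \simeq n\Rep\CG$ this forces $(n+1)\Rep\CG \simeq \Sigma\, n\Rep\CG$, and \cite[Theorem 3.35]{KZ22} then upgrades the symmetric monoidal $n$-category $n\Rep\CG$ to a symmetric fusion $n$-category. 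Thus the real content is part $(1)$, and by \cite[Theorem 3.35]{KZ22} it suffices to show that $n\Vect_\CG$ is rigid with simple unit and that $\Sigma(n\Vect_\CG) \simeq (n+1)\Rep\CG$ (the two agree since $\CG$ is group-like, so left and right modules coincide) is a separable $(n+1)$-category.

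For the inductive step I would first extract the easy structural facts from the decomposition $n\Vect_\CG \simeq \bigoplus_{g\in\pi_0(\CG)} \Sigma(n-1)\Vect_{\Omega\CG}$. Since $\Omega\CG$ is a finite $(n-1)$-group, the inductive hypothesis makes $(n-1)\Vect_{\Omega\CG}$ a fusion $(n-1)$-category; hence each summand $\Sigma(n-1)\Vect_{\Omega\CG}$ is an indecomposable separable $n$-category and the finite direct sum $n\Vect_\CG$ is separable as an $n$-category. The tensor unit lies in the $g=e$ summand, so $\Omega(n\Vect_\CG) \simeq (n-1)\Vect_{\Omega\CG}$ is fusion, giving a simple unit. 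Rigidity is then checked componentwise: the generators over $g\in\pi_0(\CG)$ are invertible because $\CG$ is group-like (dualizing $g$ to $g^{-1}$), while duals in the fiber direction come from the rigidity of $(n-1)\Vect_{\Omega\CG}$.

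The crux is the separability of $(n+1)\Rep\CG \simeq \Sigma(n\Vect_\CG)$, and here I would generalize the explicit condensation used in the proof of Proposition \ref{prop:nvecg-point}$(1)$. Concretely, the goal is to promote $-\boxtimes n\Vect_\CG \colon (n+1)\Vect \to \RMod_{n\Vect_\CG}((n+1)\Vect)$ to a condensation, exhibiting $(n+1)\Rep\CG$ as a condensate of $(n+1)\Vect$ and hence as separable. As in the group case this amounts to splitting a height-$n$ tower of counit maps built from the multiplication of $n\Vect_\CG$ and its iterated adjoints, the splitting being furnished at the top by a single nonzero scalar. The new feature is that this terminal scalar is now the homotopy cardinality $\prod_i |\pi_i(\CG)|^{(-1)^i}$ of $\CG$ rather than $|G|$; finiteness of all $\pi_i(\CG)$ makes it a nonzero rational, so it is invertible in $\C$ and the averaging goes through.

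The main obstacle is organizing this tower in the genuinely higher, non-split situation --- i.e.\ tracking the coherences of the iterated adjoints of the multiplication when $\CG$ has several nontrivial homotopy groups linked by nonzero $k$-invariants and a nontrivial $\pi_0(\CG)$-action. An alternative that isolates this difficulty is to induct up the Postnikov tower of $\CG$ rather than on $n$: each stage presents $n\Vect_\CG$ as a (possibly twisted, in the sense of Remark \ref{rem:twist-nvecG}) extension of the fusion $n$-category attached to the truncation $\tau_{\le k-1}\CG$ by $B^{k}\pi_k(\CG)$, and one must show that separability is preserved under such extensions. Either way the essential input is that characteristic zero lets us invert the relevant homotopy-cardinality scalars; the bookkeeping of the higher cells is the part that requires care rather than any new idea.
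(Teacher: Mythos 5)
Your reductions are all sound and agree with the paper's: part $(2)$ follows from part $(1)$ exactly as in Proposition \ref{prop:nvecg-point}$(2)$, and you correctly isolate the crux as the separability of $\Sigma(n\Vect_\CG)\simeq(n+1)\Rep\CG$, with rigidity and simplicity of the unit handled componentwise via $n\Vect_\CG\simeq\bigoplus_{g\in\pi_0(\CG)}\Sigma(n-1)\Vect_{\Omega\CG}$. The gap is at the step you yourself flag: you propose to split the full height-$n$ tower of counits for the \emph{absolute} tensor product $\otimes:n\Vect_\CG\boxtimes n\Vect_\CG\to n\Vect_\CG$ using a single terminal scalar equal to the homotopy cardinality $\prod_i|\pi_i(\CG)|^{(-1)^i}$. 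This is not justified and is unlikely to be literally correct: in the discrete case the intermediate counits $v_1,\dots,v_{n-1}$ are essentially identities (fold maps over the finite set $G$), which is why one scalar at the top suffices; once $\CG$ has nontrivial higher homotopy groups, each level of the tower carries its own nontrivial content (summations over $\pi_i$, twisted by $k$-invariants and the $\pi_0$-action), and there is no reason a single scalar inserted at the top level splits the whole tower. You name this coherence problem as ``the part that requires care'' but do not resolve it, and your Postnikov-tower alternative is likewise left as a statement of what would need to be proved.

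The missing idea is to make the condensation \emph{relative} rather than absolute. The paper inducts on $\prod_i|\pi_i(\CG)|$; after delooping one may assume $\pi_0(\CG)$ is nontrivial, and one sets $\CH=B\Omega\CG$, so that $n\Vect_\CH$ is fusion by the inductive hypothesis and $n\Vect_\CG\simeq\bigoplus_{g\in\pi_0(\CG)}n\Vect_\CH$. One then splits the relative tensor product $\otimes:n\Vect_\CG\boxtimes_{n\Vect_\CH}n\Vect_\CG\to n\Vect_\CG$ in $\BMod_{n\Vect_\CG|n\Vect_\CG}((n+1)\Vect)$: over $n\Vect_\CH$ all the higher homotopy of $\CG$ is absorbed into the base, only the finite set $\pi_0(\CG)$ remains, and the argument of Proposition \ref{prop:nvecg-point}$(1)$ applies verbatim with averaging scalar $1/|\pi_0(\CG)|$. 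This exhibits $\RMod_{n\Vect_\CG}((n+1)\Vect)$ as a condensate of the separable $(n+1)$-category $\RMod_{n\Vect_\CH}((n+1)\Vect)$, hence separable, with no need to track the full tower over $(n+1)\Vect$. Your componentwise structural analysis combined with this relative trick would complete the proof; as written, the key step is asserted rather than proved.
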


\begin{proof}
(1) We prove by induction on $n$. The case $n=0$ is trivial. For $n\ge1$, let $\CH=B\Omega\CG$. Then $n\Vect_\CG$ has the form $\bigoplus_{g\in\pi_0(\CG)} n\Vect_\CH$ by Remark \ref{rem:VecG-decomp} where $n\Vect_\CH \simeq \Sigma(n-1)\Vect_{\Omega\CG}$ is a fusion $n$-category by the inductive hypothesis. Then a similar argument of $\CG$-invariants as the proof of Proposition \ref{prop:nvecg-point} shows that the tensor product $\otimes: n\Vect_\CG\boxtimes_{n\Vect_\CH}n\Vect_\CG \to n\Vect_\CG$ extends to a condensation in $\BMod_{n\Vect_\CG|n\Vect_\CG}((n+1)\Vect)$ hence $-\boxtimes_{n\Vect_\CH}n\Vect_\CG: \Sigma n\Vect_\CH \to \Sigma n\Vect_\CG$ extends to a condensation. Therefore, $n\Vect_\CG$ is a fusion $n$-category.

(2) The proof is the same as Proposition \ref{prop:nvecg-point}.
\end{proof}

\begin{cor}
Let $\CG$ be a finite $(n+1)$-group $\CG$. $(1)$ $n\Vect_\CG$ is a multi-fusion $n$-category. $(2)$ $n\Vect_\CG$ is of fusion type if and only if $\pi_n(\CG)$ is trivial, i.e. $\CG$ is an $n$-group. 
\end{cor}
\begin{proof}
(1) is because $\LMod_{n\Vect_\CG}((n+1)\Vect)\simeq(n+1)\Rep\CG$ is separable.
(2) can be seen by induction on $n$ and from the base case that the group algebra $0\Vect_G$ of a finite group $G$ is decomposable unless $G$ is trivial.
\end{proof}

\begin{rem}
In the $k$-linear settings where $k$ is a field of positive characteristic, the results of this and next subsections need an additional assumption that $\chara k$ does not divide the order of $G$ or none of the order of $\pi_i(\CG)$.
\end{rem}

\subsection{Pointed fusion $n$-categories} \label{sec:point-fus}

\begin{defn}
Let $\CA$ be a fusion $n$-category and let $\CA=\bigoplus_{g\in\Lambda}\CA_g$ be the simple decomposition of the underlying separable $n$-category. We say that $\CA$ is {\em pointed} if $\Lambda$ admits the structure of a group such that the tensor product $\otimes:\CA\boxtimes\CA\to\CA$ induces an equivalence $\CA_g\boxtimes\CA_h \simeq \CA_{g h}$ for all $g,h\in\Lambda$. We refer to $\Lambda$ as the {\em fusion group} of $\CA$.
\end{defn}

\begin{exam} \label{exam:point-fusion}
(1) Let $G$ be a finite group. A monoidal functor $\rho:G\to(n+1)\Vect$ induces a monoidal functor $\tilde\rho:(n+1)\Vect_G\to(n+1)\Vect$. Since $\tilde\rho$ extends to a unital condensation of right $(n+1)\Vect_G$-modules, $\tilde\rho^R(n\Vect)$ lifts to a unital condensation algebra $A$ in $(n+1)\Vect_G$. The image $A'$ of $A$ in $(n+1)\Vect$ determines a multi-fusion $n$-category $\CA$ such that $\Sigma\CA$ is the $E_0$-multi-fusion $(n+1)$-category of condensation right $A'$-modules. Note that $\CA \simeq \bigoplus_{g\in G}\rho(g)$ and the tensor product of $\CA$ is induced by the natural equivalence $\rho(g)\boxtimes\rho(h)\simeq\rho(g h)$. Therefore, $\CA$ is pointed with fusion group $G$ for $n\ge1$.

(2) Let $\alpha: B G\to B^{n+2}\C^\times$ be a functor that represents a cohomology class $\omega\in H^{n+2}(G,\C^\times)$. Applying the above construction to the composition $G\xrightarrow{\Omega\alpha}B^{n+1}\C^\times \subset (n+1)\Vect$ yields a multi-fusion $n$-category which we denote by $n\Vect_G^\omega$. 
Informally speaking, $n\Vect_G^\omega$ is obtained from $n\Vect_G$ by twisting the coherence relation.
\end{exam}

\begin{prop} \label{prop:point-fus}
Let $\CA$ be a pointed fusion $n$-category with fusion group $G$. We have the following assertions:
\begin{enumerate}
\item $\CA_e$ is a trivial fusion $n$-category where $e\in G$ is the unit element.
\item $\CA_g$ is invertible for all $g\in G$.
\item $X\otimes Y$ is simple for any simple objects $X,Y\in\CA$.
\end{enumerate}
\end{prop}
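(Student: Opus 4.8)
The plan is to handle the four assertions in order, the first three being quick consequences of Section \ref{sec:elements} and the fourth carrying the real content.

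For (1), note that $\one_\CA$ is simple and tensor-neutral, so it lies in the component indexed by the unit; hence $\CA_e$ is a full subcategory containing $\one_\CA$ and closed under $\otimes$ and duals, i.e. a fusion $n$-category, and the pointed condition gives a $\C$-linear equivalence $\CA_e\boxtimes\CA_e\simeq\CA_{ee}=\CA_e$. Since $\CA_e$ is a nonzero separable $n$-category, Proposition \ref{prop:sep-idem} forces $\CA_e\simeq n\Vect$, and then Proposition \ref{prop:fus-trivial} (applied with $m\ge1$) upgrades this to the statement that $\CA_e$ is trivial. For (2), the pointed condition yields $\CA_g\boxtimes\CA_{g^{-1}}\simeq\CA_e\simeq n\Vect$ and $\CA_{g^{-1}}\boxtimes\CA_g\simeq\CA_e\simeq n\Vect$, which is exactly the assertion that $\CA_g$ is an invertible separable $n$-category with inverse $\CA_{g^{-1}}$.

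For (3), I would first observe that a simple object is homogeneous: a simple $X$ cannot have nonzero components in two distinct summands $\CA_g,\CA_h$, so it lies in a single $\CA_g$. Given simple $X\in\CA_g$ and $Y\in\CA_h$, the pointed condition provides the tensor equivalence $\CA_g\boxtimes\CA_h\xrightarrow{\sim}\CA_{gh}$ carrying $X\boxtimes Y$ to $X\otimes Y$. As an equivalence preserves simplicity, it suffices to know that the exterior product of two simple objects is simple, which follows from $\End_{\CA_g\boxtimes\CA_h}(X\boxtimes Y)\simeq\End_{\CA_g}(X)\boxtimes\End_{\CA_h}(Y)$ by induction on $n$ over $\C$. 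In particular each invertible $\CA_g$ has a unique simple object $m_g$, and $m_g\otimes m_h\simeq m_{gh}$.

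The real content is (4). Here I would use $\Omega\CA\simeq\Omega\CA_e\simeq\Omega(n\Vect)=(n-1)\Vect$ from (1), so that the forgetful functor exhibits $\Omega\FZ_1(\CA)$ as the braided $(n-1)$-category of objects $V\in(n-1)\Vect$ equipped with a half-braiding against all of $\CA$. By (2) and (3) the category $\CA$ is generated under condensation and $\otimes$ by the invertible simple objects $m_g$, so a half-braiding is determined by its components at the $m_g$. Using invertibility of $m_g$ to identify $\End_\CA(m_g)\simeq\Omega\CA=(n-1)\Vect$ via $-\otimes m_g$, the half-braiding component at $m_g$ becomes an automorphism $\rho_g$ of $V$, and coherence against the fusion $m_g\otimes m_h\simeq m_{gh}$ forces $\rho_g\rho_h\simeq\rho_{gh}$ coherently. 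This is precisely the datum of a functor $BG\to(n-1)\Vect$ landing at $V$, i.e. an object of $\LMod_G((n-1)\Vect)=(n-1)\Rep G$, and I would assemble these identifications into a braided equivalence $\Omega\FZ_1(\CA)\simeq(n-1)\Rep G$.

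The main obstacle is to carry out this last step at the level of full coherence: one must check that the higher coherence data of the half-braidings organizes into a genuine (not merely projective) $G$-action, and in particular that the associator twist $\omega$ of $\CA$, which lives in degree $n+2$, does not obstruct the identification with honest representations; one must also verify that the braiding induced on $\Omega\FZ_1(\CA)$ matches the symmetric structure on $(n-1)\Rep G$. I expect this to be cleanest if phrased through the equivalence $\Sigma\FZ_1(\CA)=\FZ_0(\Sigma\CA)$ and a description of $\Sigma\CA$ as a category of $G$-graded modules, which would reduce the coherence bookkeeping to the already-established functoriality of $\FZ_0$ in Theorem \ref{thm:z0}.
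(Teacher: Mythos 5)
Your treatment of (1)--(3) matches the paper's almost exactly: (1) is precisely the argument via Proposition \ref{prop:sep-idem} followed by Proposition \ref{prop:fus-trivial}, (2) follows from (1) as you say, and for (3) the paper simply cites \cite[Lemma 3.49(1)]{KZ22} for the fact that an exterior product of simple objects is simple, which is what your inductive $\End$-computation re-proves.

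The divergence, and the genuine gap, is in (4). Your plan --- compute half-braidings component by component on the invertible generators $m_g$, extract automorphisms $\rho_g$ of $V\in(n-1)\Vect$, and observe $\rho_g\rho_h\simeq\rho_{gh}$ --- identifies the correct answer, but, as you yourself flag, in a weak $n$-category the passage from ``the components multiply coherently'' to ``this is a functor $BG\to(n-1)\Vect$'' is not a finite verification: it requires assembling an unbounded tower of coherence data, and one must separately rule out a projective twist coming from the associator of $\CA$. That assembly is the entire content of the statement, so as written the proof of (4) is a correct heuristic rather than a proof. The paper avoids the bookkeeping by a formal device: parts (1) and (2) show each $\CA_g$ is an invertible separable $n$-category, so $\CA$ packages into a monoidal functor $G\to\Fun((n+1)\Vect,(n+1)\Vect)$, i.e.\ an invertible object $X$ of $(n+2)\Rep G$ lying over $(n+1)\Vect$ (exactly as in Remark \ref{rem:twist-nvecG}; the associator twist is absorbed into $X$ and causes no obstruction). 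One then identifies the fiber product $\CA_e\times_\CA\FZ_1(\CA)$ with $\Omega\Hom_{(n+2)\Rep G}(X,X)$, and invertibility of $X$ gives $\Hom_{(n+2)\Rep G}(X,X)\simeq\Omega(n+2)\Rep G\simeq(n+1)\Rep G$ purely formally; looping twice yields $\Omega\FZ_1(\CA)\simeq(n-1)\Rep G$ with all coherences supplied by the ambient category $(n+2)\Rep G$. Your closing suggestion to route the argument through $\Sigma\CA$ and the functoriality of $\FZ_0$ is pointing in this direction; to repair your (4), replace the component-wise analysis with this identification.
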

\begin{proof}
(1) Since $\CA_e\boxtimes\CA_e\simeq\CA_e$, $\CA_e\simeq n\Vect$ by Proposition \ref{prop:sep-idem}. By Proposition \ref{prop:fus-trivial}, $\CA_e$ is trivial.

(2) is a consequence of (1). 

(3) is a consequence of \cite[Lemma 3.49(1)]{KZ22}.
\end{proof}

\begin{thm} \label{thm:mext-repG}
Let $G$ be a finite group. The following three types of data one-to-one correspond to each other:
\begin{enumerate}
\renewcommand{\labelenumi}{$(\alph{enumi})$}
\item Equivalence classes of modular extensions of $n\Rep G$ (regarded as a braided fusion $n$-category over itself).
\item Equivalence classes of invertible objects of $(n+2)\Rep G$ whose image in $(n+2)\Vect$ is equivalent to $(n+1)\Vect$.
\item Equivalence classes of pointed fusion $n$-categories with fusion group $G$.
\end{enumerate}
\end{thm}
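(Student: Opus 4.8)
The plan is to establish the two bijections (a)$\leftrightarrow$(b) and (b)$\leftrightarrow$(c) separately; composing them gives the three-way correspondence. For (a)$\leftrightarrow$(b) I would iterate the reduction of modular extensions down to the $E_0$-setting and then invoke the symmetry-breaking classification of Theorem \ref{thm:sym-breaking}, and for (b)$\leftrightarrow$(c) I would unwind the construction of Remark \ref{rem:twist-nvecG} together with Proposition \ref{prop:point-fus}.

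For (a)$\leftrightarrow$(b): since $n\Rep G$ is a symmetric fusion $n$-category with $\Sigma(n\Rep G)\simeq(n+1)\Rep G$ (Theorem \ref{thm:nrepg-fus}(2)), applying Proposition \ref{prop:e2-centralizer} and then Proposition \ref{prop:e1-centralizer} identifies modular extensions of $n\Rep G$ over itself successively with $E_1$-modular extensions of the fusion $(n+1)$-category $(n+1)\Rep G$ and then with $E_0$-modular extensions of the $E_0$-multi-fusion $(n+2)$-category $(n+2)\Rep G$ over $(n+2)\Rep G$, exactly along the lines of the proofs of Corollary \ref{cor:mod-ext-e1} and Corollary \ref{cor:mod-ext-e2}. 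The point I would check carefully is that, because the structure map $\overline{n\Rep G}\to\FZ_2(n\Rep G)\simeq n\Rep G$ is the canonical equivalence, the $\Sigma^2$-reduction produces precisely the $E_0$-multi-fusion structure $((n+2)\Rep G)^\op$ over $\CE=(n+2)\Rep G$ equipped with the forgetful (fiber) functor $F:(n+2)\Rep G\to(n+2)\Vect$. Theorem \ref{thm:sym-breaking}(2) then matches these $E_0$-modular extensions with equivalence classes of invertible objects $E\in(n+2)\Rep G$ satisfying $F(E)\simeq\one$, and these are exactly the invertible objects of $(n+2)\Rep G$ whose underlying separable $(n+1)$-category is $\one_{(n+2)\Vect}=(n+1)\Vect$, i.e. the data in (b).

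For (b)$\leftrightarrow$(c): following Remark \ref{rem:twist-nvecG} and the proof of Proposition \ref{prop:point-fus}(4), a pointed fusion $n$-category $\CA=\bigoplus_{g\in G}\CA_g$ with fusion group $G$ has invertible components $\CA_g$ (Proposition \ref{prop:point-fus}(2)) together with equivalences $\CA_g\boxtimes\CA_h\simeq\CA_{gh}$, and this is exactly the data of a monoidal functor $G\to\Fun((n+1)\Vect,(n+1)\Vect)$ landing in autoequivalences, hence of an invertible object $X$ of $(n+2)\Rep G$ on $(n+1)\Vect$. Conversely such an $X$ reassembles a pointed fusion $n$-category by taking its graded pieces $\CA_g$ to be the invertible separable $n$-categories determined by the autoequivalences $X(g)$, with the monoidal coherence of $X$ supplying the fusion structure. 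I would verify that these two assignments are mutually inverse on equivalence classes and that equivalences of pointed fusion $n$-categories correspond to equivalences of invertible objects, giving the natural bijection between (b) and (c).

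The main obstacle is the bookkeeping in the first bijection: one must confirm that the abstract triple reduction $\FZ_2\rightsquigarrow\FZ_1\rightsquigarrow\FZ_0$ delivers exactly the hypotheses of Theorem \ref{thm:sym-breaking}, in particular that the ``over $n\Rep G$'' perfect-pairing condition becomes, after applying $\Sigma^2$, the fiber-functor condition $F(E)\simeq\one$, and that the resulting $E_0$-multi-fusion category is of the special form $\CE^\op$ over $\CE$ (with the canonical $\psi$) required there. As a concrete cross-check I would also record the explicit modular extension attached to a pointed $\CA$, namely its $E_1$-center $\FZ_1(\CA)$: it is nondegenerate by Corollary \ref{cor:z1-nondeg}, and it contains $n\Rep G\simeq\CA_e\times_\CA\FZ_1(\CA)$ (the computation in the proof of Proposition \ref{prop:point-fus}(4)) as a symmetric subcategory. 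Verifying directly that this copy of $n\Rep G$ is self-centralizing, so that $\FZ_1(\CA)$ is genuinely a modular extension, is the harder route that the reduction to Theorem \ref{thm:sym-breaking} is designed to bypass.
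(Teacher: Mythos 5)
Your proposal is correct and follows essentially the same route as the paper: the paper's proof consists precisely of citing Theorem \ref{thm:sym-breaking}(2) for $(a)\Leftrightarrow(b)$ (with the $\Sigma^2$-reduction through Propositions \ref{prop:e2-centralizer} and \ref{prop:e1-centralizer} left implicit, as you spell out) and of the observation via Proposition \ref{prop:point-fus}(1)(2) that a pointed fusion $n$-category with fusion group $G$ is the same data as a monoidal functor $G\to\Fun((n+1)\Vect,(n+1)\Vect)$, i.e. an invertible object of $(n+2)\Rep G$ on $(n+1)\Vect$, for $(b)\Leftrightarrow(c)$. The bookkeeping you flag as the main obstacle is exactly what the paper elides, and your concluding identification of the extension as $\FZ_1(\CA)$ is a useful but inessential addendum.
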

\begin{proof}
$(a)\Leftrightarrow(b)$ is due to Theorem \ref{thm:sym-breaking}(2).

$(b)\Leftrightarrow(c)$ Let $X\in(n+2)\Rep G$ be an invertible object whose image in $(n+2)\Vect$ is $(n+1)\Vect$. Then $X$ determines a monoidal functor $\rho:G\to(n+1)\Vect$ which induces a pointed fusion $n$-category with fusion group $G$ by Example \ref{exam:point-fusion}(1). 

Conversely, let $\CA$ be a pointed fusion $n$-category with fusion group $G$. Then $\CA$, viewed as a unital condensation algebra in $(n+1)\Vect$, lifts to a unital condensation algebra $A$ in $(n+1)\Vect_G$ which defines an object $X\in(n+2)\Rep G$. The image of $X$ in $(n+2)\Vect$ is given by $\Hom_{(n+2)\Rep G}((n+1)\Vect_G,X)$, i.e. the category of condensation right $A$-modules. According to Proposition \ref{prop:point-fus}(1)(2), the canonical map $M_e\boxtimes A\to M$ for a condensation right $A$-module $M=\oplus_{g\in G}M_g$ is invertible. Therefore, the image of $X$ in $(n+2)\Vect$ is equivalent to $(n+1)\Vect$. 

These two constructions are clearly inverse to each other.
\end{proof}

\begin{rem} \label{rem:mext-repG}
Unwinding the constructions, we see that the three types of data from Theorem \ref{thm:mext-repG} correspond in the following way. A pointed fusion $n$-category $\CA$ with fusion group $G$ induces an invertible object $X\in(n+2)\Rep G$ as well as a unital condensation algebra $A$ in $(n+1)\Vect_G$. We have $\Hom_{(n+2)\Rep G}(\one,X) \simeq \Sigma\CA$ because the left hand side is the category of condensation right $A$-modules in $\Hom_{(n+2)\Rep G}(\one,(n+1)\Vect_G) \simeq (n+1)\Vect$. Then $X$ induces an $E_0$-modular extension of $((n+2)\Rep G)^\op$
$$((n+2)\Rep G)^\op \to ((n+2)\Vect,\Sigma\CA), \quad Y\mapsto\Hom_{(n+2)\Rep G}(Y,X).$$
This yields a modular extension of $\overline{n\Rep G}$ given by $\Omega^2((n+2)\Vect,\Sigma\CA) = \Omega\FZ_0(\Sigma\CA) = \FZ_1(\CA)$. 

Moreover, unwinding the proof of Corollary \ref{cor:sym-breaking2}, we see that $$\FZ_1(\CA)\boxtimes_{n\Rep G}n\Vect \simeq \CA^\rev.$$ 
Thus the fiber $\FZ_1(\CA)\times_\CA\CA_e$ is the trivial $n\Rep G$-module. Therefore, the braided monoidal functor $\overline{n\Rep G}\to\FZ_1(\CA)$ is fully faithful.
\end{rem}

\begin{prop} \label{prop:point-simple}
Let $\CA$ be a fusion $n$-category and let $\CA=\bigoplus_{g\in\Lambda}\CA_g$ be the simple decomposition of the separable $n$-category. The following conditions are equivalent:
\begin{enumerate}
\item $\CA$ is pointed.
\item $\Omega\CA$ is trivial and $X^L\otimes X$ is simple for any simple $X\in\CA$.
\item $\Omega\CA$ is trivial and $\Lambda$ can be promoted to a group such that $\CA_g\otimes\CA_h \subset \CA_{g h}$.
\end{enumerate}
\end{prop}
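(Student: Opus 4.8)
The plan is to establish the cycle $(1)\Rightarrow(3)\Rightarrow(2)\Rightarrow(1)$, drawing on Proposition \ref{prop:point-fus} for the first step and, for the other two, on the equivalence $\CA_e\simeq n\Vect$ together with the duality identification $\Hom_\CA(\one,X^L\otimes X)\simeq\End_\CA(X)$. For $(1)\Rightarrow(3)$ I would argue as follows. Since $\one_\CA\in\CA_e$, Proposition \ref{prop:point-fus}(1) gives $\Omega\CA\simeq\Omega\CA_e\simeq\Omega(n\Vect)\simeq(n-1)\Vect$, so $\Omega\CA$ is trivial; the group structure on $\Lambda$ and the containment $\CA_g\otimes\CA_h\subset\CA_{g h}$ are immediate from the definition of pointedness, which even furnishes the equivalences $\CA_g\boxtimes\CA_h\simeq\CA_{g h}$. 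The reduction common to the remaining two implications is that $\CA_e$, being an indecomposable separable $n$-category with $\Omega\CA_e\simeq(n-1)\Vect$, is connected and hence $\CA_e\simeq\Sigma\Omega\CA_e\simeq n\Vect$; under this equivalence an object $Z\in\CA_e$ is classified by the separable $(n-1)$-category $\Hom_\CA(\one,Z)$, and $Z$ is simple exactly when $\Hom_\CA(\one,Z)$ is indecomposable. I will also use that, by rigidity of $\CA$ (as in the proof of Proposition \ref{prop:fus-nonzero}), $\one$ is a condensate of $X^L\otimes X$ for every nonzero $X$.

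For $(3)\Rightarrow(2)$, the first clause of $(2)$ is hypothesized, so it remains to prove that $X^L\otimes X$ is simple for every simple $X$. Take $X$ in the component $\CA_g$. Since $X^L\otimes X$ contains $\one\in\CA_e$ as a condensate and the grading is a group, $X^L$ must lie in $\CA_{g^{-1}}$, whence $X^L\otimes X\subset\CA_{g^{-1}}\otimes\CA_g\subset\CA_e\simeq n\Vect$. By the classification above, $X^L\otimes X$ corresponds to $\End_\CA(X)$; as $X$ is simple this endomorphism $(n-1)$-category is indecomposable, so $X^L\otimes X$ is a simple object of $\CA_e$, hence of $\CA$.

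For $(2)\Rightarrow(1)$, I would first upgrade the hypothesis to invertibility of all simple objects. For simple $X$ the object $X^L\otimes X$ lies in the component of $\one$, namely $\CA_e\simeq n\Vect$, and is simple by assumption; since $\one$ is a nonzero condensate of it, the classification forces $X^L\otimes X\simeq\one$, i.e.\ $\End_\CA(X)\simeq(n-1)\Vect$ and $X$ is left invertible. Applying the same to the simple object $X^R$ gives $X\otimes X^R\simeq\one$, so $X^L\simeq X^R$ and $X$ is invertible. Then $-\otimes X\colon\CA\to\CA$ is an autoequivalence permuting the components; as it sends $\one\in\CA_e$ to $X\in\CA_g$, it restricts to an equivalence $\CA_e\xrightarrow{\sim}\CA_g$, so each $\CA_g\simeq n\Vect$ and every simple object is invertible. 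I would finish by defining the multiplication on $\Lambda$ via $g\cdot h:=$ the component of $X_g\otimes X_h$ for chosen simple objects $X_g\in\CA_g$, $X_h\in\CA_h$ (well defined because the components are connected), checking the group axioms from invertibility, and observing that $\otimes\colon\CA_g\boxtimes\CA_h\to\CA_{g h}$ carries the generator $X_g\boxtimes X_h$ to the generator $X_g\otimes X_h$ and is therefore an equivalence of copies of $n\Vect$. This exhibits $\CA$ as pointed.

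The main obstacle is the implication ``$X^L\otimes X$ simple and $\one$ a condensate $\Rightarrow X^L\otimes X\simeq\one$'' inside $(2)\Rightarrow(1)$: it is precisely here that indecomposability of $X^L\otimes X$ must be converted, against rigidity, into the vanishing of the higher self-maps of $X$, i.e.\ into $\End_\CA(X)\simeq(n-1)\Vect$. I expect to handle it through the classification of objects of $\CA_e\simeq n\Vect$ by their $(n-1)$-categories of morphisms from $\one$, using that a simple such object which admits the unit $(n-1)\Vect$ as a condensate can only be the unit itself; the symmetric treatment of left and right duals needed to pass from one-sided to genuine invertibility is a secondary point requiring care.
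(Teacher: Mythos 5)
Your step $(1)\Rightarrow(3)$ is fine, but each of the two remaining implications rests on a claim about simple objects that is false for higher $n$, and in both cases the false claim sits exactly where the real content of the proposition lies. In $(3)\Rightarrow(2)$ you assert that simplicity of $X$ forces $\End_\CA(X)$ to be an \emph{indecomposable} separable $(n-1)$-category, so that $X^L\otimes X\in\CA_e\simeq n\Vect$ is simple. In this framework $X$ is simple when $\Id_X$ is simple in $\End_\CA(X)$ (iterated down to the bottom level), which only makes $\End_\CA(X)$ \emph{connected}; it need not be indecomposable as a separable $(n-1)$-category. For instance the regular object $\CB\in\Sigma\CB$ is simple for any fusion $1$-category $\CB$, yet $\End_{\Sigma\CB}(\CB)\simeq\CB$ decomposes into one copy of $\Vect$ per simple object of $\CB$. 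Since indecomposability of $\End_\CA(X)$ is \emph{equivalent} to the simplicity of $X^L\otimes X$ under your dictionary, this step is circular. In $(2)\Rightarrow(1)$, the lemma you flag as the main obstacle --- a simple object of $n\Vect$ admitting $\one_{n\Vect}$ as a condensate must be $\one_{n\Vect}$ --- fails for $n\ge3$: in $3\Vect$ the unit $2\Vect$ is a condensate of \emph{every} nonzero object (for any nonzero $\CB$-module $\CM$ the composite of $-\boxtimes\CM:2\Vect\to\Sigma\CB$ with $\Fun_\CB(\CM,-):\Sigma\CB\to2\Vect$ is tensoring with the nonzero separable $1$-category $\Fun_\CB(\CM,\CM)$, which contains $\Id_{2\Vect}$ as a summand), so $\Sigma\Vect_{\Z_2}$ is a simple object of $3\Vect$ admitting $\one$ as a condensate without being $\one$. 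Note that Corollary \ref{cor:ret-nvec} runs in the opposite direction --- it concerns $\CC$ being a retract \emph{of} $n\Vect$, not $n\Vect$ being a retract of $\CC$ --- so it cannot be used to repair this.

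The paper's proof (cycle $(1)\Rightarrow(2)\Rightarrow(3)\Rightarrow(1)$) avoids both traps by not arguing object-by-object. For $(2)\Rightarrow(3)$ it shows directly that $X\otimes Y$ is simple: $(X^L\otimes X)\otimes(Y\otimes Y^R)=X^L\otimes(X\otimes Y)\otimes Y^R$ is simple by hypothesis $(2)$ together with \cite[Lemma 3.49(1)]{KZ22} applied inside $\CA_e\simeq n\Vect$, and Proposition \ref{prop:fus-nonzero} then rules out any nontrivial decomposition of $X\otimes Y$. For $(3)\Rightarrow(1)$ it works with whole components: rigidity of $\CA$ exhibits the separable $n$-category $\CA_{g^{-1}}$ as dual to $\CA_g$, associativity of $\otimes$ exhibits $\CA_g\boxtimes\CA_{g^{-1}}$ as a retract of $n\Vect$ (the direction in which Corollary \ref{cor:ret-nvec} actually applies), and hence $\otimes:\CA_g\boxtimes\CA_{g^{-1}}\to\CA_e$ is invertible. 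If you insist on an object-level route to the simplicity of $X^L\otimes X$, the viable argument is the one in Theorem \ref{thm:point-fus}, via simplicity of the unit morphism $u:\one\to X\otimes X^L$ and \cite[Lemma 3.49(2)]{KZ22}, not the two lemmas you propose.
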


\begin{proof}
$(1)\Rightarrow(2)$ is immediate from Proposition \ref{prop:point-fus}(1)(3).

$(2)\Rightarrow(3)$ If $X,Y\in\CA$ are simple, then $(X^L\otimes X)\otimes(Y\otimes Y^R)$ is simple by \cite[Lemma 3.49(1)]{KZ22}. By Proposition \ref{prop:fus-nonzero}, $X\otimes Y$ has to be simple. Therefore, the tensor product of $\CA$ promotes $\Lambda$ to a group such that $\CA_g\otimes\CA_h \subset \CA_{g h}$. 

$(3)\Rightarrow(1)$
We need to show that the tensor product functor $v:\CA_g\boxtimes\CA_{g^{-1}}\to\CA_e$ is invertible for all $g\in\Lambda$. Indeed, by the duality of $\CA$, $v$ exhibits the separable $n$-category $\CA_{g^{-1}}$ dual to $\CA_g$. 
The associativity of the tensor product functor $\CA_g\boxtimes\CA_{g^{-1}}\boxtimes\CA_g\to\CA_g$ then implies that $v$ and the unit map $u: n\Vect \to \CA_g\boxtimes\CA_{g^{-1}}$ exhibit $\CA_g\boxtimes\CA_{g^{-1}}$ as a retract of $n\Vect$. Invoking Corollary \ref{cor:ret-nvec} we conclude that $v$ is invertible, as desired.
\end{proof}

\begin{thm} \label{thm:point-fus}
Let $\CA$ be a fusion $n$-category such that $\Omega\CA$ is trivial where $n\ge2$. Then $\CA$ is pointed.
\end{thm}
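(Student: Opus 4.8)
The plan is to reduce, via Proposition \ref{prop:point-simple}, to a statement about the endomorphism $(n-1)$-categories of simple objects, and then to prove that statement by induction on $n$. Since $\Omega\CA$ is trivial by hypothesis, Proposition \ref{prop:point-simple} shows it is enough to prove that $X^L\otimes X$ is simple for every simple object $X$. For such an $X$ I write $\CD_X:=\Hom_\CA(X,X)$ for its endomorphism $(n-1)$-category; as $X$ is simple, $\CD_X$ is a fusion $(n-1)$-category and the connected component $\langle X\rangle$ of $X$ is equivalent to $\Sigma\CD_X$, while the unit component is $\langle\one\rangle\simeq\Sigma\Omega\CA\simeq n\Vect$ because $\Omega\CA$ is trivial. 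I would in fact prove the stronger assertion that $\CD_X\simeq(n-1)\Vect$ is trivial for every simple $X$: granting this, all simple objects have trivial endomorphism categories, so \cite[Lemma 3.49(1)]{KZ22} gives that the tensor product of simple objects is again simple, and in particular $X^L\otimes X$ is simple. I would also record at the outset that $n\ge2$ is essential, since for $n=1$ every fusion $1$-category has $\Omega\CA\simeq\C$ yet e.g.\ $\Rep S_3$ is not pointed; thus any valid argument must genuinely use the higher-categorical hypothesis.

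To prove $\CD_X$ trivial I would induct on $n$, the engine being two facts about a simple $X$: $(a)$ $\CD_X$ is nondegenerate, and $(b)$ $\Omega\CD_X$ is trivial. In the base case $n=2$, $\CD_X$ is a fusion $1$-category, nondegenerate by $(a)$; since there is no nontrivial invertible separable $2$-category (Example \ref{exam:e0-fully-faithful}(3), cf.\ Proposition \ref{prop:inv-z0}), Proposition \ref{prop:fus-nondeg} forces $\Sigma\CD_X\simeq 2\Vect$ and hence $\CD_X\simeq\Vect$. For $n\ge3$, fact $(b)$ lets me apply the theorem at level $n-1$ to conclude that $\CD_X$ is pointed, say $\CD_X\simeq(n-1)\Vect_H^\omega$ with fusion group $H$; then fact $(a)$ says $\FZ_1(\CD_X)$ is trivial, so by Proposition \ref{prop:point-fus}(4) we get $(n-2)\Rep H\simeq\Omega\FZ_1(\CD_X)\simeq(n-2)\Vect$, which by Theorem \ref{thm:nrepg-fus} forces $H$ to be trivial and therefore $\CD_X\simeq(n-1)\Vect$.

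It remains to establish $(a)$ and $(b)$, and here is where I expect the real work. For $(a)$ I would use the duality data of $X$: the evaluation and coevaluation of the dual pair $(X,X^L)$, normalized using separability, should exhibit $\FZ_1(\CD_X)=\Omega\FZ_0(\langle X\rangle)$ as a retract of $\Omega\FZ_0(\langle\one\rangle)\simeq(n-1)\Vect$, so that Corollary \ref{cor:ret-nvec} (equivalently Proposition \ref{prop:sep-idem}) makes it trivial, i.e.\ $\CD_X$ is nondegenerate by Proposition \ref{prop:fus-nondeg}. Fact $(b)$, that $\Omega\CD_X=\Hom_\CA(\id_X,\id_X)$ is trivial, is the crux and the main obstacle: it amounts to propagating the hypothesis ``$\Omega$ is trivial'' one categorical level down to the endomorphism category of a simple object, i.e.\ to controlling the higher endomorphisms of the simple $1$-morphism $\id_X$. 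I would attempt this by the same rigidity mechanism applied to $\id_X$ (whiskering by $X$ and using that $\Omega^2\CA\simeq(n-2)\Vect$ is trivial because $\Omega\CA$ is), exhibiting $\Omega\CD_X$ as a retract of $\Omega^2\CA$. Making the retracts in $(a)$ and $(b)$ precise---in particular checking that the coevaluation section splits after the separability normalization, so that no spurious summands survive---is the delicate point on which the whole induction rests.
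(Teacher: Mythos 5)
Your reduction to Proposition \ref{prop:point-simple} is the same first step as the paper's, but the route you take from there contains a fatal error: the ``stronger assertion'' that $\CD_X=\Hom_\CA(X,X)\simeq(n-1)\Vect$ for every simple $X$ is false once $n\ge3$, and in particular your fact $(b)$ is false. The paper itself supplies a counterexample in Section \ref{sec:class-gql}: if $\CB$ is a chiral UMTC whose Witt class has finite order $m>1$ (e.g.\ the Ising category), then $\CC=\bigoplus_{i=0}^{m-1}\Sigma^2\CB^{\boxtimes i}$ is a pointed fusion $3$-category with $\Omega\CC\simeq 2\Vect$ trivial, yet the simple object $X=\Sigma\CB$ of the component $\Sigma^2\CB$ has $\Hom_\CC(X,X)\simeq(\Sigma\CB)^\rev$, a nontrivial fusion $2$-category with $\Omega\Hom_\CC(X,X)\simeq\CB\not\simeq\Vect$. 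The underlying issue is that pointedness only forces each component $\CA_g$ to be an \emph{invertible} separable $n$-category (Proposition \ref{prop:point-fus}(2)), and for $n\ge3$ these need not be $n\Vect$; the hypothesis that $\Omega\CA$ is trivial controls the unit component only and does not propagate to $\Hom_\CA(X,X)$ for $X$ outside it. Consequently your induction cannot get past $n=2$ (and, for the same reason, a pointed fusion $(n-1)$-category need not be of the form $(n-1)\Vect_H^\omega$, so even the shape of your inductive step is off). You also leave $(a)$ and $(b)$ as sketches, but the problem is not merely incompleteness --- $(b)$ is simply not true.

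The paper's proof shows that this detour through $\End(X)$ is unnecessary: it works with simple $1$-morphisms rather than endomorphism categories. Since $X$ is simple, $\Id_X$ is simple, hence so is the unit map $u:\one_\CA\to X\otimes X^L$ under the duality adjunction; $u$ lives in the unit component $\Sigma\Omega\CA\simeq n\Vect$, so $u\otimes\Id_X$ is simple by \cite[Lemma 3.49(2)]{KZ22}, and the triangle identity transports it to $\Id_{X^L\otimes X}$, which is therefore simple. This gives the simplicity of $X^L\otimes X$ directly, with no induction and no control over $\Hom_\CA(X,X)$, after which Proposition \ref{prop:point-simple} finishes as you intended.
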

\begin{proof}
Let $X\in\CA$ be a simple object. The equivalence $\Hom_\CA(X,X) \simeq \Hom_\CA(\one_\CA,X\otimes X^L)$ maps $\Id_X$ to the unit map $u:\one_\CA\to X\otimes X^L$. Thus $u$ is simple. Note that $u$ can be regarded as a morphism of $\Sigma\Omega\CA\simeq n\Vect$. The morphism $u\otimes\Id_X: X\to X\otimes X^L\otimes X$ is simple by \cite[Lemma 3.49(2)]{KZ22}. Therefore, the induced morphism $\Id_{X^L\otimes X}$ is also simple. Namely, $X^L\otimes X$ is simple. Invoking Proposition \ref{prop:point-simple} we conclude that $\CA$ is pointed.
\end{proof}

\begin{cor} \label{cor:pointed-z1}
Let $\CA$ be a fusion $n$-category where $n\ge2$. Then $\CA$ is trivial if and only if $\Omega\CA$ and $\Omega^{n-1}\FZ_1(\CA)$ are both trivial.
\end{cor}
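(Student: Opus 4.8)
The plan is to read off both implications directly from the structure theory of pointed fusion $n$-categories developed just above, the point being that the two hypotheses detect exactly the two pieces of data---pointedness and the fusion group---that together determine triviality. So I would not attempt anything new; I would assemble Theorem \ref{thm:point-fus} and Proposition \ref{prop:point-fus}.

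Necessity is the easy direction. If $\CA$ is trivial then $\CA\simeq n\Vect$ by Proposition \ref{prop:fus-trivial}, so $\Omega\CA\simeq(n-1)\Vect$ is trivial. For the center condition one may either compute $\FZ_1(n\Vect)\simeq n\Vect$ directly (using $\FZ_1(\CA)=\Omega\FZ_0(\Sigma\CA)$ together with the fact that $\Sigma(n\Vect)=(n+1)\Vect$ is invertible), giving $\Omega^{n-1}\FZ_1(\CA)\simeq\Omega^{n-1}(n\Vect)\simeq\Vect$, or simply run the sufficiency computation below with trivial fusion group.

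For sufficiency, suppose $\Omega\CA$ and $\Omega^{n-1}\FZ_1(\CA)$ are both trivial. Since $\Omega\CA$ is trivial and $n\ge2$, Theorem \ref{thm:point-fus} shows that $\CA$ is pointed; let $G$ be its fusion group. Proposition \ref{prop:point-fus}(4) gives $\Omega\FZ_1(\CA)\simeq(n-1)\Rep G$, and looping down $n-2$ further times via $\Omega\big(m\Rep G\big)\simeq(m-1)\Rep G$ yields
$$\Omega^{n-1}\FZ_1(\CA)\simeq\Omega^{n-2}\big((n-1)\Rep G\big)\simeq\Rep G.$$
The hypothesis then forces the symmetric fusion $1$-category $\Rep G$ to be trivial, i.e.\ $\Rep G\simeq\Vect$, which happens precisely when $G$ is the trivial group (a nontrivial finite group has at least two irreducible representations). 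With $G$ trivial the simple decomposition of $\CA$ collapses to $\CA=\CA_e$, which is trivial by Proposition \ref{prop:point-fus}(1); hence $\CA$ is trivial.

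I do not expect a genuine obstacle, as the corollary is an assembly of the preceding results. The only steps that require care are the bookkeeping of the iterated loop functors so as to land on the honest $1$-category $\Rep G$, and the appeal to the classical fact that $\Rep G$ is trivial if and only if $G$ is trivial---this is exactly what converts the hypothesis on $\Omega^{n-1}\FZ_1(\CA)$ into triviality of the fusion group, and thus into triviality of $\CA$.
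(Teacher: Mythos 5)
Your proof is correct and is exactly the paper's argument: the paper's entire proof reads ``Combine Proposition \ref{prop:point-fus}(4) and Theorem \ref{thm:point-fus},'' and you have simply spelled out the same two ingredients together with the routine delooping bookkeeping and the standard fact that $\Rep G\simeq\Vect$ forces $G$ trivial.
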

\begin{proof}
Suppose that $\Omega\CA$ is trivial. By Theorem \ref{thm:point-fus}, $\CA$ is pointed. Then by Remark \ref{rem:mext-repG}, $\Omega^{n-1}\FZ_1(\CA) \simeq \Rep G$ where $G$ is the fusion group of $\CA$. Therefore, if $\Omega^{n-1}\FZ_1(\CA)$ is trivial, then $\CA$ is trivial.
\end{proof}

\begin{cor} \label{cor:nondeg-trivial}
Let $\CA$ be a nondegenerate braided fusion $n$-category where $n\ge2$. Then $\CA$ is trivial if and only if $\Omega\CA$ is trivial.
\end{cor}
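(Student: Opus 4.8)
The plan is to reduce the assertion to Corollary \ref{cor:pointed-z1}, which handles triviality of a fusion $n$-category in terms of its loop space and its $E_1$-center, and then to channel the entire nondegeneracy hypothesis through the identification $\FZ_1(\CA)\simeq\CA\boxtimes\bar\CA$.

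First I would dispatch the easy direction: if $\CA$ is trivial then $\CA\simeq n\Vect$, so $\Omega\CA\simeq(n-1)\Vect$ is trivial. For the converse, assume $\Omega\CA$ is trivial. Forgetting the braiding, $\CA$ is a fusion $n$-category with $n\ge2$, so Corollary \ref{cor:pointed-z1} says that $\CA$ is trivial as soon as both $\Omega\CA$ and $\Omega^{n-1}\FZ_1(\CA)$ are trivial. The first is given by hypothesis, so the whole problem collapses to showing that $\Omega^{n-1}\FZ_1(\CA)$ is trivial.

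This is precisely where nondegeneracy enters. By the definition of a nondegenerate braided fusion $n$-category, the canonical $\C$-linear braided monoidal functor $\CA\boxtimes\bar\CA\to\FZ_1(\CA)$ is an equivalence, so $\FZ_1(\CA)\simeq\CA\boxtimes\bar\CA$. It then remains to compute $\Omega$ of this box product. Using that $\Omega$ is compatible with $\boxtimes$ (the monoidal unit of $\CA\boxtimes\bar\CA$ is $\one_\CA\boxtimes\one_{\bar\CA}$ and Hom spaces multiply under $\boxtimes$), I obtain $\Omega\FZ_1(\CA)\simeq\Omega\CA\boxtimes\Omega\bar\CA$. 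Since $\bar\CA$ shares the underlying fusion $n$-category of $\CA$ (only the braiding is reversed), $\Omega\bar\CA$ is trivial along with $\Omega\CA$, whence $\Omega\FZ_1(\CA)\simeq(n-1)\Vect\boxtimes(n-1)\Vect\simeq(n-1)\Vect$ is trivial. Looping down $n-2$ further times gives $\Omega^{n-1}\FZ_1(\CA)\simeq\Omega^{n-2}((n-1)\Vect)\simeq\Vect$, which is trivial, and Corollary \ref{cor:pointed-z1} then yields the triviality of $\CA$.

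The only step requiring real care is the commutation $\Omega(\CA\boxtimes\bar\CA)\simeq\Omega\CA\boxtimes\Omega\bar\CA$; everything else is a direct appeal to Corollary \ref{cor:pointed-z1} and the definition of nondegeneracy. Although this commutation is routine in the separable setting, I would state it explicitly, because the full force of the nondegeneracy hypothesis is carried by it. Indeed, without nondegeneracy one only knows, via Theorem \ref{thm:point-fus} (forcing $\CA$ to be pointed once $\Omega\CA$ is trivial and $n\ge2$) and Proposition \ref{prop:point-fus}(4), that $\Omega^{n-1}\FZ_1(\CA)\simeq\Rep G$ for the abelian fusion group $G$ of $\CA$; it is exactly nondegeneracy that forces $G$, and hence $\Rep G$, to be trivial. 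So the anticipated main obstacle is not any single hard estimate but rather pinning down how nondegeneracy feeds into the center computation, which the equivalence $\FZ_1(\CA)\simeq\CA\boxtimes\bar\CA$ resolves cleanly.
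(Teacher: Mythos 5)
Your proposal is correct and follows essentially the same route as the paper: the paper's proof likewise uses nondegeneracy to identify $\FZ_1(\CA)\simeq\CA\boxtimes\bar\CA$, observes that $\Omega\FZ_1(\CA)\simeq\Omega(\CA\boxtimes\bar\CA)$ is trivial when $\Omega\CA$ is, and concludes via Corollary \ref{cor:pointed-z1}. Your extra care about $\Omega(\CA\boxtimes\bar\CA)\simeq\Omega\CA\boxtimes\Omega\bar\CA$ and the final delooping is a reasonable elaboration of steps the paper leaves implicit.
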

\begin{proof}
Suppose that $\Omega\CA$ is trivial. Since $\CA$ is nondegenerate, $\Omega\FZ_1(\CA) \simeq \Omega(\CA\boxtimes\bar\CA)$ is trivial. Then by Corollary \ref{cor:pointed-z1}, $\CA$ is trivial.
\end{proof}

\begin{prop} \label{prop:nondeg-bfc2}
The following conditions are equivalent for a braided fusion $n$-category $\CA$:
\begin{enumerate}
\item[(5)] $\FZ_1(\Sigma\CA)$ is trivial.
\item[(6)] $\FZ_2(\CA)$ is trivial.
\end{enumerate}
\end{prop}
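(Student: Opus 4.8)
The crux is the identity $\FZ_2(\CA) = \Omega\FZ_1(\Sigma\CA)$, which is the case $m=2$, $k=1$ of Theorem \ref{thm:mfc-centralizer} applied to $\FZ_2(\CA) = \FZ_2(\CA,\CA)$. Granting this, both implications reduce to tracking underlying $\C$-linear equivalences: by Proposition \ref{prop:fus-trivial}, an $E_1$-monoidal (hence also $E_2$-monoidal) category is trivial exactly when its underlying $\C$-linear category is equivalent to the appropriate $n\Vect$. So condition $(5)$ says $\FZ_1(\Sigma\CA)\simeq(n+1)\Vect$ and condition $(6)$ says $\FZ_2(\CA)\simeq n\Vect$, and it suffices to relate these two $\C$-linear equivalences through the identity above.

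The direction $(5)\Rightarrow(6)$ I would dispatch directly: if $\FZ_1(\Sigma\CA)\simeq(n+1)\Vect$, then $\FZ_2(\CA) = \Omega\FZ_1(\Sigma\CA)\simeq\Omega(n+1)\Vect = n\Vect$, so $\FZ_2(\CA)$ is trivial. The substantive direction is $(6)\Rightarrow(5)$, and here the key observation is that $\FZ_1(\Sigma\CA)$ is not an arbitrary braided fusion $(n+1)$-category but is \emph{automatically nondegenerate}. Indeed, since $\CA$ is a (connected) fusion $n$-category, $\Sigma\CA$ is an indecomposable multi-fusion $(n+1)$-category, so Corollary \ref{cor:z1-nondeg} shows $\FZ_1(\Sigma\CA)$ is nondegenerate. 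Now if $\FZ_2(\CA)$ is trivial, then $\Omega\FZ_1(\Sigma\CA) = \FZ_2(\CA)$ is trivial, and Corollary \ref{cor:nondeg-trivial} upgrades triviality of the loop space to triviality of $\FZ_1(\Sigma\CA)$ itself.

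The main obstacle is the dimensional boundary rather than any genuine difficulty: Corollary \ref{cor:nondeg-trivial} requires dimension $\ge 2$, so the argument for $(6)\Rightarrow(5)$ is valid only when $\FZ_1(\Sigma\CA)$ has dimension $n+1\ge2$, i.e.\ for $n\ge1$. The case $n=0$ must be handled by hand, but it is immediate: the only braided fusion $0$-category is $0\Vect$, for which $\FZ_1(\Sigma\CA) = \FZ_1(\Vect)$ and $\FZ_2(\CA)$ are both trivial, so the equivalence holds vacuously. The conceptual input that makes the whole proof short is precisely the forced nondegeneracy of $\FZ_1(\Sigma\CA)$ from Corollary \ref{cor:z1-nondeg}; without it, ``$\Omega$ trivial'' would not imply ``trivial''.
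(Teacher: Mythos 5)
Your proof is correct and follows essentially the same route as the paper: the easy direction via $\FZ_2(\CA)=\Omega\FZ_1(\Sigma\CA)$, and the converse by combining Corollary \ref{cor:z1-nondeg} (nondegeneracy of $\FZ_1(\Sigma\CA)$) with Corollary \ref{cor:nondeg-trivial}. The only difference is that you spell out the trivial $n=0$ case, which the paper simply sidesteps by assuming $n\ge1$.
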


\begin{proof}
$(5)\Rightarrow(6)$ $\FZ_2(\CA) = \Omega\FZ_1(\Sigma\CA)$ is trivial.
$(6)\Rightarrow(5)$ Assume $n\ge1$. Let $\CB=\FZ_1(\Sigma\CA)$ so that $\Omega\CB$ is trivial. By Corollary \ref{cor:z1-nondeg}, $\CB$ is nondegenerate. Then by Corollary \ref{cor:nondeg-trivial}, $\CB$ is trivial.
\end{proof}

\begin{rem}
For a pointed fusion $n$-category over an arbitrary field $k$, $\chara k$ may not divide the order of the fusion group. Otherwise, we follow the lines of the proof of Proposition \ref{prop:nvecg-point} and draw a contradiction.
\end{rem}

\begin{rem}
Proposition \ref{prop:point-fus}(3), Proposition \ref{prop:point-simple} and Theorem \ref{thm:point-fus} 
only apply to separably closed base fields because their proofs rely essentially on \cite[Lemma 3.49]{KZ22}. Corollary \ref{cor:pointed-z1}, Corollary \ref{cor:nondeg-trivial} and Proposition \ref{prop:nondeg-bfc2} apply to an arbitrary base field because the claims can be reduced to the separably closed case.
\end{rem}

\begin{rem}
Theorem \ref{thm:point-fus} for $n=2$ is due to \cite[Theorem A]{JFY21}.
Corollary \ref{cor:nondeg-trivial} for $n=2$ was predicted in \cite{LKW18}.
Corollary \ref{cor:nondeg-trivial} and Proposition \ref{prop:nondeg-bfc2} are due to \cite{JF22}.
Direction $(1)\Leftrightarrow(6)$ of Proposition \ref{prop:nondeg-bfc} and \ref{prop:nondeg-bfc2} for $n=1$ is due to \cite[Proposition 3.7]{DGNO10}.
\end{rem}

\section{Orthogonal higher categories} \label{sec:orth-cat}

The objective of this section is to outline a theory of orthogonal higher categories and to examine the orthogonal higher representations of finite groups. Our motivation arises from a desire to encapsulate, within a mathematical framework, properties of anti-unitary symmetries, such as the time-reversal symmetry, observed in topological phases of matter. The exposition of the first subsection is parallel to \cite[Subsection 4.2]{KZ22} where a theory of unitary higher categories has been outlined. The results from the second subsection will be applied in Subsection \ref{sec:anti-unitary-symm} to classify gapped quantum liquids with anti-unitary symmetries.

\medskip

\subsection{Orthogonal $n$-categories}

An {\em $\R^*$-$n$-category} is an $\R$-linear $n$-category $\CC$ equipped with an $\R$-linear involution $*:\CC\to\CC^{\op n}$ which fixes all the objects and all the $k$-morphisms for $k<n$. 
An {\em $\R^*$-functor} $F:\CC\to\CD$ between two $\R^*$-$n$-categories is a $*$-equivariant $\R$-linear functor. Similarly, a {\em (higher) $\R^*$-natural transformation} is a $*$-equivariant $\R$-linear (higher) natural transformation. 

Let $\Cat^{\R*}_n$ denote the $(n+1)$-category formed by $\R^*$-$n$-categories, $\R^*$-functors and (higher) $\R^*$-natural transformations and let $\KarCat^{\R*}_n$ denote the full subcategory of $*$-condensation-complete $\R^*$-$n$-categories.
By slightly abusing notation, we use $\Fun_\R(\CC,\CD)$ to denote $\Hom_{\Cat^{\R*}_n}(\CC,\CD)$.
We use $\boxtimes_\R$ to denote the tensor product of $\KarCat^{\R*}_n$ and leave the notation $\boxtimes$ for $\KarCat^*_n$.

The theory of $\R^*$-$n$-categories is completely parallel to that of $\R$-linear $n$-categories and that of unitary $n$-categories with some new features. Several definitions and results are listed as follows. C.f. \cite[Subsection 4.2]{KZ22}.

\smallskip

We use $\Hilb_\R$ to denote the symmetric monoidal $\R^*$-1-category of finite-dimensional real Hilbert spaces, and
use $n\Hilb_\R$ to denote the symmetric monoidal $\R^*$-$n$-category $\Sigma_*^{n-1}\Hilb_\R = \Sigma_*^n\R$.

\begin{prop}
The functor $\Hom_{(n+1)\Hilb_\R}(\bullet,-): (n+1)\Hilb_\R \to \Cat^{\R*}_n$ is fully faithful.
\end{prop}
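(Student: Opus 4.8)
The plan is to show that the corepresentable functor $\Phi:=\Hom_{(n+1)\Hilb_\R}(\bullet,-)$ is naturally equivalent to the tautological embedding of $(n+1)\Hilb_\R$ into $\Cat^{\R*}_n$, and then that this embedding is full on cells of every dimension. Here $\bullet$ is the monoidal unit $\one=n\Hilb_\R$ of $(n+1)\Hilb_\R=\Sigma_*^n\Hilb_\R$. The first step is to produce a natural equivalence $\Hom_{(n+1)\Hilb_\R}(\bullet,X)\simeq X$ of $\R^*$-$n$-categories. Since $(n+1)\Hilb_\R$ is the $*$-condensation completion of $B(n\Hilb_\R)$, equivalently $\RMod_{n\Hilb_\R}$ by the orthogonal analogue of \cite[Theorem 3.22]{KZ22}, the unit $\bullet$ is the regular module, and mapping out of it returns the underlying $\R^*$-$n$-category; I would obtain this by iterating the adjunction $\Sigma_*\dashv\Omega$, just as over $\C$. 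Naturality in $X$ is built into the construction.

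Granting this, $\Phi$ carries a $1$-morphism $f\colon X\to Y$ to post-composition $\Hom_{(n+1)\Hilb_\R}(\bullet,f)$, which under the equivalences of the first step is the underlying $\R^*$-functor of $f$; the same holds for higher cells, which become underlying $\R^*$-natural transformations and so on. Thus $\Phi$ is naturally equivalent to the canonical functor $U\colon(n+1)\Hilb_\R\to\Cat^{\R*}_n$ that sends a separable $\R^*$-$n$-category to itself. Full faithfulness of $\Phi$ is therefore equivalent to the assertion that the resulting comparison
\[
\Hom_{(n+1)\Hilb_\R}(X,Y)\longrightarrow\Fun_\R(X,Y)
\]
is an equivalence of $\R^*$-$n$-categories for all separable $X,Y$.

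This comparison is faithful for formal reasons, a morphism of $(n+1)\Hilb_\R$ being determined by its underlying $\R^*$-functor. The content is fullness and essential surjectivity at every level, i.e. that each $\R^*$-functor (and each higher $\R^*$-natural transformation) between separable objects is already a morphism of $(n+1)\Hilb_\R$. The decisive point is that $*$-condensates are \emph{absolute}: being split by coherent (co)units, they are preserved by every $\R^*$-functor. Hence an arbitrary $\R^*$-functor between $*$-condensation-complete categories automatically lies in $\KarCat^{\R*}_n$, and by the orthogonal analogue of \cite[Theorem 3.35]{KZ22} the $(n+1)$-category $(n+1)\Hilb_\R$ is exactly the full sub-$(n+1)$-category of $\KarCat^{\R*}_n$ spanned by the separable objects, while $\KarCat^{\R*}_n\hookrightarrow\Cat^{\R*}_n$ is full by construction. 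Composing these full inclusions identifies the comparison above with an equality.

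The step requiring the most care is this last one — verifying that the hom-$n$-categories internal to $\Sigma_*^n\Hilb_\R$ coincide with the full $\R^*$-functor categories $\Fun_\R(X,Y)$ rather than with some proper subcategory of condensate-preserving functors. I expect this to be the crux, but it is not obstructed by the real base field: because it rests only on the absoluteness of $*$-condensates, the argument runs in parallel with the $\C$-linear case and nowhere uses that the ground field is separably closed. If preferred, one could also deduce the real statement from the complex one by descent along $\Gal(\C/\R)$, since the embedding is compatible with the extension $\C/\R$.
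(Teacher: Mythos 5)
The paper states this proposition without giving any proof, so there is nothing to compare against; judged on its own terms, your argument is the natural and essentially correct one. The two pillars you identify are exactly right: (i) $\Hom_{(n+1)\Hilb_\R}(\bullet,X)\simeq X$ because $\bullet$ is the regular module and $\Sigma_*\dashv\Omega$, and (ii) full faithfulness reduces to $\Hom_{(n+1)\Hilb_\R}(X,Y)\simeq\Fun_\R(X,Y)$, which holds because $*$-condensates are absolute (so every $\R^*$-functor between $*$-condensation-complete categories is automatically condensate-preserving and automatically an $n\Hilb_\R$-module functor, $n\Hilb_\R$ being the unit), combined with the module-category description of $\Sigma_*$. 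Your closing remark is also correct that none of this uses separable closedness of the base field: the ingredients are the orthogonal analogues of \cite[Theorems 3.1 and 3.22]{KZ22} and the GJF-style absoluteness of condensations, not the lemmas (such as \cite[Lemmas 3.49--3.50]{KZ22}) where algebraic closure enters. The only blemish is the citation of \cite[Theorem 3.35]{KZ22}, which concerns recognizing multi-fusion categories from separability of $\Sigma\CA$ and is not the statement you need; the relevant result identifying internal homs of $\Sigma_*\CC$ with full module-functor categories is of the Theorem 3.1/3.22 type.
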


\begin{defn} \label{defn:orth-cat}
An {\em orthogonal $n$-category} is an $\R^*$-$n$-category that lies in the essential image of the above functor.
\end{defn}

\begin{rem}
Since $n\Hilb_\R$ is an iterated delooping of $\Hilb_\R$, it is {\em positive}: $f^*\circ f\neq0$ for any nonzero $n$-morphism $f$. Since orthogonal $n$-categories are hom categories of $(n+1)\Hilb_\R$, they are also positive.
\end{rem}

\begin{prop}
Let $\CC$ be an orthogonal $n$-category. Then $\Hom_\CC(A,B)$ is an orthogonal $(n-1)$-category for any two objects $A,B\in\CC$.
\end{prop}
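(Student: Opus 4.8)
The plan is to unwind the definition of an orthogonal $n$-category and then transport the hom-$(n-1)$-category $\Hom_\CC(A,B)$, via an adjunction, to a hom-$(n-1)$-category emanating from the unit object one categorical level down, where it is manifestly orthogonal. Since the functor $\Hom_{(n+1)\Hilb_\R}(\bullet,-)$ of the preceding proposition is fully faithful, I may assume $\CC = \Hom_{(n+1)\Hilb_\R}(\bullet,\CX)$ for some $\CX\in(n+1)\Hilb_\R$, where $\bullet$ denotes the unit object of $(n+1)\Hilb_\R=\Sigma_*^{n+1}\R$. Under this identification the objects $A,B\in\CC$ are $1$-morphisms $\bullet\to\CX$, and $\Hom_\CC(A,B)$ is the $(n-1)$-category of $2$-morphisms $A\Rightarrow B$ computed in $(n+1)\Hilb_\R$.

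First I would invoke rigidity: like its complex counterpart, $(n+1)\Hilb_\R=\Sigma_*^{n+1}\R$ admits adjoints for $1$-morphisms, so $A$ has a right adjoint $A^R:\CX\to\bullet$. The mate correspondence for the adjunction $A\dashv A^R$, specialized to the identity $1$-morphism of $\bullet$, then produces an $\R^*$-equivalence of $(n-1)$-categories
$$\Hom_\CC(A,B)\;\simeq\;\Hom_{(n+1)\Hilb_\R}(\id_\bullet,\,A^R\circ B),$$
where $A^R\circ B:\bullet\to\bullet$ is an endomorphism of $\bullet$.

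Next I would recognize the right-hand side one level down. Since $\bullet$ is the unit object, $\End_{(n+1)\Hilb_\R}(\bullet)=\Omega(n+1)\Hilb_\R\simeq n\Hilb_\R$, and under this monoidal equivalence $\id_\bullet$ corresponds to the unit object of $n\Hilb_\R$ while $A^R\circ B$ becomes an object $\CY\in n\Hilb_\R$. Hence
$$\Hom_\CC(A,B)\;\simeq\;\Hom_{n\Hilb_\R}(\one,\CY),$$
which lies in the essential image of $\Hom_{n\Hilb_\R}(\bullet,-):n\Hilb_\R\to\Cat^{\R*}_{n-1}$. By Definition \ref{defn:orth-cat}, read at level $n-1$, this exhibits $\Hom_\CC(A,B)$ as an orthogonal $(n-1)$-category, as desired.

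The main obstacle is the only non-formal input, namely that $(n+1)\Hilb_\R$ has adjoints for $1$-morphisms and that the mate correspondence upgrades to an equivalence of $\R^*$-$(n-1)$-categories rather than a mere equivalence of underlying $\R$-linear $(n-1)$-categories. I would obtain adjoints from the separable (finitely semisimple) nature of the objects of $\Sigma_*^{n+1}\R$, exactly as in the unitary case, and secure $*$-compatibility by taking $A^R$ to be the canonical $*$-adjoint supplied by the $\R^*$-structure, so that the unit and counit are themselves $*$-morphisms and the resulting equivalence is an $\R^*$-functor.
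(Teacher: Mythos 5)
The paper states this proposition without proof, so there is no argument to compare against line by line; it is implicitly justified by the same reasoning that makes hom-categories of separable $n$-categories into separable $(n-1)$-categories in the complex setting of \cite{KZ22}. Your reconstruction is exactly that standard argument transported to the $\R^*$ setting: realize $\CC=\Hom_{(n+1)\Hilb_\R}(\bullet,\CX)$, use the adjoint $A^R$ (available because every $1$-morphism of $\Sigma_*^{n+1}\R$ admits adjoints by condensation-completeness) to identify $\Hom_\CC(A,B)\simeq\Hom_{\Omega(n+1)\Hilb_\R}(\one,A^R\circ B)$, and recognize $\Omega(n+1)\Hilb_\R\simeq n\Hilb_\R$ so that the right-hand side is manifestly in the essential image of $\Hom_{n\Hilb_\R}(\bullet,-)$. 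This is correct and is the intended proof; the only point to phrase carefully is that the $*$-involution acts only on top-level morphisms, so the $\R^*$-compatibility of the mate equivalence follows from functoriality of $*$ under composition rather than from a ``$*$-adjoint of a $1$-morphism'' per se, but this does not affect the argument.
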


\begin{defn} 
An {\em orthogonal $E_m$-multi-fusion $n$-category} is an $E_m$-monoidal $\R^*$-$n$-category $\CA$ such that $\Sigma_*^m\CA$ is an orthogonal $(n+m)$-category. An orthogonal $E_m$-multi-fusion $n$-category with a simple tensor unit is also referred to as an {\em orthogonal $E_m$-fusion $n$-category}.
\end{defn}

\begin{thm}
Let $\CA$ be an $E_m$-monoidal $\R^*$-$n$-category where $n\ge1$. The following conditions are equivalent:
\begin{enumerate}
\item $\CA$ is an orthogonal $E_m$-multi-fusion $n$-category.
\item $\CA$ has duals and its underlying $\R^*$-$n$-category is orthogonal.
\end{enumerate}
\end{thm}

\begin{exam} \label{exam:orth-cat}
(1) Since $\C$ is an orthogonal symmetric fusion 0-category, $n\Hilb$ is an orthogonal symmetric fusion $n$-category. Therefore, unitary $n$-categories are orthogonal $n$-categories. In this sense, unitary theory is covered by orthogonal theory. 

(2) There are a total of three orthogonal fusion 0-categories: $\R$, $\C$ and the quaternion algebra $\BH$. Therefore, there are a total of three indecomposable orthogonal 1-categories: $\Hilb_\R$, $\Hilb$ and the $\R^*$-1-category of finite-dimensional quaternionic Hilbert spaces $\Hilb_\BH$. 

(3) We have isomorphisms of $\R$-algebras 
$$\C\otimes_\R\C \simeq \C\oplus\C, \quad \C\otimes_\R\BH \simeq \hom_\C(\C^2,\C^2), \quad \BH\otimes_\R\BH \simeq \hom_\R(\R^4,\R^4).$$ 
Thus
$$\Hilb \boxtimes_\R \Hilb \simeq \Hilb\oplus\Hilb, \quad
  \Hilb \boxtimes_\R \Hilb_\BH \simeq \Hilb, \quad
  \Hilb_\BH \boxtimes_\R \Hilb_\BH \simeq \Hilb_\R.
$$
Therefore, the orthogonal 1-categories $\Hilb_\R$ and $\Hilb_\BH$ are invertible, but $\Hilb$ is not.
\end{exam}

\begin{rem}
The inclusion $\iota:\R\to\C$ induces an orthogonal symmetric monoidal functor
$$\Sigma_*^{n+1}\iota: (n+1)\Hilb_\R \to (n+1)\Hilb, \quad \CC \mapsto \CC^\C := \CC\boxtimes_\R n\Hilb.$$
It is left adjoint to the restriction functor $(n+1)\Hilb \to (n+1)\Hilb_\R$. We refer to $\CC^\C$ as the {\em complexification} of $\CC$ and refer to $\CC$ as a {\em real form} of $\CC^\C$. For example, $\Hilb_\BH$ is a real form of $\Hilb$.
\end{rem}

\begin{rem}
A simple object $X\in\CC$ of an orthogonal 1-category describes a {\em time-reversal singlet} in the physics literature if $\Hom_\CC(X,X)=\R$; a {\em time-reversal doublet} if $\Hom_\CC(X,X)=\C$; a {\em Kramer doublet} if $\Hom_\CC(X,X)=\BH$. According to Example \ref{exam:orth-cat}(3), a time-reversal singlet remains simple in $\CC^\C$; a time-reversal doublet breaks into a direct sum of two distinct simple objects; a Kramer doublet breaks into a direct sum of two same simple objects.
\end{rem}

\subsection{Orthogonal representations of finite groups}

Let $\Z_2^T$ denote the Galois group $\Gal(\C/\R) = \{1,\dagger\}$ where $\dagger:\C\to\C$ is the complex conjugation. For a finite group $G$ equipped with a group homomorphism $\pi:G\to\Z_2^T$, the {\em twisted group algebra} $\C[G^\pi]$ is defined to be the $\R$-algebra obeying the multiplication rule
$$g\cdot h = g h, \quad g\cdot\lambda = \pi(g)(\lambda)g, \qquad g,h\in G, \; \lambda\in\C.$$
We use $\Rep G^\pi$ to denote the orthogonal symmetric fusion 1-category $\LMod_{\C[G^\pi]}(\Hilb_\R)$ where the tensor product of two modules $X$ and $Y$ is $X\otimes_\C Y$. We define $$n\Rep G^\pi = \Sigma_*^{n-1}\Rep G^\pi.$$

\begin{exam} \label{exam:rep-Gpi}
(1) If $\pi:G\to\Z_2^T$ is trivial, i.e. $\pi(G)=\{1\}$, then $\C[G^\pi]$ coincides with the usual group algebra $\C[G]$ and $n\Rep G^\pi$ coincides with $n\Rep G = \LMod_G(n\Hilb)$.

(2) We have $\C[\Z_2^T] \simeq \hom_\R(\C,\C)$ where $\Z_2^T$ is equipped with the identity homomorphism onto itself. Thus $n\Rep\Z_2^T \simeq n\Hilb_\R$.

(3) We have $\C[H\times\Z_2^T] \simeq \hom_\R(\C,\C) \otimes_\R \R[H]$ thus $n\Rep(H\times\Z_2^T) \simeq n\Rep_\R H=\LMod_H(n\Hilb_\R)$ where $H\times\Z_2^T$ is equipped with the projection onto the second variable.
\end{exam}

More generally, one can assign a super element, i.e. a central element $z\in G$ of order two such that $\pi(z)=1$, to define an orthogonal symmetric fusion 1-category $\sRep G^\pi$ where the braiding is twisted by $z$. As before, let $n\sRep G^\pi = \Sigma_*^{n-1}\sRep G^\pi$.
For example, $\sRep\Z_2^f$, where $\Z_2^f$ denotes the super $\Z_2$, is the unitary symmetric fusion 1-category of finite-dimensional super Hilbert spaces.
Similarly, $\sRep(\Z_2^f\times\Z_2^T) \simeq \sRep_\R\Z_2^f$ is the orthogonal symmetric fusion 1-category of finite-dimensional super real Hilbert spaces.

\begin{prop} \label{prop:complex-RepG}
Let $G$ be a finite group and let $\pi:G\to\Z_2^T$ be a nontrivial homomorphism. Then the restriction functor $n\Rep G^\pi\to n\Rep(\Ker\pi)$ exhibits $n\Rep(\Ker\pi)$ as the complexification of $n\Rep G^\pi$. If, in addition, $G$ is a super group, then the restriction functor $n\sRep G^\pi\to n\sRep(\Ker\pi)$ exhibits $n\sRep(\Ker\pi)$ as the complexification of $n\sRep G^\pi$.
\end{prop}
\begin{proof}
We have a decomposition of left $\C[\Ker\pi]$-modules 
$$\C[G^\pi]=\C[\Ker\pi]\oplus\C[\Ker\pi]g$$ 
where $g\in\pi^{-1}(\dagger)$ hence an isomorphism of $\C$-algebras 
$$\phi: \C[G^\pi]\otimes_\R\C \to \hom_{\C[\Ker\pi]}(\C[G^\pi],\C[G^\pi])^\rev, \quad \phi(x\otimes_\R\lambda)(y)=\lambda y x.$$ 
Therefore, the $\C[\Ker\pi]$-$\C[G^\pi]\otimes_\R\C$-bimodule $\C[G^\pi]$ is invertible hence induces a $*$-equivalence 
$$\Rep G^\pi\boxtimes_\R\Hilb \simeq \Rep(\Ker\pi), \quad X\otimes_\R\C\mapsto X.$$ 
Endowing the $*$-equivalence with the untwisted or the twisted symmetric monoidal structure and delooping for $n-1$ times then yield the conclusion.
\end{proof}

\begin{rem} \label{rem:real-form}
Note that giving a real Hilbert space $X\in\Hilb_\R$ is equivalent to giving a complex Hilbert space $Y\in\Hilb$ carrying an anti-unitary involution (indeed, $Y=X\otimes_\R\C$ and $X$ is the invariants of $Y$). On the other hand, giving a unitary $\Z_2$-module is equivalent to giving a complex Hilbert space carrying a unitary involution.
These facts can be encoded by the following pullback diagram:
$$\xymatrix{
  \Hilb_\R\oplus\Rep\Z_2 \ar[r] \ar[d] & \Rep_\R\Z_2 \ar[d] \\
  \Hilb \ar[r] & \Hilb_\R.
}
$$
Delooping for $n$ times then yields the following pullback diagram:
$$\xymatrix{
  (n+1)\Hilb_\R\oplus(n+1)\Rep\Z_2 \ar[r] \ar[d] & (n+1)\Rep_\R\Z_2 \ar[d] \\
  (n+1)\Hilb \ar[r] & (n+1)\Hilb_\R.
}
$$
That is, giving an orthogonal $n$-category $\CX$ is equivalent to giving a unitary $n$-category $\CY$ carrying a (homotopic) anti-unitary involution (again, $\CY=\CX^\C$). It follows that giving an orthogonal $E_m$-multi-fusion $n$-category $\CX$ is equivalent to giving a unitary $E_m$-multi-fusion $n$-category $n$-category $\CY$ carrying an anti-unitary involution (again, $\CY=\CX^\C$). In other words, giving a real form of $\CY$ is equivalent to giving an anti-unitary involution of $\CY$.
\end{rem}

\begin{rem} \label{rem:real-form2}
More generally, we have the following pullback diagram for a finite group $G$:
$$\xymatrix{
  \bigoplus_\pi (n+1)\Rep G^\pi \ar[r] \ar[d] & (n+1)\Rep_\R G \ar[d] \\
  (n+1)\Hilb \ar[r] & (n+1)\Hilb_\R .
}
$$
That is, giving an object of $(n+1)\Rep G^\pi$ is equivalent to giving a unitary $n$-category carrying a (homotopic) orthogonal $G$-action such that $g\in G$ acts unitarily or anti-unitarily if $\pi(g)=1$ or $\dagger$, respectively. 
\end{rem}

\begin{rem} \label{rem:point-fusion-r*}
Example \ref{exam:point-fusion} and Proposition \ref{prop:nVec-RepG} have the following variants. 

(1) Let $G$ be a finite group. A monoidal functor $\rho:G\to\Fun_\R((n+1)\Hilb,(n+1)\Hilb)$ induces an object $X:B G\to (n+2)\Hilb_\R$ of $(n+2)\Rep_\R G$. According to the orthogonal equivalence $$\Fun_\R((n+1)\Hilb,(n+1)\Hilb) \simeq (n+1)\Hilb\oplus(n+1)\Hilb,$$ $\rho$ induces a group homomorphism $\pi:G\to\Z_2^T$ and, in view of Remark \ref{rem:real-form2}, $X$ lifts to an object $\tilde X\in(n+2)\Rep G^\pi$. Since the image of $\tilde X$ in $(n+2)\Hilb$ is $(n+1)\Hilb$, $\tilde X$ is invertible. Moreover, $\bigoplus_{g\in G}\rho(g)$ carries the structure of an orthogonal multi-fusion $n$-category.

(2) Fix a group homomorphism $\pi:G\to\Z_2^T$ and use $U_T(1)$ to denote the group $U(1)$ equipped with the evident $\Z_2^T$-action. Let $B G\to B^{n+2}U_T(1)\sslash\Z_2^T$ be a functor that represents a cohomology class $\omega\in H^{n+2}(G,U_T(1))$. Applying the above construction to the composition $X:B G\to B^{n+2}U_T(1)\sslash\Z_2^T \subset (n+2)\Hilb_\R$ yields an invertible object of $(n+2)\Rep G^\pi$ as well as an orthogonal multi-fusion $n$-category which we denote by $n\Hilb_{G^\pi}^\omega$. 

(3) Let $n\Hilb_{G^\pi}$ denote the orthogonal multi-fusion $n$-category corresponding to the trivial cohomology class in $H^{n+2}(G,U_T(1))$, for example, $0\Hilb_{G^\pi}=\C[G^\pi]$. Then Remark \ref{rem:real-form2} is reformulated into the following equivalencies
$$\LMod_{n\Hilb_{G^\pi}}((n+1)\Hilb_\R) \simeq (n+1)\Rep G^\pi,$$
$$\Fun_{n\Hilb_{G^\pi}}(n\Hilb,n\Hilb) \simeq \Omega(n+1)\Rep G^\pi.$$
\end{rem}

\begin{defn}
A {\em unitary (resp. orthogonal) modular tensor $n$-category}, $n$UMTC (resp. $n$OMTC) for short, is a nondegenerate unitary (resp. orthogonal) braided fusion $n$-category. 
\end{defn}


\begin{rem} \label{rem:umtc-realform}
Let $\CA$ be an $n$OMTC. The orthogonal symmetric monoidal equivalence $\dagger:n\Hilb\to n\Hilb$ induced by the complex conjugation $\dagger:\C\to\C$ induces an anti-$\C$-linear orthogonal braided monoidal equivalence $\tau:\CA^\C\to\CA^\C$. In the special case where $n=1$, this implies that the central charge of the UMTC $\CA^\C$ is divided by 4. According to Remark \ref{rem:real-form}, a UMTC $\CB$ admits a real form if and only if there is an anti-$\C$-linear orthogonal braided monoidal equivalence $\tau:\CB\to\CB$ such that $\tau^2\simeq\Id_\CB$. Using this criterion and the results from \cite{ACRW16,GNN09}, one verifies that the $so(p)_2$ UMTC for prime $p\equiv1\pmod4$ admits real forms. Details will be given in a forthcoming paper \cite{WZ22}. We also remark that the $so(8)_1$ UMTC\footnote{The $so(8)_1$ UMTC is a modular extension of $\sRep\Z_2^f$ of central charge 4.} 
has an obvious real form.
\end{rem}

\begin{exam} \label{exam:repz2}
There are two real forms of the unitary symmetric fusion 1-category $\Rep\Z_2$: one is $\Rep(\Z_2\times\Z_2^T) \simeq \Rep_\R\Z_2$ and the other is $\Rep\Z_4^T$ where $\Z_4^T$ denotes the group $\Z_4$ equipped with the nontrivial homomorphism onto $\Z_2^T$. Indeed, $\Rep\Z_4^T$ has the form $\Hilb_\R\oplus\Hilb_\BH$ hence consists of two simple objects $\one$ and $e$ obeying the fusion rule $e\otimes e=\one\oplus\one\oplus\one\oplus\one$. 

There are four real forms of the UMTC $\FZ_1(\Rep\Z_2)$: one is $\FZ_1(\Rep_\R\Z_2)$ and the other three have the form $\Hilb_\R\oplus\Hilb_\R\oplus\Hilb_\BH\oplus\Hilb_\BH$. Three of them are non-chiral, but one is chiral. Moreover, there are two modular extensions of $\Rep\Z_4^T$: $\FZ_1(\Rep\Z_4^T)$ and the chiral real form of $\FZ_1(\Rep\Z_2)$. There are four modular extensions of $\sRep(\Z_2^f\times\Z_2^T)$: $\FZ_1(\Rep_\R\Z_2)$, the chiral real form of $\FZ_1(\Rep\Z_2)$ and two real forms of the $so(8)_1$ UMTC.
\end{exam}

\section{Classification of gapped quantum liquids with finite symmetries} \label{sec:class-gql}

In this section, we apply the mathematical results obtained earlier to physical problems, specifically the classification of gapped quantum liquids. In the first subsection, we provide a more precise description and detailed discussion of the classification from \cite{KLWZZ20a, KZ22}. In the second subsection, we extend the classification from \cite{KLWZZ20a, KZ22} to include anti-unitary symmetries. Specifically, we predict multiple time-reversal SPT phases when the spacetime dimension is greater than or equal to three. In the third subsection, we use a categorical approach to derive the crystalline equivalence principle \cite{TE18}: the classification of gapped crystalline quantum liquids is identical to that of gapped quantum liquids with onsite symmetry. More precisely, we show that both types of phases can be characterized by group actions on underlying topological orders.

\subsection{Unitary symmetries} \label{sec:unitary-symm}

Let $\CE$ be a nonzero unitary multi-fusion $n$-category. According to \cite{KLWZZ20a} and \cite[Subsection 5.2]{KZ22}, $n$D gapped quantum liquids\footnote{$n$D represents the spacetime dimension.} with internal symmetry $\CE$ are classified (modulo invertible topological orders) by the equivalence classes of pairs $(\CA,\phi)$ where $\CA$ is a unitary $E_0$-multi-fusion $n$-category and $\phi:\FZ_0(\CE)^\rev \simeq \FZ_0(\CA)$ is a unitary monoidal equivalence. Two pairs $(\CA,\phi)$ and $(\CB,\psi)$ are equivalent if there is a unitary $E_0$-monoidal equivalence $F:\CA\to\CB$ such that $\FZ_0(F)\circ\phi \simeq \psi$.

Consider the composite $*$-functor $\gamma:\CE \to \FZ_0(\CE) \xrightarrow\phi \FZ_0(\CA)^\rev \to \CA^\op$.
\begin{itemize}
\item If $\gamma$ is a $*$-equivalence, then the pair $(\CA,\phi)$ classifies an SPT order.\footnote{An $n$D fermionic invertible topological order is regarded as an $n$D SPT order with internal symmetry $n\sRep\Z_2^f$.} 
The SPT order is trivial if and only if $\phi$ is induced by $\gamma$.
\item If $\gamma$ is not a $*$-equivalence and if $\gamma$ transforms inequivalent objects or morphisms into inequivalent ones, then the pair $(\CA,\phi)$ classifies an SET order.
\item Otherwise, the pair $(\CA,\phi)$ classifies a symmetry breaking order.
\end{itemize}

Two approaches were proposed in \cite{KLWZZ20a} to the classification of gapped quantum liquids: one is via modular extension and the other is via boundary-bulk relation. The $*$-versions of Theorem \ref{thm:mod-ext-e0} and Corollary \ref{cor:mod-ext-e1} show that these two approaches indeed yield the same results.

\begin{exam}
Let $G$ be a finite group. The $*$-version of Theorem \ref{thm:mext-repG} shows that $n$D SPT orders with onsite symmetry $G$ are classified by pointed unitary fusion $(n-2)$-categories with fusion group $G$. By the $*$-version of Example \ref{exam:point-fusion}(2), there is a pointed unitary fusion $(n-2)$-category $(n-2)\Hilb_G^\omega$ for each $\omega\in H^n(G,U(1))$. 
Since there is no nontrivial invertible unitary $(n-2)$-category for $n\le4$, $n$D SPT orders with onsite symmetries are classified by group cohomology.
However, the classification is beyond group cohomology for $n\ge5$. For example, if $\CA$ is a chiral $(n-4)$UMTC such that $\CA^{\boxtimes m}$ is non-chiral, then $\bigoplus_{i=0}^{m-1} \Sigma_*^2\CA^{\boxtimes i}$ carries the structure of a pointed unitary fusion $(n-2)$-category. It seems to us that this classification is more than that given in \cite{Wen15}.

\end{exam}

\begin{rem} \label{rem:class-ito}
The above classification of $n$D SPT orders with symmetry $n\Hilb$ (aka invertible topological orders) is completely trivial: one has $\CA\simeq n\Hilb$. Conversely, if a fusion $(n-1)$-category describes an $n$D invertible topological order, then it is trivial by the $*$-version of Proposition \ref{prop:fus-trivial}. This is why the above classification is up to invertible topological orders. 

Nevertheless, there is still a categorical approach to invertible topological orders: the invertible objects of $n\Hilb$ (i.e. invertible unitary $(n-1)$-categories) classify those $n$D invertible topological orders with gapped boundaries. This classification is in the sense that the invertible objects of $n\Hilb$ classify the invertible $n$D extended TQFT's with target $n\Hilb$ by the cobordism hypothesis. 

Recall that giving an invertible object of $n\Hilb$ is equivalent to giving a Witt class of $(n-3)$UMTC's by the $*$-version of Corollary \ref{cor:witt=morita=inv}. It follows that the 4D invertible topological orders are classified by the Witt group of UMTC's introduced in \cite{DMNO13}. It is known that the invertible 4D TQFT associated to a UMTC \cite{CY93} is realized by the Walker-Wang model \cite{WW12} and only sensitive to the central charge (mod 8) of the UMTC. But an invertible 4D extended TQFT encodes topological excitations on the gapped boundaries of an invertible topological order hence is sensitive to the Witt class of a UMTC.
\end{rem}


\subsection{Anti-unitary symmetries} \label{sec:anti-unitary-symm}

Consider an $n$D time-reversal SPT order. The topological defects of the SPT order form an orthogonal $E_0$-multi-fusion $n$-category $\CA$ over $n\Rep\Z_2^T$. However, we have $n\Rep\Z_2^T \simeq n\Hilb_\R$ by Example \ref{exam:rep-Gpi}(2). This forces $\CA\simeq n\Hilb_\R$. This situation is similar to Remark \ref{rem:class-ito}: the invertible objects of $n\Hilb_\R$ (i.e. invertible orthogonal $(n-1)$-categories) classify $n$D time-reversal SPT orders with gapped boundaries. This classification is in the sense that the invertible objects of $n\Hilb_\R$ classify the invertible $n$D extended TQFT's with target $n\Hilb_\R$ by the cobordism hypothesis.

\begin{exam} \label{exam:spt-Z2T}
Below are examples of time-reversal SPT's in low dimensions:
\begin{itemize}
\item There is only one invertible orthogonal 0-category $\R$. Thus there is only one 1D time-reversal SPT order.
\item According to Example \ref{exam:orth-cat}(3), there are two invertible orthogonal 1-categories $\Hilb_\R$ and $\Hilb_\BH$. Thus there are two 2D time-reversal SPT orders.
\item There is only one invertible unitary 2-category $2\Hilb$ and there is only one real form of $2\Hilb$. Therefore, there is only one 3D time-reversal SPT order.
\item According to Remark \ref{rem:point-fusion-r*}(2), each cohomology class $\omega\in H^n(\Z_2^T,U_T(1))$ induces an invertible object of $n\Hilb_\R$ which is a real form of $(n-1)\Hilb$. Thus $H^n(\Z_2^T,U_T(1))$ classifies $n$D time-reversal SPT orders that arise from a part of real forms of $(n-1)\Hilb$. 
The above examples match \cite{CGLW13}.
\item There are infinitely many inequivalent invertible objects of $4\Hilb_\R$ arising from chiral OMTC's as shown in Remark \ref{rem:umtc-realform}. Therefore, the classification of 4D time-reversal SPT orders is beyond group cohomology. The 4D time-reversal SPT order associated to the obvious real form of the $so(8)_1$ UMTC was studied in \cite{VS13,WS13}, but the others are missing in the physics literature. See \cite{BBJCW20} for a related work on unoriented 4D TQFT's.
\end{itemize}
\end{exam}

\smallskip

In what follows, classification of gapped quantum liquids with unitary symmetries is modulo invertible topological orders and that with anti-unitary symmetries is modulo $\Z_2^T$ SPT orders.

The classification results from \cite{KLWZZ20a} and \cite[Subsection 5.2]{KZ22} can be generalized to anti-unitary symmetries as follows.
\begin{itemize}
\item Let $\CE$ be a nonzero orthogonal multi-fusion $n$-category. The $n$D gapped quantum liquids with internal symmetry $\CE$ are classified (modulo invertible topological orders or $\Z_2^T$ SPT orders) by the equivalence classes of pairs $(\CA,\phi)$ where $\CA$ is an orthogonal $E_0$-multi-fusion $n$-category and $\phi:\FZ_0(\CE)^\rev \simeq \FZ_0(\CA)$ is an orthogonal monoidal equivalence. Two pairs $(\CA,\phi)$ and $(\CB,\psi)$ are equivalent if there is an orthogonal $E_0$-monoidal equivalence $F:\CA\to\CB$ such that $\FZ_0(F)\circ\phi \simeq \psi$.
\end{itemize}


\begin{exam} \label{exam:newspt-Z4T}
According to Example \ref{exam:repz2}, there are two modular extensions of $\Rep\Z_4^T$. On the other hand, $H^3(\Z_4^T,U_T(1))=\Z_1$. Therefore, the classification of 3D $\Z_4^T$ SPT orders is beyond group cohomology.
The nontrivial 3D $\Z_4^T$ SPT order was claimed in \cite[Section VI.B]{Wen17} but the modular data therein are slightly different from ours.
More generally, $\Rep(\Z_m\times\Z_4^T) \simeq \Rep_\R\Z_m\boxtimes_\R\Rep\Z_4^T$ admits at least two evident modular extensions while $H^3(\Z_m\times\Z_4^T,U_T(1))=\Z_1$ for odd $m$. Again the classification of 3D $\Z_m\times\Z_4^T$ SPT orders is beyond group cohomology.
\end{exam}

\begin{exam} \label{exam:newspt-Z2fxZ2T}
According to Example \ref{exam:repz2}, there are four modular extensions of $\sRep(\Z_2^f\times\Z_2^T)$ while the complexifications of the former two give the trivial modular extension of $\sRep\Z_2^f$. Therefore, there are two fermionic 3D time-reversal SPT orders. We failed to find the nontrivial one in the physics literature. C.f. \cite{GW14}.
\end{exam}

\subsection{Crystalline equivalence principle} \label{sec:ext-symm}

Let $G$ be a finite spacetime group that acts on the $n$D spacetime. We have a group homomorphism $\pi:G\to\Z_2^T$ such that $\pi(g)=1$ or $\dagger$ if $g$ preserves or reverses the orientation, respectively. 
The purpose of this subsection is to derive the crystalline equivalence principle \cite{TE18} for finite spacetime groups from the categorical point of view. That is, we show that the classifications of the following two types of phases are the same:
\begin{enumerate}
\renewcommand{\labelenumi}{$(\alph{enumi})$}
\item $n$D gapped crystalline quantum liquids with the spacetime group symmetry $G$;
\item $n$D gapped quantum liquids with the onsite symmetry $G^\pi$.
\end{enumerate}

Consider an $n$D gapped crystalline quantum liquid with the spacetime group symmetry $G$. The presence of the spacetime symmetry makes it hardly to organize the topological defects of the phase to a higher category. Hence neither the approach of modular extension nor boundary-bulk relation works for the classification. But we might characterize this phase by the $G$-action on the underlying $n$D topological order, say, $\CX$. Indeed, the $G$-action assigns to each group element an invertible domain wall between $\CX$ while such data can be approached as follows.

Recall the higher category $\TO^n_\sk$ of potentially anomalous $n$D topological orders, which is equivalent to $\bullet/(n+1)\Hilb$ or its opposite \cite[Subsection 5.4]{KZ22}. Then $\CX$ can be viewed as an object of $\bullet/(n+1)\Hilb$. That is, $\CX$ is a nondegenerate unitary $E_0$-multi-fusion $n$-category and $\Omega\CX$ is a nondegenerate unitary multi-fusion $(n-1)$-category\footnote{This was used in \cite{KWZ15,JF22} as the definition of a topological order.}. 

\smallskip

For simplicity, we consider first the case where $\pi:G\to\Z_2^T$ is trivial, that is, $G$ preserves the orientation of the $n$D spacetime. In this case, the $G$-action on the topological order $\CX$ is simply characterized by a monoidal functor $\alpha: G \to \Omega\CX$ because an invertible domain wall between $\CX$ is nothing but an invertible object of $\Omega\CX$.

On the other hand, we have the following observation for a nonzero unitary multi-fusion $n$-category $\CE$:
\begin{itemize}
\item $n$D gapped quantum liquids with the internal symmetry $\CE$ are classified by the equivalence classes of pairs $(\CM,\rho)$ where $\CM$ is an invertible unitary $n$-category and $\rho:\CE\to\CM$ is a $*$-functor. Two pairs $(\CM,\rho)$ and $(\CN,\sigma)$ are equivalent if $\CM$ and $\CN$ are equivalent over $\CE$.
\end{itemize}
In fact, given a pair $(\CM,\rho)$, we have a unitary $E_0$-multi-fusion $n$-category $\CA = \FZ_0(\CE,\CM)$ as well as a composite unitary monoidal equivalence $\phi: \FZ_0(\CA) \simeq \FZ_0(\CE)^\rev\boxtimes\FZ_0(\CM) \simeq \FZ_0(\CE)^\rev$. Conversely, $(\CM,\rho)$ can be recovered from the pair $(\CA,\phi)$: $\CM = \CA\boxtimes_{\FZ_0(\CE)}\CE$ and $\rho = \one_\CA\boxtimes_{\FZ_0(\CE)}\Id_\CE$ (see the proof of Theorem \ref{thm:mod-ext-e0}). Moreover, the composition $\CE\to\FZ_0(\CE)\xrightarrow\phi\FZ_0(\CA)^\rev\to\CA^\op$ corresponds to $\CE\to\Fun(\CE,\CM)^\op$, $E\mapsto\rho(E^R\otimes-)$.

Then, we have the following observation confirming the equivalence $(a)\Leftrightarrow(b)$:
\begin{itemize}
\item $n$D gapped quantum liquids with the onsite symmetry $G$ are classified by pairs $(\CX,\alpha)$ where $\CX$ is a nondegenerate unitary $E_0$-multi-fusion $n$-category and $\alpha: (n-1)\Hilb_G \to \Omega\CX$ is a unitary monoidal functor (or, equivalently, a monoidal functor $\alpha: G \to \Omega\CX$).
\end{itemize}
If fact, the underlying topological order of $(\CA,\phi)$ is given by the nondegenerate unitary $E_0$-multi-fusion $n$-category $\CX=\CA\boxtimes_{\FZ_0(n\Rep G)}\Sigma_*(n-1)\Hilb_G$ or, equivalently, $\CX=(\CM,\rho((n-1)\Hilb_G))$. Note that $\rho$ induces a unitary monoidal functor $\alpha: (n-1)\Hilb_G \to \Omega\CX$ and one can recover $(\CM,\rho)$ from the pair $(\CX,\alpha)$ as $\rho=\Sigma_*\alpha$.

\begin{rem}
The above arguments can be generalized to show the following result:
\begin{itemize}
\item $n$D gapped quantum liquids with a finite $n$-group symmetry $\CG$ are classified by pairs $(\CX,\alpha)$ where $\CX$ is a nondegenerate unitary $E_0$-multi-fusion $n$-category and $\alpha: (n-1)\Hilb_\CG \to \Omega\CX$ is a unitary monoidal functor (or, equivalently, a monoidal functor $\alpha: \CG \to \Omega\CX$).
\end{itemize}
\end{rem}

Now we turn to the case where $\pi:G\to\Z_2^T$ is nontrivial. In this case, to give an anti-unitary $G^\pi$-action on a topological order $\CX\in\bullet/(n+1)\Hilb$ amounts to lifting $\CX$ to a real form $\tilde\CX\in\bullet/(n+1)\Hilb_\R$ and giving an orthogonal monoidal functor $\alpha:(n-1)\Hilb_{G^\pi}\to\Omega\tilde\CX$. Here, $\tilde\CX$ is pointed by $F(\one_\CX)$ where $F:\CX\to\tilde\CX$ is the forgetful functor induced by $n\Hilb\to n\Hilb_\R$. 

\begin{exam}
(1) In the special case where $\CX$ has the form $(n\Hilb,X)$, i.e. the topological order $\CX$ has a gapped boundary, $(n\Hilb_\R,X)$ is a lift of $(n\Hilb,X)$. Moreover, giving an orthogonal monoidal functor $\alpha: (n-1)\Hilb_{G^\pi} \to \Omega(n\Hilb_\R,X) = \Fun_\R(X,X)$ is equivalent to giving a $G^\pi$-action on the unitary $(n-1)$-category $X$ by Remark \ref{rem:real-form2}. 

(2) There may be more than one lifts of a given $\CX$. For example, $(\Hilb_\BH,\BH)$ is also a lift of $(\Hilb,\C)$.
\end{exam}

As before, we have the following observation for a nonzero orthogonal multi-fusion $n$-category $\CE$:
\begin{itemize}
\item $n$D gapped quantum liquids with the internal symmetry $\CE$ are classified by the equivalence classes of pairs $(\CM,\rho)$ where $\CM$ is an invertible orthogonal $n$-category and $\rho:\CE\to\CM$ is an orthogonal functor. Two pairs $(\CM,\rho)$ and $(\CN,\sigma)$ are equivalent if $\CM$ and $\CN$ are equivalent over $\CE$.
\end{itemize}
Then, we have the following result confirming the equivalence $(a)\Leftrightarrow(b)$:
\begin{itemize}
\item $n$D gapped quantum liquids with the onsite symmetry $G^\pi$ are classified by triples $(\CX,\tilde\CX,\alpha)$ where $\CX$ is a nondegenerate unitary $E_0$-multi-fusion $n$-category, $\tilde\CX\in\bullet/(n+1)\Hilb_\R$ is a lift of $\CX$ and $\alpha: (n-1)\Hilb_{G^\pi} \to \Omega\tilde\CX$ is an orthogonal monoidal functor.
\end{itemize}
In fact, $n\Rep(\Ker\pi)$ is the complexification of $n\Rep G^\pi$ by Proposition \ref{prop:complex-RepG}. In order to get the underlying topological order of $(\CM,\rho)$, we break the symmetry $G^\pi$ to $\Ker\pi$ and obtain the phase $(\CM^\C,\rho^\C)$, then break $\Ker\pi$ and obtain $\CX=(\CM^\C,\rho^\C((n-1)\Hilb_{\Ker\pi}))$. Let $\tilde\CX = (\CM,\rho((n-1)\Hilb_{G^\pi}))$. Then $\tilde\CX$ is a lift of $\CX$ and $\rho$ induces an orthogonal monoidal functor $\alpha: (n-1)\Hilb_{G^\pi} \to \Omega\tilde\CX$. 
Again, $(\CM,\rho)$ can be recovered from $(\tilde\CX,\alpha)$ as $\rho=\Sigma_*\alpha$. 


\end{document}